\theoremstyle{plain}
\newtheorem{theorem}{Theorem}[section]
\newtheorem{lemma}[theorem]{Lemma}
\newtheorem{proposition}[theorem]{Proposition}
\newtheorem{corollary}[theorem]{Corollary}
\theoremstyle{definition}
\newtheorem{definition}[theorem]{Definition}
\newtheorem{example}[theorem]{Example}
\newtheorem{question}[theorem]{Question}
\theoremstyle{remark}
\newtheorem{remark}[theorem]{Remark}
\newcommand{\conditemStyle}{\bfseries\ttfamily}
\newcommand{\algostepStyle}{\bfseries\ttfamily}
\newlist{conditions}{itemize}{1}
\newlist{algorithm}{itemize}{1}
\setlist[conditions]{font={\conditemStyle}}
\setlist[algorithm]{font ={\algostepStyle}}
\newcommand{\conditem}[2]{
  \item[
  \phantomsection
  #1\protected@edef\@currentlabel{{\conditemStyle #1}}\label{#2}]
}
\newcommand{\algostep}[2]{
  \item[{
  \phantomsection
  #1\protected@edef\@currentlabel{{\algostepStyle #1}}\label{#2}}]
}
\DeclareMathOperator{\Aut}{Aut}
\DeclareMathOperator{\Span}{Span}
\DeclareMathOperator{\Conv}{Conv}
\DeclareMathOperator{\IntRel}{int_{rel}}
\DeclareMathOperator{\Int}{int}
\DeclareMathOperator{\CR}{CR}
\DeclareMathOperator{\End}{End}
\DeclareMathOperator{\im}{im}
\DeclareMathOperator{\diag}{diag}
\newcommand*\bigcdot{\mathpalette\bigcdot@{.7}}
\newcommand*\bigcdot@[2]{\mathbin{\vcenter{\hbox{\scalebox{#2}{$\m@th#1\bullet$}}}}}
\newcommand{\R}{\mathbb{R}}
\newcommand{\C}{\mathbb{C}}
\newcommand{\Z}{\mathbb{Z}}
\newcommand{\N}{\mathbb{N}}
\renewcommand{\H}{\ensuremath{\mathbb{H}}}
\renewcommand{\P}{\ensuremath{\mathbb{P}}}
\newcommand{\Ell}{\mathcal{E}}
\newcommand{\A}{\mathbb{A}}
\newcommand{\D}{\mathcal{D}}
\DeclareMathOperator{\flag}{\mathcal{F}}
\DeclareMathOperator{\grass}{grass}
\newcommand{\horo}{\mathcal{H}}
\newcommand{\Cont}{\mathcal{C}}
\newcommand{\M}{\mathrm{Mat}}
\newcommand{\PGL}{\mathrm{PGL}}
\newcommand{\PO}{\mathrm{PO}}
\renewcommand{\O}{\ensuremath{\mathrm{O}}}
\newcommand{\GL}{\mathrm{GL}}
\newcommand{\SL}{\mathrm{SL}}
\newcommand{\SLpm}{\mathrm{SL}^\pm}
\newcommand{\COne}{\mathcal{C}^1}
\newcommand{\Order}{\Theta}
\DeclarePairedDelimiter\abs{\lvert}{\rvert}
\DeclarePairedDelimiter\norm{\lVert}{\rVert}
\newcommand{\tosub}[1]{\xrightarrow[#1]{}}
\newcommand{\simsub}[1]{\underset{#1}{\sim}}
\DeclarePairedDelimiter\gen{\langle}{\rangle}
\newcommand{\genRep}[2]{{\mathcal{G}_{#2}^{#1}}}
\newcommand{\periph}{\ensuremath{\mathcal{P}}}
\DeclareMathOperator{\trans}{\ell}
\let\oldabs\abs
\def\abs{\@ifstar{\oldabs}{\oldabs*}}
\let\oldnorm\norm
\def\norm{\@ifstar{\oldnorm}{\oldnorm*}}
\newcommand*{\transpose}[2][-5mu]{\ensuremath{\mskip1mu\prescript{\smash{\mathrm t\mkern#1}}{}{\mathstrut#2}}}
\renewcommand{\epsilon}{\varepsilon}
\renewcommand{\phi}{\varphi}
\newcommand{\ie}{\emph{i.e.~}}
\newcommand{\resp}{resp.~}
\newcommand{\Def}{Def.~}
\newcommand{\Lm}{Lm.~}
\newcommand{\Prop}{Prop.~}
\newcommand{\Thm}{Thm.~}
\newcommand{\Cor}{Cor.~}
\title{On cusp holonomies in strictly convex projective geometry}
\author{Balthazar Fléchelles}
\email{balthazar.flechelles@univ-grenoble-alpes.fr}
\address{Institut Fourier, 100 rue des mathématiques, 38610, Gières, France}
\date{\today}
\thanks{The author received funding from the European Research Council (ERC) under the European Union's Horizon 2020 research and innovation programme (ERC starting grant DiGGeS, grant agreement No 715982, and ERC consolidator grant GeometricStructures, grant agreement No 614733), as well as the ANR-23-CE40-0012 HilbertXfield. The author acknowledges support of the Institut Henri Poincaré (UAR 839 CNRS-Sorbonne Université) and LabEx CARMIN (ANR-10-LABX-59-01).}
\begin{document}
\begin{abstract}
   We give a complete characterization of the holonomies of strictly convex cusps and of round cusps in convex projective geometry. We build families of generalized cusps of non-maximal rank associated to each strictly convex or round cusp. We also extend Ballas--Cooper--Leitner's definition of generalized cusp to allow for virtually solvable fundamental group, and we produce the first such example with non-virtually nilpotent fundamental group.

   Along with a companion paper, this allows to build strictly convex cusps and generalized cusps whose fundamental group is any finitely generated virtually nilpotent group. This also has interesting consequences for the theory of relatively Anosov representations.

\end{abstract}
\maketitle

\tableofcontents

\section{Introduction}

A \emph{cusp} of a hyperbolic orbifold $\H^n/\Gamma$ is a suborbifold that is the quotient of a horoball of $\H^n$ by a \emph{maximal parabolic subgroup} of $\Gamma$, \ie a maximal infinite subgroup of $\Gamma$ that fixes exactly one point in $\partial\H^n$. They appear in particular in finite volume and geometrically finite hyperbolic orbifolds, and play a central role in their study. In this article, we are interested in understanding cusps in a generalization of hyperbolic geometry, namely \emph{strictly convex (projective) geometry}.

The Kazhdan-Margulis lemma implies that hyperbolic cusp holonomies are virtually nilpotent. In fact, the Bieberbach theorem, combined with the fact that hyperbolic horospheres are endowed with a Euclidean affine structure induced by the ambient metric, show that hyperbolic cusp holonomies are virtually abelian. This leads to a precise classification of hyperbolic cusps and their holonomies, which in turns allows for a precise understanding of geometrically finite hyperbolic orbifolds. In order to define geometric finiteness, we need to recall the \emph{thick-thin decomposition} theorem.

A $n$-dimensional hyperbolic orbifold can be decomposed into its \emph{thick part}, where the injectivity radius is at least the Margulis constant $\mu_n$, and its complement, called the \emph{thin part}. The \emph{thick-thin decomposition theorem} states that the thin part decomposes into a union of connected components with disjoint closures and simple geometry. The bounded components
, called \emph{Margulis tubes}, are tubular neighborhoods of simple closed geodesics of length smaller than $\mu_n$. The unbounded components, which are the intersections of the orbifold's cusps with the thin part, are roughly quotients of uniform neighborhoods of $m$-dimensional horoballs of $\H^n$ by the cusp holonomy, for some $m\leq n$. A hyperbolic orbifold is \emph{geometrically finite} if its thick part interior is homeomorphic to the interior of a compact manifold with boundary. In particular, geometrically finite hyperbolic orbifolds have finitely many cusps and Margulis tubes. A geometrically finite hyperbolic orbifold that has no cusps is said to be \emph{convex-cocompact}. 

Geometrically finite and convex-cocompact hyperbolic orbifolds behave similarly to respectively finite volume and compact hyperbolic orbifolds in many ways, and have been extensively studied. A key difference between the geometrically finite case and the finite covolume case is that all cusps of a finite volume hyperbolic orbifold have \emph{maximal rank}, meaning that the cusp holonomy acts cocompactly on the horospheres it preserves. This does not have to be the case for geometrically finite hyperbolic orbifolds.

\subsection{Strictly convex geometry}

At the end of the nineteenth century, Hilbert observed that on any \emph{properly convex domain} $\Omega\subset\P(\R^n)$, \ie a bounded convex open subset of an affine chart of $\P(\R^n)$, one can define a complete and proper metric $d_\Omega$ by means of the cross-ratio. This metric, called the \emph{Hilbert metric}, induces a Finsler metric on the tangent bundle, and has the property that all projective segments are geodesics. Moreover, since it is defined using the cross-ratio, the Hilbert metric is invariant under all projective transformations $g\in\PGL(n,\R)$ which preserve $\Omega$. Such isometries of $(\Omega,d_\Omega)$ are called \emph{automorphisms of $\Omega$}.

The regularity of the boundary $\partial\Omega$ of $\Omega$ profoundly impacts the properties of the Hilbert metric. When $\partial\Omega$ contains non-trivial segments, the geometry recalls that of non-positively curved Riemannian spaces. If for instance $\Omega$ is a $n-1$-dimensional simplex in $\P(\R^n)$, then $(\Omega,d_\Omega)$ is isometric to $\R^{n-1}$ endowed with a polytopal norm.

On the other hand, when $\Omega$ is \emph{strictly convex}, meaning that $\partial\Omega$ contains no non-trivial segments, the geometry resembles more that of hyperbolic space. In fact, if $\Omega$ is a ball, then $(\Omega,d_\Omega)$ is isometric to $\H^{n-1}$.

A strictly convex orbifold is the quotient of a strictly convex domain $\Omega$ by a discrete subgroup of automorphisms. Round orbifolds, a particular subclass of strictly convex orbifolds will also play a role in this work. They are quotients of a \emph{round domain}, \ie a strictly convex domain whose boundary has $\COne$ regularity, by a discrete subgroup of automorphisms. Observe that hyperbolic geometry is not only an example of strictly convex geometry, but also of round geometry. 

Things are very close to the hyperbolic picture in strictly convex geometry. A Kazhdan-Margulis lemma holds, as well as a thick-thin decomposition theorem (see \cite{cramponMarquis2013,cramponMarquis2014,cooperLongTillmann2015}). This allows to define geometric finiteness in strictly convex geometry as in the hyperbolic case above. Geometrically finite round orbifolds were first defined and extensively studied in \cite{cramponMarquis2014,blayacMarquis2024}, and the authors showed that numerous characterizations from the hyperbolic case generalized to this setting.

Just like in the hyperbolic setting, a cusp of a strictly convex orbifold $\Omega/\Gamma$ is a suborbifold that is the quotient of a horoball of $\Omega$ by a maximal parabolic subgroup of $\Gamma$, \ie a maximal infinite subgroup of $\Gamma$ that fixes exactly one point in $\partial\Omega$.
Apart from the maximal rank strictly convex cusps, whose holonomy were proven to be conjugated to maximal rank hyperbolic cusp holonomies of the same dimension in \cite{cooperLongTillmann2015} (see also \cite{cramponMarquis2014} for the round case), the understanding of strictly convex cusp holonomies seems to be a blind spot of the literature. This is surprising as there are a few indications showing the importance of this question.

Firstly, there appears to be a deep connection between strictly convex geometry and generalizations of rank 1 phenomena to the higher rank setting. The most striking example is the proof in \cite{dancigerGueritaudKassel2024} that Anosov representations, a generalization of rank 1 convex-cocompactness, can be seen as convex-cocompact actions on strictly convex domains (a weaker but related statement is proven in \cite{zimmer2021}). One may think that adding cusps into the picture would allow to get similar results for relatively Anosov representations, a generalization of Anosov representations to relatively hyperbolic groups. This is the route taken by Islam, Zhu and the author in \cite{mitulFeng1} to prove that relatively Anosov representations correspond to geometrically finite actions on round domains, using the characterizations of strictly convex cusp holonomies developed here (see Theorem \ref{thmIntro:characHoloStrictlyConvCusps}).

Moreover, a better understanding of strictly convex cusp holonomies may allow to generalize theorems about geometrically finite hyperbolic orbifolds to the strictly convex realm. In fact, the aforementioned classification of maximal rank strictly convex cusps in \cite{cramponMarquis2014,cooperLongTillmann2015} allowed Cooper--Long--Tillmann to extend numerous classical results on finite volume hyperbolic orbifolds to the strictly convex case.

However, there is an important difference between the hyperbolic and the strictly convex settings: because strictly convex horospheres typically are not endowed with a natural Euclidean affine structure, one cannot use the Bieberbach theorem to conclude that cusp holonomies must be virtually abelian. It is then natural to ask whether the Margulis lemma is the only restriction on the fundamental groups of strictly convex cusps.

\begin{question}\label{Q:vNilpGpsAreFundGpsOfRoundCusps}
   Can any virtually nilpotent group be realized as the fundamental group of a strictly convex cusp? Of a round cusp?
\end{question}

Of course, our original question about deepening the understanding of strictly convex cusp holonomies remains.

\begin{question}\label{Q:classificationOfCuspHolonomies}
   Can we classify or characterize the holonomies of strictly convex cusps? What about the round case?
\end{question}

In a series of two articles, we set out to answer these two questions. The present article shall tackle Question \ref{Q:classificationOfCuspHolonomies}, while the companion paper \cite{flechelles2025unipotentReps} brings a positive answer to Question \ref{Q:vNilpGpsAreFundGpsOfRoundCusps}.

\subsection{Characterization of strictly convex cusp holonomies}

Before stating our results, we introduce divergent groups and their limit sets. We will denote by $\flag_{1,n-1}(\R^n)$ the space of line-hyperplane flags in $\R^n$, and by $\pi_1:\flag_{1,n-1}(\R^n)\to\P(\R^n)$ the natural projection.
\begin{definition}\label{def:divergentGroup}
   A discrete group $\Gamma<\PGL(n,\R)$ is $P_1$-divergent if for any injective sequence $(\gamma_n)_n$ in $\Gamma$, there is, up to extracting a subsequence, a pair of flags $(p_\pm,H_\pm)\in\flag_{1,n-1}(\R^n)$ such that for any flag $(p,H)$ transverse to $(p_-,H_-)$, 
   
   $\gamma_n (p,H)\to(p_+,H_+)$ uniformly on compact sets.
\end{definition}
The flags $(p_+,H_+)$ and $(p_-,H_-)$ are respectively called the \emph{attractive} and \emph{repulsive} flags of the sequence $(\gamma_n)_n$. Similarly, $p_+$ and $H_-$ are respectively called the \emph{attractive point} and \emph{repulsive hyperplane} of $(\gamma_n)_n$ in $\P(\R^n)$.

We can now define the limit sets of $\Gamma$ in $\flag_{1,n-1}(\R^n)$ and $\P(\R^n)$:
\begin{align*}
   \Lambda_{1,n-1}(\Gamma) &:= \{\text{attractive flags of sequences in $\Gamma$}\}\subset\flag_{1,n-1}(\R^n) \\
   \Lambda_1(\Gamma) &:= \{\text{attractive points of sequences in $\Gamma$}\}\subset\P(\R^n)
\end{align*}
Observe that $\pi_1(\Lambda_{1,n-1}(\Gamma))=\Lambda_1(\Gamma)$. They are non-empty closed $\Gamma$-invariant subsets of $\flag_{1,n-1}(\R^n)$ and $\P(\R^n)$ respectively.

It is a classical fact that a group preserving a strictly convex domain is $P_1$-divergent (see Proposition \ref{prop:autOfStrictlyConvDomainsAreP1Div}). Hence discrete groups of isometries of $\H^n$ are always $P_1$-divergent. It turns out hyperbolic cusp holonomies are exactly the discrete groups $\Gamma$ of isometries of $\H^n$ whose limit sets $\Lambda_1(\Gamma)$ and $\Lambda_{1,n-1}(\Gamma)$ are reduced to a point. This motivates the following characterization of strictly convex and round cusp holonomies.

\begin{theorem}[see \Thm \ref{thm:characHoloStrictlyConvCusps} \& \ref{thm:characHoloRoundCusps}]\label{thmIntro:characHoloStrictlyConvCusps}
   Let $\Gamma$ be a discrete subgroup of $\PGL(n,\R)$. Then $\Gamma$ is the holonomy of a strictly convex (\resp round) cusp if and only if $\Gamma$ preserves a properly convex domain, is $P_1$-divergent, and its limit set $\Lambda_1(\Gamma)$ (\resp $\Lambda_{1,n-1}(\Gamma)$) is reduced to a point. 
\end{theorem}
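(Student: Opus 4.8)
The plan is to prove the two implications separately, handling the strictly convex and round cases in parallel. For the forward direction, suppose $\Gamma$ is the holonomy of a strictly convex cusp, so there are a strictly convex domain $\Omega$, a point $\xi\in\partial\Omega$, and a $\Gamma$-invariant horoball $\horo_\xi$ based at $\xi$, with $\Gamma$ a maximal parabolic subgroup fixing only $\xi$. Since a strictly convex domain is in particular a bounded convex open subset of an affine chart, $\Omega$ is properly convex, giving the first condition, and $P_1$-divergence is immediate from Proposition~\ref{prop:autOfStrictlyConvDomainsAreP1Div}. For the third condition I would use that the limit set of any group preserving $\Omega$ lies in $\partial\Omega$: as $\Gamma$ preserves $\horo_\xi$, every orbit inside $\horo_\xi$ stays there, and strict convexity forces $\closure{\horo_\xi}\cap\partial\Omega=\{\xi\}$, so orbits accumulate only at $\xi$ and, $\Gamma$ being infinite, $\Lambda_1(\Gamma)=\{\xi\}$. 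In the round case the $\COne$ regularity of $\partial\Omega$ yields a unique supporting hyperplane $T_\xi$ at $\xi$, and since the hyperplane of an attractive flag is a supporting hyperplane at its attractive point, $\Lambda_{1,n-1}(\Gamma)=\{(\xi,T_\xi)\}$.

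For the backward direction, assume $\Gamma$ preserves a properly convex domain $\Omega_0$, is $P_1$-divergent, and satisfies $\Lambda_1(\Gamma)=\{\xi\}$ (\resp $\Lambda_{1,n-1}(\Gamma)=\{(\xi,H)\}$). As the limit set is $\Gamma$-invariant and reduced to a single point (\resp flag), $\Gamma$ fixes $\xi$ (\resp the flag $(\xi,H)$) and hence lies in the parabolic subgroup of $\PGL(n,\R)$ stabilising $\xi$. I would then read off the dynamical structure from $\Lambda_1(\Gamma)=\{\xi\}$: for every infinite-order $\gamma$ both $(\gamma^m)_m$ and $(\gamma^{-m})_m$ have attractive point $\xi$, so no element has transverse north--south dynamics and $\Gamma$ is ``purely parabolic'' at $\xi$. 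Passing to a block normal form for the stabiliser of $\xi$ (\resp of the flag) constrains the diagonal action of $\Gamma$ and exhibits an invariant horospherical foliation transverse to the $\xi$-direction.

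The crux is then to build a $\Gamma$-invariant strictly convex (\resp round) domain $\Omega$ with $\xi\in\partial\Omega$. My approach is to mimic the hyperbolic model, in which a cusp group preserves a paraboloid (a horoball) in the upper half-space: working in the affine chart complementary to the invariant supporting hyperplane---provided directly by the round hypothesis, and produced from the parabolic normal form in the strictly convex case---I would write down an explicit generalised-paraboloid domain adapted to $\xi$ and check both its strict convexity and its invariance under the normal form of $\Gamma$. A more intrinsic alternative is to realise $\Omega$ as a convex hull, or a nested intersection of half-spaces, over a $\Gamma$-orbit of a well-chosen osculating convex body, thereby rounding $\Omega_0$ near $\xi$ while keeping $\Gamma$-invariance. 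Once $\Omega$ is available, an invariant horoball $\horo_\xi$ arises as a sublevel set of the Busemann function at $\xi$, which is $\Gamma$-invariant because parabolics fixing $\xi$ preserve horospheres based there. Finally I would check that $\xi$ is the only point of $\partial\Omega$ fixed by $\Gamma$---a second fixed boundary point would yield a $\Gamma$-invariant projective segment in $\closure{\Omega}$ and, via $P_1$-divergence, a diverging sequence with attractive point different from $\xi$, contradicting $\Lambda_1(\Gamma)=\{\xi\}$---so that $\Gamma$ is its own maximal parabolic subgroup and the horoball quotient in $\Omega/\Gamma$ is a cusp of holonomy $\Gamma$.

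I expect the main obstacle to be exactly this construction of the invariant strictly convex (\resp round) domain: guaranteeing global strict convexity (\resp $\COne$ regularity) while retaining invariance under a possibly non-abelian parabolic group is delicate, and it is here that $P_1$-divergence and the single-point limit set must be leveraged to control the behaviour of $\Gamma$ transverse to the $\xi$-direction.
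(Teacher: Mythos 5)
Your forward direction is essentially the paper's argument: $P_1$-divergence comes from Proposition~\ref{prop:autOfStrictlyConvDomainsAreP1Div}, the single-point limit set comes from the fact that the invariant horospheres meet $\partial\Omega$ only at $\xi$, and the round case is handled by duality. That half is fine (modulo the fact that the paper's working definition of a cusp, Definition~\ref{def:cusps}, is the elementary quotient $\Omega/\Gamma$ rather than a horoball quotient by a maximal parabolic, but this does not affect the characterization of holonomies).

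The backward direction, however, has a genuine gap exactly where you flag ``the main obstacle'': you never actually construct the invariant strictly convex (\resp round) domain, and neither of the two strategies you sketch works as stated. An explicit ``generalised paraboloid'' invariant under $\Gamma$ does not exist in general --- the whole point of the paper is that cusp holonomies can be non-abelian nilpotent (even non-quadric-preserving) groups, so there is no model domain to write down. The convex-hull-of-an-orbit idea is the right starting point, but the convex hull of a $\Gamma$-orbit of a compact set is essentially never strictly convex: its boundary contains segments joining orbit points. The paper's route is to first convert the hypothesis $\Lambda_1(\Gamma)=\{\xi\}$ into the algebraic domination condition \ref{cond:topRowDominates} (\resp \ref{cond:topRightDominates}) via Proposition~\ref{prop:necessaryCond}, using the machinery of generic s-proper matrix coefficients (Propositions~\ref{prop:genPLCOrder}, \ref{prop:gpPreservesDomainNSC} and Lemma~\ref{lm:genPGLCAreGenPLC}); this is what guarantees that $\overline{\Int\Conv(\Gamma\cdot\P(U_1))}$ meets the invariant supporting hyperplane $H$ only at $\xi$. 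It then applies the smoothing Lemma~\ref{lm:smoothing}, which foliates the hull by Vinberg hypersurfaces in one dimension higher to produce an invariant domain with strictly convex analytic boundary off $H$. In the round case there is a further step you do not address at all: the smoothed hull need not be $\COne$ at $\xi$, and Proposition~\ref{prop:suffCondRoundCase} has to adjoin a carefully chosen countable family of extra orbit points $[x_m^\epsilon]$ to force uniqueness of the supporting hyperplane before smoothing. Without these ingredients --- the passage from the dynamical hypothesis to the entry-domination condition, the Vinberg smoothing, and the $\COne$ fattening --- the proof of the ``if'' direction is not complete.
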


Note that Theorem \ref{thmIntro:characHoloStrictlyConvCusps} does not appear to give more constraints on the fundamental groups of strictly convex and round cusps,
so Question \ref{Q:vNilpGpsAreFundGpsOfRoundCusps} remains open. It is also unclear at first sight wether or not strictly convex cusp holonomies are identical to round cusp holonomies. In the companion paper \cite{flechelles2025unipotentReps}, we show that there are strictly convex cusp holonomies that are not round cusp holonomies.

\begin{theorem}[see \cite{flechelles2025unipotentReps}]\label{thmIntro:nilpGpsElemCuspExamples}
   Let $\Gamma$ be a finitely generated virtually nilpotent group. Then $\Gamma$ is the fundamental group of a round cusp.

   It is also the fundamental group of a strictly convex cusp whose holonomy is not the holonomy of a round cusp.
\end{theorem}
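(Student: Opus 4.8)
The plan is to deduce everything from the characterization of Theorem~\ref{thmIntro:characHoloStrictlyConvCusps}: it suffices to produce, for a given finitely generated virtually nilpotent group $\Gamma$, faithful discrete representations $\rho\colon\Gamma\to\PGL(n,\R)$ (for suitable $n$) that preserve a properly convex domain, are $P_1$-divergent, and have the prescribed limit set. For the round case, $\Lambda_{1,n-1}(\rho(\Gamma))$ must reduce to a single flag; for the strictly-convex-but-not-round case, $\Lambda_1(\rho(\Gamma))$ must reduce to a single point while $\Lambda_{1,n-1}(\rho(\Gamma))$ is strictly larger. Since $\pi_1(\Lambda_{1,n-1})=\Lambda_1$, the latter means a single attracting point carried by several distinct attracting hyperplanes. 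I would first reduce to the torsion-free nilpotent case: by Mal'cev's theory $\Gamma$ contains a finite-index torsion-free finitely generated nilpotent subgroup $N$, which embeds as a lattice in its Mal'cev completion, a simply connected nilpotent Lie group $\mathcal{N}$ with Lie algebra $\mathfrak{n}$.

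The core is a construction of unipotent representations, as the title of the companion paper announces. Starting from a faithful representation of $\mathfrak{n}$ by strictly upper-triangular matrices (existence guaranteed by Ado's theorem, or directly the Birkhoff embedding), one exponentiates to a faithful unipotent representation of $\mathcal{N}$ and restricts it to $N$; the image consists of unipotent elements of $\PGL(n,\R)$. The first task is to equip this image with an invariant properly convex domain. I would do this by attaching to each such representation a combinatorial invariant — a \emph{Newton polytope} $\Newton$ recording the degrees of polynomial growth of the matrix coefficients — and constructing the domain as the projectivization of a convex cone adapted to $\Newton$, for instance the convex hull of a well-chosen orbit or its dual. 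Checking that this cone is properly convex and invariant is where the bulk of the explicit computation lies, but it is essentially bookkeeping once the polytope is fixed.

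Next I would analyze the projective dynamics to pin down the limit set. For a sequence $\gamma_k\to\infty$ in the unipotent group, the attracting flag is governed by the dominant monomials in the polynomial entries of $\gamma_k$, which are read off from the supporting faces of $\Newton$ in the direction of escape; this yields a dictionary between directions of escape in $\mathcal{N}$ and attracting flags. For the round cusp I would choose the representation so that $\Newton$ has a single dominant vertex in every direction, forcing $\Lambda_{1,n-1}$ to be a single flag, imitating maximal-rank hyperbolic cusps. For the strictly-convex-but-not-round cusp the idea is to pin the attracting \emph{point} while letting the attracting \emph{hyperplane} genuinely move: concretely, via a join construction $\rho_1\join\rho_2$ in which one block fixes a common highest-weight line $[e_1]$ (so $\Lambda_1$ stays a single point) while a second block contributes, according to the direction of escape, a family of distinct hyperplanes through that point (so $\Lambda_{1,n-1}$ becomes a nontrivial arc).

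Finally I would promote the construction from $N$ to the full group $\Gamma$ by an induction/equivariant-extension argument, checking that properness of the convex domain, $P_1$-divergence and the limit set survive this operation; $P_1$-divergence itself is verified directly from the dominant-Jordan-block structure of the unipotent dynamics, rather than inferred from domain invariance. The main obstacle is the strictly-convex-but-not-round case: one must simultaneously (i) keep the attracting point constant, (ii) force the attracting hyperplanes to vary, and (iii) retain a properly convex invariant domain, three requirements that are in tension, since enlarging the family of limiting hyperplanes tends to destroy either properness of the domain or constancy of the point. Resolving this tension is precisely what the operations $\join$ and $\glue$ are designed for, and making them interact correctly with the Newton-polytope control of the limit set is the heart of the proof.
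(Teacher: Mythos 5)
First, a point of scope: the present paper does not prove Theorem \ref{thmIntro:nilpGpsElemCuspExamples} at all --- it is imported from the companion paper \cite{flechelles2025unipotentReps}. What this paper contributes to it is exactly the reduction you begin with, via Theorems \ref{thm:characHoloStrictlyConvCusps} and \ref{thm:characHoloRoundCusps}: it suffices to produce a faithful discrete representation of $\Gamma$ satisfying \ref{cond:topRightDominates} (round case), respectively satisfying \ref{cond:existsGenPLC} and \ref{cond:topRowDominates} while $\Lambda_{1,n-1}$ of the image is not a single flag (strictly convex but non-round case). Your overall plan --- Mal'cev completion, faithful unipotent representations, control of s-properness and of the dominating entries through the degrees of the polynomial matrix coefficients, and join/glue operations to pin the attracting point while letting the attracting hyperplane vary --- is consistent with the machinery this paper sets up for the companion paper's use, so as a program it points in the right direction.

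As a proof, however, it has a genuine gap at its center. Ado/Birkhoff gives you \emph{some} faithful unipotent representation, but a generic one preserves no properly convex domain at all: already a single Jordan block of even size fails (see the remark following Proposition \ref{prop:gpPreservesDomainNSC}), and for nonabelian $N$ one must arrange simultaneously that a positive generic s-proper matrix coefficient exists, that the dominating entries all lie on one row (or consist of a single corner entry of constant sign, for \ref{cond:topRightDominates}), and that faithfulness and discreteness survive. Calling this ``bookkeeping once the polytope is fixed'' hides the actual theorem: one must prove that for \emph{every} finitely generated torsion-free nilpotent group a representation with the required domination pattern exists, and nothing in your sketch produces it --- the only instance known before \cite{flechelles2025unipotentReps} was Cooper's Heisenberg example \cite{cooper2017}, which is precisely why the companion paper is needed. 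A second unaddressed step is the passage from the finite-index torsion-free subgroup $N$ to $\Gamma$ itself: induction multiplies the block structure by the finite quotient and will in general move the candidate fixed flag around its orbit, so keeping $\Lambda_1$ (let alone $\Lambda_{1,n-1}$) reduced to a point after induction is a constraint that must be built into the construction rather than checked afterwards. Both difficulties are exactly what the operations $\join$ and $\glue$ and the Newton-polytope calculus of the companion paper are designed to resolve, and neither is resolved by the proposal as written.
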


The only previously known example of a non-virtually abelian group being the fundamental group of a strictly convex cusp is the Heisenberg group (see \cite{cooper2017}). We generalize Cooper's construction in order to build representations of virtually nilpotent groups satisfying the conditions of Theorem \ref{thmIntro:characHoloStrictlyConvCusps}.

The cusps we build in Theorems \ref{thmIntro:characHoloStrictlyConvCusps} and \ref{thmIntro:nilpGpsElemCuspExamples} are all \emph{elementary cusps}, meaning that the larger manifold of which they are the cusp has the same holonomy as the cusp itself.

It is not clear at first that non-elementary cusp holonomies are the same as elementary cusp holonomies in strictly convex geometry.

In hyperbolic geometry, it is not difficult to show that there is no real difference between elementary and non-elementary cusps, by playing ping-pong with a hyperbolic isometry.
We cannot naively apply this strategy in strictly convex geometry because there is no guarantee that the strictly convex domains invariant under our cusp holonomies admit any hyperbolic automorphisms. Instead, we use \emph{relatively Anosov representations} and tools developed by Islam, Zhu and the author in \cite{mitulFeng1}, including the generalization of a ping-pong argument from \cite{zhuZimmer2024Examples}.

\subsection{Non-elementary cusps and relatively Anosov representations}

Relatively Anosov representations are a generalization of Anosov representations which allows for relatively hyperbolic groups at the source. It generalizes rank $1$ geometric finiteness to the higher rank setting similarly to how Anosov representations generalize rank $1$ convex-cocompactness to the higher rank. Our relatively Anosov representations are the same as the relatively Anosov representations of \cite{zhuZimmer2024Examples,zhuZimmer2025Theory}, the asymptotically embedded representations of \cite{kapovichLeeb2023}, and the relatively dominated representations of \cite{zhu2021,zhu2023}. We will also need in this paper a new more general version of relatively Anosov representations, called \emph{asymmetrically relatively Anosov representations}, which are introduced and studied in \cite{mitulFeng1}.

\begin{definition}
   Let $\Gamma<\PGL(n,\R)$ be a $P_1$-divergent discrete subgroup. We say that $\Gamma$ is \emph{transverse} if, for all $x\neq y$ in $\Lambda_{1,n-1}(\Gamma)$, $x$ and $y$ are transverse flags.

   We instead say that $\Gamma$ is \emph{$\pi_1$-transverse} if, for all $x,y\in\Lambda_{1,n-1}(\Gamma)$ such that $\pi_1(x)\neq\pi_1(y)$, $x$ and $y$ are transverse.
\end{definition}

The definition of (asymmetrically) relatively Anosov representations involves the notion of geometrically finite convergence actions on compact metrizable spaces. Such actions generalize the action of a geometrically finite hyperbolic group on its limit set in $\partial\H^n$
. For the definition and more context, see for instance \cite{bowditch1999,bowditch2012}.
\begin{definition}
   A discrete finitely generated group $\Gamma<\PGL(n,\R)$ is \emph{$P_1$-Anosov relative to} a collection $\periph$ of subgroups of $\Gamma$ if it is $P_1$-divergent, transverse, and the induced convergence action of $\Gamma$ on $\Lambda_{1,n-1}(\Gamma)$ is geometrically finite with parabolic subgroups $\periph$.
\end{definition}
\begin{definition}
   A discrete finitely generated group $\Gamma<\PGL(n,\R)$ is \emph{asymmetrically $P_1$-Anosov relative to} a collection $\periph$ of subgroups of $\Gamma$ if it is $P_1$-divergent, $\pi_1$-transverse, and the induced convergence action of $\Gamma$ on $\Lambda_1(\Gamma)$ is geometrically finite with parabolic subgroups $\periph$.
\end{definition}

The groups in the collection $\periph$ with respect to which a group $\Gamma$ is (asymmetrically) relatively $P_1$-Anosov are called \emph{peripheral subgroups} of $\Gamma$. An important observation is that by Theorem \ref{thmIntro:characHoloStrictlyConvCusps}, round (\resp strictly convex) cusp holonomies coincide with the peripheral subgroups of relatively $P_1$-Anosov representations (\resp asymmetrically relatively $P_1$-Anosov representations) which preserve a properly convex domain. This allows us to make use of results in \cite{mitulFeng1}, yielding:
\begin{theorem}
   Let $\Gamma<\SLpm(n,\R)$ be the holonomy of a strictly convex (resp. round) elementary cusp. Then $\Gamma$ is the holonomy of a non-elementary strictly convex (\resp round) cusp.
   
\end{theorem}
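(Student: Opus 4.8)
The plan is to realize $\Gamma$ as a \emph{proper} peripheral subgroup of a larger group that is asymmetrically relatively $P_1$-Anosov (\resp relatively $P_1$-Anosov) and preserves a properly convex domain; the correspondence recorded above between cusp holonomies and peripheral subgroups of such representations then immediately upgrades the elementary cusp to a non-elementary one. First I would apply \Thm\ref{thmIntro:characHoloStrictlyConvCusps}: since $\Gamma$ is the holonomy of an elementary strictly convex (\resp round) cusp, it is a discrete $P_1$-divergent subgroup of $\SLpm(n,\R)$ preserving some properly convex domain $\Omega_0$, whose limit set $\Lambda_1(\Gamma)$ (\resp $\Lambda_{1,n-1}(\Gamma)$) is reduced to a single point; in particular $\Gamma$ fixes a flag $(p_0,H_0)\in\flag_{1,n-1}(\R^n)$. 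On its own, $\Gamma$ is thus asymmetrically relatively $P_1$-Anosov (\resp relatively $P_1$-Anosov) relative to the single peripheral subgroup $\Gamma$ itself, which is precisely the degenerate, \emph{elementary} situation we wish to escape.

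Next I would enlarge $\Gamma$ by a ping-pong argument. The idea is to choose a free Schottky subgroup $F<\SLpm(n,\R)$ generated by sufficiently biproximal elements whose attracting and repelling data are in general position with respect to the flag $(p_0,H_0)$ and to $\Omega_0$; concretely, the limit set of $F$ should be $\pi_1$-transverse (\resp transverse) to $(p_0,H_0)$. Applying the combination theorem for relatively $P_1$-Anosov representations from \cite{mitulFeng1} --- itself a generalization of the ping-pong argument of \cite{zhuZimmer2024Examples} --- one obtains that $\hat\Gamma:=\langle\Gamma,F\rangle$ is isomorphic to the free product $\Gamma\ast F$ and is asymmetrically relatively $P_1$-Anosov (\resp relatively $P_1$-Anosov) relative to the collection $\periph$ of $\hat\Gamma$-conjugates of $\Gamma$. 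Since $F$ is non-trivial, $\hat\Gamma$ strictly contains $\Gamma$.

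Finally I would verify that $\hat\Gamma$ preserves a properly convex domain, which lets the correspondence above apply and exhibits the peripheral subgroup $\Gamma$ as a strictly convex (\resp round) cusp holonomy inside the geometrically finite quotient by $\hat\Gamma$; as $\hat\Gamma\supsetneq\Gamma$, this cusp is non-elementary. The preservation of a properly convex domain is the crux and the step I expect to be the main obstacle: unlike in the hyperbolic case, $\Omega_0$ need carry no hyperbolic automorphism, so one cannot play ping-pong inside a single fixed domain. Instead I would either construct the domain $\Omega'$ preserved by $\hat\Gamma$ by projectively gluing $\Omega_0$ to its translates under the Schottky generators, controlling the overlaps by the ping-pong dynamics, or invoke a result of \cite{mitulFeng1} ensuring that an asymmetrically relatively (\resp relatively) $P_1$-Anosov group whose peripheral subgroups each preserve a properly convex domain must itself preserve one. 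The remaining verifications --- that $\Gamma$ is a \emph{maximal} parabolic subgroup of $\hat\Gamma$, that the quotient is strictly convex (\resp round) near the cusp, and that the cusp is genuinely non-elementary --- then follow formally from \Thm\ref{thmIntro:characHoloStrictlyConvCusps} and the definitions.
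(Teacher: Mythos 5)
Your proposal is correct and follows essentially the same route as the paper: the paper likewise uses \Thm\ref{thmIntro:characHoloStrictlyConvCusps} to identify strictly convex (\resp round) cusp holonomies with the peripheral subgroups of asymmetrically relatively (\resp relatively) $P_1$-Anosov representations preserving a properly convex domain, and then invokes the ping-pong/combination machinery of \cite{mitulFeng1} (generalizing \cite{zhuZimmer2024Examples}) to realize $\Gamma$ as a proper peripheral subgroup of a larger such group. You also correctly single out the preservation of a properly convex domain by the enlarged group as the genuine difficulty (the absence of hyperbolic automorphisms of $\Omega_0$), which is exactly the obstacle the paper delegates to the tools of \cite{mitulFeng1}.
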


Moreover, Theorem \ref{thmIntro:nilpGpsElemCuspExamples} and the results of \cite{mitulFeng1} also allow to build a lot of new relative and asymmetrically relatively $P_1$-Anosov representations, further showing the strength of the geometric method in the study of Anosov representations.
\begin{theorem}\label{thmIntro:existenceOfRelAnRepsWithPeriphSgpsAnyVNilpGp}
   Let $\Gamma$ be a finitely generated virtually nilpotent group. Then there exist (asymmetrically) relatively $P_1$-Anosov representations with a peripheral subgroup isomorphic to $\Gamma$.
\end{theorem}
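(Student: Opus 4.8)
The goal is to prove Theorem \ref{thmIntro:existenceOfRelAnRepsWithPeriphSgpsAnyVNilpGp}: for any finitely generated virtually nilpotent group $\Gamma$, there exist (asymmetrically) relatively $P_1$-Anosov representations with a peripheral subgroup isomorphic to $\Gamma$.

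The plan is to combine the two main inputs that the excerpt explicitly makes available. First, by Theorem \ref{thmIntro:nilpGpsElemCuspExamples}, any finitely generated virtually nilpotent group $\Gamma$ is realized as the fundamental group of a round cusp, and also of a strictly convex cusp whose holonomy is not that of a round cusp. Fix such a holonomy representation $\rho\colon\Gamma\to\PGL(n,\R)$; its image $\rho(\Gamma)$ is, by construction, a round (\resp strictly convex) elementary cusp holonomy. Second, the key bridge is the observation made just before the statement of the non-elementary-cusp theorem: by Theorem \ref{thmIntro:characHoloStrictlyConvCusps}, round (\resp strictly convex) cusp holonomies that preserve a properly convex domain coincide exactly with the peripheral subgroups of relatively (\resp asymmetrically relatively) $P_1$-Anosov representations preserving a properly convex domain.

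So the argument is essentially an assembly. First I would invoke Theorem \ref{thmIntro:nilpGpsElemCuspExamples} to produce a discrete subgroup $\rho(\Gamma)<\PGL(n,\R)$ that is a round cusp holonomy (for the symmetric case) or a strictly convex cusp holonomy (for the asymmetric case), noting that by definition such a holonomy preserves a properly convex domain, is $P_1$-divergent, and has the appropriate limit set ($\Lambda_{1,n-1}$ or $\Lambda_1$) reduced to a point, as in Theorem \ref{thmIntro:characHoloStrictlyConvCusps}. Next I would feed this elementary cusp holonomy into the machinery of \cite{mitulFeng1}, which upgrades an elementary cusp to a non-elementary one: the resulting ambient group is relatively (\resp asymmetrically relatively) $P_1$-Anosov relative to a collection $\periph$ of peripheral subgroups, one of which is precisely the conjugacy class of $\rho(\Gamma)$. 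The generalized ping-pong argument from \cite{zhuZimmer2024Examples}, as adapted in \cite{mitulFeng1}, is what guarantees that the cusp holonomy embeds as a genuine peripheral subgroup of a larger relatively Anosov group rather than merely sitting inside it. Finally I would read off from the definitions of (asymmetrically) relatively $P_1$-Anosov that this ambient representation has a peripheral subgroup isomorphic to $\Gamma$, completing the proof.

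The step I expect to be the main obstacle, or rather the step carrying all the real content, is the passage from \emph{elementary} to \emph{non-elementary} cusp, i.e.\ the invocation of \cite{mitulFeng1}. Because, as the excerpt emphasizes, the invariant properly convex domains need not admit hyperbolic automorphisms, one cannot play the naive hyperbolic-style ping-pong; the correct tool is the relatively Anosov ping-pong of \cite{zhuZimmer2024Examples}. Since that construction is carried out in the companion work, the present proof reduces to verifying that the hypotheses of those results hold --- namely that $\rho(\Gamma)$ genuinely is an elementary cusp holonomy of the stated type with a point limit set --- and then citing the conclusion. The remainder is bookkeeping: matching the ``preserves a properly convex domain'' condition of Theorem \ref{thmIntro:characHoloStrictlyConvCusps} with the hypotheses needed to apply the results of \cite{mitulFeng1}, and distinguishing the symmetric (round, relative to $\Lambda_{1,n-1}$) from the asymmetric (strictly convex, relative to $\Lambda_1$) cases throughout.
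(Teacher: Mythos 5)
Your proposal is correct and follows essentially the same route as the paper: the result is obtained by combining Theorem \ref{thmIntro:nilpGpsElemCuspExamples} (realizing $\Gamma$ as a round, \resp strictly convex, cusp holonomy), the characterization of Theorem \ref{thmIntro:characHoloStrictlyConvCusps} identifying such holonomies with peripheral subgroups of (asymmetrically) relatively $P_1$-Anosov representations preserving a properly convex domain, and the elementary-to-non-elementary upgrade from \cite{mitulFeng1} via the ping-pong of \cite{zhuZimmer2024Examples}. You also correctly identify that all the real content lives in the companion papers, with the present argument being an assembly of those inputs.
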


\subsection{The non-strictly convex setting: generalized cusps}

As observed in \cite{cooperLongTillmann2015}, no naive generalization of the thick-thin decomposition theorem can hold in general convex projective geometry, as there are examples for any $\epsilon>0$ of convex projective manifolds whose injectivity radius at all points is smaller than $\epsilon$. This implies that if $\epsilon$ is small enough, the thin part equals the full manifold, and therefore fails to decompose as a union of connected components whose holonomy is virtually nilpotent. This implies that there is no good definition of convex projective cusps at the moment, if it is even possible. Nevertheless, \emph{generalized cusps} were introduced and studied in \cite{cooperLongTillmann2018,ballasCooperLeitner2020,ballasCooperLeitner2022} by analogy with a classical characterization of cusps in pinched negatively curved Riemannian geometry. 

\begin{definition}\label{defIntro:genCusps}
   A generalized cusp is a closed suborbifold $X$ of a convex projective orbifold $\Omega/\Gamma$ such that
   \begin{enumerate}
      \item $\Gamma$ is virtually nilpotent;
      \item $\Omega$ is foliated by a family of strictly convex $\Gamma$-invariant hypersurfaces $(S_t)_{t\in\R}$ such that $S_t\subset\Conv S_s$ whenever $s<t$;
      \item $\Omega/\Gamma$ is diffeomorphic to $S_0/\Gamma\times\R_{\geq 0}$;
      \item $X$ is the projection in $\Omega/\Gamma$ of $\cup_{t\geq 0}S_t=(\Conv S_0)/\Gamma$.
   \end{enumerate}
\end{definition}

Ballas--Cooper--Leitner gave in \cite{ballasCooperLeitner2020} a classification of all generalized cusps of maximal rank, meaning that $S_0/\Gamma$ is compact. They were able to show that in this case, $\Gamma$ must be virtually abelian, much like maximal rank strictly convex cusps have virtually abelian holonomies. Though maximal rank hyperbolic cusps are examples of maximal rank generalized cusps, general cusps are typically non strictly convex. Indeed, there is always a (closed) simplex $\Delta\subset\partial\Omega$ of dimension $d\leq \dim\Omega- 1$, which we call the \emph{boundary simplex} of the generalized cusp, such that $\partial\Omega-\Delta$ is strictly convex. Hence a maximal rank generalized cusp is strictly convex if and only if $d=0$ and $\Delta$ is a point, in which case the maximal rank assumption implies that $\Gamma$ is the holonomy of a hyperbolic cusp.

We generalize this geometric description of the maximal rank generalized cusp and show that it always gives rise to generalized cusps (see paragraph \ref{par:genCuspGeom}). We use this strategy in order to produce examples of non-maximal rank generalized cusps.

\begin{theorem}[see \Prop \ref{prop:geomGenCuspsAreGenCusps} \& \Cor \ref{cor:genCuspDomainsForVirtNilpGps}]\label{thmIntro:strCvxCuspHolYieldGenCusps}
   Let $\Gamma<\PGL(n,\R)$ be the holonomy of a strictly convex cusp. Then for all $s\in\N$ and any subgroup $\Gamma'<\Gamma\times\Z^s$, there is a generalized cusp of dimension $n+s$ with a boundary simplex of dimension $s$ whose fundamental group is $\Gamma'$.
\end{theorem}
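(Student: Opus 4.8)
The plan is to realize $\Gamma'$ explicitly as a group of automorphisms of a properly convex domain $\Omega\subset\P(\R^{n+s+1})$ carrying a foliation by strictly convex hypersurfaces, so that $\Omega/\Gamma'$ is a generalized cusp in the sense of Definition \ref{defIntro:genCusps}, and then to read off the two dimension counts. First I would invoke Theorem \ref{thmIntro:characHoloStrictlyConvCusps} to fix a strictly convex domain $\Omega_0\subset\P(\R^n)$ preserved by $\Gamma$ with $\Lambda_1(\Gamma)=\{p\}$, lift $\Gamma$ to $\SLpm(n,\R)$, and choose coordinates in which $p=[e_n]$ and the supporting hyperplane of $\Omega_0$ at $p$ is a coordinate hyperplane (legitimate since the limit set being a point forces the attracting and repelling flags of every sequence to coincide in a single flag). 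Since $\Gamma$ is a strictly convex cusp holonomy it is virtually nilpotent, so $\Gamma\times\Z^s$ and each of its subgroups are virtually nilpotent and the first condition of Definition \ref{defIntro:genCusps} is automatic.

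The core of the argument is the construction of a pair $(\Omega,\rho)$ with $\rho\colon\Gamma\times\Z^s\to\SLpm(n+s+1,\R)$. I would split $\R^{n+s+1}$ into the $n$ original coordinates, one extra depth coordinate suspending the cusp, and $s$ new coordinates $f_1,\dots,f_s$; the vectors $f_1,\dots,f_s$ together with the cusp direction $e_n$ span the $(s+1)$-dimensional subspace carrying the boundary simplex $\Delta=\Conv([e_n],[f_1],\dots,[f_s])$. The factor $\Gamma$ acts through a suspension of its action on $\Omega_0$, fixing $f_1,\dots,f_s$, while $\Z^s$ acts by a rank-$s$ group of diagonal matrices scaling the $s+1$ vertices of $\Delta$ and fixing the remaining directions; choosing the weights generic makes this action faithful, discrete and cocompact on the relative interior of $\Delta$, and makes $\Delta$ genuinely $s$-dimensional. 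I would then define $\Omega$ as a sublevel set $\{F<0\}$ of a $\Gamma\times\Z^s$-invariant proper convex function $F=F_{\mathrm{cusp}}+F_{\mathrm{simplex}}$ on an affine chart, where $F_{\mathrm{cusp}}$ is modeled on a horofunction of $\Omega_0$ and $F_{\mathrm{simplex}}$ is adapted to the simplex directions, arranging that $\partial\Omega$ contains $\Delta$ and is strictly convex on $\partial\Omega\setminus\Delta$. This is essentially a join-type construction $\Omega\sim\Omega_0\join\Delta$ of the cusp with the simplex.

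With $(\Omega,\rho)$ in hand, I would take the foliation $(S_t)_t$ to be the level sets $\{F=-e^{-t}\}$ and verify the remaining axioms: each $S_t$ is a $\Gamma\times\Z^s$-invariant strictly convex hypersurface, the family is nested with $S_t\subset\Conv S_s$ for $s<t$ and foliates $\Omega$, and the transverse flow yields an equivariant diffeomorphism $\Omega\cong S_0\times\R$. Since the flow preserves each leaf, quotienting by any subgroup $\Gamma'<\Gamma\times\Z^s$ gives $\Omega/\Gamma'\cong(S_0/\Gamma')\times\R_{\geq 0}$, so $\Omega/\Gamma'$ is a generalized cusp with fundamental group $\Gamma'$; because $\Omega$ and its foliation are built for the full group $\Gamma\times\Z^s$, no product structure on $\Gamma'$ is needed and diagonal subgroups are handled for free. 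Finally $\dim\Omega=n+s$, and the boundary simplex $\Delta\subset\partial\Omega$, being a feature of $\Omega$ alone, has dimension $s$ for every choice of $\Gamma'$.

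I expect the main obstacle to be analytic: producing $F$ (equivalently $\Omega$) so that the level sets $S_t$ are genuinely \emph{strictly} convex hypersurfaces even though $\partial\Omega$ itself contains the flat simplex $\Delta$, which requires controlling the degeneration of the second fundamental form of $S_t$ as $t\to+\infty$ and $t\to-\infty$, while simultaneously keeping $\Omega$ properly convex, keeping $F$ convex and proper, and keeping the $\Z^s$-action cocompact on the relative interior of $\Delta$. The delicate point is the interaction between the parabolic $\Gamma$-directions, along which the geometry is asymptotically that of the strictly convex cusp, and the diagonal $\Z^s$-directions, along which it degenerates onto the simplex: one must choose the weights of the diagonal part and the relative scale of $F_{\mathrm{cusp}}$ and $F_{\mathrm{simplex}}$ so that strict convexity transverse to $\Delta$, the nesting condition, and equivariance all hold at once.
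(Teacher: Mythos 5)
Your architecture coincides with the paper's: produce a strictly convex $\Gamma$-invariant domain whose boundary minus the fixed point $\xi=[p]$ is the graph of a strictly convex \emph{analytic} function $\phi$ (this is item (i) of Theorem \ref{thm:characHoloStrictlyConvCusps}, obtained from the Vinberg smoothing Lemma \ref{lm:smoothing}), take the sublevel set of $h_{\psi,\Omega}(U,V,t)=\phi(V)-\sum_i\psi_i\log U_i-t$ --- exactly your $F_{\mathrm{cusp}}+F_{\mathrm{simplex}}$ minus the depth --- foliate by its level sets, and quotient by an arbitrary discrete subgroup of the product group (Definition \ref{def:genRepDomain}, Proposition \ref{prop:genRepDomainProperties}, Corollary \ref{cor:genCuspDomainsForVirtNilpGps}, Proposition \ref{prop:geomGenCuspsAreGenCusps}). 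In particular, the analytic difficulty you single out as the main obstacle evaporates with this choice of $F$: the Hessian of $\phi(V)-\sum_i\psi_i\log U_i$ is the block sum of $D_V^2\phi$ and $\diag(\psi_i/U_i^2)$, positive definite everywhere, so every leaf is strictly convex with no degeneration to control; the hard work is hidden in getting $\phi$ strictly convex and analytic, which the characterization theorem already supplies. (Also, the paper introduces no separate depth coordinate: it reuses the cusp direction $p$ as the graph direction, so everything lives in $\P(\R^{n+s})$ and $\Delta$ has vertices $[e_1],\dots,[e_s],\xi$.)

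The one genuine gap is your description of the $\Z^s$-action as purely diagonal on $\Span\Delta$ and trivial elsewhere. Under $U\mapsto e^{\diag X}U$ the term $-\sum_i\psi_i\log U_i$ drops by $\psi(X)$, and no diagonal action on the remaining coordinates can compensate additively: the only $\Gamma$-invariant cusp horofunction available is $\phi(V)-h$ with $h$ the coordinate along $p$, and scaling $h$ multiplies it rather than translating it. In fact a convex $F_{\mathrm{simplex}}$ invariant under a lattice of scalings is forced to be constant along the scaling directions, so with a genuinely split, diagonally invariant $F$ the leaves and $\partial\Omega-\overline{\Delta}$ would contain $s$-dimensional affine pieces, destroying strict convexity of $\partial C$ (and of the foliation). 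The paper's fix is the unipotent shear $\Phi_\psi(X)$, which makes $X$ act in addition by $h\mapsto h-\psi(X)$: this restores invariance of $h_{\psi,\Omega}$, and simultaneously guarantees that the whole group fixes the pair $(\xi,H')$ with equal trivial weights, which is what lets the algebraic-horosphere flow produce the product structure $\overline{C}\cong\partial C\times\R_{\geq 0}$ in Proposition \ref{prop:geomGenCuspsAreGenCusps}. Once that shear is added, your argument becomes the paper's.
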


Combined with Theorem \ref{thmIntro:nilpGpsElemCuspExamples}, this gives the first examples of generalized cusps whose holonomy is not virtually abelian.

\begin{corollary}
   There are (non strictly convex) generalized cusps whose holonomy is isomorphic to any finitely generated virtually nilpotent group.
\end{corollary}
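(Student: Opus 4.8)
The plan is to combine the two results already quoted in the paper. Theorem \ref{thmIntro:nilpGpsElemCuspExamples} states that any finitely generated virtually nilpotent group $\Gamma$ is the fundamental group of a strictly convex cusp; Theorem \ref{thmIntro:strCvxCuspHolYieldGenCusps} then turns any strictly convex cusp holonomy into generalized cusps. So the proof is essentially a short deduction, and the main point is to verify that the group-theoretic class is preserved under the operations involved.

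First I would take $\Gamma$ to be an arbitrary finitely generated virtually nilpotent group. By Theorem \ref{thmIntro:nilpGpsElemCuspExamples}, $\Gamma$ is the fundamental group of a strictly convex cusp, so there is a discrete subgroup $\Gamma_0 < \PGL(n,\R)$, isomorphic to $\Gamma$ and realized as the holonomy of such a cusp. I would then apply Theorem \ref{thmIntro:strCvxCuspHolYieldGenCusps} to $\Gamma_0$, taking $s = 0$ and $\Gamma' = \Gamma_0 \times \Z^0 = \Gamma_0$. This directly produces a generalized cusp of dimension $n$ with a boundary simplex of dimension $0$ whose fundamental group is $\Gamma_0 \cong \Gamma$. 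Thus every finitely generated virtually nilpotent group arises as the fundamental group of a generalized cusp.

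There is, however, a subtlety to address if one wants the generalized cusps to be \emph{non strictly convex}, as the statement parenthetically asserts. A boundary simplex of dimension $0$ is a point, and as the excerpt notes, a generalized cusp is strictly convex precisely when its boundary simplex is a point (the case $d=0$); in the maximal rank setting this forces $\Gamma$ to be a hyperbolic cusp holonomy and hence virtually abelian. The clean way to guarantee non strict convexity for an arbitrary, possibly non virtually abelian, $\Gamma$ is therefore to take $s \geq 1$: applying Theorem \ref{thmIntro:strCvxCuspHolYieldGenCusps} with, say, $s = 1$ and $\Gamma' = \Gamma_0 \times \{0\} \cong \Gamma_0$ yields a generalized cusp of dimension $n+1$ with a boundary simplex of positive dimension $1$, hence genuinely non strictly convex, and still with fundamental group isomorphic to $\Gamma$. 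One should check that $\Gamma_0 \times \{0\}$ is a legitimate choice of subgroup $\Gamma' < \Gamma_0 \times \Z$, which is immediate.

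I expect the only real obstacle to be bookkeeping rather than mathematics: one must be careful that the group furnished by Theorem \ref{thmIntro:nilpGpsElemCuspExamples} is isomorphic to the abstract group $\Gamma$ we started with (it is, by the statement of that theorem), and that the class of finitely generated virtually nilpotent groups is exactly the class to which Theorem \ref{thmIntro:nilpGpsElemCuspExamples} applies, so that no group is lost or spuriously added. Since both invoked theorems are stated for precisely this class, and since passing to $\Gamma_0 \times \{0\}$ preserves the isomorphism type, the corollary follows. For the non strict convexity claim I would simply invoke the dimension-of-boundary-simplex criterion recalled in the introduction, ensuring the chosen $s$ makes that dimension positive.
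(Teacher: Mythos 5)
Your proposal is correct and follows exactly the route the paper intends: the corollary is stated as the immediate combination of Theorem \ref{thmIntro:nilpGpsElemCuspExamples} (realizing any finitely generated virtually nilpotent group as a strictly convex cusp holonomy) with Theorem \ref{thmIntro:strCvxCuspHolYieldGenCusps} (producing generalized cusps from such holonomies). Your observation that one should take $s\geq 1$ so that the boundary simplex has positive dimension, guaranteeing the resulting generalized cusp is genuinely non strictly convex, is a correct and worthwhile clarification of the parenthetical in the statement.
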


Moreover, we show that relaxing slightly Definition \ref{defIntro:genCusps} by allowing $\Gamma$ to be virtually solvable rather than virtually nilpotent gives new examples. Observe that by the Tits alternative, it is natural to ask for virtual solvability rather than virtual nilpotency as this is sufficient for making sure the holonomy is (virtually) elementary (see Proposition \ref{prop:solvGpsAreElem}).

\begin{theorem}[see \Prop \ref{prop:geomGenCuspsAreGenCusps} \& \ref{prop:exOfGenCuspWithSolvableHolo}]
   Let $G$ be the group of upper triangular matrices in $\SL(2,\Z[\sqrt{2}])$. Then $G$ is solvable non-virtually nilpotent, and there exists a generalized cusp of dimension $6$ with boundary simplex of dimension $2$ whose fundamental group is $G$.
\end{theorem}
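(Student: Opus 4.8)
The plan is to treat the two assertions separately, proving the algebraic statement by hand and reducing the geometric statement to the general construction of \Prop~\ref{prop:geomGenCuspsAreGenCusps}.

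First I would analyse the structure of $G$. An element of $G$ is a matrix $\left(\begin{smallmatrix} a & b \\ 0 & a^{-1}\end{smallmatrix}\right)$ with $b\in\Z[\sqrt2]$ and $a$ a unit of $\Z[\sqrt2]$; since the unit group is $\{\pm(1+\sqrt2)^k : k\in\Z\}\cong\Z\times\Z/2\Z$, the assignment $\left(\begin{smallmatrix} a & b \\ 0 & a^{-1}\end{smallmatrix}\right)\mapsto a$ splits and exhibits $G$ as $N\rtimes U$, where $N=\{\left(\begin{smallmatrix} 1 & b \\ 0 & 1\end{smallmatrix}\right):b\in\Z[\sqrt2]\}\cong\Z^2$ and $U\cong\Z\times\Z/2\Z$. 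Conjugation by $\diag(a,a^{-1})$ multiplies $b$ by $a^2$, so the central factor $\langle -I\rangle$ acts trivially and the generator $\diag(1+\sqrt2,(1+\sqrt2)^{-1})$ acts on $N\cong\Z[\sqrt2]$ by multiplication by $(1+\sqrt2)^2=3+2\sqrt2$. In the basis $(1,\sqrt2)$ this is the matrix $A=\left(\begin{smallmatrix} 3 & 4 \\ 2 & 3\end{smallmatrix}\right)$, with eigenvalues $3\pm2\sqrt2$. Thus $G\cong(\Z^2\rtimes_A\Z)\times\Z/2\Z$. It is solvable, being a subgroup of the (metabelian) group of upper triangular matrices; it is not virtually nilpotent because $A$ is hyperbolic, so $\Z^2\rtimes_A\Z$ is a cocompact lattice in $\mathrm{Sol}$ and has exponential growth, which by Gromov's theorem rules out virtual nilpotency. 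Virtual solvability is exactly what is needed to apply the relaxed definition of generalized cusp, by \Prop~\ref{prop:solvGpsAreElem}.

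For the geometric statement I would feed suitable data into \Prop~\ref{prop:geomGenCuspsAreGenCusps}. The idea is to build a representation $\rho\colon G\to\SLpm(7,\R)$ together with an invariant properly convex $\Omega\subset\P(\R^7)$ whose boundary contains a single $2$-simplex $\Delta$, with $\partial\Omega\smallsetminus\Delta$ strictly convex, and which is foliated by strictly convex $G$-invariant hypersurfaces. Concretely I would split $\R^7=\R^3\oplus\R^4$, using the two real embeddings $\sigma_1,\sigma_2$ of $\Q(\sqrt2)$ to diagonalise $A$: writing $\xi_i=\sigma_i(b)$ for $b\in\Z[\sqrt2]$, the automorphism $A$ becomes $\diag(\lambda,\lambda^{-1})$ with $\lambda=3+2\sqrt2$. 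On the $\R^4$ block I would let $N$ act by commuting unipotents in the two directions $\xi_1,\xi_2$ (the strictly convex, horospherical directions), while the $\R^3$ block carries the $2$-simplex $\Delta=\Conv(v_0,v_1,v_2)$. The generator $g$ of the $\Z$ factor is sent to a diagonal element whose entries are integral powers of $\lambda$, chosen so that conjugation on the unipotent block reproduces $A$ and the restriction to $\R^3$ is a hyperbolic element of the diagonal torus of $\Delta$ fixing the three vertices; the remaining eigenvalues are balanced so that $\det=\pm1$ and a convex hypersurface (the graph, in horospherical coordinates, of a convex function adapted to $\Delta$) is preserved. The central $\Z/2\Z$ is sent to a projective symmetry of the whole picture. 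I would then verify that this data satisfies the hypotheses of \Prop~\ref{prop:geomGenCuspsAreGenCusps}: $\rho$ is faithful (because $\sigma_1\times\sigma_2$ is injective on $\Q(\sqrt2)$ and the unipotent part is faithful), $\Omega$ is properly convex and foliated by nested strictly convex leaves $S_t$ obtained by flowing $S_0$ along the radial direction transverse to $\Delta$, and $\partial\Omega$ is strictly convex away from $\Delta$; granting this, the virtually solvable form of \Prop~\ref{prop:geomGenCuspsAreGenCusps} yields the generalized cusp of \Prop~\ref{prop:exOfGenCuspWithSolvableHolo}.

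The main obstacle is precisely this geometric verification, and it is where the example genuinely departs from Theorem~\ref{thmIntro:strCvxCuspHolYieldGenCusps}. There the extra directions carried only a $\Z^s$ of commuting \emph{translations} of the simplex, keeping the holonomy virtually nilpotent; here the element $g$ is honestly hyperbolic, so its non-conformal eigenvalues $\lambda^{\pm1}$ cannot be absorbed by any strictly convex cusp structure on $N$ alone. Indeed, by Theorem~\ref{thmIntro:characHoloStrictlyConvCusps} the non-nilpotent group $\Z^2\rtimes_A\Z$ preserves no strictly convex domain as a cusp, so the hyperbolic dynamics must be routed through the simplex directions while the leaves $S_t$ remain strictly convex and the boundary flat stays exactly $2$-dimensional. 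Checking that the eigenvalue balancing achieves this, without creating additional segments in the strictly convex part of $\partial\Omega$ or enlarging the simplex, is the crux of the argument, and is exactly what the geometric criterion of \Prop~\ref{prop:geomGenCuspsAreGenCusps} is designed to certify.
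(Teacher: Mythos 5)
Your algebraic analysis of $G$ is correct and in fact more detailed than what the paper records: the decomposition $G\cong(\Z^2\rtimes_A\Z)\times\Z/2\Z$ with $A$ hyperbolic, solvability from the metabelian ambient group, and non-virtual-nilpotency via exponential growth are all sound. Your high-level route for the geometric half is also the paper's route: build an explicit discrete faithful representation using the two real embeddings of $\Q(\sqrt2)$, produce an invariant properly convex domain whose boundary contains a $2$-simplex and is strictly convex elsewhere, and feed the result into \Prop \ref{prop:geomGenCuspsAreGenCusps}.

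The gap is that the geometric half remains a plan rather than a proof, and you say so yourself (``the main obstacle is precisely this geometric verification\dots the crux of the argument''). That verification \emph{is} the content of \Prop \ref{prop:exOfGenCuspWithSolvableHolo}; \Prop \ref{prop:geomGenCuspsAreGenCusps} does not certify it for you, it only converts an already-verified geometric generalized cusp into a generalized cusp. Concretely, four things are missing. (1) An explicit representation: the paper takes the symmetric square of the standard representation of $G$, sums it with its Galois conjugate to force discreteness, identifies the two eigenvalue-$1$ directions, and (after adding one trivial direction and a parabolic block) lands in $\SL(7,\R)$. (2) A proof that this representation preserves a properly convex domain; the paper exhibits a positive \emph{generic} s-proper matrix coefficient and applies \Prop \ref{prop:gpPreservesDomainNSC}, where genericity hinges on dominating the intermediate-order entries $\lambda^{\pm n}(a\pm b\sqrt2)$ that couple the simplex directions with the unipotent ones --- a coupling your clean split $\R^7=\R^3\oplus\R^4$ glosses over. (3) A proof that orbit closures meet the boundary in exactly the open $2$-simplex $\Delta$, obtained in the paper from the dominating-entries analysis followed by the smoothing Lemma \ref{lm:smoothing}, which is what makes $\partial\Omega-\overline{\Delta}$ analytic and segment-free. (4) The requirement, from Definition \ref{def:genCuspGp} via \Prop \ref{prop:autPreservesAlgHoroNSC}, of a fixed point $\xi\in\overline{\Delta}$ and a fixed supporting hyperplane $H\supset\Delta$ carrying the \emph{same} eigenvalue; without it the algebraic horospheres are not preserved and the product structure $\partial C\times\R_{\geq 0}$ fails. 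Your sketch, in which $g$ acts on the simplex block by a hyperbolic diagonal element with eigenvalues merely ``balanced so that $\det=\pm1$,'' does not guarantee this: the paper has to enlarge its representation by one extra trivial direction precisely to create the eigenvalue-$1$ vertex paired with the eigenvalue-$1$ supporting hyperplane. Until these points are carried out, the existence of the $6$-dimensional generalized cusp is not established.
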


This is the first example of a generalized cusp whose fundamental group is not virtually nilpotent. We expect that any virtually polycylic group is the fundamental group of a generalized cusp.
Such examples show that if there is a well-defined concept of cusp in convex projective geometry, and if generalized cusps are indeed cusps, then even the Kazhdan-Margulis lemma does not always apply to cusp holonomies. Hence a theory of convex projective cusps if it exists ought to involve arguments that drastically depart from the classical case of hyperbolic geometry.

\subsection{Organization of the paper}

In Section 2, we give reminders about convex projective geometry. We then recall results from \cite{cooperLongTillmann2015} about elementary groups, and prove a smoothing lemma for elementary groups in Section 3. Section 4 introduces notions used to study the asymptotics and properness of sets of functions, and Section 5 recalls basic results about $P_1$-divergent groups. In Section 6, we state and prove Theorem \ref{thmIntro:characHoloStrictlyConvCusps}, and Section 7 is devoted to the application of this result to the building of new examples of generalized cusps.

\subsection{Acknowledgments}

We are grateful to Sami Douba for interesting us in Cooper's article \cite{cooper2017} and for suggesting possible applications of the results in this article. We would like to warmly thank Fanny Kassel, Gye-Seon Lee, Daryl Cooper and Olivier Guichard for their precious help and feedback while reviewing the earliest version of this work. Our thanks also go to Yves Benoist, Anne Parreau and Pierre-Louis Blayac for helpful discussions and interesting questions.

\section{Reminders about convex projective geometry}

In this section, we recall classical results in convex projective geometry that are relevant to this work. 
A very good survey of most of the results in this section is given in \cite{marquis2014}. An excellent introduction that focuses more on the metric aspects of convex projective geometry is \cite{vernicos2005} (in french).

\subsection{The Hilbert metric}

The starting point of convex projective geometry are \emph{properly convex domains}.

\begin{definition}
   Let $C$ be a convex cone of $\R^n$. We say that $C$ is \emph{sharp} if its closure does not contain a full line.

   A domain $\Omega$ in $\P(\R^n)$ is said to be \emph{properly convex} if it is the projection of a sharp (open) cone of $\R^n$.
\end{definition}

Alternatively, one can think of properly convex domains as open subsets of $\P(\R^n)$ that are convex and bounded in some affine chart. This allows to easily define the \emph{open faces of $\Omega$}.

\begin{definition}
   Given $x,y\in\overline{\Omega}$, we say that $x$ and $y$ \emph{are in the same open face} if $x=y$ or there is a segment in $\overline{\Omega}$ containing both $x$ and $y$ in its interior. This defines an equivalence relation on $\overline{\Omega}$. The \emph{open faces of $\Omega$} are the equivalence classes of this relation.

   The largest open face of $\Omega$ is $\Omega$ itself, and the other faces form a partition of its boundary. We may say open faces of $\partial\Omega$ for the open faces of $\Omega$ that are not $\Omega$.
\end{definition}

In the following, all mentions of faces of $\Omega$ denote open faces, unless stated otherwise. The points of $\partial\Omega$ that are open faces of $\Omega$ will play a special role in our analysis; they are said to be \emph{extreme}.

\begin{definition}
   A point $x$ of a closed convex set $X$ is \emph{extreme} if it does not lie in the interior of any segment of $X$. It is \emph{strongly extreme} if it does not lie in any non-trivial segment of $X$.
\end{definition}

The convex hull of a subset of an affine space is the smallest convex subset that contains it. We will need to consider convex hulls in $\P(\R^n)$. These are a priori not well defined, as there are always two different projective segments between two distinct points in $\P(\R^n)$. Observe however that if $\Omega\subset\P(\R^n)$ is a properly convex domain, there is a unique segment $[x,y]\subset\overline{\Omega}$ between any pair of distinct points $x,y\in\overline{\Omega}$. Moreover, in any affine chart containing $\Omega$, the choice of segments given by the affine chart and by $\overline{\Omega}$ coincides. Therefore, the convex hull of subsets $X\subset\overline{\Omega}$ is well-defined independently of the choice of affine chart containing $\overline{\Omega}$.

We will mostly find ourselves in this situation, so we will simply write $\Conv$ to denote the convex hull in any affine chart containing the properly convex domain. When we are not in this context, we will make it explicit by mentioning that we take the convex hull in some choice of affine chart.

We now give the definition of the \emph{Hilbert metric} on a properly convex domain $\Omega$. For $x,y\in\Omega$ two distinct points, let
\begin{equation*}
   d_\Omega(x,y) := \frac12\log\CR(a,x,y,b)
\end{equation*}
where $a$ and $b$ are the two intersection points of the projective line $L$ going through $x$ and $y$ with the boundary of $\Omega$, arranged so as to be in the order $a,x,y,b$ along $L$. The quantity $\CR(a,x,y,b)$ is the cross-ratio of these four points, and equals
\begin{equation*}
   \frac{\norm{a-y}}{\norm{a-x}}\cdot\frac{\norm{b-x}}{\norm{b-y}}
\end{equation*}
in any affine chart of $L$ containing the four points, and for any norm on this affine chart.

The topology on $\Omega$ defined by the Hilbert metric is the trace topology of the inclusion of $\Omega$ in $\P(\R^n)$. The pair $(\Omega,d_\Omega)$ is called a \emph{Hilbert geometry}. It is very rarely an example of Riemannian geometry, but it is always a Finsler geometry \cite[\Prop 1 \& \Thm 2]{vernicos2005}. An important fact is that projective segments are geodesics for the Hilbert metric. However, geodesics are not always given by line segments (see for instance \cite[\Prop 3 \& 4]{vernicos2005}).

It is not hard to see that cross ratios are invariant under projective transformations. It follows that the elements of
\begin{equation*}
   \Aut(\Omega) := \{g\in\PGL(n,\R),g\Omega=\Omega\}
\end{equation*}
are isometries of the Hilbert metric on $\Omega$. They are called \emph{automorphisms of $\Omega$}.

It is apparent that the regularity of the Hilbert metric on $\Omega$ largely depends on the regularity of the boundary of $\Omega$. For this reason, regularity hypotheses on Hilbert geometries are usually stated as regularity hypotheses on the boundary of the domains.

\begin{definition}
   A properly convex domain $\Omega$ is \emph{strictly convex} if there is no non-trivial segment in $\partial\Omega$. It is \emph{$\COne$ at a point $x\in\partial\Omega$} if there is a unique hyperplane supporting $\Omega$ at $x$. We say that \emph{$\Omega$ is $\COne$} if it is $\COne$ at all points of its boundary.

   Moreover, we say that $\Omega$ is a \emph{round domain} if it is both strictly convex and $\COne$.
\end{definition}

In the following, we will need a less standard variation on the notion of $\COne$ points, which previously appeared for simplices in the thesis of Adva Wolf \cite{wolf2020}.

\begin{definition}
   A face $\omega$ of $\Omega$ is $\COne$ if there is a unique supporting hyperplane of $\Omega$ that contains $\omega$.
\end{definition}

It is often practical to consider a sharp open convex cone $C$ lifting a given properly convex domain $\Omega$.

Upon doing so, one can also lift the automorphisms of $\Omega$ to $\SLpm(n,\R)$. There is a priori a choice to be made in the lift, but observe that once $C$ is fixed, there is a unique lift of any given automorphism of $\Omega$ to $\SLpm(n,\R)$ which preserves $C$. In particular, the automorphism group of $\Omega$ lifts uniquely to a subgroup of $\SLpm(n,\R)$ which preserves $C$. For this reason, we will not make a distinction in what follows between a subgroup $\Gamma$ of $\PGL(n,\R)$ preserving $\Omega$ and its unique lift to $\SLpm(n,\R)$ preserving $C$, and we will denote both groups by $\Gamma$.

Benzécri's theorem is one of the most fundamental observations in convex projective geometry, and has a lot of interesting consequences. Before giving its statement, consider the space $C(\R^n)$ of the properly convex sets in $\P(\R^n)$, endowed with the Hausdorff metric, and $C_\ast(\R^n)$ the set of pointed properly convex domains endowed with the product topology. 

\begin{theorem}[\cite{benzecri1960}]\label{thm:Benzecri}
   The natural action of $\PGL(n,\R)$ on $C_\ast(\R^n)$ is proper and cocompact.
\end{theorem}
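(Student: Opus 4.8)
The plan is to prove the two assertions—cocompactness and properness—separately, the former by an explicit normalization using John's ellipsoid and the latter by forbidding a degeneration of singular values. For cocompactness, I would first normalize the underlying domain: given a pointed properly convex domain $(\Omega,x)$, choose an affine chart $\A\cong\R^{n-1}$ with $\overline\Omega\subset\A$, so that $\Omega$ becomes a bounded convex body, and let $E\subset\Omega$ be its ellipsoid of maximal volume. By John's theorem $E\subset\Omega\subset(n-1)\cdot E$, where $(n-1)\cdot E$ is the dilation of $E$ by the factor $n-1$ about its center; an affine map carrying $E$ to the unit ball $B$ centered at the origin then places $\Omega$ in the position $B\subset\Omega'\subset (n-1)B$. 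The family of convex bodies trapped between these two fixed balls is compact for the Hausdorff metric by the Blaschke selection theorem, which controls the domain coordinate. For the marked point, I would note that $x'\in\Omega'\subset (n-1)B$ already lies in a fixed bounded region; the one delicate case is $x'$ approaching $\partial\Omega'$, which I would resolve by a further projective renormalization ``zooming in'' at the boundary point that $x'$ approaches, the renormalized domains then converging to the osculating ellipsoid there. This places every orbit in a fixed compact subset of $C_\ast(\R^n)$.

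For properness I must show that if $g_k\in\PGL(n,\R)$ satisfies $(\Omega_k,x_k)\to(\Omega_\infty,x_\infty)$ and $g_k(\Omega_k,x_k)\to(\Omega_\infty',x_\infty')$ with all four limits genuine pointed properly convex domains, then $(g_k)$ subconverges in $\PGL(n,\R)$; this is the standard properness criterion for a locally compact group acting on a locally compact space. The first step is to extract uniform geometric control from the convergence: since the limits are properly convex and $x_\infty,x_\infty'$ lie in their interiors, for $k$ large all $\overline{\Omega_k}$ and all $\overline{g_k\Omega_k}$ sit in a common affine chart, are contained in a fixed ball, and contain a ball of some fixed radius $r>0$ centered respectively at $x_k$ and at $x_k'=g_kx_k$.

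The heart of the matter is then to forbid $g_k$ from degenerating. Lifting to $\tilde g_k\in\SLpm(n,\R)$, I would argue by contradiction: if no subsequence of $(\tilde g_k)$ stays bounded, then after extraction the Cartan decomposition $\tilde g_k=k_k\,\diag(\sigma_1^{(k)},\dots,\sigma_n^{(k)})\,l_k$ has singular value ratio $\sigma_1^{(k)}/\sigma_n^{(k)}\to\infty$, so the projective maps collapse the complement of a neighborhood of a repelling hyperplane onto a proper attracting subspace. But $g_k$ carries the fixed-radius ball about $x_k$—whose centers converge and hence stay away from the repelling hyperplane—into $g_k\Omega_k$, while $g_k\Omega_k$ must retain a ball of radius $r$ about $x_k'$. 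A collapsing sequence cannot do both: the image of a ball of definite size would be squeezed into an arbitrarily thin neighborhood of a lower-dimensional subspace, contradicting the persistence of a definite inradius around $x_k'$. Hence $(\tilde g_k)$ is bounded, subconverges, and its limit preserves the limiting data, yielding the desired subsequential limit in $\PGL(n,\R)$.

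The main obstacle is exactly this non-degeneration step, where proper convexity—the sharpness of the lifting cone—must be quantified as an obstruction to the singular-value blow-up; the bookkeeping of common charts, uniform inradii, and the passage from ``a ball collapses'' to a contradiction is where the real work lies. By comparison, cocompactness is nearly immediate from John's theorem, the only subtlety being the renormalization of the marked point as it approaches the boundary.
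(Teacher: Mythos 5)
The paper does not prove this statement; it is quoted from Benzécri's 1960 paper, so there is no in-house argument to compare yours against. Judged against the standard proof, your outline has the right ingredients but two genuine gaps. For cocompactness, normalizing $\Omega$ alone by its John ellipsoid does not suffice: the family of pairs $(\Omega',x')$ with $B\subset\Omega'\subset(n-1)B$ and $x'\in\Omega'$ is \emph{not} compact in $C_\ast(\R^n)$, precisely because $x'$ may converge to $\partial\Omega'$, and cocompactness demands one fixed compact set meeting every orbit. Your proposed repair --- ``zoom in'' at the boundary point so that the renormalized domains converge to the osculating ellipsoid --- is not available: a general convex body has no osculating ellipsoid at a boundary point (that statement is Benzécri's separate theorem about orbit closures at $\mathcal{C}^2$ points), and in any case the zoom destroys the normalization of the domain you just arranged. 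The standard fix is to normalize the \emph{pair}: replace $\Omega$ by its symmetrization $\Omega\cap(2x-\Omega)$ about $x$, whose John ellipsoid is automatically centered at $x$; sending that ellipsoid to the unit ball centered at the origin lands $(\Omega,x)$ in a genuinely compact family.

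For properness, the final contradiction does not close as stated. That $g_k$ squeezes $B(x_k,r)$ into a thin neighborhood of a proper subspace is not in tension with $g_k\Omega_k$ containing a definite ball around $x_k'$, because $g_k(B(x_k,r))$ is only a \emph{subset} of $g_k\Omega_k$: a convex set can perfectly well contain a fat ball together with a thin sliver through its center. To get a contradiction you need a two-sided statement --- for instance that $g_k$ maps the Hilbert ball $B_{d_{\Omega_k}}(x_k,1)$ \emph{onto} $B_{d_{g_k\Omega_k}}(x_k',1)$, both of which are uniformly sandwiched between Euclidean balls of fixed radii thanks to the uniform inradius and circumradius bounds, and then show a projective map carrying one such set onto another cannot have exploding singular value ratio; this forces you to run the collapsing analysis for $g_k^{-1}$ as well. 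The slicker classical route avoids all of this: exhibit a continuous $\PGL(n,\R)$-equivariant map from $C_\ast(\R^n)$ to a space on which the action is visibly proper (e.g.\ the symmetric space $\PGL(n,\R)/\PO(n)$, via the Hessian of $\log f_C$ at the lift of $x$, or via the John ellipsoid of the symmetrized body viewed as a pointed ellipsoid); properness then pulls back for free since $\{g:gK\cap K\neq\varnothing\}$ is contained in the corresponding set downstairs. I would encourage you to repair the argument along one of these two lines.
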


One of the consequences of Benzécri's theorem is the following fact, which explains why convex projective geometry is an excellent tool for studying the discrete subgroups of $\PGL(n,\R)$.

\begin{proposition}\label{prop:autGpActsProp}
   Let $\Omega$ be a properly convex domain of $\P(\R^n)$. Then the automorphism group $\Aut(\Omega)$ acts properly on $\Omega$.

   In particular, a group $\Gamma$ of automorphisms of $\Omega$ is discrete if and only if it acts properly discontinuously on $\Omega$.
\end{proposition}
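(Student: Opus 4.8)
The plan is to deduce the proposition entirely from Benzécri's theorem (Theorem \ref{thm:Benzecri}) by realizing the $\Aut(\Omega)$-action on $\Omega$ as the restriction of the $\PGL(n,\R)$-action on $C_\ast(\R^n)$ to a single ``slice''. Consider the map $\iota\colon\Omega\to C_\ast(\R^n)$, $x\mapsto(\Omega,x)$. It is a homeomorphism onto the slice $\{\Omega\}\times\Omega$, and it is proper in the sense that a compact set $K\subset\Omega$ maps to the compact set $\{\Omega\}\times K$. Crucially, $\iota$ intertwines the two actions: for $g\in\Aut(\Omega)$ one has $g\cdot(\Omega,x)=(g\Omega,gx)=(\Omega,gx)$, so $\iota(gx)=g\cdot\iota(x)$. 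Thus properness of the $\Aut(\Omega)$-action on $\Omega$ will follow from properness of the ambient $\PGL(n,\R)$-action on $C_\ast(\R^n)$.

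To carry this out, fix a compact set $K\subset\Omega$ and consider the return set $S_K:=\{g\in\Aut(\Omega):gK\cap K\neq\emptyset\}$. I claim it coincides with $\{g\in\PGL(n,\R): g(\{\Omega\}\times K)\cap(\{\Omega\}\times K)\neq\emptyset\}$. Indeed, $g\cdot(\Omega,x)=(g\Omega,gx)$ lies in $\{\Omega\}\times K$ precisely when $g\Omega=\Omega$ and $gx\in K$, that is, when $g\in\Aut(\Omega)$ and $gK\cap K\neq\emptyset$. Applying the properness assertion of Theorem \ref{thm:Benzecri} to the compact set $\{\Omega\}\times K\subset C_\ast(\R^n)$, this set is relatively compact in $\PGL(n,\R)$. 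Since the action on $\Omega$ is continuous and $K$ is compact and Hausdorff, $S_K$ is moreover closed, hence compact. As $K$ was arbitrary, $\Aut(\Omega)$ acts properly on $\Omega$.

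For the final assertion, I would first record that $\Aut(\Omega)$ is closed in $\PGL(n,\R)$, being the stabilizer of the point $\Omega\in C(\R^n)$ for the continuous $\PGL(n,\R)$-action on $C(\R^n)$. If $\Gamma<\Aut(\Omega)$ is discrete, then for any compact $K\subset\Omega$ the set $\{\gamma\in\Gamma:\gamma K\cap K\neq\emptyset\}=\Gamma\cap S_K$ is a discrete subset of the compact set $S_K$, hence finite, so $\Gamma$ acts properly discontinuously. Conversely, if $\Gamma$ is not discrete, choose distinct $\gamma_m\in\Gamma$ converging to some $g\in\PGL(n,\R)$, which lies in $\Aut(\Omega)$ by closedness; fixing $x_0\in\Omega$ and a compact neighborhood $K\subset\Omega$ of $\{x_0,gx_0\}$, the convergence $\gamma_m x_0\to gx_0$ forces $\gamma_m x_0\in K$ for large $m$, so infinitely many $\gamma_m$ lie in $\{\gamma\in\Gamma:\gamma K\cap K\neq\emptyset\}$ and the action fails to be properly discontinuous. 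This gives the equivalence.

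The only steps that require genuine care are the topological bookkeeping: verifying that $\iota$ is proper so that compact subsets of $\Omega$ correspond to compact subsets of the slice, and upgrading the relative compactness handed down by Benzécri's theorem to honest compactness of $S_K$ via closedness of return sets. I do not expect either to be a real obstacle; the entire substantive content of the proposition is Benzécri's theorem, and everything else is a direct transport of properness across the equivariant embedding $\iota$.
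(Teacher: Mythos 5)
Your proposal is correct and follows essentially the same route as the paper: the paper's proof likewise views the $\Aut(\Omega)$-action on $\Omega$ as the restriction of the $\PGL(n,\R)$-action on $C_\ast(\R^n)$ to the slice $\{\Omega\}\times\Omega$ and invokes the properness assertion of Benzécri's theorem, with the discreteness/proper-discontinuity equivalence deduced from compactness of the return sets. You have merely written out in full the equivariance and compactness bookkeeping that the paper leaves implicit.
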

\begin{proof}
   First, observe that the Hilbert metric is proper, meaning that all closed balls are compact subsets of $\Omega$. Then, seeing the action of $\Aut(\Omega)$ on $\Omega$ as the restriction of the action of $\PGL(n,\R)$ on $C_\ast(\R^n)$, Benzécri's theorem \ref{thm:Benzecri} implies that the action is proper.

   In particular, for any $x\in\Omega$, and any $R>0$, the set of automorphisms $g\in\Aut(\Omega)$ such that $gx\in \overline{B(x,R)}$ is a compact subset of $\Aut(\Omega)$.
\end{proof}

It follows that the \emph{full orbital limit set}
\begin{equation*}
   \Lambda_\Omega(\Gamma) := \bigcup_{o\in\Omega}\overline{\Gamma\cdot o}-\Gamma\cdot o
\end{equation*}
of a discrete subgroup $\Gamma$ of $\Aut(\Omega)$ must lie in $\partial\Omega$.

Convex projective geometry is the set of all geometries (in the sense of the Erlangen program) modeled on some Hilbert geometry. It is a classical observation that the projective model of $\H^n$ makes it a Hilbert geometry. A less straightforward observation is that the Hilbert geometry on the $n$-dimensional simplex is isometric to $\R^n$ endowed with a polytopal norm \cite{delaharpe1993}. It is also possible to realize the symmetric space of $\SL(n,\R)$ as a properly convex domain, the Hilbert metric being equivalent to the Riemannian metric. Therefore, convex projective geometry is a very versatile setup, that displays behaviors from the worlds of both negatively curved and flat manifolds.

In fact, any discrete linear group can be realized as the holonomy of a convex projective orbifold, by having it act on the projective model of the symmetric space of $\SL(n,\R)$, and using Proposition \ref{prop:autGpActsProp}.

\subsection{Duality}
One of the most important tools in convex projective geometry is duality. If $C\subset\R^n$ is an open convex cone, we can define its dual, which is an open convex cone of $\P((\R^n)^\ast)$
\begin{equation*}
   C^\ast = \left\{\alpha\in(\R^n)^\ast, \alpha(x) < 0,\forall x\in \overline{C}-\{0\}\right\}
\end{equation*}

The following lemma will be very useful later on. It implies that properly convex domains are exactly the open subsets of projective space that are bounded and convex in some affine chart.

\begin{lemma}\label{lm:sharpConvConeNSC}
   Let $C$ be an open cone in $\R^n$. Then $C$ is sharp if and only if $C^\ast$ is non-empty.
\end{lemma}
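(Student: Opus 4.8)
The plan is to prove the two implications separately, the reverse direction being essentially immediate and the forward direction requiring a short compactness-and-separation argument.

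First I would dispatch the easy implication: assume $C^\ast\neq\emptyset$ and pick $\alpha\in C^\ast$. If $\overline{C}$ contained a full line, i.e.\ a nonzero $v$ with $\R v\subset\overline{C}$, then both $v$ and $-v$ would lie in $\overline{C}-\{0\}$, forcing $\alpha(v)<0$ and $\alpha(-v)=-\alpha(v)<0$ at the same time, which is absurd. Hence $\overline{C}$ contains no line and $C$ is sharp.

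For the converse, assume $C$ is sharp and aim to produce a functional that is strictly negative on all of $\overline{C}-\{0\}$. Fixing a Euclidean norm $\norm{\cdot}$ on $\R^n$, I set $K:=\overline{C}\cap S^{n-1}$, a compact subset of the closed convex cone $\overline{C}$, and let $D:=\Conv(K)$, which is convex and compact (the convex hull of a compact set in finite dimension is compact). The crux is to show $0\notin D$, and this is exactly where sharpness enters. I would argue by contradiction: a relation $0=\sum_i t_i x_i$ with $t_i>0$, $\sum_i t_i=1$ and $x_i\in K$ gives $t_1 x_1=-\sum_{i\geq 2} t_i x_i$; since $\overline{C}$ is a convex cone it is stable under nonnegative combinations, so the right-hand side lies in $\overline{C}$, whence $-x_1\in\overline{C}$ as well. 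Then $x_1,-x_1\in\overline{C}$ yield the full line $\R x_1\subset\overline{C}$ with $x_1\neq 0$, contradicting sharpness. Thus $0\notin D$.

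Once $0\notin D$ is established, the remainder is standard convexity: since $\{0\}$ and the compact convex set $D$ are disjoint, the strict separation theorem provides $\alpha\in(\R^n)^\ast$ with $\sup_{y\in D}\alpha(y)<\alpha(0)=0$, so $\alpha<0$ on $D\supseteq K$. By homogeneity, for any $x\in\overline{C}-\{0\}$ one has $x/\norm{x}\in K$, so $\alpha(x)=\norm{x}\,\alpha\!\left(x/\norm{x}\right)<0$; therefore $\alpha\in C^\ast$ and $C^\ast\neq\emptyset$. The only nontrivial point in the whole argument is the claim $0\notin\Conv(\overline{C}\cap S^{n-1})$: the separating hyperplane and the homogeneity upgrade from the sphere to the punctured cone are routine, so I expect this lineality-versus-sharpness step to be the main obstacle.
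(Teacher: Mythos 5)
Your proof is correct, but it takes a genuinely different route from the paper's. The paper argues algebraically: it writes $\overline{C}$ as the intersection of the closed half-spaces $\{\alpha\leq 0\}$ containing it, observes that sharpness forces $\overline{C}\cap-\overline{C}=\bigcap\ker\alpha=\{0\}$, extracts finitely many functionals $\alpha_1,\dots,\alpha_k$ with trivial common kernel, and checks that their sum is strictly negative on $\overline{C}-\{0\}$. You instead normalize to the sphere, form the compact convex set $D=\Conv(\overline{C}\cap S^{n-1})$, use sharpness to rule out $0\in D$, and then invoke strict separation of the point $0$ from $D$, upgrading to the whole punctured cone by homogeneity. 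Both arguments rest on the same underlying convexity input (a Hahn--Banach-type separation statement), but yours trades the paper's "finite subcover of kernels plus summation" trick for a compactness argument; the one step you should phrase more carefully is the derivation of $-x_1\in\overline{C}$: from $0=\sum_i t_ix_i$ you get $-x_1=t_1^{-1}\sum_{i\geq 2}t_ix_i$, which lies in $\overline{C}$ because it is a \emph{nonnegative} combination of elements of $\overline{C}$ (your sentence as written attributes membership in $\overline{C}$ to $-\sum_{i\geq 2}t_ix_i$, which is the wrong sign). That is a wording slip, not a gap; the key observation that $0\notin D$ is exactly where sharpness is used, and you identified it correctly.
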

\begin{proof}
   The open cone $C$ is sharp if and only if its closure $\overline{C}$ does not contain any line. It is clear that if $\overline{C}$ contains a line, then there is no linear form $\alpha$ taking strictly negative values on $\overline{C}$. If $\overline{C}$ does not contain any line, we must have $\overline{C}\cap -\overline{C}=\{0\}$. Since a closed convex set of $\R^n$ is the intersection of the affine half-spaces that contain it, we have
   \begin{equation*}
      \overline{C}=\bigcap_{\substack{\alpha\in(\R^n)^\ast \\\alpha(\overline{C})\leq 0}}\{x\in\R^n,\alpha(x)\leq 0\}
   \end{equation*}
   so that
   \begin{align*}
      \overline{C}\cap-\overline{C} &= \bigcap_{\substack{\alpha\in(\R^n)^\ast\\\alpha(\overline{C})\leq 0}}\{x\in\R^n,\alpha(x)\leq 0\}\cap\{x\in\R^n,\alpha(x)\geq 0\}\\
      &= \bigcap_{\substack{\alpha\in(\R^n)^\ast\\\alpha(\overline{C})\leq 0}}\ker\alpha
   \end{align*}

   It follows that we can find $\alpha_1,\dots,\alpha_k\in(\R^n)^\ast$ taking non-positive values on $\overline{C}$ such that $\ker\alpha_1\cap\dots\cap\ker\alpha_k=\{0\}$. Now, if we let $\alpha=\alpha_1+\dots+\alpha_k$, we have $\alpha(x)\leq 0$ for all $x\in\overline{C}$, with equality if and only if $\alpha_1(x)=\dots=\alpha_k(x)=0$, if and only if $x=0$. Hence $\alpha\in C^\ast$, so the dual is non-empty.
\end{proof}

In restriction to sharp open convex cones, duality is a well-defined involution.

\begin{lemma}\label{lm:dualIsInvolution}
   Let $C$ be a sharp open convex cone. Then $C^{\ast\ast} = C$.
\end{lemma}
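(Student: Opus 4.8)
The plan is to prove the equality by double inclusion, working with cones in $\R^n$ and $(\R^n)^\ast$ under the canonical identification $(\R^n)^{\ast\ast}\cong\R^n$, so that by definition $C^{\ast\ast}=\{x\in\R^n:\alpha(x)<0\text{ for all }\alpha\in\overline{C^\ast}-\{0\}\}$. First I would dispatch the easy inclusion $C\subseteq C^{\ast\ast}$. Fix $x\in C$; since $C$ is sharp and open, $x\neq 0$, and for $\alpha\in C^\ast$ the definition of $C^\ast$ gives $\alpha(x)<0$, whence $\alpha(x)\leq 0$ for every $\alpha\in\overline{C^\ast}$ by continuity. To upgrade this to a strict inequality I would use that $C$ is open: if some $\alpha\in\overline{C^\ast}-\{0\}$ satisfied $\alpha(x)=0$, picking $v$ with $\alpha(v)>0$ and noting $x+tv\in C$ for small $t>0$ would force $\alpha$ to take a positive value on $\overline{C}$, contradicting $\alpha\in\overline{C^\ast}$. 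Hence $x\in C^{\ast\ast}$.

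The substantive inclusion is $C^{\ast\ast}\subseteq C$, which I would prove by contraposition using Hahn--Banach separation. Suppose $x\notin C$. If $x=0$, then $\alpha(0)=0$ for every $\alpha$, so trivially $x\notin C^{\ast\ast}$; assume $x\neq 0$. Since $C$ is open and convex with $x\notin C$, there is a nonzero linear form $\beta$ with $\sup_{y\in C}\beta(y)\leq\beta(x)$; because $C$ is a cone this supremum is finite only if $\beta\leq 0$ on $C$, hence $\beta(y)\leq 0$ for all $y\in\overline{C}$, while approaching $0$ within $C$ shows the supremum is $0$ and thus $\beta(x)\geq 0$. It then remains to verify that such a $\beta$ lies in $\overline{C^\ast}$.

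This reduces to the key auxiliary claim $\overline{C^\ast}=\{\alpha\in(\R^n)^\ast:\alpha(y)\leq 0\text{ for all }y\in\overline{C}\}$. The inclusion $\subseteq$ is immediate, as the right-hand side is closed and contains $C^\ast$. For $\supseteq$, I would use that $C$ is sharp, so by Lemma~\ref{lm:sharpConvConeNSC} we may fix some $\gamma\in C^\ast$; given $\beta$ with $\beta\leq 0$ on $\overline{C}$, the perturbations $\beta+t\gamma$ satisfy $(\beta+t\gamma)(y)<0$ for every $y\in\overline{C}-\{0\}$ and every $t>0$, hence lie in $C^\ast$, and they converge to $\beta$ as $t\to 0^+$. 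Granting the claim, the separating form $\beta$ lies in $\overline{C^\ast}-\{0\}$ and satisfies $\beta(x)\geq 0$, so $x\notin C^{\ast\ast}$, which completes the second inclusion and the proof.

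I expect the main obstacle to be the separation step combined with the identification $\overline{C^\ast}=\{\alpha:\alpha\leq 0\text{ on }\overline{C}\}$: the delicate point is that the \emph{strict} definition of the dual must recover the \emph{open} cone $C$ rather than its closure $\overline{C}$ (as a naive application of the bipolar theorem would yield), and the perturbation trick using a fixed $\gamma\in C^\ast$ — available precisely because $C$ is sharp — is exactly what pins down the closure of $C^\ast$ correctly.
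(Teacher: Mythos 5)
Your proof is correct. It follows the same overall skeleton as the paper's (double inclusion, with the easy inclusion $C\subseteq C^{\ast\ast}$ handled identically via continuity plus openness of $C$), but the hard inclusion is organized differently. The paper separates only points $x\notin\overline{C}$: since $C$ is sharp, $\P(C)$ is bounded in an affine chart by Lemma~\ref{lm:sharpConvConeNSC}, so an affine hyperplane separating $[x]$ from $\overline{\P(C)}$ produces a genuine element $\alpha\in C^\ast$ with $\alpha(x)>0$; this gives $C^{\ast\ast}\subseteq\overline{C}$, and boundary points are then excluded by observing that $C^{\ast\ast}$ is an open convex cone, so it must sit inside the interior of $\overline{C}$, which is $C$. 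You instead separate any $x\notin C$ directly, at the cost that the separating functional $\beta$ may only be $\leq 0$ on $\overline{C}$ rather than strictly negative, and you compensate with the auxiliary identification $\overline{C^\ast}=\{\alpha:\alpha\leq 0\text{ on }\overline{C}\}$, proved by perturbing $\beta$ by $t\gamma$ for a fixed $\gamma\in C^\ast$ (again available by Lemma~\ref{lm:sharpConvConeNSC}). Both uses of sharpness are essential and your perturbation argument is sound; your route makes explicit the closed-versus-open bookkeeping that the paper's version handles implicitly (and somewhat tersely) through the openness of $C^{\ast\ast}$, at the price of the extra claim about $\overline{C^\ast}$, which is itself a reusable fact.
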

\begin{proof}
   If $x\in C$, then for any $\alpha\in C^\ast$, $\alpha(x) < 0$, so that $\alpha(x)\leq 0$ for all $\alpha\in \overline{C^\ast}$. Since the last inequality holds for any $x\in C$ which is open and convex, we must have $\alpha(x) < 0$ for all $\alpha\in\overline{C^\ast}-\{0\}$, that is $x\in C^{\ast\ast}$.

   For the converse, observe that since $C$ is sharp, $\P(C)$ is bounded and convex in some affine chart $\A$ by Lemma \ref{lm:sharpConvConeNSC}. Hence, for any $x\not\in \overline{C}$, we can find $\alpha\in C^\ast$ such that $\alpha(x)>0$ by choosing an affine hyperplane separating $[x]$ from $\overline{\P(C)}$ in $\A$. It follows that $C^{\ast\ast}\subset C$ as $\alpha(x) < 0$ for all $\alpha\in\overline{C^\ast}-\{0\}$ and $x\in C^{\ast\ast}$. Since $C$ and $C^{\ast\ast}$ are both open convex cones, we conclude that $C\subset C^{\ast\ast}$.
\end{proof}

We extend duality to properly convex domains of $\P(\R^n)$ by lifting them to sharp open cones of $\R^n$. That is, if $\Omega$ is the projection of a sharp open cone $C$, then $\Omega^\ast$ is the projection of $C^\ast$. Observe that $\Aut(\Omega)$ naturally acts on $\Omega^\ast$ by transposition.

Since there is a bijection between projective hyperplanes of $\P(\R^n)$ and points in $\P((\R^n)^\ast)$, it is often useful to think of $\Omega^\ast$ as the set of projective hyperplanes that do not meet the closure of $\Omega$. Equivalently, $\Omega^\ast$ is the set of hyperplanes defining an affine chart in which $\Omega$ is bounded.

The boundary points of $\Omega^\ast$ can also be interpreted as supporting hyperplanes to $\Omega$. This leads to the following observation.

\begin{lemma}
   Let $\Omega$ be a properly convex domain of $\P(\R^n)$. Then $\Omega$ is $\COne$ if and only if $\Omega^\ast$ is strictly convex.
\end{lemma}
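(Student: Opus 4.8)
The plan is to pass to the associated cones and reduce everything to statements about linear forms. Fix a sharp open convex cone $C$ with $\Omega=\P(C)$, so that $\Omega^\ast=\P(C^\ast)$ and, exactly as in the proof of Lemma \ref{lm:sharpConvConeNSC}, $\overline{C^\ast}=\{\alpha\in(\R^n)^\ast:\alpha(x)\le 0\text{ for all }x\in\overline{C}\}$. The first step is to make precise the dictionary alluded to just above the statement: a point $[\alpha]\in\P((\R^n)^\ast)$ lies in $\partial\Omega^\ast$ if and only if, after a choice of sign, $\alpha\le 0$ on $\overline{C}$ and $\alpha$ vanishes at some $\tilde x\in\partial C-\{0\}$; equivalently, $\ker\alpha$ is a supporting hyperplane of $\Omega$ meeting $\overline{\Omega}$ at $[\tilde x]$. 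Thus boundary points of $\Omega^\ast$ are in bijection with supporting hyperplanes of $\Omega$, and $\Omega$ fails to be $\COne$ exactly when some boundary point of $\Omega$ carries two distinct such hyperplanes.

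The second step is a single elementary observation driving both implications: if $\alpha_0,\alpha_1\in(\R^n)^\ast$ are both $\le 0$ on $\overline{C}$ and some convex combination $\alpha_t=(1-t)\alpha_0+t\alpha_1$ with $t\in(0,1)$ vanishes at a point $\tilde x\in\overline{C}$, then necessarily $\alpha_0(\tilde x)=\alpha_1(\tilde x)=0$, since a convex combination of nonpositive reals vanishes only if each term does. I would then argue both directions as contrapositives. If $\Omega$ is not $\COne$, pick $[\tilde x]\in\partial\Omega$ with two distinct supporting hyperplanes $\ker\alpha_0\ne\ker\alpha_1$, normalized so that $\alpha_i\le 0$ on $\overline{C}$ and $\alpha_i(\tilde x)=0$. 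As $\alpha_0,\alpha_1$ are linearly independent, each $\alpha_t$ is nonzero, lies in $\overline{C^\ast}$, and vanishes at $\tilde x\ne 0$, so $[\alpha_t]\in\partial\Omega^\ast$; the path $t\mapsto[\alpha_t]$ is the unique segment of $\overline{\Omega^\ast}$ joining the distinct points $[\alpha_0],[\alpha_1]$, and it lies entirely in the boundary, so $\Omega^\ast$ is not strictly convex. Conversely, if $\Omega^\ast$ is not strictly convex, take a nontrivial boundary segment and lift its endpoints to $\alpha_0,\alpha_1\in\partial C^\ast\subset\overline{C^\ast}$ with the segment equal to $\{[\alpha_t]\}$; its midpoint lies in $\partial\Omega^\ast$, so $\alpha_{1/2}$ vanishes at some $\tilde x\in\partial C-\{0\}$, whence by the observation $\alpha_0(\tilde x)=\alpha_1(\tilde x)=0$, exhibiting two distinct supporting hyperplanes $\ker\alpha_0\ne\ker\alpha_1$ of $\Omega$ at $[\tilde x]$. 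Hence $\Omega$ is not $\COne$.

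The two directions are mirror images of one another, reflecting the involutivity $C^{\ast\ast}=C$ of Lemma \ref{lm:dualIsInvolution}; in fact one implication applied to both $\Omega$ and $\Omega^\ast$ already yields the equivalence. There is no substantial analytic obstacle here: the only point requiring care is the bookkeeping, namely choosing consistent signs for the lifts $\alpha_i$ and checking that the projective segments produced really are the unique segments contained in the properly convex dual $\Omega^\ast$, so that exhibiting a boundary segment genuinely witnesses the failure of strict convexity. I expect this sign-and-lift consistency to be the most error-prone part, though it is routine once the dictionary of the first step is set up cleanly.
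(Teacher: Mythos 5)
Your proof is correct. Both you and the paper start from the same dictionary — boundary points of $\Omega^\ast$ correspond to supporting hyperplanes of $\Omega$ — but the mechanism you use to produce and analyze the boundary segment is genuinely different. The paper argues projectively: a segment in $\partial\Omega^\ast$ is read as a pencil of supporting hyperplanes through a common codimension~$2$ subspace $F$, and one then checks via a comparison of affine charts that $\overline{\Omega}$ must meet $F$; in the converse direction it invokes (implicitly, via the remark that follows it) the fact that the supporting hyperplanes at a fixed boundary point form a face of $\partial\Omega^\ast$. You instead lift everything to the cone $C$ and its dual and isolate the one linear-algebraic fact doing all the work: a convex combination of functionals that are nonpositive on $\overline{C}$ vanishes at $\tilde x\in\overline{C}$ only if every term does. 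This single observation drives both implications symmetrically, replaces the paper's affine-chart argument with a one-line computation, and makes explicit the ``face'' structure that the paper only states as a remark. The only points needing care — and you flag them correctly — are that a supporting hyperplane admits a lift $\alpha$ with $\alpha\le 0$ on $\overline{C}$ (constant sign on the connected set $C$), that the affine segment between two such lifts avoids the origin (sharpness of $\overline{C^\ast}$, which follows from $C^{\ast\ast}=C$), and that its projection is therefore the unique segment of $\overline{\Omega^\ast}$ joining the two endpoints. Your version is arguably the cleaner write-up; the paper's has the virtue of making the geometric picture (the codimension-$2$ axis of the pencil touching $\partial\Omega$) visible.
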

\begin{proof}
   Observe that the points on a projective line in $\P((\R^n)^\ast)$ can be interpreted in $\P(\R^n)$ as the set of projective hyperplanes containing a fixed codimension $2$ projective subspace. Therefore, a non-trivial segment in $\partial\Omega^\ast$ corresponds to a set of supporting hyperplanes of $\Omega$, all intersecting a common codimension $2$ projective subset $F$ of $\P(\R^n)$.

   Since $\Omega$ is contained in the two affine charts defined by the two hyperplanes corresponding to the extreme points of the segment, and since the hyperplane corresponding to a point in the interior of the segment only meets the closure of the intersection of these two affine charts at $F$, it follows that $\overline{\Omega}\cap F\neq\varnothing$. It is clear that $\Omega$ is not $\COne$ at any of the points of $\overline{\Omega}\cap F\subset\partial\Omega$.

   Conversely, if $\Omega$ is not $\COne$ at $x\in\partial\Omega$, there are two different supporting hyperplanes $H_1$ and $H_2$ at $x$. Therefore, the hyperplane defined by $x$ in $\P((\R^n)^\ast)$ meets $\partial\Omega^\ast$ at two distinct points corresponding to $H_1$ and $H_2$, so $\partial\Omega^\ast$ contains the non-trivial segment that lies in between, and $\Omega^\ast$ is not strictly convex.
\end{proof}

In fact, we can push further the correspondence established in the last lemma.

\begin{remark}
   The set of supporting hyperplanes of $\Omega$ at a boundary point $x$ is a face of the boundary of $\Omega^\ast$. This is because it is the intersection of $\partial\Omega^\ast$ with the supporting hyperplane defined by $x$.
\end{remark}

\subsection{Classification of automorphisms}

A very important and classical result in convex projective geometry is that, similarly to the picture in the hyperbolic space, there is a trichotomy for non-trivial isomorphisms based on their eigenvalues.

\begin{definition}
   Let $\Omega\subset\P(\R^n)$ be a properly convex domain, and $g\in\Aut(\Omega)$ a non-trivial automorphism. Define $\trans(g):=\inf_{x\in\Omega}d(x,gx)$. Either
   \begin{enumerate}
      \item $\trans(g) = 0$ and the infimum is achieved in $\Omega$: we say that $g$ is \emph{elliptic};
      \item $\trans(g) = 0$ and the infimum is not achieved in $\Omega$: we say that $g$ is \emph{parabolic};
      \item $\trans(g) > 0$: we say that $g$ is \emph{hyperbolic}.
   \end{enumerate}
\end{definition}

We can express the translation length of $g$ in terms of its eigenvalues.

\begin{lemma}[{{\cite[\Prop 2.1]{cooperLongTillmann2015}}}]\label{lm:transLengthIsRatioEV}
   Let $\Omega\subset\P(\R^n)$ be a properly convex domain, and $g\in\Aut(\Omega)$. Then $\trans(g)=\frac12\log\lambda_1(g)/\lambda_n(g)$, where $\lambda_1(g)\geq \dots \geq\lambda_n(g)$ denote the moduli of the eigenvalues of a lift of $g$ to $\GL(n,\R)$.
\end{lemma}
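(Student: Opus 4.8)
The plan is to prove the two inequalities $\trans(g)\geq\frac12\log\lambda_1/\lambda_n$ and $\trans(g)\leq\frac12\log\lambda_1/\lambda_n$ separately. Fix a lift $\hat g\in\GL(n,\R)$ preserving a sharp open cone $C$ over $\Omega$, and write $\lambda_1\geq\dots\geq\lambda_n$ for its eigenvalue moduli. Since $\hat g$ preserves the sharp closed cone $\overline C$, the Perron--Frobenius/Krein--Rutman theorem provides an eigenvector $v_+\in\overline C$ with $\hat g v_+=\lambda_1 v_+$; applied to $\hat g^{-1}$ it gives $v_-\in\overline C$ with $\hat g v_-=\lambda_n v_-$. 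Dually, the automorphism of $\dual C$ given by $\phi\mapsto\phi\circ\hat g$ preserves the sharp closed cone $\overline{\dual C}$ and has the same spectrum, so there are covectors $\phi_\pm\in\overline{\dual C}-\{0\}$ with $\phi_+(\hat g v)=\lambda_1\phi_+(v)$ and $\phi_-(\hat g v)=\lambda_n\phi_-(v)$ for all $v$.

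For the lower bound, the key elementary fact is that for distinct $x,y\in\Omega$ whose line meets $\partial\Omega$ at $a,x,y,b$ in this order, and for any $\phi,\psi\in\overline{\dual C}-\{0\}$ with lifts $\tilde x,\tilde y\in C$,
\[
   \CR(a,x,y,b)\ \geq\ \frac{\phi(\tilde y)}{\phi(\tilde x)}\cdot\frac{\psi(\tilde x)}{\psi(\tilde y)}.
\]
Indeed, restricting a functional to the line $L$ and coordinatizing $L$, the first cross-ratio factor $\|a-y\|/\|a-x\|$ equals $\sup_{\phi\in\overline{\dual C}}\phi(\tilde y)/\phi(\tilde x)$, the supremum being attained by a supporting functional at $a$; monotonicity of $s\mapsto(t_y-s)/(t_x-s)$ shows every admissible $\phi$ yields a smaller value, and likewise for the second factor, so the product of the two (which is scale-invariant in the lifts, hence needs no normalization) is bounded by $\CR$. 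Applying this with $y=gx$, $\phi=\phi_+$, $\psi=\phi_-$ gives $\phi_+(\hat g\tilde x)/\phi_+(\tilde x)=\lambda_1$ and $\phi_-(\tilde x)/\phi_-(\hat g\tilde x)=\lambda_n^{-1}$ (all values are nonzero since $\tilde x\in C$), whence $d_\Omega(x,gx)\geq\frac12\log(\lambda_1/\lambda_n)$ for every $x$. This already settles the elliptic and parabolic cases, where $\trans(g)=0$ forces $\frac12\log\lambda_1/\lambda_n=0$, matching both sides.

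For the upper bound it remains to exhibit points of small displacement when $\lambda_1>\lambda_n$. If the open segment $([v_-],[v_+])$ meets $\Omega$, pick $z$ on it: then $a=[v_-]$ and $b=[v_+]$ are the endpoints of $L\cap\overline\Omega$, both fixed by $g$, and writing $\tilde z=s v_-+u v_+$ gives $\hat g\tilde z=s\lambda_n v_-+u\lambda_1 v_+$, so a direct computation yields $\CR([v_-],z,gz,[v_+])=\lambda_1/\lambda_n$ and hence $d_\Omega(z,gz)=\frac12\log\lambda_1/\lambda_n$ exactly. Combined with the lower bound this gives equality and shows the infimum is attained, consistent with $g$ being hyperbolic.

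The hard part is precisely the degenerate situation in which $\Omega$ is not strictly convex and the eigen-segment $[v_-,v_+]$ lies in $\partial\Omega$, so that no $z\in\Omega$ sits on the axis. To handle it I would argue by approximation: fixing $o\in\Omega$, consider the Hilbert midpoints $z_k$ of the geodesic segments $[\hat g^{-k}o,\hat g^{k}o]$, whose endpoints converge to $[v_-]$ and $[v_+]$, and show $d_\Omega(z_k,gz_k)\to\frac12\log\lambda_1/\lambda_n$. The relevant numerics are controlled by the coarse estimate $d_\Omega(o,\hat g^k o)=\frac12\log(\sigma_1(\hat g^k)/\sigma_n(\hat g^k))+O(1)$ together with the standard spectral asymptotics $\sigma_i(\hat g^k)^{1/k}\to\lambda_i$ (Gelfand's formula applied to exterior powers), which identify the stable translation length as $\frac12\log\lambda_1/\lambda_n$; the remaining task is to upgrade this to the infimum $\trans(g)$ via a limiting cross-ratio computation along the $z_k$ (whose exit points tend to $[v_\pm]$), keeping the cross-ratios bounded above by means of the eigen-covector inequality from the lower bound. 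This boundary-axis case is where the genuine care is required.
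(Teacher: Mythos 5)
The paper does not prove this lemma at all: it is quoted directly from Cooper--Long--Tillmann, so there is no in-paper argument to compare yours against. Judged on its own terms, your lower bound is complete and correct: the Krein--Rutman eigenvectors $v_\pm$ and eigen-covectors $\phi_\pm$ exist because $\hat g$ preserves the sharp solid cones $\overline C$ and $\overline{\dual C}$, the cross-ratio inequality $\CR(a,x,y,b)\geq\phi(\tilde y)\psi(\tilde x)/\phi(\tilde x)\psi(\tilde y)$ is the standard dual-cone (Birkhoff-type) representation of the Hilbert metric and your verification of it is sound, and plugging in $\phi_\pm$ gives $d_\Omega(x,gx)\geq\frac12\log(\lambda_1/\lambda_n)$ for every $x$. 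The upper bound when the open segment $([v_-],[v_+])$ meets $\Omega$ is also fine.

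The gap is the upper bound in the remaining cases, and it is genuine. First, your reduction is slightly off: saying the lower bound "settles the elliptic and parabolic cases" conflates the trichotomy (defined by $\trans(g)$) with the eigenvalue condition. What actually remains to prove is $\trans(g)\leq\frac12\log(\lambda_1/\lambda_n)$ for \emph{every} $g$, which when $\lambda_1=\lambda_n$ is the nontrivial assertion that weakly unipotent automorphisms have zero infimal displacement; your argument never addresses this case, since it presupposes $\lambda_1>\lambda_n$ and two distinct eigen-directions. Second, in the case $\lambda_1>\lambda_n$ with $[v_-,v_+]\subset\partial\Omega$, the strategy you sketch does not close. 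The singular-value asymptotics $\sigma_i(\hat g^k)^{1/k}\to\lambda_i$ together with $d_\Omega(o,\hat g^k o)=\frac12\log(\sigma_1(\hat g^k)/\sigma_n(\hat g^k))+O(1)$ only identify the \emph{stable} translation length $\lim_k d_\Omega(o,g^ko)/k$, which by the triangle inequality satisfies stable length $\leq\trans(g)$ -- i.e.\ it reproves the lower bound and says nothing about an upper bound. To control $d_\Omega(z_k,gz_k)$ for your midpoints you would need a convexity property of the displacement function along geodesics, and Hilbert geometries are not Busemann convex in general, so that route is not merely unfinished but doubtful as stated. A workable completion stays inside your dual-cone framework: for $\epsilon>0$ and $y\in C$, a two-sided weighted orbit sum such as $\tilde x=\sum_{j=-N}^{N}\mu_j\hat g^{j}y$ with geometric weights chosen so that $\lambda_1(1+\epsilon)\tilde x-\hat g\tilde x\in C$ and $\hat g\tilde x-\lambda_n(1+\epsilon)^{-1}\tilde x\in C$ for $N$ large (using that the spectral radii of $\hat g$ and $\hat g^{-1}$ are $\lambda_1$ and $\lambda_n^{-1}$), which forces $d_\Omega([\tilde x],g[\tilde x])\leq\frac12\log(\lambda_1/\lambda_n)+O(\epsilon)$ and handles the degenerate and $\lambda_1=\lambda_n$ cases uniformly. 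As it stands, your proof establishes only one of the two inequalities in general.
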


It follows that elliptic and parabolic automorphisms have eigenvalues of modulus $1$. We can discriminate between elliptic and parabolic isometries by being more precise.

\begin{lemma}[{{\cite[\Lm 2.2]{cooperLongTillmann2015}}}]
   Let $\Omega\subset\P(\R^n)$ be a properly convex domain, and $g\in\Aut(\Omega)$. Then $g$ is elliptic if and only if a lift of $g$ into $\SLpm(n,\R)$ is conjugated into $\O(n)$.
\end{lemma}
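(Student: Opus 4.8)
The plan is to route both implications through the intermediate statement that $g$ is elliptic if and only if $g$ fixes a point of $\Omega$, and to identify ``fixing a point'' with ``lying in a compact subgroup of $\Aut(\Omega)$''. Since a lift is conjugate into $\O(n)$ exactly when it generates a relatively compact subgroup of $\SLpm(n,\R)$, this reduces the lemma to a fixed-point statement for compact groups acting on $\Omega$.

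For the forward direction, suppose $g$ is elliptic. Then $\trans(g)=0$ and the infimum is achieved, so there is $x_0\in\Omega$ with $d_\Omega(x_0,gx_0)=0$; as $d_\Omega$ is a genuine metric this forces $gx_0=x_0$. Thus $g$ lies in the stabilizer of $x_0$ in $\Aut(\Omega)$, which is compact because $\Aut(\Omega)$ acts properly on $\Omega$ by Proposition~\ref{prop:autGpActsProp}. The canonical lift $\tilde g$ preserving the cone $C$ then lies in the preimage of this compact stabilizer, which is again compact since $\SLpm(n,\R)\to\PGL(n,\R)$ is a finite covering. A compact subgroup of $\GL(n,\R)$ preserves the inner product obtained by averaging any inner product over its Haar measure, hence is conjugate into $\O(n)$; in particular so is $\tilde g$.

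For the reverse direction, suppose a lift $\tilde g$ is conjugate into $\O(n)$. Then $\tilde g$ is semisimple with all eigenvalues of modulus $1$, so Lemma~\ref{lm:transLengthIsRatioEV} gives $\trans(g)=\tfrac12\log(\lambda_1/\lambda_n)=0$; hence $g$ is either elliptic or parabolic, and it suffices to produce a fixed point in $\Omega$. Let $K=\overline{\langle\tilde g\rangle}$, a compact subgroup of $\SLpm(n,\R)$ preserving the sharp open cone $C$ above $\Omega$. Fixing $v_0\in C$, I would form the barycenter $\bar v=\int_K k\,v_0\,dk$ with respect to normalized Haar measure on $K$. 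By left-invariance of Haar measure, $k'\bar v=\bar v$ for all $k'\in K$, so $\bar v$ is $K$-fixed; and $\bar v$ projects to a $g$-fixed point of $\Omega$, witnessing that the infimum defining $\trans(g)$ is achieved. This shows $g$ is elliptic.

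The main obstacle is the last step: checking that the barycenter $\bar v$ actually lies in the open cone $C$, rather than merely in $\overline C$. I would handle this by noting that the orbit $K\cdot v_0$ is a compact subset of the open convex cone $C$, so its closed convex hull stays inside $C$ and contains $\bar v$; that $\bar v\neq 0$ can be seen by picking, via Lemma~\ref{lm:sharpConvConeNSC}, a functional $\alpha\in C^\ast$ with $\alpha<0$ on $\overline C\setminus\{0\}$ and observing $\alpha(\bar v)=\int_K\alpha(k v_0)\,dk<0$. Everything else is standard; the only points demanding care are this nondegeneracy of the average and, in the forward direction, the compactness of point stabilizers, which is exactly what properness of the $\Aut(\Omega)$-action supplies.
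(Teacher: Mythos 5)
The paper does not prove this lemma; it is quoted directly from Cooper--Long--Tillmann \cite[\Lm 2.2]{cooperLongTillmann2015}, so there is no in-paper argument to compare against. Your proof is correct and follows the standard route (fixed point $\Leftrightarrow$ compact stabilizer $\Leftrightarrow$ invariant inner product, with the Haar barycenter in the cone $C$ supplying the fixed point in the converse), and you rightly isolate the two points needing care: properness of the $\Aut(\Omega)$-action for compactness of stabilizers, and nondegeneracy of the average $\bar v$. The only cosmetic point worth adding is that of the two lifts $\pm\tilde g$, you should work with the one preserving $C$; since $-I$ lies in every conjugate of $\O(n)$, both lifts are conjugate into $\O(n)$ simultaneously, so this costs nothing.
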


It is important to observe that unlike in hyperbolic geometry, where hyperbolic isometries always have a well-defined and unique axis in $\H^n$, this is not always the case in the convex projective world.

In general, the segments on which a hyperbolic isometry $g$ achieves minimal displacement will be called \emph{axes} of $g$. They do not have to be unique, nor do they have to be contained in the interior of $\Omega$.

\subsection{Vinberg hypersurfaces}

One last tool in convex projective geometry is the existence, for any sharp open convex cone $C$ in $\R^n$, of a foliation of $C$ by regular hypersurfaces that are invariant under any automorphism. This can be seen as a generalization to any properly convex domain of the hyperboloid model of hyperbolic space.

\begin{definition}
   Let $C$ be a sharp open convex cone, and $S$ be an hypersurface in $C$. We say that $S$ is a \emph{strictly convex analytic hypersurface} in $C$ if the connected component of $C-S$ that does not contain $0$ in its closure is strictly convex, and its boundary $S$ is defined by analytic equations in the canonical coordinates on $\R^n$.

   We say that $S$ is \emph{asymptotic} to $C$ if moreover there is no affine half-line contained in the connected component of $C-S$ containing $0$.
\end{definition}

There are two classical ways of producing such hypersurfaces: Vinberg hypersurfaces and affine spheres. We will focus here on Vinberg hypersurfaces, which are easier to define in our setting. For an introduction to affine spheres covering the topic of its applications to convex projective geometry, see the following survey of Loftin \cite{loftin2010}. In particular, Theorem 3 from this survey would be the result corresponding to Theorem \ref{thm:VinHypersurfaces} below.

Choose a volume form $\mathrm{d}\alpha$ on $(\R^n)^\ast$. The \emph{characteristic function} associated to a sharp convex cone $C$ is
\begin{equation*}
   f_C : \left\{\begin{aligned}
      C &\to \R \\
      x &\mapsto \int_{C^\ast} e^{\alpha(x)}\mathrm{d}\alpha
   \end{aligned}\right.
\end{equation*}
It is elementary to check that this defines an analytic function satisfying
\begin{equation}\label{eq:characFuncEquivariance}
   f_C(g x) = \abs{\det g}^{-1} f_{gC}(x)
\end{equation}
for any $g\in\GL(n,\R)$ and $x\in C$ (for instance, see \cite[\Lm 6.6]{goldman1988}). In particular, it is invariant under the subgroup of $\SLpm(n,\R)$ that preserves $C$. The level sets of $f_C$ define a foliation of $C$ by hypersurfaces.

\begin{theorem}[{{\cite{vinberg1963}}}]\label{thm:VinHypersurfaces}
   Let $C\subset\R^n$ be a sharp open convex cone. There exists a foliation of $C$ by strictly convex analytic hypersurfaces asymptotic to $C$, and invariant under the lift of $\Aut(\P(C))$ to $\SLpm(n,\R)$ preserving $C$. 
\end{theorem}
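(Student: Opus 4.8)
The plan is to take the promised foliation to be the level sets of the characteristic function $f_C$ introduced just above, and to read off all four required properties (analyticity, strict convexity, asymptoticity, invariance) from its analytic and convexity-theoretic features. Two of these are essentially free. The invariance follows immediately from the equivariance relation \eqref{eq:characFuncEquivariance}: if $g\in\SLpm(n,\R)$ preserves $C$, then $\abs{\det g}=1$ and $gC=C$, so $f_C(gx)=\abs{\det g}^{-1}f_{gC}(x)=f_C(x)$, and $g$ preserves every level set. Analyticity of the level sets will follow once I know they are regular, by the analytic implicit function theorem, since $f_C$ is itself analytic.

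The geometric heart of the argument is the strict convexity of the level sets, which I would deduce from strict log-convexity of $f_C$. First I would record that $f_C$ is homogeneous of degree $-n$: applying \eqref{eq:characFuncEquivariance} to $g=\lambda\,\id$ with $\lambda>0$ gives $f_C(\lambda x)=\lambda^{-n}f_C(x)$. Euler's relation then yields $x\cdot\nabla f_C(x)=-n\,f_C(x)\neq 0$, so $\nabla f_C$ never vanishes on $C$; hence every level set $\{f_C=e^c\}$ is a regular (and, by analyticity, analytic) hypersurface, and since $f_C$ is a positive submersion these sets foliate $C$. To get strict convexity I would show $\log f_C$ is strictly convex by Hölder's inequality: for $x\neq y$ in $C$ and $t\in(0,1)$,
\begin{equation*}
   f_C(tx+(1-t)y)=\int_{C^\ast}\bigl(e^{\alpha(x)}\bigr)^t\bigl(e^{\alpha(y)}\bigr)^{1-t}\,\mathrm{d}\alpha\leq f_C(x)^t f_C(y)^{1-t},
\end{equation*}
with equality exactly when $e^{\alpha(x)}$ and $e^{\alpha(y)}$ are proportional for almost every $\alpha$, i.e. when $\alpha(x-y)$ is constant on the open set $C^\ast$, which forces $x=y$. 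Thus $\log f_C$ is strictly convex, its sublevel sets are convex, so the component $\{f_C<e^c\}$ of $C-S$ is convex and $S=\{f_C=e^c\}$ contains no segment; since $f_C\to+\infty$ near $0$ by homogeneity, $\{f_C<e^c\}$ is precisely the component of $C-S$ whose closure avoids $0$, giving exactly the strict convexity demanded by the definition.

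For the asymptoticity I would identify the component of $C-S$ containing $0$ as the superlevel set $R=\{f_C>e^c\}$ (its closure contains $0$ by homogeneity) and show it contains no affine half-line. Any half-line remaining in $C$ has direction $v\in\overline{C}\setminus\{0\}$, so $\alpha(v)<0$ for every $\alpha\in C^\ast$ by the very definition of the dual cone; hence by dominated convergence (the dominating function $e^{\alpha(p)}$ being integrable on $C^\ast$ since $p\in C$),
\begin{equation*}
   f_C(p+tv)=\int_{C^\ast}e^{\alpha(p)}e^{t\alpha(v)}\,\mathrm{d}\alpha\xrightarrow[t\to\infty]{}0,
\end{equation*}
so the half-line eventually leaves $R$. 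I expect the main obstacle to be the convexity step: although the Hölder estimate is a single line, the real content lies in handling the equality case rigorously and, just as importantly, in keeping straight which sub-/superlevel set is the component meeting $0$ so that the conclusion matches the definition of \emph{strictly convex analytic hypersurface asymptotic to $C$}. All of the integrability and openness facts used above ultimately rest on the sharpness of $C$ (Lemma \ref{lm:sharpConvConeNSC}), which guarantees that $C^\ast$ is a genuine nonempty open cone and that $e^{\alpha(p)}\in L^1(C^\ast)$.
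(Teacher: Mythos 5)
Your argument is correct and follows exactly the route the paper intends: the paper gives no proof of its own (it cites Vinberg), but it introduces the characteristic function $f_C$ and its equivariance \eqref{eq:characFuncEquivariance} immediately beforehand precisely so that the theorem amounts to checking that the level sets of $f_C$ have the stated properties, which is what you do. Your verification of the four properties (invariance from equivariance, regularity/analyticity from Euler's relation, strict convexity from the H\"older equality case, asymptoticity from dominated convergence using $\alpha(v)<0$ for $v\in\overline{C}-\{0\}$) is the standard and correct completion of that argument.
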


There are a number of useful applications of these Vinberg hypersurfaces.

\begin{proposition}[{{\cite{vinberg1963}}}]\label{prop:dualMap}
  Let $\Omega\subset\P(\R^n)$ be a properly convex domain. There exists a $\Aut(\Omega)$-equivariant diffeomorphism from $\Omega$ to $\Omega^\ast$.
\end{proposition}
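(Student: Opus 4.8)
The plan is to realize the map as the \emph{Vinberg map} attached to the characteristic function, working first on a sharp open cone $C\subset\R^n$ lifting $\Omega$ and then projectivizing. Recall that $f_C(x)=\int_{C^\ast}e^{\alpha(x)}\,\mathrm d\alpha>0$ and $(\mathrm d f_C)_x=\int_{C^\ast}\alpha\,e^{\alpha(x)}\,\mathrm d\alpha$, so I would set
\[
\Phi(x):=(\mathrm d\log f_C)_x=\frac{1}{f_C(x)}\int_{C^\ast}\alpha\,e^{\alpha(x)}\,\mathrm d\alpha\in(\R^n)^\ast.
\]
The first point is that $\Phi$ maps $C$ into $C^\ast$: the right-hand side is the barycentre of the points $\alpha\in C^\ast$ against the probability measure $f_C(x)^{-1}e^{\alpha(x)}\,\mathrm d\alpha$, which has positive density on the open convex cone $C^\ast$ (nonempty by Lemma \ref{lm:sharpConvConeNSC}); the barycentre of a probability measure with full support on an open convex set lies in that set, so $\Phi(x)\in C^\ast$. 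Substituting $\beta=t\alpha$ shows $f_C(tx)=t^{-n}f_C(x)$, whence $\log f_C(tx)=\log f_C(x)-n\log t$ and, differentiating, $\Phi(tx)=t^{-1}\Phi(x)$. Thus $\Phi$ is homogeneous of degree $-1$ and descends to a well-defined map $\overline\Phi:\Omega\to\Omega^\ast$ between the projectivizations $\Omega=\P(C)$ and $\Omega^\ast=\P(C^\ast)$.

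Next I would establish that $\overline\Phi$ is a diffeomorphism. Differentiating once more, the Hessian of $\log f_C$ at $x$ is the covariance matrix
\[
\mathrm{Hess}(\log f_C)_x=\frac{1}{f_C(x)}\int_{C^\ast}\alpha\otimes\alpha\,e^{\alpha(x)}\,\mathrm d\alpha-\Phi(x)\otimes\Phi(x),
\]
i.e.\ the covariance of the coordinate functions under the above probability measure. Since $C^\ast$ is open it lies in no affine hyperplane, so this covariance is positive definite; equivalently $\log f_C$ is strictly convex. Consequently $\Phi$ is everywhere a local diffeomorphism (its differential is the invertible Hessian) and is globally injective (strict convexity forces distinct gradients at distinct points). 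Since $f_C$ is analytic, so is $\Phi$, and the inverse function theorem then makes $\Phi$ an analytic diffeomorphism onto its open image.

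The remaining, and I expect main, obstacle is to prove that $\Phi$ is \emph{surjective} onto $C^\ast$, so that $\overline\Phi$ is a genuine diffeomorphism $\Omega\to\Omega^\ast$. The degree $-1$ homogeneity makes $\Phi$ non-proper along rays, so a naive properness argument does not apply directly. The cleanest route uses the involutivity of duality, $C^{\ast\ast}=C$ (Lemma \ref{lm:dualIsInvolution}): performing the identical construction for the cone $C^\ast$ yields $\Phi_{C^\ast}:C^\ast\to C^{\ast\ast}=C$, and one checks on projectivizations that $\overline{\Phi_{C^\ast}}$ and $\overline\Phi$ are mutually inverse. The normalization $\Phi(x)(x)=-n$, which follows from Euler's identity applied to the degree $-n$ homogeneous function $f_C$, is the key tool here, together with the barrier behaviour of $f_C$ near $\partial C$ which forces $\Phi$ to exhaust $C^\ast$; verifying this inversion carefully is where the work lies.

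Finally, for equivariance, let $g\in\Aut(\Omega)$ be the lift normalized so that $gC=C$. Equation \eqref{eq:characFuncEquivariance} gives $f_C(gx)=\abs{\det g}^{-1}f_C(x)$, hence $\log f_C(gx)=\log f_C(x)-\log\abs{\det g}$. Differentiating and using the chain rule for the linear map $g$ yields $\transpose{g}\,\Phi(gx)=\Phi(x)$, that is $\Phi(gx)=(\transpose{g})^{-1}\Phi(x)$. As the action of $\Aut(\Omega)$ on $\Omega^\ast$ is precisely the contragredient action $\alpha\mapsto(\transpose{g})^{-1}\alpha$ (which indeed preserves $C^\ast$ because $gC=C$), this reads $\overline\Phi(g\cdot x)=g\cdot\overline\Phi(x)$, so $\overline\Phi$ is the required $\Aut(\Omega)$-equivariant diffeomorphism.
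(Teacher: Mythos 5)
Your construction is the same map as the paper's: the paper radially projects onto a Vinberg hypersurface (a level set of $f_C$) and takes the direction of its tangent hyperplane there, which is exactly $[(\mathrm{d}f_C)_x]=[\Phi(x)]$ by homogeneity, so your analytic description via $\mathrm{d}\log f_C$ and the paper's geometric one define the identical equivariant map. Like the paper's sketch, you supply the easy verifications (that $\Phi(x)\in C^\ast$, local diffeomorphism and injectivity from the positive-definite Hessian, equivariance from Equation \eqref{eq:characFuncEquivariance}) and defer the one genuinely hard point --- surjectivity, i.e.\ that $\overline\Phi$ and $\overline{\Phi_{C^\ast}}$ are mutually inverse --- to Vinberg's argument, so the two proofs match in both substance and level of detail.
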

\begin{proof}[Sketch of proof]
  Consider $S\subset\R^n$ a Vinberg hypersurface for $\Omega$. Let $\pi_S$ be the radial projection from $\Omega$ onto $S$ (meaning by rays emanating from $0\in\R^n$). For all $x\in\Omega$, $S$ has a unique well-defined (affine) tangent hyperplane $H_{\pi_S(x)}$ at the point $\pi_S(x)$. Its direction defines a point of $\Omega^\ast$. The resulting application from $\Omega$ to $\Omega^\ast$ is clearly infinitely differentiable and $\Aut(\Omega)$-equivariant. This is the map that we consider.

  See \cite{vinberg1963} for the rest of the proof, or \cite{goldman1988} for a proof in English.
\end{proof}

We can also use them in order to prove the existence of centers of mass for compact subsets.

\begin{proposition}[{{\cite[\Lm 4.2]{marquis2014}}}]\label{lm:centerOfMass}
  Let $\Omega\subset\P(\R^n)$ be a properly convex domain, and $K\subset\Omega$ be a compact subset. There exists a point $x_K\in\Omega$ such that any automorphism preserving $K$ fixes $x_K$.
\end{proposition}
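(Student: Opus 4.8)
The plan is to note that the automorphisms preserving $K$ form the stabilizer subgroup $H := \{g\in\Aut(\Omega) : gK=K\}$, and that this group is compact; it then suffices to produce a single point of $\Omega$ fixed by the whole group $H$. Compactness of $H$ is immediate from Proposition \ref{prop:autGpActsProp}. If $K=\varnothing$ the statement is vacuous, so fix a basepoint $o\in K$ and a radius $R$ with $K\subset\overline{B(o,R)}$. Every $g\in H$ satisfies $go\in K\subset\overline{B(o,R)}$, and the proof of Proposition \ref{prop:autGpActsProp} shows that $\{g\in\Aut(\Omega) : go\in\overline{B(o,R)}\}$ is compact. Since $H$ is a closed subset of this set (it is the preimage of the closed singleton $\{K\}$ under the continuous map $g\mapsto gK$ for the Hausdorff topology), it is compact.

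To produce an $H$-fixed point I would average in the cone. Let $C$ be a sharp open cone lifting $\Omega$, and recall that $H$ lifts uniquely to a compact subgroup $\widetilde H\subset\SLpm(n,\R)$ that preserves $C$ and acts linearly on $\R^n$. Choosing a lift $\hat o\in C$ of the basepoint, set
\[
   b := \int_{\widetilde H} h\,\hat o \;\mathrm{d}h,
\]
the barycenter of the orbit for the Haar probability measure $\mathrm{d}h$ on $\widetilde H$. Since $\widetilde H\hat o$ is a compact subset of the open convex cone $C$, its convex hull is a compact subset of $C$, and $b$ lies in this hull, so $b\in C$. Left-invariance of Haar measure together with the linearity of the action gives $h'b=\int_{\widetilde H}(h'h)\hat o\,\mathrm{d}h = b$ for every $h'\in\widetilde H$, so $b$ is $\widetilde H$-fixed. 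Its projection $x_K:=[b]\in\Omega$ is then fixed by all of $H$, which is exactly the desired conclusion.

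This argument uses only properness and no Vinberg theory, but the point it produces depends on the auxiliary basepoint. If instead one wants a canonical \emph{center of mass}, equivariant in the sense $x_{gK}=g\,x_K$ for all $g\in\Aut(\Omega)$, I would lift $K$ to the compact set $\hat K\subset S$ cut out on a Vinberg hypersurface $S$ (which is $\Aut(\Omega)$-invariant by Theorem \ref{thm:VinHypersurfaces}), form the compact convex set $\Conv(\hat K)\subset C$, and let $x_K$ be the unique minimizer of the characteristic function $f_C$ on $\Conv(\hat K)$; equivariance is then forced by the invariance of $S$ together with $f_C\circ g=f_C$, which follows from \eqref{eq:characFuncEquivariance} because $g$ preserves $C$ with $\abs{\det g}=1$. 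The one nontrivial input in that variant is \emph{uniqueness} of the minimizer, which rests on the strict convexity of $\log f_C$ (classical for characteristic functions of sharp cones), and this is where the real work would lie. For the statement as phrased, however, the compact-averaging route is the cleanest; the only point deserving genuine care there is checking that the barycenter remains in the \emph{open} cone $C$ rather than drifting to its boundary.
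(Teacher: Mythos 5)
Your first argument is correct and complete, but it takes a genuinely different route from the paper. The paper's proof is a two-line appeal to Vinberg theory: project $K$ radially onto a Vinberg hypersurface $S\subset C$, take the convex hull $K'$ of the projection (a compact convex subset of $\R^n$), and let $x_K$ correspond to its center of mass; any automorphism preserving $K$ lifts to a linear map preserving $C$ and $S$ (Theorem \ref{thm:VinHypersurfaces}), hence preserving $K'$, hence fixing its centroid by affine equivariance. You instead first establish compactness of the stabilizer $H$ of $K$ via the properness statement in Proposition \ref{prop:autGpActsProp}, lift $H$ to the compact group $\widetilde H$ preserving $C$, and average one orbit point against Haar measure; the barycenter stays in $C$ because the convex hull of the compact orbit lies in the convex set $C$. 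All of these steps check out, and your version suffices for the way the lemma is actually used later (e.g.\ in Corollary \ref{cor:strictlyConvDomainsElemIFFVirtSolv}, where one only needs a point fixed by the stabilizer of a finite orbit). What the paper's route buys is a canonical, $\Aut(\Omega)$-equivariant assignment $K\mapsto x_K$ with no choice of basepoint and no discussion of the stabilizer's topology; what yours buys is independence from Vinberg hypersurfaces, at the cost of a basepoint-dependent output. Your second, sketched variant (minimizing $f_C$ on a convex hull in the cone) is closest in spirit to the paper but is not what the paper does either --- the paper uses the centroid, not the minimizer of the characteristic function --- and, as you note, it would require justifying strict convexity of $f_C$, so it is right to present the averaging argument as the main proof.
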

\begin{proof}
  Let $S\subset\R^n$ be a Vinberg hypersurface for $\Omega$, and $K'$ be the convex hull of the projection of $K$ onto $S$. $K'$ is a compact convex subset of $\R^n$, so it has a center of mass. The corresponding point in $\Omega$ is $x_K$.
\end{proof}

Later, we will use the Vinberg hypersurfaces in order to smooth out domains.

\section{Elementary groups and cusps}

We now recall the definitions and properties of algebraic horospheres and elementary groups given in sections 3 and 4 of \cite{cooperLongTillmann2015}, as well as a smoothing lemma from \cite{cooperLongTillmann2015} which plays a very important role in our theory.

\subsection{Algebraic horospheres}

Consider a pair $(\xi,H)$ consisting of a point $\xi$ in the boundary of some properly convex domain $\Omega\subset\P(\R^n)$ and $H$ a supporting hyperplane of $\Omega$ at $\xi$. The horospheres of $\Omega$ based at $(\xi,H)$ are defined using the \emph{$(\xi,H)$-flow}.

\begin{definition}
   In a basis whose first vector is a lift of $\xi$, and whose $n-1$ first vectors span the lift of $H$, the \emph{$(\xi,H)$-flow} is given by the applications
   \begin{equation*}
      \phi^{(\xi,H)}_t = \begin{pmatrix}
         1 & 0 & t \\
         0 & I_{n-2} & 0 \\
         0 & 0 & 1
      \end{pmatrix}
   \end{equation*}
   for $t\in\R$.

   If $S^\Omega_\xi$ is the union of all segments in $\partial\Omega$ containing $\xi$, the \emph{algebraic horospheres of $\Omega$ centered at $(\xi,H)$} are the images of $\partial\Omega-S^\Omega_\xi$ under $\phi^{(\xi,H)}_t$ for $t>0$. If $\Omega$ and $(\xi,H)$ are understood from the context, we may write $\horo_t$ for the image of $\partial\Omega-S^\Omega_\xi$ under $\phi_t^{(\xi,H)}$.
\end{definition}

So $\Omega$ is foliated by its algebraic horospheres centered at $(\xi,H)$. It turns out we can say more.

\begin{remark}\label{rmk:ConeFoliatedByAlgHoro}
   If we denote by $C^\Omega_\xi$ the intersection of all half-spaces containing $\Omega$ whose boundary hyperplane is supporting to $\Omega$ at $\xi$, we see that it is entirely foliated by the translates of $\partial\Omega-S^\Omega_\xi$ by the $(\xi,H)$-flow.
\end{remark}

Notice that in the affine chart $\P(\R^n)-H$, $\phi_t^{(\xi,H)}$ acts by translating in the direction of $\xi$ by a linear function of $t$. It follows that the $(\xi,H)$-algebraic horospheres are the translates of $\partial\Omega-S^\Omega_\xi$ in the direction of $\xi$. This choice of affine chart allowing for an easy description of the algebraic horospheres is what \cite{cooperLongTillmann2015} call \emph{parabolic coordinates} for $(\xi,H)$.

It is not difficult to tell whether an automorphism of $\Omega$ preserves algebraic horospheres.

\begin{proposition}[{{\cite[\Prop 3.2]{cooperLongTillmann2015}}}]\label{prop:autPreservesAlgHoroNSC}
   Let $\Omega\subset\P(\R^n)$ be a properly convex domain, $\xi\in\partial\Omega$ and $H$ a supporting hyperplane of $\Omega$ at $\xi$. An automorphism $g\in\Aut(\Omega)$ preserves the foliation of $\Omega$ by its algebraic horospheres based at $(\xi,H)$ if and only if $g(\xi,H)=(\xi,H)$. In this case, $g$ sends the horosphere $\horo_t$ to $\horo_{t\tau_{(\xi,H)}(g)}$, where $\tau(g)$ is the quotient of the eigenvalues of $g$ associated to $\xi$ by that associated to $H$. 
\end{proposition}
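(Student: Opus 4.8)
The plan is to work throughout in the basis adapted to $(\xi,H)$ used in the definition of the flow, so that $\xi=[e_1]$, $H=\Span(e_1,\dots,e_{n-1})$, and $\phi^{(\xi,H)}_t=\id+t\,e_1e_n^\ast$ is a rank-one perturbation, where $e_n^\ast$ is the linear form cutting out $H$. Two elementary facts about the flow drive everything. First, $\phi^{(\xi,H)}_s\horo_t=\horo_{s+t}$, so the flow acts on leaves by translating the parameter. Second, $\phi^{(\xi,H)}_t x\to\xi$ as $t\to+\infty$ for every $x\notin H$, so $\xi$ is the attracting fixed point of $\{\phi^{(\xi,H)}_t\}_{t>0}$ and $H$ its pointwise-fixed hyperplane. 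These facts let me read off $(\xi,H)$ intrinsically from the flow, which is what the converse will exploit.

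For the ``if'' direction together with the displacement formula, I would start from the assumption $g(\xi,H)=(\xi,H)$. Then $g$ fixes the line $\xi$ with $ge_1=\lambda_\xi e_1$, preserves $H$, and acts on $\R^n/H$ by $\lambda_H$, i.e. $e_n^\ast g=\lambda_H e_n^\ast$. Substituting the rank-one expression into the conjugation gives
\[
g\,\phi^{(\xi,H)}_t\,g^{-1}=\id+t\,(ge_1)(e_n^\ast g^{-1})=\id+t\tfrac{\lambda_\xi}{\lambda_H}\,e_1e_n^\ast=\phi^{(\xi,H)}_{t\tau},
\]
with $\tau=\lambda_\xi/\lambda_H=\tau_{(\xi,H)}(g)$. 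Since $g\in\Aut(\Omega)$ preserves $\partial\Omega$ and fixes $\xi$, it preserves the union $S^\Omega_\xi$ of segments of $\partial\Omega$ through $\xi$, hence fixes $\partial\Omega-S^\Omega_\xi$; therefore $g\horo_t=g\phi^{(\xi,H)}_t(\partial\Omega-S^\Omega_\xi)=\phi^{(\xi,H)}_{t\tau}(\partial\Omega-S^\Omega_\xi)=\horo_{t\tau}$. As the image must again be a genuine leaf (indexed by a positive parameter), this forces $\tau>0$, so $g$ permutes the leaves and the formula $g\horo_t=\horo_{t\tau}$ holds.

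For the converse, suppose $g$ preserves the foliation, so $g\horo_t=\horo_{\sigma(t)}$ for a bijection $\sigma$ of $\R_{>0}$. Because $g$ is a homeomorphism of $\Omega$ respecting the nesting of the horoballs and their two distinct boundary limits (the leaves converge to $\partial\Omega-S^\Omega_\xi$ as $t\to0^+$, whereas they leave every compact set and accumulate on the face $\overline\Omega\cap H$ as $t\to+\infty$), $\sigma$ is an increasing bijection and $g$ preserves each of these limits. I would then aim to show that $g$ normalizes the flow, i.e. $g\phi^{(\xi,H)}_t g^{-1}=\phi^{(\xi,H)}_{h(t)}$ for some reparametrization $h$: since $\phi_s$ acts on leaves by $t\mapsto t+s$ and $g$ by $t\mapsto\sigma(t)$, the conjugate $g\phi_s g^{-1}$ acts on leaves by $t\mapsto\sigma(\sigma^{-1}(t)+s)$, and exploiting the projective rigidity of the leaves (they are mutually projectively translated copies of the single boundary piece $\partial\Omega-S^\Omega_\xi$) should force $\sigma$ to be affine, so that $g\phi_s g^{-1}$ is again a flow element. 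Once the flow is normalized, $g$ must carry the attracting fixed point and fixed hyperplane of the flow to those of the conjugated flow; as both coincide with $\{\phi^{(\xi,H)}_t\}$ up to time scaling, this yields $g\xi=\xi$ and $gH=H$.

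The hard part will be exactly this normalization step in the converse: a foliation does not by itself determine its transverse flow, so the argument must genuinely use the extra projective structure—that every leaf is the image of one fixed boundary piece under the one-parameter group $\phi^{(\xi,H)}_t$—to force $\sigma$ to be affine and conclude that $g$ conjugates the flow to itself. Everything else (the adapted-basis computation, the positivity of $\tau$, and the $g$-invariance of $S^\Omega_\xi$) should be routine once this rigidity is established.
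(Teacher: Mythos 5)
First, a remark on the comparison: the paper does not prove this proposition at all — it is quoted from \cite[\Prop 3.2]{cooperLongTillmann2015} — so your attempt can only be judged on its own merits. Your ``if'' direction and the displacement formula are correct and complete: in an adapted basis $\phi^{(\xi,H)}_t=\id+t\,e_1e_n^\ast$, the conjugation identity $g\phi^{(\xi,H)}_tg^{-1}=\id+t\,(ge_1)(e_n^\ast g^{-1})=\phi^{(\xi,H)}_{t\lambda_\xi/\lambda_H}$ together with $g(\partial\Omega-S^\Omega_\xi)=\partial\Omega-S^\Omega_{g\xi}=\partial\Omega-S^\Omega_\xi$ yields $g\horo_t=\horo_{t\tau}$, and your observation that the image must be a leaf lying in $\Omega$ does force $\tau>0$. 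That half is the standard computation and is fine.

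The converse, however, contains a genuine gap, which you yourself flag but do not close. Knowing that $g$ permutes the leaves only gives you the induced bijection $\sigma$ of the leaf space $(0,\infty)$, and even if you managed to prove that $\sigma$ is affine, you could not conclude that $g\phi^{(\xi,H)}_sg^{-1}$ is a flow element: two projective transformations can induce the same permutation of the leaves while acting differently inside each leaf, so ``acts on the leaf space like $\phi_{h(s)}$'' does not imply ``equals $\phi_{h(s)}$.'' The normalization step is therefore not merely hard — it does not follow from the route you propose. What is actually needed is that the foliation determines the pair $(\xi,H)$: by the same conjugation identity (applied without assuming $g$ fixes anything), $g$ carries the $(\xi,H)$-foliation to the $(g\xi,gH)$-foliation, so it suffices to show that two pairs producing the same foliation of $\Omega$ coincide. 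This can be attacked through the asymptotics of the leaves — for instance $\overline{\partial\Omega-S^\Omega_\xi}$ is the limit of $\horo_t$ as $t\to0^+$, and $\bigcap_{t}\overline{\bigcup_{s\geq t}\horo_s}=\overline{\Omega}\cap H$ recovers the face at infinity — but recovering the point $\xi$ itself and the individual supporting hyperplane $H$ (not just the face $\overline{\Omega}\cap H$) for a general properly convex, possibly non-strictly-convex and non-$\COne$ domain requires an argument your sketch does not supply. As written, only the ``if'' half of the equivalence is established; for the purposes of this paper the clean fix is simply to cite \cite[\Prop 3.2]{cooperLongTillmann2015}, as the text does.
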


In particular, observe that parabolic and elliptic elements preserving the center of an algebraic horosphere always preserve the horospheres, whereas hyperbolic elements fixing the center of the horospheres preserve them if and only if they have the same eigenvalue on $\xi$ and $H$. What is meant here by having the same eigenvalue on $\xi$ and $H$ is that, in a basis whose first vector is $\xi$, and whose first $n-1$ vectors span $H$, the automorphism is of the form
\begin{equation*}
   \begin{bmatrix}
      \lambda & \ast & \ast \\
      0 & \ast & \ast \\
      0 & 0 & \lambda
   \end{bmatrix}
\end{equation*}
where $\lambda\in\R$ is the eigenvalue of $\xi$ and $H$. Observe that having the same eigenvalue does not depend on the choice of a representative in $\GL(n,\R)$.

We also observe that the subgroup of $\Aut(\Omega)$ preserving algebraic horospheres commutes with the associated flow.

\begin{lemma}\label{lm:autThatPreservesAlgHoroCommutesWithFlow}
   Let $\Omega\subset\P(\R^n)$ be a properly convex domain, $\xi\in\partial\Omega$ and $H$ a supporting hyperplane of $\Omega$ at $\xi$. If $g\in\Aut(\Omega)$ preserves the algebraic horospheres of $\Omega$ centered at $(\xi,H)$, then $g$ commutes with the $(\xi,H)$-flow.
\end{lemma}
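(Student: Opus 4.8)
The plan is to reduce the statement to a one-line matrix computation in parabolic coordinates for $(\xi,H)$. First I would fix a basis as in the definition of the $(\xi,H)$-flow: the first vector lifts $\xi$, the first $n-1$ vectors span $H$, and $\phi^{(\xi,H)}_t$ is the matrix with a single off-diagonal entry $t$ in position $(1,n)$. Because $g$ preserves the algebraic horospheres, Proposition~\ref{prop:autPreservesAlgHoroNSC} applies: in particular $g$ fixes the pair $(\xi,H)$, so $g\xi=\xi$ and $gH=H$, and in this basis $g$ is block upper-triangular,
\[
   g=\begin{pmatrix} a & \ast & \ast \\ 0 & B & \ast \\ 0 & 0 & d \end{pmatrix},
\]
where $a$ is the eigenvalue of $g$ on the line $\xi$ and $d$ is the eigenvalue of the action induced by $g$ on the quotient $\R^n/H$.

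The crucial point is to use the full strength of the hypothesis, namely that $g$ preserves \emph{each} horosphere $\horo_t$ and not merely the foliation. By Proposition~\ref{prop:autPreservesAlgHoroNSC} we have $g\horo_t=\horo_{t\tau_{(\xi,H)}(g)}$, so fixing every leaf forces $\tau_{(\xi,H)}(g)=1$. Since $\tau_{(\xi,H)}(g)$ is the ratio $a/d$ of the eigenvalue on $\xi$ by the eigenvalue on $H$, this is exactly the condition $a=d$, \ie $g$ has the same eigenvalue on $\xi$ and on $H$.

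It then remains to compare $g\,\phi^{(\xi,H)}_t$ with $\phi^{(\xi,H)}_t\,g$. Writing $\phi^{(\xi,H)}_t=I+tE$, where $E$ is the matrix with a single entry $1$ in position $(1,n)$, one computes that $gE$ and $Eg$ each have a single nonzero entry, located in position $(1,n)$ and equal to $a$ and $d$ respectively; hence $g\,\phi^{(\xi,H)}_t-\phi^{(\xi,H)}_t\,g=t(a-d)E$, which vanishes for all $t$ precisely because $a=d$. Conceptually, conjugating by $g$ turns $\phi^{(\xi,H)}_t$ into another one-parameter group of translations in the direction of $\xi=g\xi$ within the affine chart $\P(\R^n)-H=\P(\R^n)-gH$, and the condition $\tau_{(\xi,H)}(g)=1$ is exactly what rules out a reparametrization of the speed. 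I do not expect a genuine obstacle here: the only thing to be careful about is reading the hypothesis as ``$g$ fixes every horosphere'', which is what produces the eigenvalue equality $a=d$ that makes the corner entries of the two products agree.
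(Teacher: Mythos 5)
Your proposal is correct and follows essentially the same route as the paper: invoke Proposition~\ref{prop:autPreservesAlgHoroNSC} to get that $g$ fixes $(\xi,H)$ with equal eigenvalues there (since preserving each leaf forces $\tau_{(\xi,H)}(g)=1$), then verify commutation with $\phi^{(\xi,H)}_t=I+tE$ by a direct matrix computation. The paper leaves that last computation as ``elementary''; you have simply written it out via $gE=aE$, $Eg=dE$.
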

\begin{proof}
   By Proposition \ref{prop:autPreservesAlgHoroNSC}, $g$ has the same eigenvalue $\lambda$ for both $\xi$ and $H$ (seen as eigenvectors). Therefore, $g$ is of the form
   \begin{equation*}
      \begin{bmatrix}
         \lambda & \ast & \ast \\
         0 & \ast & \ast \\
         0 & 0 & \lambda
      \end{bmatrix}
   \end{equation*}
   in a basis $e_1,\dots,e_n$ where $e_1$ is a lift of $\xi$, and $\gen{e_1,\dots,e_{n-1}}$ is the hyperplane lifting $H$.

   Checking that such a matrix commutes with the $(\xi,H)$-flow is then an elementary computation.
\end{proof}

\subsection{Elementary groups in strictly convex domains}\label{ssec:elemGps}

The following is a summary of the main results of Section 4 in \cite{cooperLongTillmann2015}. They were written in the case of torsion-free subgroups for simplicity, but the same proofs easily adapt to the general case.

\begin{definition}
   A group $G$ of automorphisms of a properly convex domain $\Omega\subset\P(\R^n)$ is \emph{elementary} if it admits a globally fixed point $x$ in the closure of $\Omega$. We say that $G$ is
   \begin{enumerate}
      \item \emph{elementary of the elliptic type} if there is a fixed point in $\Omega$;
      \item \emph{elementary of the parabolic type} if there is a unique fixed point in $\partial\Omega$, none in $\Omega$, and $G$ contains no hyperbolic elements;
      \item \emph{elementary of the hyperbolic type} if there are two fixed points in $\partial\Omega$, and none in $\Omega$.
   \end{enumerate}
   In the last two cases, we say that $G$ is \emph{doubly elementary} if there exists $\xi\in\partial\Omega$ and $H$ a supporting hyperplane of $\Omega$ at $\xi$ such that $G\cdot (\xi,H)=(\xi,H)$.
\end{definition}

Observe that a priori, nothing keeps an elementary group from preserving more than $2$ points in the boundary of a properly convex domain.

\begin{example}
   If $\Omega$ is a $k$-dimensional simplex in $\P(\R^{k+1})$, then the group of diagonal matrices in the basis given by the vertices of the simplex is elementary fixing $k+1$ distinct points, but is not elliptic.
\end{example}

However, in the strictly convex case, elementary groups satisfy this trichotomy.

\begin{lemma}
   Let $\Omega\subset\P(\R^n)$ be a strictly convex domain, and $G<\Aut(\Omega)$ be an elementary group. If $G$ fixes three distinct points in $\partial\Omega$, then $G$ is elliptic.
\end{lemma}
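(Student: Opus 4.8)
The plan is to produce a single point of $\Omega$ fixed by all of $G$, which is exactly what it means for the elementary group $G$ to be elliptic. Write $p_1,p_2,p_3\in\partial\Omega$ for the three fixed points. First I would record that strict convexity forces them to be non-collinear: a projective line meets the bounded convex set $\overline\Omega$ in a sub-segment, which by strict convexity contains at most two points of $\partial\Omega$, so no line carries three distinct boundary points. Hence $p_1,p_2,p_3$ span a projective plane $P\cong\P(\R^3)$ and bound a genuine triangle whose relative interior lies in $\Omega$. Since $G$ fixes the three $p_i$, it preserves $P$ and the slice $\Omega':=\Omega\cap P$, which is again strictly convex (a segment in $\partial\Omega'$ would lie in $\partial\Omega$). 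In the basis of lifts of $p_1,p_2,p_3$ every $g\in G$ is diagonal, so $G|_P$ is abelian and each eigenvalue $\mu_i\colon G\to\R^\ast$ is a homomorphism.

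The geometric heart of the argument, and the step I expect to be the main obstacle, is to show that no $g\in G$ can have three eigenvalues of distinct modulus on $P$. Suppose one did; replacing $g$ by $g^2$ I may assume $g|_P=\diag(a_1,a_2,a_3)$ with $a_1>a_2>a_3>0$, so that the middle vertex, say $p_2$, is a saddle fixed point. The three vertices cut the circle $\partial\Omega'$ into three arcs, each of which $g$ must preserve, since a self-homeomorphism of $\partial\Omega'$ fixing three points cannot reverse the cyclic order. Let $A$ be the arc joining $p_2$ to $p_1$ and avoiding $p_3$. In the affine chart based at $p_2$ with coordinates $(y_1,y_3)$ running along the edges $p_1p_2$ and $p_2p_3$, the map becomes $(y_1,y_3)\mapsto((a_1/a_2)y_1,(a_3/a_2)y_3)$ with $a_1/a_2>1$ and $a_3/a_2<1$. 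If some $x\in A$ had $y_3(x)\neq 0$, then $g^{-k}x$ would converge to $p_3$; but $g^{-k}x$ stays in the invariant arc $A$, whose closure only adds $p_1$ and $p_2$, a contradiction. Hence $A$ lies in the line $\{y_3=0\}=p_1p_2$, so $\partial\Omega'$ contains a non-trivial segment, contradicting strict convexity.

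With that established, the rest is bookkeeping on the abelian group $G|_P$. The homomorphism $g\mapsto(\log\abs{\mu_1(g)},\log\abs{\mu_2(g)},\log\abs{\mu_3(g)})$ has image a subgroup $V\subset\R^3$, and the previous step says $V$ is contained in the union of the three hyperplanes where two coordinates coincide. An elementary argument (take $a\in V$ off one hyperplane and $b\in V$ off another, and observe that $a+nb$ escapes all three hyperplanes for all but finitely many integers $n$) shows that a subgroup lying in such a union must lie in a single one of them. So there is a fixed pair, say $\abs{\mu_1(g)}=\abs{\mu_2(g)}$ for every $g\in G$.

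Finally I would exploit this pair. The open segment $(p_1,p_2)$ lies in $\Omega'$ by strict convexity, and $G$ preserves it, as it is the unique segment of $\overline{\Omega'}$ between the fixed points $p_1,p_2$. On the line $p_1p_2$ each $g$ acts as $\diag(\mu_1,\mu_2)$ with $\mu_1/\mu_2$ real of modulus $1$, hence equal to $\pm1$; the value $-1$ is impossible because the corresponding involution swaps the two complementary segments bounded by $p_1,p_2$ and so cannot preserve $(p_1,p_2)$. Therefore every $g$ restricts to the identity on the line $p_1p_2$, and $G$ fixes $(p_1,p_2)\subset\Omega$ pointwise. Any point of this segment is then a common fixed point of $G$ in $\Omega$, proving that $G$ is elliptic. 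The only genuinely delicate point is the saddle argument of the second paragraph; everything else is linear algebra together with the basic dictionary between eigenvalues and the Hilbert geometry (Lemma \ref{lm:transLengthIsRatioEV}).
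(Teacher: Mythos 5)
Your proof is correct, and its second half genuinely diverges from the paper's. Both arguments open identically: the three fixed points are non-collinear by strict convexity, they span a $G$-invariant projective plane meeting $\Omega$ in a strictly convex slice, and $G$ acts diagonally there. The paper then shows that \emph{every} element acts as the identity on that plane, by quoting Marquis's classification of the properly convex domains preserved by a positive diagonalizable matrix with two or three distinct eigenvalues (\cite[Prop.~2.9 \& 2.10]{marquis2012surface}); in particular the ``two equal eigenvalues'' case is disposed of by the fact that such an element only preserves triangles. You instead (a) rule out three distinct moduli by a self-contained saddle argument on the invariant boundary arcs, which is essentially the dynamical content behind the paper's citation of Prop.~2.9 and is carried out correctly (the arc $A$ lies in the chart complementary to the line $p_1p_3$, so the backward orbit computation is legitimate); (b) upgrade the elementwise statement to a uniform one via the observation that a subgroup of $\R^3$ contained in the union of the three coincidence hyperplanes lies in a single one --- your $a+nb$ sketch does work here, though it needs the small extra step of handling the functional that might vanish on both $a$ and $b$; and (c) kill the remaining sign ambiguity on the distinguished edge line using invariance of the segment $(p_1,p_2)\subset\Omega$, which hands you a common fixed point in $\Omega$ without ever showing $G$ is trivial on the whole plane. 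The trade-off: the paper's route is shorter on the page but leans on external propositions and proves the stronger conclusion that $G|_\Pi=\id$; yours is longer but self-contained and only establishes what ellipticity actually requires. One cosmetic point: the individual eigenvalues $\mu_i$ of an element of $\PGL(n,\R)$ are only defined up to a common scalar, so your homomorphism to $\R^3$ should be read modulo the diagonal line (or computed on the canonical lift to $\SLpm(n,\R)$ preserving a cone over $\Omega$); since all three hyperplanes contain that diagonal, nothing in the argument is affected.
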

\begin{proof}
   Suppose $G$ fixes $3$ distinct points $x,y,z\in\partial\Omega$. Since $\Omega$ is strictly convex, these three points cannot be on the same projective line. Let $\Pi$ be the projective plane determined by $x$, $y$ and $z$. $\Pi$ is clearly $G$-invariant, and intersects $\Omega$ in a strictly convex domain $\omega$ of $\Pi$. Moreover, the restriction of $G$ to $\SLpm(\Span\Pi)$ is a diagonal group with weights $\lambda_x$, $\lambda_y$ and $\lambda_z$ associated to $x$, $y$ and $z$ respectively.

   Let $g\in G$ be a non-trivial element. Suppose $\{\lambda_x(g),\lambda_y(g),\lambda_z(g)\}$ has cardinal larger than $1$. If it has cardinal $2$, then all the properly convex domains preserved by $g$ are triangles (see \cite[\Prop 2.10]{marquis2012surface}), which contradicts the strict convexity of $\omega$. Similarly, if it has cardinal $3$, any $g$-invariant domain containing $x$, $y$ and $z$ in its boundary would contain at least the segments $[x,y]$ and $[y,z]$ in its boundary if we suppose $\lambda_x(g)>\lambda_y(g) >\lambda_z(g)$, contradicting the strict convexity of $\omega$. This is because on the complement in $\Pi$ of the $2$ projective lines going through $x$ and $y$, and through $y$ and $z$, any forward (\resp backward) $g$-orbit limits to $x$ (\resp $z$), and because the two affine charts determined by these two lines are preserved by $g$, so the proper convexity of $\omega$ implies it does not intersect these two lines. For more details, see \cite[\Prop 2.9]{marquis2012surface}.

   It follows that $g$ acts as the identity on $\Pi$, so it fixes any point in the interior of $\omega$. As this is true of any element of $G$, $G$ is elementary of the elliptic type.
\end{proof}

We now summarize the description of each type of elementary subgroups given by Cooper, Long and Tillmann.

\begin{proposition}
   Let $\Omega$ be a strictly convex domain, and $\Gamma$ be a subgroup of automorphisms of $\Omega$. Then $\Gamma$ is elementary of the elliptic type if and only if a lift of $\Gamma$ into $\SLpm(n,\R)$ is conjugated into $\O(n)$.
\end{proposition}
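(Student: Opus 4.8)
The statement to prove is the following: given a strictly convex domain $\Omega$ and a subgroup $\Gamma<\Aut(\Omega)$, then $\Gamma$ is elementary of the elliptic type if and only if a lift of $\Gamma$ to $\SLpm(n,\R)$ is conjugated into $\O(n)$.

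\begin{proof}[Proof proposal]
The plan is to prove the two implications separately, with the backward direction being essentially immediate and the forward direction requiring the bulk of the work. For the backward implication, suppose a lift of $\Gamma$ is conjugated into $\O(n)$. Every element of $\O(n)$ fixes the center of the round ball it preserves, and more abstractly a compact group of automorphisms always fixes a point of $\Omega$ by a center-of-mass argument. So I would invoke Proposition \ref{lm:centerOfMass}: the closure of an orbit $\overline{\Gamma\cdot x}$ is compact in $\Omega$ (since the conjugated group lies in the compact group $\O(n)$, its orbits are bounded in the Hilbert metric, hence have compact closure by properness of the metric), and its convex hull $K$ is a compact $\Gamma$-invariant subset of $\Omega$; the associated center of mass $x_K$ is then fixed by all of $\Gamma$. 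Thus $\Gamma$ fixes a point in $\Omega$ and is elementary of the elliptic type.

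For the forward implication, assume $\Gamma$ is elementary of the elliptic type, so it fixes a point $x_0\in\Omega$. The key idea is to produce a $\Gamma$-invariant compact set and then an invariant Euclidean structure. First I would note that since $\Gamma$ fixes $x_0\in\Omega$ and acts by isometries of the proper metric $d_\Omega$, every orbit is bounded, so $\Gamma$ preserves the compact ball $\overline{B(x_0,R)}$ for every $R$. By Proposition \ref{prop:autGpActsProp} (properness of the action via Benzécri), the stabilizer of $x_0$ in $\Aut(\Omega)$ is a compact subgroup of $\PGL(n,\R)$. Since $\Gamma$ lies in this stabilizer, $\closure{\Gamma}$ is a compact subgroup of $\PGL(n,\R)$, and correspondingly its lift is a compact subgroup of $\SLpm(n,\R)$. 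The point is then that every compact subgroup of $\SLpm(n,\R)$ is conjugated into the maximal compact subgroup $\O(n)$: this is the standard Cartan fixed-point theorem applied to the action of the compact group on the (nonpositively curved) symmetric space of $\SL(n,\R)$, yielding an invariant positive-definite quadratic form, in whose orthonormal basis the group lands in $\O(n)$.

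The main obstacle, and the step deserving the most care, is establishing that the closure of $\Gamma$ is genuinely compact in $\PGL(n,\R)$ rather than merely acting with bounded orbits on $\Omega$. The bridge is exactly Benzécri's theorem \ref{thm:Benzecri} through Proposition \ref{prop:autGpActsProp}: the set of $g\in\Aut(\Omega)$ with $g\cdot x_0\in\overline{B(x_0,R)}$ is compact in $\Aut(\Omega)$, and since $\Gamma$ fixes $x_0$ we may take $R=0$, so $\Gamma$ is contained in the compact stabilizer of $x_0$. Hence $\closure\Gamma$ is compact. I would emphasize that strict convexity is not even needed for this implication — it is genuinely a statement about elliptic elementary groups in arbitrary properly convex domains — but it fits the strictly convex framework of the surrounding results. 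Once compactness is in hand, the conjugation into $\O(n)$ is the routine linear-algebra fact about maximal compact subgroups, and the proof concludes.
\end{proof}
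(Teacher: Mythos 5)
Your proof is correct, but your forward direction takes a genuinely different route from the paper's. The paper argues as follows: given a fixed point $x\in\Omega$, the group also preserves the dual hyperplane $H$ of $x$ (via Proposition \ref{prop:dualMap}), hence the affine chart $\P(\R^n)-H$, hence the John ellipsoid of $\Omega$ in that chart; since the Hilbert geometry of an ellipsoid is hyperbolic space and the statement is known for elliptic elementary subgroups of $\PO(n-1,1)$, the conjugation into $\O(n)$ follows. You instead go through compactness: by Proposition \ref{prop:autGpActsProp} (\ie Benz\'ecri's theorem \ref{thm:Benzecri}) the stabilizer of $x_0$ in $\Aut(\Omega)$ is compact, so $\closure{\Gamma}$ is a compact subgroup of $\SLpm(n,\R)$ and is therefore conjugated into the maximal compact $\O(n)$ by the standard averaging/Cartan fixed-point argument. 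Both arguments are sound and both ultimately rest on Benz\'ecri; yours avoids the John ellipsoid and the reduction to the hyperbolic case at the cost of invoking the conjugacy of maximal compact subgroups, and your observation that strict convexity plays no role here is accurate (it plays no role in the paper's proof either). Your backward direction, via bounded orbits and the center-of-mass Proposition \ref{lm:centerOfMass}, is essentially the same as the paper's appeal to Cooper--Long--Tillmann's Lemma 4.3.
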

\begin{proof}
   If $G$ is conjugated into $\O(n)$ up to lifting it to $\SLpm(n,\R)$, $G$ has compact closure in $\Aut(\Omega)$, so by \cite[\Lm 4.3]{cooperLongTillmann2015}, $G$ fixes a point in $\Omega$, \ie $G$ is elementary of the elliptic type.

   Conversely, if $G$ is elementary of the elliptic type, consider a point $x\in\Omega$ that is fixed by $G$. Therefore, $G$ also preserves the hyperplane $H$ dual to $x$ (see Proposition \ref{prop:dualMap}), and the affine chart defined by $H$. Since $G$ preserves $\Omega$, it must also preserve its John ellipsoid in the affine chart defined by $H$. Since the Hilbert geometry of an ellipsoid is hyperbolic geometry, and the result holds for elementary elliptic subgroups of $\PO(n-1,1)$, we see that a lift of $G$ to $\SLpm(n,\R)$ is conjugated into $\O(n)$.
\end{proof}

\begin{proposition}\label{prop:CLTHypElemSgpSummary}
   Let $\Omega$ be a strictly convex domain, and $\Gamma$ an infinite discrete torsion-free subgroup of $\Aut(\Omega)$. The following are equivalent:
   \begin{enumerate}
      \item $\Gamma$ is doubly elementary of the hyperbolic type;
      \item $\Gamma$ is elementary of the hyperbolic type;
      \item $\Gamma$ is cyclic and has no parabolic element.
      \item $\Gamma$ is virtually nilpotent and has no parabolic element;
   \end{enumerate}
\end{proposition}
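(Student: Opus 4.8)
The plan is to establish the cycle $(1)\Rightarrow(2)\Rightarrow(3)\Rightarrow(1)$ together with the two implications $(3)\Rightarrow(4)$ and $(4)\Rightarrow(2)$; this suffices for all four equivalences. Two of these are free: $(1)\Rightarrow(2)$ holds because a doubly elementary group of hyperbolic type is \emph{a fortiori} elementary of hyperbolic type, and $(3)\Rightarrow(4)$ holds because a cyclic group is abelian, hence nilpotent, and the condition on parabolics is unchanged. The observation used throughout is that under the standing hypotheses a group with no parabolic element has \emph{only} hyperbolic nontrivial elements: an elliptic $g$ fixes a point of $\Omega$, so by properness of the $\Aut(\Omega)$-action (Proposition \ref{prop:autGpActsProp}) it lies in a compact stabilizer; being discrete, this stabilizer is finite, and torsion-freeness forces $g=\id$.

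For $(2)\Rightarrow(3)$, let $\Gamma$ be elementary of hyperbolic type, fixing $\xi_+,\xi_-\in\partial\Omega$. Since $\Omega$ is strictly convex these are joined by a unique chord $c\subset\Omega$, a complete Hilbert geodesic preserved by $\Gamma$, on which every element acts as an orientation-preserving isometry, i.e.\ a translation. Writing $\lambda_\pm(g)$ for the eigenvalue of the chosen lift of $g$ on the line $\xi_\pm$, the map $\rho\colon g\mapsto \log\abs{\lambda_+(g)}-\log\abs{\lambda_-(g)}$ is a homomorphism $\Gamma\to\R$ because $\Gamma$ fixes both eigenlines simultaneously, and a cross-ratio computation along $c$ identifies $\abs{\rho(g)}$ with twice the translation distance of $g$ on $c$. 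If $\rho(g)=0$ then $g$ fixes $c$ pointwise, hence fixes a point of $\Omega$ and is trivial by the dichotomy above; thus $\rho$ is injective, and properness forces $\rho(\Gamma)$ to be discrete in $\R$, so $\Gamma$ is infinite cyclic. No nontrivial element can be parabolic either: a parabolic element has all eigenvalues of equal modulus by Lemma \ref{lm:transLengthIsRatioEV}, whence $\rho=0$ and the element is trivial.

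The heart of the argument is $(4)\Rightarrow(2)$. Assume $\Gamma$ virtually nilpotent with no parabolic element, so all its nontrivial elements are hyperbolic, and choose a finite-index normal nilpotent subgroup $N\triangleleft\Gamma$. It is infinite, hence nontrivial nilpotent, so its center contains a hyperbolic element $z$, with unique attracting and repelling fixed points $\xi_+,\xi_-\in\partial\Omega$. Every $h\in N$ commutes with $z$, so conjugation by $h$ fixes $z$ and preserves its attracting and repelling points: $N$ fixes both $\xi_+$ and $\xi_-$. By the preceding lemma no nontrivial element can fix three boundary points, so each hyperbolic element of $N$ has fixed-point set exactly $\{\xi_+,\xi_-\}$, and this pair is canonically attached to $N$. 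As $N\triangleleft\Gamma$, conjugating a hyperbolic $w\in N$ by any $\gamma\in\Gamma$ yields another hyperbolic element of $N$, whose fixed points $\gamma\{\xi_+,\xi_-\}$ must again equal $\{\xi_+,\xi_-\}$; thus $\Gamma$ preserves this pair. An element swapping $\xi_+$ and $\xi_-$ would reverse the chord $c$ and fix its midpoint, hence be elliptic and trivial, so $\Gamma$ fixes $\xi_+$ and $\xi_-$ individually, with no fixed point in $\Omega$: this is exactly elementary of hyperbolic type.

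It remains to close the cycle with $(3)\Rightarrow(1)$: if $\Gamma=\gen{g}$ with $g$ hyperbolic, I must produce a $g$-invariant flag $(\xi,H)$. Taking $\xi=\xi_+$, the supporting hyperplanes of $\Omega$ at $\xi_+$ form, after passing to the closure, a compact convex set (a face of $\partial\Omega^\ast$, as in the remark above) preserved by $g$, so Brouwer's fixed-point theorem yields a $g$-invariant supporting hyperplane $H$ at $\xi_+$; then $(\xi_+,H)$ is $\Gamma$-invariant and $\Gamma$ is doubly elementary of hyperbolic type. The main obstacle is $(4)\Rightarrow(2)$: the delicate points are that the center of $N$ is nontrivial and contains a hyperbolic element, that its unique fixed pair is canonically attached to $N$ so that normality propagates invariance to all of $\Gamma$, and that torsion-freeness and strict convexity jointly exclude both swaps of $\xi_\pm$ and interior fixed points—each relying on the rigidity of fixed-point sets in strictly convex domains established above.
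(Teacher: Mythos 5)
Your proof is correct, but it takes a genuinely different route from the paper's. The paper disposes of the whole proposition in a few lines: the easy implications are as in your argument (parabolics are excluded because they fix only one boundary point, cyclicity follows from discreteness and torsion-freeness of a group acting by translations on the axis), and the hard implication --- from ``virtually nilpotent with no parabolics'' back to (doubly) elementary of hyperbolic type --- is delegated entirely to Propositions 4.13 and 4.14 of Cooper--Long--Tillmann. You instead reprove that implication from scratch: extracting a hyperbolic element from the center of a finite-index normal nilpotent subgroup $N$, using the paper's ``three fixed points implies elliptic'' lemma to pin the fixed pair $\{\xi_+,\xi_-\}$ canonically to $N$, propagating invariance to all of $\Gamma$ by normality, and excluding the swap of $\xi_\pm$ via torsion-freeness; you also upgrade to doubly elementary by a Brouwer fixed-point argument on the compact convex set of supporting hyperplanes at $\xi_+$, which closes the cycle back to $(1)$ --- something the paper's written chain $(1)\Rightarrow(2)\Rightarrow(3)\Rightarrow(4)\Rightarrow(2)$ leaves to the cited propositions. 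What your approach buys is self-containedness (essentially a reproof of the relevant CLT results); what the paper's buys is brevity. The only ingredient you use without proof is that a hyperbolic automorphism of a strictly convex domain has fixed points in $\partial\Omega$ realizing its extreme eigenvalues (so that $\xi_\pm$ exist and conjugation sends attracting point to attracting point); this Perron--Frobenius-type fact is standard and is implicit in the paper's discussion of axes, so it is an acceptable black box rather than a gap.
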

\begin{proof}
   Clearly, being doubly elementary of the hyperbolic type implies being elementary of the hyperbolic type. If $\Gamma$ is elementary of the hyperbolic type, it does not have any parabolic elements since these fix at most one point in $\overline{\Omega}$. Moreover, $\Gamma$ is cyclic since it is discrete and torsion-free. Clearly, being cyclic implies being virtually nilpotent. Finally, if $\Gamma$ is virtually nilpotent and has no parabolic elements, it is elementary of the hyperbolic type by \cite[\Prop 4.14 \& 4.13]{cooperLongTillmann2015}.
\end{proof}

\begin{proposition}[{{\cite[sec. 4]{cooperLongTillmann2015}}}] \label{prop:CLTParabElemSgpSummary}
   Let $\Omega$ be a strictly convex domain, and $\Gamma$ an unbounded subgroup of $\Aut(\Omega)$. The following are equivalent
   \begin{enumerate}
      \item $\Gamma$ is doubly elementary of the parabolic type;
      \item $\Gamma$ is elementary of the parabolic type;
      \item $\Gamma$ is weakly unipotent;
      \item $\Gamma$ has no hyperbolic elements.
   \end{enumerate}
   If moreover $\Gamma$ is discrete, then it is virtually nilpotent.
\end{proposition}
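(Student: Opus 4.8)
The plan is to prove the chain $(1)\Rightarrow(2)\Rightarrow(4)\Leftrightarrow(3)$ and then close the loop with the substantial implication $(4)\Rightarrow(1)$, before addressing the final assertion on virtual nilpotency. The implication $(1)\Rightarrow(2)$ is immediate, since being doubly elementary of the parabolic type is by definition a strengthening of being elementary of the parabolic type. Likewise $(2)\Rightarrow(4)$ is immediate, as the absence of hyperbolic elements is built into the definition of elementary groups of the parabolic type. For the equivalence $(3)\Leftrightarrow(4)$, I would invoke Lemma \ref{lm:transLengthIsRatioEV}: an element $g$ is hyperbolic exactly when $\trans(g)=\frac12\log\lambda_1(g)/\lambda_n(g)>0$, i.e.\ when $\lambda_1(g)>\lambda_n(g)$. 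Hence $\Gamma$ has no hyperbolic element if and only if every $g\in\Gamma$ has all its eigenvalues of equal modulus; since we work with lifts in $\SLpm(n,\R)$, this common modulus must be $1$, which is precisely weak unipotence.

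The crux is $(4)\Rightarrow(1)$. First I would rule out a global fixed point in $\Omega$: since $\Aut(\Omega)$ acts properly on $\Omega$ (Proposition \ref{prop:autGpActsProp}), the stabilizer of any interior point is compact, so a fixed point in $\Omega$ would force $\Gamma$ to be bounded, contradicting the hypothesis. Hence any global fixed point must lie in $\partial\Omega$. To produce one, I would use that $\Gamma$, preserving the strictly convex domain $\Omega$, is $P_1$-divergent (Proposition \ref{prop:autOfStrictlyConvDomainsAreP1Div}); applying divergence to an injective sequence $(\gamma_k)$ in the infinite group $\Gamma$ yields, after extraction, an attractive flag $(p_+,H_+)$ whose point $p_+$ lies in the full orbital limit set, hence in $\partial\Omega$.

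It then remains to show that $\Gamma$ fixes the flag $(p_+,H_+)$ and that the fixed point in $\partial\Omega$ is unique. This is exactly the structural analysis of Cooper--Long--Tillmann in Section 4 of \cite{cooperLongTillmann2015}: the absence of hyperbolic elements forces every divergent sequence to share a common attractive/repulsive flag, and strict convexity prevents two distinct boundary fixed points, since two such points would span a $\Gamma$-invariant segment in $\partial\Omega$ or produce a hyperbolic element. I would therefore cite the relevant propositions there, exactly as in the proof of Proposition \ref{prop:CLTHypElemSgpSummary} above, to conclude that $\Gamma$ is doubly elementary of the parabolic type. This implication — in particular the passage from the limiting flag of a single sequence to a flag invariant under all of $\Gamma$ — is the main obstacle.

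Finally, assuming $\Gamma$ discrete, I would deduce virtual nilpotency. At this point $\Gamma$ is a discrete group, all of whose elements are elliptic or parabolic, stabilizing a common flag $(\xi,H)$ with $\xi\in\partial\Omega$. One route is to note that such a $\Gamma$ lies in a cusp neighborhood inside the thin part and apply the Kazhdan--Margulis lemma for strictly convex Hilbert geometries recalled in the introduction; a more algebraic route is that a discrete subgroup of $\SLpm(n,\R)$ all of whose elements have eigenvalues of modulus $1$ and which stabilizes a flag is virtually nilpotent, again as established in \cite{cooperLongTillmann2015}. Either way the conclusion follows.
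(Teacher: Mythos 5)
Your proposal is correct and follows essentially the same route as the paper: the easy implications are definitional, $(3)\Leftrightarrow(4)$ comes from the translation-length formula of Lemma \ref{lm:transLengthIsRatioEV}, and the substantive step $(4)\Rightarrow(1)$ together with the virtual nilpotency of discrete examples is delegated to Section 4 of \cite{cooperLongTillmann2015} (the paper cites Cor.~4.7 and Prop.~4.11 specifically). The extra scaffolding you add --- ruling out an interior fixed point via properness and unboundedness, and invoking $P_1$-divergence to locate a boundary limit point --- is sound but becomes redundant once the Cooper--Long--Tillmann results are invoked.
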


\begin{proof}
   Clearly, being doubly elementary of the parabolic type implies being elementary of the parabolic type. If $\Gamma$ is an elementary group of the parabolic type, it does not have any hyperbolic elements, so it is weakly unipotent by Lemma \ref{lm:transLengthIsRatioEV}. Finally, if $\Gamma$ has no hyperbolic elements, it is doubly elementary by \cite[\Cor 4.7]{cooperLongTillmann2015}, necessarily of the parabolic type, since it has no loxodromic element, and any non-trivial parabolic element in $\Gamma$ would fix two distinct points in $\overline{\Omega}$.

   If moreover $\Gamma$ is discrete, it is virtually nilpotent by \cite[\Prop 4.11]{cooperLongTillmann2015}.
\end{proof}

In the general properly convex case, we have.

\begin{proposition}\label{prop:solvGpsAreElem}
   Let $\Omega\subset\P(\R^n)$ be a properly convex domain, and $R<\Aut(\Omega)$ be a solvable Zariski connected group. Then $R$ is elementary.
\end{proposition}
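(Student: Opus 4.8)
The plan is to prove the slightly stronger statement that any Zariski-connected solvable subgroup $R<\PGL(n,\R)$ preserving a properly convex domain $\Omega$ has a fixed point in $\overline{\Omega}$, by induction on $n$. The engine is a dichotomy coming from the structure of solvable groups. Since $R$ is Zariski-connected and solvable, the Lie--Kolchin theorem provides a common eigenvector of $R$ in $\C^n$; taking real and imaginary parts yields a minimal nonzero $R$-invariant subspace $V\subset\R^n$ with $\dim V\in\{1,2\}$. I would then distinguish cases according to the position of $\P(V)$ relative to $\overline{\Omega}$.

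The favorable case is $\dim V=1$ with $[V]\in\overline{\Omega}$: then $[V]$ is the desired fixed point. I claim that in every other case $\P(V)\cap\overline{\Omega}=\varnothing$. Indeed, if $\dim V=2$ then, $V$ being irreducible, $R$ cannot act by scalars on $V$, so its action on the projective line $\P(V)$ is by a nontrivial connected group of rotations (the two complex eigenvalues are conjugate and non-real); such a group admits no invariant proper closed arc, while $\P(V)$ itself cannot lie in $\overline{\Omega}$ by proper convexity, forcing $\P(V)\cap\overline{\Omega}=\varnothing$. The remaining case $\dim V=1$, $[V]\notin\overline{\Omega}$, is disjointness by definition.

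Assuming $\P(V)\cap\overline{\Omega}=\varnothing$, I would project from $\P(V)$. Writing $\pi:\P(\R^n)\setminus\P(V)\to\P(\R^n/V)$ for the induced projection, one checks that $\pi(\overline{\Omega})$ is again properly convex (it is compact, convex, and contains no projective line, since $\overline{\Omega}$ does not and is disjoint from $\P(V)$) and is preserved by the induced $R$-action on $\P(\R^n/V)$. As $\dim \R^n/V<n$, the induction hypothesis yields an $R$-fixed point $\bar x\in\pi(\overline{\Omega})$. Let $M$ be the projective subspace spanned by $\P(V)$ and the fiber $\pi^{-1}(\bar x)$; it is $R$-invariant of projective dimension $\dim V$, it contains $\P(V)$ as a projective hyperplane, and $K:=M\cap\overline{\Omega}$ is a nonempty, compact, convex, $R$-invariant set disjoint from $\P(V)$. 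Thus $K$ lies in the $R$-invariant affine chart $M\setminus\P(V)$, on which $R$ acts by affine transformations; the centroid of $K$ computed in this chart is therefore $R$-fixed and lies in $K\subseteq\overline{\Omega}$, completing the induction. The base case $n=1$ is trivial, as $\P(\R^1)$ is a point.

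The step I expect to be the crux is the disjointness claim $\P(V)\cap\overline{\Omega}=\varnothing$ in the two-dimensional case, together with the verification that $\pi(\overline{\Omega})$ remains properly convex; once disjointness is secured, the passage to an invariant affine chart and the affine-invariance of the centroid make the conclusion essentially automatic, and crucially sidestep having to split the possibly non-split $R$-module extension that arises when $\dim V=2$. Note finally that Zariski-connectedness is used only through Lie--Kolchin, to produce the low-dimensional invariant subspace and to rule out an invariant proper arc on $\P(V)$; one may harmlessly replace $R$ by its Zariski closure throughout, which has the same invariant subspaces and still preserves $\overline{\Omega}$.
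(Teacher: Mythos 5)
Your argument is correct and follows essentially the same route as the paper: Lie--Kolchin yields a real invariant subspace of dimension $1$ or $2$ whose projectivization either contains a fixed point in $\overline{\Omega}$ or (by the rotation argument in the two-dimensional case, or by assumption in the one-dimensional case) is disjoint from $\overline{\Omega}$, after which one projects to a lower-dimensional properly convex domain, applies induction, and extracts a fixed point from the resulting compact convex $R$-invariant set $K$ in an invariant affine chart. The only deviation is the last step, where you use the affine equivariance of the centroid of $K$ while the paper invokes amenability of the solvable group $R$ to fix a point of $K$; both are valid, and your write-up is otherwise just a more explicit version of the paper's (e.g.\ in checking that the projected domain is properly convex).
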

\begin{proof}
    We prove the result by induction on the dimension. The result is clear for $n=1$. Suppose $n\geq 2$. By the Lie--Kolchin theorem, $R$ either preserves a projective line coming from an eigenvector in $\P(\C^n)-\P(\R^n)$, or it preserves a point in $\P(\R^n)$. Observe that if $R$ preserves a line, then it cannot meet $\overline{\Omega}$, since $R$ would act by rotations on this line, which would contradict the proper convexity of $\Omega$.

    Hence we may suppose that $R$ preserves a projective subspace $\P(E)$ of dimension $0$ or $1$ which does not meet $\overline{\Omega}$. Since $E\cap\overline{\Omega}=\varnothing$, $\Omega$ projects to a properly convex domain in $\P(\R^n/E)$, under the projection $\P(\R^n)-\P(E)\to\P(\R^n/E)$. By the induction hypothesis, we find a fixed point in its closure, which lifts to a $R$-invariant projective subspace $\P(F)$ which intersects $\overline{\Omega}$ and contains $\P(E)$ as a hyperplane.

   Therefore, in $\P(F)$, $R$ preserves the affine chart determined by $\P(E)$, as well as the bounded convex subset (in this affine chart) $\overline{\Omega}\cap\P(F)$. Since $R$ is solvable, it is amenable, so any affine action on a compact convex set has a fixed point in this compact set. This gives a fixed point in $\overline{\Omega}$.
\end{proof}

As a corollary, we get the following generalization of \cite[\Prop 4.14]{cooperLongTillmann2015}.

\begin{corollary}\label{cor:strictlyConvDomainsElemIFFVirtSolv}
   Let $\Omega\subset\P(\R^n)$ be a strictly convex domain, and $R<\Aut(\Omega)$ be a virtually solvable torsion-free group. Then $R$ is elementary.
\end{corollary}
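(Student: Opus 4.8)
The plan is to reduce, via the Zariski closure, to a situation where Proposition \ref{prop:solvGpsAreElem} applies, and then to promote the resulting fixed point from a finite-index subgroup to all of $R$ using strict convexity and torsion-freeness.

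First I would set $G=\Zclosure{R}$. Since $R$ is virtually solvable, so is $G$, and hence its Zariski identity component $G^{\circ}=\Zclosure{R}^{\circ}$ is a Zariski connected solvable group. Put $H:=R\cap G^{\circ}$. Because $G^{\circ}$ has finite index in $G$ and $R\subseteq G$, the subgroup $H$ has finite index in $R$; as $G^{\circ}$ is normal in $G$ and $R\subseteq G$, the subgroup $H$ is normal in $R$. Writing $R=\bigsqcup_{i=1}^{k}r_iH$, one gets $G=\bigcup_i r_i\,\Zclosure{H}$, so $\Zclosure{H}$ has finite index in $G$ and therefore contains $G^{\circ}$; combined with $\Zclosure{H}\subseteq G^{\circ}$ this gives $\Zclosure{H}=G^{\circ}$, so $H$ is Zariski connected. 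Moreover $H$ is solvable (a subgroup of the solvable group $G^{\circ}$) and satisfies $H<\Aut(\Omega)$ (a subgroup of $R$). Proposition \ref{prop:solvGpsAreElem} then shows that $H$ is elementary, so $H$ fixes some point of $\overline{\Omega}$.

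Next I would exploit normality of $H$. The fixed set $F:=\{x\in\overline{\Omega}: hx=x \ \text{for all}\ h\in H\}$ is a nonempty closed subset of $\overline{\Omega}$ which is convex (if every $h$ fixes two points, it fixes the segment between them) and $R$-invariant (if $hx=x$ for all $h$ and $r\in R$, then $h(rx)=r(r^{-1}hr)x=rx$ since $r^{-1}hr\in H$). I would then run the trichotomy for $H$ in the strictly convex domain $\Omega$. If $H$ is of the elliptic type, then $H$ is conjugate into $\mathrm{O}(n)$, hence relatively compact, and since $[R:H]<\infty$ the closure $\overline{R}$ is a finite union of translates of $\overline{H}$, hence compact; applying the center of mass of Proposition \ref{lm:centerOfMass} to the closure of an orbit $\overline{Rx}\subset\Omega$ yields an $R$-fixed point in $\Omega$. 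If $H$ is of the parabolic type, then $F\cap\partial\Omega$ consists of the single point $\xi$, which must be fixed by $R$ by $R$-invariance of $F$. In both cases $R$ is elementary.

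The main obstacle is the hyperbolic type, where $H$ fixes exactly two points $\xi^{\pm}\in\partial\Omega$ (any common fixed point of $H$ is a common fixed point of a hyperbolic element of $H$, and such an element fixes only $\xi^{\pm}$ in $\overline{\Omega}$), so $F=\{\xi^{+},\xi^{-}\}$ and $R$ merely preserves this pair. Let $R^{+}$ be the index-at-most-two subgroup fixing both points; it acts on the open geodesic $L=(\xi^{-},\xi^{+})\subset\Omega$, which is isometric to $\R$, by translations, while any element of $R\setminus R^{+}$ acts as an orientation-reversing isometry, i.e.\ a reflection fixing a unique point of $L\subset\Omega$. I expect the delicate point to be ruling out a genuine infinite-dihedral action on $L$: when $R^{+}$ acts trivially on $L$ one immediately obtains a global fixed point, since the reflection point of any swapping element is then fixed by all of $R$; when the translation part is nontrivial one must invoke torsion-freeness to exclude the swap, as the induced involution on $L$ would otherwise force an order-two element. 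This reduces the corollary to showing that a torsion-free virtually solvable group preserving $\Omega$ cannot realize such a dihedral configuration.
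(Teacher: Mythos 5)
Your reduction to a finite-index Zariski-connected solvable subgroup $H$ and the application of Proposition \ref{prop:solvGpsAreElem} is exactly the paper's first step (the paper writes $R'$ for your $H$ and does not even need normality), and your treatment of the elliptic and parabolic cases matches the paper's as well. The problem is the hyperbolic case, which you explicitly leave open, and that is a genuine gap: as you have set things up, nothing yet prevents an element of $R$ from swapping the two fixed points $\xi^{\pm}$.

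The paper closes all non-elliptic cases at once with the following observation: for $g\in R\setminus R'$ one has $g^n\in R'$ for some $n>1$, so $g^n$ fixes the boundary point $x$ fixed by $R'$, and a \emph{non-elliptic} $g$ must then fix $x$ itself. Indeed $g$ permutes $\mathrm{Fix}(g^n)\cap\overline{\Omega}$, which has at most two points (an automorphism of a strictly convex domain fixing three boundary points is elliptic, by the lemma on elementary groups in Section 3, and ellipticity of $g^n$ would force ellipticity of $g$); and if $g$ swapped two boundary fixed points, it would preserve and reverse the open segment joining them, which lies in $\Omega$ by strict convexity, hence fix an interior point and be elliptic --- a contradiction. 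This is the ingredient you are missing: the obstruction to the swap is not, as you suggest, that the induced involution on $L$ ``forces an order-two element'' (a reflection of $L$ is readily induced by an infinite-order isometry, e.g.\ a point-reflection of the axis composed with an irrational rotation about it), but that any swapping element fixes a point of $\Omega$ and is therefore elliptic. Note that torsion-freeness enters precisely through the assertion that the elements $g\in R\setminus R'$ are not elliptic --- immediate when $R$ is discrete, since a discrete torsion-free group of automorphisms has no nontrivial elliptic element --- and your proposal never isolates this as the decisive point; without it, the dihedral configuration you worry about cannot be excluded.
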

\begin{proof}
   Consider $R'$ a finite index solvable Zariski connected subgroup of $R$. $R'$ is elementary by Proposition \ref{prop:solvGpsAreElem}.

   If $R'$ is elementary of the elliptic type, we can find $x\in\Omega$ such that $R'\cdot x = x$. Since $R'$ is a finite index subgroup of $R$, $R\cdot x\subset\Omega$ is a finite subset, and is therefore bounded. It is clearly $R$-invariant, so Lemma \ref{lm:centerOfMass} gives a point $y\in\Omega$ that is fixed by $R$. Therefore, $R$ is elementary of the elliptic type.

   Otherwise, $R'$ fixes a point $x\in\partial\Omega$. For any $g\in R-R'$, $g^n\in R'$ for some $n>1$, so $g^n x = x$. Since $g$ is not elliptic, it follows that $g$ also fixes $x$, so $R$ is elementary.
\end{proof}

Lastly, we give the definition of cusps we will be using throughout this article. Notice that it is different from the more classical definition found in the introduction.

\begin{definition}\label{def:cusps}
   For $\Omega$ a strictly convex (\resp round) domain, and $\Gamma<\Aut(\Omega)$ infinite and discrete, we say that $\Omega/\Gamma$ is a \emph{strictly convex (\resp round) cusp} if $\Gamma$ fixes a unique point in $\partial\Omega$.
\end{definition}

The following is a direct corollary of Proposition \ref{prop:CLTParabElemSgpSummary}.

\begin{lemma}
   Let $\Gamma$ be a discrete infinite subgroup of automorphisms of a strictly convex domain $\Omega$. Then $\Omega/\Gamma$ is a cusp if and only if the action of $\Gamma$ is doubly elementary of the parabolic type.
\end{lemma}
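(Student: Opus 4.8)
The plan is to deduce both implications from Proposition~\ref{prop:CLTParabElemSgpSummary}, whose equivalences hold for any unbounded subgroup. Two preliminary observations about $\Gamma$, which is infinite and discrete, will be used throughout. By Proposition~\ref{prop:autGpActsProp} the $\Gamma$-action on $\Omega$ is proper, so point stabilisers in $\Omega$ are compact; consequently $\Gamma$ fixes no point of $\Omega$ (otherwise it would be finite, being a discrete subgroup of a compact stabiliser) and it is unbounded in $\Aut(\Omega)$. In particular $\Gamma$ can never be elementary of the elliptic type, and Proposition~\ref{prop:CLTParabElemSgpSummary} is available.

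The reverse implication is immediate from the definitions: if $\Gamma$ is doubly elementary of the parabolic type then it is \emph{a fortiori} elementary of the parabolic type, and such a group fixes, by definition, a unique point of $\partial\Omega$; hence $\Omega/\Gamma$ is a cusp.

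For the direct implication, suppose $\Omega/\Gamma$ is a cusp, so that $\Gamma$ fixes a unique point $\xi\in\partial\Omega$. By the implication (4)~$\Rightarrow$~(1) of Proposition~\ref{prop:CLTParabElemSgpSummary}, it suffices to show that $\Gamma$ contains no hyperbolic element, after which $\Gamma$ is doubly elementary of the parabolic type. Assume for contradiction that $g\in\Gamma$ is hyperbolic. By Lemma~\ref{lm:transLengthIsRatioEV} the extreme eigenvalue moduli of $g$ are distinct, so in the strictly convex domain $\Omega$ the element $g$ has one attracting and one repelling fixed point in $\partial\Omega$; since $g\xi=\xi$, one of these equals $\xi$ and the other is a point $g^-\neq\xi$. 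As $\Gamma$ fixes $\xi$ alone, $g^-$ cannot be $\Gamma$-invariant, so there is $h\in\Gamma$ with $hg^-\neq g^-$; because $h\xi=\xi$, the conjugate $g':=hgh^{-1}$ is again hyperbolic, with the same attracting fixed point $\xi$ but repelling fixed point $hg^-$ transverse to $g^-$.

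The main obstacle is to extract a contradiction from the existence of two hyperbolic elements $g,g'\in\Gamma$ sharing exactly the fixed point $\xi$. I would analyse the dynamics near $\xi$, passing to coordinates adapted to the attracting line $\xi$ and repelling line of $g$, as in the parabolic coordinates discussed after Proposition~\ref{prop:autPreservesAlgHoroNSC}. Using that $g$ and $g'$ contract towards $\xi$ but have distinct repelling lines, a suitable product or commutator construction produces a non-trivial element of $\Gamma$ that fixes $\xi$ with equal eigenvalue on $\xi$ and a supporting hyperplane, \ie is parabolic; conjugating it by the powers $g^{k}$, whose contraction towards $\xi$ shrinks it, then yields pairwise distinct non-trivial elements of $\Gamma$ converging to the identity, contradicting discreteness. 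This is the familiar rank-one phenomenon that a discrete group fixing a single boundary point of a strictly convex domain cannot contain a biproximal element, and it is the only non-formal input beyond Proposition~\ref{prop:CLTParabElemSgpSummary}.
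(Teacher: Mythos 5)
Your reverse implication is fine, and your reduction of the forward implication to the exclusion of hyperbolic elements is exactly the right move: once one knows that $\Gamma$ is unbounded, fixes no point of $\Omega$ (both follow from properness, Proposition \ref{prop:autGpActsProp}, and discreteness), fixes a unique point of $\partial\Omega$, and contains no hyperbolic element, Proposition \ref{prop:CLTParabElemSgpSummary} finishes the proof. The paper treats this last ingredient --- a discrete group fixing a unique point of $\partial\Omega$ of a strictly convex domain has no hyperbolic element --- as part of the material recalled from Section 4 of \cite{cooperLongTillmann2015} (it is their trichotomy for discrete elementary groups, resting on the lemma that a hyperbolic element $g$ and an automorphism fixing one endpoint of the axis of $g$ but not the other generate a non-discrete group), which is why it calls the statement a direct corollary. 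You instead attempt to reprove this fact, which would be legitimate if the argument were complete.

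It is not, and the gap sits precisely at the only step carrying mathematical content. Two of your assertions are unjustified and, as stated, not true in general. First, a ``suitable product or commutator'' of two hyperbolic elements $g,g'$ fixing $\xi$ need not be parabolic: such a commutator fixes $\xi$ with eigenvalue $1$, and having equal eigenvalues on $\xi$ and on a supporting hyperplane only says it preserves the algebraic horospheres (Proposition \ref{prop:autPreservesAlgHoroNSC}); in dimension $n\geq 3$ it can perfectly well be elliptic or even hyperbolic, with its extreme eigenvalues carried by other invariant subspaces. (The $2\times 2$ computation where upper-triangular commutators are automatically unipotent does not generalize.) Second, even granting a nontrivial parabolic $p$ fixing $\xi$, the claim that the conjugates $g^{-k}pg^{k}$ converge to the identity is not proved: their matrix entries scale by ratios of \emph{all} the eigenvalues of $g$, not just the extreme ones, so there is no reason for every entry to contract; and $\Omega$ being merely strictly convex, one cannot argue via a Lie-group structure on the parabolics fixing $\xi$. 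The correct argument is metric rather than matricial: one bounds the displacement $d_\Omega\bigl(x,g^{-k}hg^{k}x\bigr)=d_\Omega\bigl(g^{k}x,hg^{k}x\bigr)$ for a suitable basepoint, deduces from properness that the conjugates lie in a compact subset of $\Aut(\Omega)$, hence by discreteness take finitely many values, and concludes that some power of $g$ commutes with $h$, forcing $h$ to fix both endpoints of the axis of $g$ --- the desired contradiction. Either supply this argument or cite it from \cite{cooperLongTillmann2015}; as written, the crux of the forward direction is asserted, not proved.
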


\subsection{A smoothing lemma for elementary groups of the parabolic type}

We now give a generalization of a smoothing lemma by Cooper--Long--Tillmann \cite{cooperLongTillmann2015} allowing to smooth out domains preserved by elementary subgroups of the parabolic type. This is one of the main ingredients of many of the results of this work. The original version was only stated in the case where the elementary group is unipotent, but the same proof yields a slightly more general statement, which will later enable us to build a generalized cusp with non-virtually nilpotent holonomy (see paragraph \ref{par:genCuspWithSolvHolo}).

\begin{lemma}[{{\cite[\Prop 5.8]{cooperLongTillmann2015}}}]\label{lm:smoothing}
   Suppose $\Gamma < \PGL(n,\R)$ preserves a properly convex domain $\Omega\subset\P(\R^n)$, as well as a hyperplane $H$ supporting $\Omega$ with trivial weight. Then $\Omega$ is foliated by $\Gamma$-invariant hypersurfaces that are the graphs of strictly convex analytic functions in the affine chart $\P(\R^n)-H$.

   The interior $\Omega'$ of the convex hull of one of these hypersurfaces is such that $\partial\Omega'$ meets $\partial\Omega$ at $\partial\Omega\cap H$ exactly and $\partial\Omega'-H$ is $\COne$ and contains no non-trivial segments. If moreover $\partial\Omega\cap H$ is a single point, $\Omega'$ is strictly convex, and if this point is $\COne$ in $\partial\Omega$, then $\Omega'$ is round.
\end{lemma}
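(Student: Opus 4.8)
The plan is to follow Cooper--Long--Tillmann's argument for \cite[\Prop 5.8]{cooperLongTillmann2015}, whose only use of unipotency is to ensure invariance of the characteristic function of the cone over $\Omega$; I would isolate exactly this from the trivial weight hypothesis. First I lift $\Omega$ to a sharp open convex cone $C\subset\R^n$ and $\Gamma$ to the subgroup of $\SLpm(n,\R)$ preserving $C$, so that $\abs{\det g}=1$ for all $g\in\Gamma$. In a basis $e_1,\dots,e_n$ with $H=\gen{e_1,\dots,e_{n-1}}$, the hypothesis that $H$ is supporting with trivial weight says exactly that $\Gamma$ acts trivially on $\R^n/H$, equivalently that $\Gamma$ preserves the affine hyperplane $\mathcal A:=\{x_n=1\}$ of $\R^n$. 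Then $\P(\R^n)-H$ is identified with $\mathcal A$ and $\Omega$ with $C\cap\mathcal A$, on which $\Gamma$ acts by affine maps; and since $H$ is supporting, a nonzero $d$ lies in the recession cone of $\Omega\subset\mathcal A$ if and only if $d\in\overline C\cap H$.

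The heart of the proof is the function $u:=f_C|_{\mathcal A}$. It is analytic and $\Gamma$-invariant: by \eqref{eq:characFuncEquivariance} and $\abs{\det g}=1$ we have $f_C\circ g=f_{gC}=f_C$, and $g$ preserves $\mathcal A$. Intersecting the strictly convex analytic level hypersurfaces $\{f_C=c\}$ of Theorem~\ref{thm:VinHypersurfaces} with the transverse affine hyperplane $\mathcal A$ shows that each $\{u=c\}$ is a strictly convex analytic hypersurface of $\mathcal A$, that $u$ is convex, and that $u\to+\infty$ on approach to the finite boundary $\partial\Omega\cap\mathcal A\subset\partial C$. The crucial monotonicity comes from differentiating under the integral sign: $\nabla f_C(x)=\int_{C^\ast}\alpha\,e^{\alpha(x)}\,\mathrm d\alpha$ lies in $C^\ast$, since every $\alpha\in C^\ast$ is negative on $\overline C\setminus\{0\}$; hence for any recession direction $d\in\overline C\cap H$, $d\neq 0$, one has $\tfrac{\mathrm d}{\mathrm dt}f_C(p+td)=\nabla f_C(p+td)(d)<0$. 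Thus $u$ is strictly decreasing along every recession ray, has no critical point in $\Omega$, and its level sets meet each recession line at most once; so the $\{u=c\}$ form a $\Gamma$-invariant foliation of $\Omega$ by strictly convex analytic hypersurfaces, each of which is the graph of a strictly convex analytic function in the chart $\P(\R^n)-H$.

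Next I fix one leaf $S=\{u=c\}$ and study $\Omega':=\Int(\Conv S)$. Since $u$ is convex and strictly decreasing toward the cusp, $\{u\le c\}$ is convex with recession cone $\overline C\cap H$ and with boundary $S$ inside $\mathcal A$; hence $\Omega'$ is the interior of this cusp side, its finite boundary $\partial\Omega'-H$ equals $S$, and $\partial\Omega'\cap H=\overline C\cap H=\partial\Omega\cap H$. This gives at once that $\partial\Omega'$ meets $\partial\Omega$ exactly along $\partial\Omega\cap H$, and that $\partial\Omega'-H=S$ is $\COne$ (a smooth graph) and contains no non-trivial segment (being strictly convex). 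If moreover $\partial\Omega\cap H=\{\xi\}$ is a single point, then $\xi$ is the only boundary point of $\Omega'$ off $S$; a non-trivial segment in $\partial\Omega'$ would have its relative interior inside $S$ (impossible) or pass through $\xi$, forcing $S$ to contain a ray (also impossible). So $\Omega'$ is strictly convex.

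Finally, assume in addition that $\xi$ is a $\COne$ point of $\partial\Omega$, so $H$ is the unique supporting hyperplane of $\Omega$ at $\xi$; I would deduce that $\Omega'$ is round by showing it is $\COne$ at $\xi$. As $S$ is a smooth strictly convex graph asymptotic to $\xi$, the supporting hyperplanes of $\Omega'$ at $\xi$ are the limits of the tangent hyperplanes of $S$ along sequences converging to $\xi$; the $\COne$-ness of $\Omega$ at $\xi$ forces every such limit to be $H$, whence $H$ is the unique support of $\Omega'$ at $\xi$ and $\Omega'$ is round. I expect the main obstacle to be precisely this asymptotic control near the cusp: one must check that $u$ truly blows up at the finite boundary (so the leaves are honest graphs that sweep out $\Omega$) and, in the round case, that the tangent hyperplanes of $S$ converge to $H$ and not to some spurious support of the strictly smaller domain $\Omega'$, i.e. one must turn the regularity of $\partial\Omega$ at $\xi$ into uniform estimates along the leaf.
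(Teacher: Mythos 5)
Your construction is essentially a reorganization of the paper's: both arguments rest on Vinberg's characteristic function $f_C$ together with the pencil of affine hyperplanes parallel to $\Span H$ singled out by the trivial-weight hypothesis. The paper embeds $C$ into $\P(\R^{n+1})$, fixes one Vinberg hypersurface $S$, and slices $\Conv S$ by the whole pencil; you fix one member $\mathcal A$ of the pencil and slice it by the whole family of Vinberg hypersurfaces, i.e.\ you take level sets of $u=f_C|_{\mathcal A}$. By homogeneity of $f_C$ these produce the same foliation up to reparametrization, and your computation $\nabla f_C(x)=\int_{C^\ast}\alpha\,e^{\alpha(x)}\,\mathrm d\alpha\in C^\ast$ is a clean way to get the monotonicity along recession rays, hence the graph property, the absence of critical points, and the identification of $\partial\Omega'\cap H$ with $\partial\Omega\cap H$. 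Up to and including the strictly convex case your argument is correct.

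The round case, however, has a genuine gap, and the mechanism you propose does not work as stated. First, for a convex body the set of supporting hyperplanes at a non-smooth boundary point is in general strictly larger than the set of limits of tangent hyperplanes at nearby smooth points (at a corner of a square the limits of tangents are the two edge lines, while the supporting lines form a full one-parameter family), so "the supporting hyperplanes of $\Omega'$ at $\xi$ are the limits of the tangent hyperplanes of $S$" is false without taking a convex hull. Second, and more seriously, the $\COne$-ness of $\Omega$ at $\xi$ does not by itself constrain the tangent hyperplanes of $S$ or the supports of the smaller domain $\Omega'$ at $\xi$: the tangent hyperplanes of $S$ support $\Omega'$, not $\Omega$, and a smooth strictly convex subdomain of a domain that is $\COne$ at $\xi$ need not be $\COne$ at $\xi$. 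In the affine chart, take $\Omega=\{y>x^2\}$ (so $\xi$ is the vertical point at infinity and $\Omega$ is $\COne$ there) and $\Omega'=\{y>x^2/(1-x^2),\ \abs{x}<1\}$: the latter is smooth and strictly convex but acquires the extra supporting lines $x=\pm1$ at $\xi$, which are precisely limits of its tangent lines. What saves you is a property of $S$ you have not invoked, namely that $\{u<c\}$ is asymptotic to all of $\Omega$: since $\alpha(d_\xi)<0$ for every $\alpha\in C^\ast$, dominated convergence gives $f_C(p+td_\xi)\to 0$ as $t\to+\infty$, so every line in the direction $d_\xi$ that meets $\Omega$ eventually enters $\{u<c\}$. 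Hence $\Omega'$ and $\Omega$ have the same image under the projection $\pi:\mathcal A\to\mathcal A/\R d_\xi$; and a projective hyperplane $P\neq H$ through $\xi$ supports a domain at $\xi$ exactly when the affine hyperplane $\pi(P\cap\mathcal A)$ misses the projected domain. So $\Omega'$ and $\Omega$ have exactly the same supporting hyperplanes at $\xi$, which gives the round case. (The paper avoids this issue differently, by suspending to $\P(\R^{n+1})$ and transferring a supporting hyperplane of the slice directly to a supporting hyperplane of $\Omega$ at $\xi$.) You correctly identified this asymptotic control as the main obstacle, but the proof as written does not close it.
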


\begin{proof}
   Choose a convex cone $C$ lifting $\Omega$ to $\R^n$, and a lift $\Gamma'<\SLpm(n,\R)$ of $\Gamma$ preserving $C$. We identify $\R^n$ with the affine chart of $\P(\R^{n+1})$ determined by the hyperplane $H_0$ defined by the equation $x_{n+1}=0$, and let $o=[0:\dots :0:1]$ be the point identified with the origin of $\R^n$. This makes $C$ a properly convex domain of $\P(\R^{n+1})$ since $\Omega$ is properly convex in $H_0$ and $o\not\in H_0$. We realize $\Gamma'$ as a subgroup of $\Aut(C)$ via the embedding
   \begin{equation*}
      \gamma\mapsto\begin{bmatrix}
         \gamma & 0 \\
         0 & 1
      \end{bmatrix}
   \end{equation*}

   By Theorem \ref{thm:VinHypersurfaces}, any Vinberg hypersurface $S\subset C$ is invariant under the action of $\Gamma'$. Let $H_1$ be the projective hyperplane containing $H\subset H_0$ and $o$. Observe that all elements of $\Gamma'$ fix both $H_0$ and $H_1$, with eigenvalue $1$. Therefore, $\Gamma'$ acts on $\P(\R^{n+1}/\Span H)\simeq\P(\R^2)$ by fixing the two points corresponding to $H_0$ and $H_1$, with eigenvalue $1$, so $\Gamma'$ acts trivially on $\P(\R^{n+1}/\Span H)$. It follows that $\Gamma'$ fixes any hyperplane in $\P(\R^{n+1})$ containing $H$.

   Choose a hyperplane $H_2$ containing $H$ and meeting the interior of $C$. We claim that $\Omega_{H_2}:=\IntRel H_2\cap \Conv S$ is a properly convex domain of $H_2$ whose boundary is the graph of a strictly convex analytic function in $H_2-H$. Moreover, if $H\cap\overline{\Omega}$ is reduced to a point, then the domain is strictly convex, and if this point is a $\COne$ point of $\partial\Omega$, then the domain is round. Assuming this holds, the lemma follows because $C$, and hence $S$ is foliated by hyperplanes containing $H$ and meeting with $C$, so that the projection into $\Omega$ of $\partial\Omega_{H_2}$ for varying $H_2$ defines a foliation of $\Omega$ by hypersurfaces as required.

   The first claim comes from the fact that $S$ is the graph of a strictly convex analytic function in the affine chart $\P(\R^{n+1})-H_0$, hence its intersection with $H_2$ is also the graph of a strictly convex analytic function in the affine chart $H_2-H_2\cap H_0=H_2-H$ of $H_2$. The second claim follows directly from the first. For the third one, observe that a supporting hyperplane $H'$ of $\Omega_{H_2}$ in $H_2$ at the point $H\cap\overline{\Omega}$ defines a supporting hyperplane $\P(\Span H'\oplus \Span o)$ to $C$ at the point $H\cap\overline{\Omega}$ whose intersection with $H_0$ is a supporting hyperplane of $\Omega$ at $H\cap\overline{\Omega}$. Since this point is $\COne$ in $\partial\Omega$, $H'$ is uniquely determined and must be $H_1\cap H_2=H$, so $\Omega_{H_2}$ is $\COne$.
\end{proof}

\section{Properness and asymptotics of linear combinations of multivariate functions}

\subsection{Order and domination}

Let $k\geq 2$ and $X\subset \R^k$ be a subset with the induced topology. In most places, we will set $X=\Gamma\subset \GL(n,\R)\subset \M(n,\R)\simeq\R^{n^2}$ discrete, but the case where $X\simeq \R^k$ is the Lie algebra of a nilpotent Lie group also appears. We will denote by $\Cont(X)$ the set of continuous functions from $X\to\R$. We want to study the asymptotics at infinity in $X$ of finite sets of functions of $\Cont(X)$, \ie the asymptotics along any sequence in $X$ that leaves all compacts.

We will make extensive use of the Landau notation at infinity. For this reason, unless specified otherwise, all Landau symbols should be interpreted at infinity. We recall the $\Theta$ notation, which is slightly less common.

\begin{definition}
   Let $f,g\in\Cont(X)$. We say that \emph{$f$ and $g$ the same order}, and we write $f = \Theta(g)$ when $f = O(g)$ and $g = O(f)$.

   Following the convention that $\Theta(g)$ denotes the set of all functions having the same order as $g$, we may write as well $\Theta(f)=\Theta(g)$.
\end{definition}

This means that there is a constant $A>1$ and a compact subset $K\subset X$ such that for all $x\in X-K$
\begin{equation*}
   \frac1A \abs{f(x)} \leq \abs{g(x)} \leq A \abs{f(x)}
\end{equation*}
In other words, one of $\abs{f}$ or $\abs{g}$ goes to $0$ or infinity along a divergent sequence $x_n\to\infty$ in $X$ if and only if the other does so, and when this happens, they do so at the same rate.

We will mainly study the order of sets of functions in what follows.

\begin{definition}
   The order of a finite set of functions $S\subset\Cont(X)$ is the order of $\max_{f\in S}\abs{f}$. We will write
   \begin{equation*}
      \Theta(S) := \Theta(\max_{f\in S}\abs{f})
   \end{equation*}
   Similarly, $g\in\Cont(X)$ is $o(S)$ (\resp $O(S)$) if and only if it is $o(\max_{f\in S}\abs{f})$ (resp. $O(\max_{f\in S}\abs{f})$).
\end{definition}

It is easy to verify that the order of a finite set of functions is equal to the order of the sum of the absolute values of these functions.

\begin{lemma}
   Let $S\subset\Cont(X)$ be a finite set. Then
   \begin{equation*}
      \Theta(S) = \Theta\left(\sum_{f\in S}\abs{f}\right)
   \end{equation*}
\end{lemma}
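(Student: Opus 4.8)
The plan is to prove the two-sided bound $\Theta(S) = \Theta\left(\sum_{f\in S}\abs{f}\right)$ by establishing that $\max_{f\in S}\abs{f}$ and $\sum_{f\in S}\abs{f}$ are each $O$ of the other outside some compact set, which is exactly the definition of having the same order. Write $M(x) := \max_{f\in S}\abs{f(x)}$ and $\Sigma(x) := \sum_{f\in S}\abs{f(x)}$, and let $N := \card(S)$, which is finite by hypothesis.

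The first inequality is immediate: since every term $\abs{f(x)}$ is nonnegative and bounded above by the maximum, we have
\begin{equation*}
   M(x) \leq \Sigma(x)
\end{equation*}
for every $x\in X$, with no need to discard any compact set. This gives $M = O(\Sigma)$.

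The second inequality is equally elementary: the sum of $N$ nonnegative terms, each at most the maximum, is at most $N$ times the maximum, so
\begin{equation*}
   \Sigma(x) \leq N\cdot M(x)
\end{equation*}
for every $x\in X$. This gives $\Sigma = O(M)$. Combining the two bounds, we have $\frac1N\Sigma(x)\leq M(x)\leq\Sigma(x)$ everywhere on $X$, so with the constant $A = N$ (or any $A>N$ to get a strict inequality as in the definition) and the empty compact set $K = \varnothing$, the functions $M$ and $\Sigma$ satisfy the defining condition for $\Theta$. Hence $\Theta(S) = \Theta(M) = \Theta(\Sigma)$.

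There is essentially no obstacle here: the statement is a formal consequence of the two trivial pointwise inequalities relating a maximum and a sum of finitely many nonnegative quantities, and both hold globally rather than merely outside a compact set, so no asymptotic argument is even required. The only point worth a word of care is that finiteness of $S$ is what makes $N<\infty$ and keeps the constant $A=N$ admissible; for an infinite family the sum could fail to be comparable to the maximum.
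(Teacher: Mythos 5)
Your proof is correct and is exactly the elementary argument the paper has in mind — the paper states this lemma without proof, prefacing it with ``it is easy to verify,'' and the intended verification is precisely the two pointwise inequalities $\max_{f\in S}\abs{f}\leq\sum_{f\in S}\abs{f}\leq\card(S)\cdot\max_{f\in S}\abs{f}$. Your closing remark that finiteness of $S$ is what makes the constant admissible is the right thing to flag.
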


Lastly, in a given set of functions, some may not contribute to the order of the whole set. We formalize this with the notion of \emph{domination}.

\begin{definition}
   Given a finite set of continuous functions $S\subset \Cont(X)$, we say that
   \begin{itemize}
      \item $s\in S$ is \emph{dominated} by a subset $T$ of $S$ if $s = o(T)$.
      \item $s\in S$ is \emph{dominating} if it is not dominated by $S$.
   \end{itemize}
   We denote by $S^+$ the subset of dominating functions in $S$.
\end{definition}

\begin{lemma}
   If $s\in S-S^+$, then $s$ is dominated by $S^+$.
\end{lemma}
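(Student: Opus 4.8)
The plan is to reduce the statement to the single assertion that discarding the dominated functions does not change the order of $S$, namely that $\Theta(S) = \Theta(S^+)$; granting this, the lemma is immediate. Indeed, by definition $s\in S-S^+$ means precisely that $s$ is \emph{not} dominating, i.e.\ that $s$ is dominated by $S$, which reads $s = o(S)$. Since $\Theta(S)=\Theta(S^+)$, the relation $s=o(S)$ is the same as $s=o(S^+)$, and this is exactly the claim that $s$ is dominated by $S^+$.

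To establish $\Theta(S)=\Theta(S^+)$, I would work through the sum characterization proved just above, so that $\Theta(S)=\Theta(\sum_{f\in S}\abs{f})$ and $\Theta(S^+)=\Theta(\sum_{f\in S^+}\abs{f})$. Write $A=\sum_{f\in S}\abs{f}$, $B=\sum_{f\in S^+}\abs{f}$ and $C=\sum_{s\in S-S^+}\abs{s}$, so that $A=B+C$. Since $S^+\subseteq S$ and all terms are nonnegative, $B\leq A$ pointwise, which gives $B=O(A)$ for free. For the reverse estimate, each $s\in S-S^+$ satisfies $\abs{s}=o(A)$ by definition of being dominated by $S$; as $S-S^+$ is finite, a finite sum of little-$o$ terms is again little-$o$, so $C=o(A)$. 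Hence $A=B+C$ with $C=o(A)$, and choosing the compact set large enough that $C\leq\tfrac12 A$ off it, we obtain $B=A-C\geq\tfrac12 A$, i.e.\ $A=O(B)$. Together with $B=O(A)$ this yields $B=\Theta(A)$, hence $\Theta(S^+)=\Theta(S)$.

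Combining the two observations, for any $s\in S-S^+$ we have $s=o(S)$, and since $\Theta(S)=\Theta(S^+)$ this is the same as $s=o(S^+)$, so $s$ is dominated by $S^+$, as required. One small point worth checking en route is that when $S-S^+\neq\varnothing$ the set $S^+$ is itself nonempty, so that $\max_{f\in S^+}\abs{f}$ is meaningful: if $S^+$ were empty we would have $C=A$, and the bound $C=o(A)$ would force $A=o(A)$, which is absurd. I do not anticipate a genuine obstacle here. The only mild subtleties are the bookkeeping that a finite sum of functions each of order $o(A)$ is again $o(A)$, and the elementary manipulation turning $A=B+o(A)$ into $A=\Theta(B)$ — both routine once the sum characterization of $\Theta(S)$ is available.
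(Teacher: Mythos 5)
Your proposal is correct and follows essentially the same route as the paper: note that every $s\in S-S^+$ satisfies $s=o(S)$ by definition, deduce $\Theta(S)=\Theta(S^+)$, and conclude $s=o(S^+)$. The only difference is that you justify $\Theta(S)=\Theta(S^+)$ explicitly via the sum characterization (the paper simply asserts it), which is a welcome bit of extra care, as is your observation that $S^+\neq\varnothing$.
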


\begin{proof}
   By definition of dominated elements of $S$, we have, for all $s\in S-S^+$:
   \begin{equation*}
      s = o(S)
   \end{equation*}

   Therefore
   \begin{equation*}
      \Order(S) = \Order(S^+ \cup (S-S^+)) = \Order(S^+)
   \end{equation*}
   so that
   \begin{equation*}
      s = o(S) = o(S^+)
   \end{equation*}
   for all $s\in S-S^+$.

\end{proof}

\subsection{Generic s-proper linear combinations}

We now need to consider “proper” functions on $X\subset\R^k$. Our concept of properness needs to allow for continuous extension to larger subsets. That is, if $Y\supset X$, continuous proper functions on $X$ should extend to continuous proper functions on $Y$. This is because our main application case will be when $X$ is an unbounded subgroup of $\SLpm(n,\R)$, seen as a subset of the set of $n$ by $n$ matrices, identified with $\R^{n^2}$. We need to be able to say that if $X$ is a subgroup of larger Lie group $Y$, then existence of some proper functions on $X$ gives proper functions on $Y$ as well.

However, the classical topological notion of properness is clearly not suited to our main application case, since if $X$ is “too disconnected”, some continuous proper functions on $X$ would be very far from being proper on a larger connected subset. For instance, if $X := \Z^2\subset Y:=\R^2$,

the function $f:(a,b)\in\Z^2\mapsto (-1)^{a+b}\allowbreak \exp(a^2+b^2)$ defines a continuous proper function on $X$, but it cannot be extended to a proper continuous function on $Y$ as it changes sign on the complement of any compact subset, by Lemma \ref{lm:properIsBoundedBelow} below.

Instead, since we are interested in extending proper maps continuously, we may consider the realm of continuous functions on $\R^k$, where a stronger condition holds.

\begin{lemma}\label{lm:properIsBoundedBelow}
   A function $f\in\Cont(\R^k)$ is proper if and only if either $f\to_{\infty}+\infty$, or $f\to_{\infty} -\infty$.
\end{lemma}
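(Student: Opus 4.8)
The plan is to prove the two implications separately; the reverse direction (from the infinite limit to properness) is immediate, while the forward direction carries all the content. For the reverse direction I would assume $f \to_\infty +\infty$, the case $-\infty$ being symmetric, and check the defining property of properness: given a compact $K \subset \R$, it is contained in some interval $[-M,M]$, and since $f(x) > M$ outside a large enough ball $\overline{B(0,R)}$, the set $f^{-1}([-M,M])$ is bounded; being closed by continuity of $f$, it is compact, and hence so is the closed subset $f^{-1}(K)$. This shows $f$ is proper.

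For the forward direction I would start by unwinding properness: since a compact subset of $\R$ is contained in some $[-M,M]$ and continuity makes preimages of closed sets closed, $f$ is proper if and only if $f^{-1}([-M,M])$ is bounded for every $M > 0$. Equivalently, for each $M > 0$ there is a radius $R_M$ with $\abs{f(x)} > M$ whenever $\norm{x} > R_M$. The crucial ingredient is the topological fact that, because $k \geq 2$, the exterior region $U_R := \{x \in \R^k : \norm{x} > R\}$ is connected (indeed path-connected); I would record this first.

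Applying the above with $M = 1$, the continuous function $f$ avoids $[-1,1]$ on the connected set $U_{R_1}$, so $U_{R_1}$ is covered by the two disjoint open sets $\{f > 1\}$ and $\{f < -1\}$; by connectedness one of them is empty, so $f$ has a constant sign on $U_{R_1}$, say $f > 1$ throughout (the other case will yield $f \to_\infty -\infty$ by the symmetric argument). It then remains to upgrade this to $f \to_\infty +\infty$: fixing any $M \geq 1$ and arranging $R_M \geq R_1$ so that $U_{R_M} \subset U_{R_1}$, every $x \in U_{R_M}$ satisfies both $\abs{f(x)} > M$ and $f(x) > 0$, hence $f(x) > M$. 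As $M$ is arbitrary, this says precisely that $f(x) \to +\infty$ as $\norm{x} \to \infty$, finishing the proof.

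The only genuine obstacle is the forward direction, and specifically the essential use of the hypothesis $k \geq 2$ through the connectedness of $U_R$: for $k = 1$ the statement is false (a function tending to $+\infty$ on one ray and to $-\infty$ on the other is proper), so any correct argument must exploit that the exterior of a ball in $\R^k$ remains connected, which is exactly what forces the sign of $f$ at infinity to be globally consistent.
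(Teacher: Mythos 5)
Your proof is correct and follows essentially the same route as the paper's: both arguments hinge on the connectedness of $\R^k$ minus a large ball (valid since $k\geq 2$) together with continuity to force $f$ to have a single sign near infinity, the paper phrasing this as a contradiction via the intermediate value theorem while you phrase it as a disconnection of the exterior region. Your write-up is in fact slightly more complete, since you also spell out the easy converse implication and the final upgrade from ``constant sign'' to the actual limit, which the paper leaves to the reader.
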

\begin{proof}
   Let $A > 0$. Suppose one can find arbitrarily large $x,y\in \R^k$ such that $f(x)>A$ and $f(y)<-A$. Since $f$ is proper, outside of a large enough ball, any $z$ satisfy $\abs{f(z)}> A$. However since $\R^k$ minus a ball is connected, we can find a continuous path from $x$ to $y$ that avoids this large ball. Since $f$ is continuous, it must take the value $0$ along that path. This is a contradiction.
   
   The conclusion follows.
\end{proof}

\begin{definition}\label{def:sProper}
   We say that $f\in\Cont(X)$ is \emph{s-proper} if either $f\to_{\infty}+\infty$ or $f\to_{\infty}-\infty$.
\end{definition}

We may now start to investigate s-proper linear combinations of continuous functions on $X$.

\begin{lemma}\label{lm:setOfsPLCIsConvex}
   Let $f_1,\dots,f_s\in\Cont(X)$ be continuous functions such that
   \begin{equation*}
      \Lambda = \{(\lambda_1,\dots,\lambda_s)\in\R^s,\text{$\sum\lambda_i f_i$ is s-proper and bounded below}\}
   \end{equation*}
   is not empty.

   Then $\Lambda$ is convex and the linear combinations corresponding to points in the relative interior of $\Lambda$ all have the same order.
\end{lemma}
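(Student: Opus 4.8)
The plan is to first reduce the defining condition of $\Lambda$ to a single clean statement. If $F:=\sum_i\lambda_i f_i$ is s-proper and bounded below, then s-properness forces $F\to_{\infty}+\infty$ or $F\to_{\infty}-\infty$, and the lower bound rules out the second case, so $F\to_{\infty}+\infty$. Conversely, any continuous $F$ with $F\to_{\infty}+\infty$ is automatically bounded below: outside a compact set $K$ it exceeds $0$, and on $K$ it is bounded below by continuity. Writing $F_\lambda:=\sum_i\lambda_i f_i$, I would therefore record as a first step the equivalence $\lambda\in\Lambda \iff F_\lambda\to_{\infty}+\infty$. This is where the bounded-below hypothesis really earns its keep, since it pins down the sign of the divergence and is what makes the nonnegative-combination estimates below go through.

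Convexity of $\Lambda$ is then immediate. For $\lambda,\mu\in\Lambda$ and $t\in[0,1]$, linearity gives $F_{t\lambda+(1-t)\mu}=tF_\lambda+(1-t)F_\mu$, a combination with nonnegative coefficients of two functions tending to $+\infty$, which therefore tends to $+\infty$; hence $t\lambda+(1-t)\mu\in\Lambda$.

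For the statement about orders, I would fix $\lambda$ and $\mu$ in the relative interior of $\Lambda$ and, by symmetry, reduce to proving $F_\mu=O(F_\lambda)$. The key device is to push slightly past $\lambda$ along the line through $\mu$ and $\lambda$: since $\lambda-\mu$ lies in the direction space of the affine hull of $\Lambda$ and $\lambda$ is a relative interior point, there is $\epsilon>0$ with $\nu:=\lambda+\epsilon(\lambda-\mu)\in\Lambda$. Rearranging expresses $\lambda$ as the strict convex combination $\lambda=\tfrac{\epsilon}{1+\epsilon}\mu+\tfrac{1}{1+\epsilon}\nu$, so by linearity $F_\lambda=aF_\mu+bF_\nu$ with $a=\tfrac{\epsilon}{1+\epsilon}>0$ and $b=\tfrac{1}{1+\epsilon}>0$. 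Because $\nu\in\Lambda$, we have $F_\nu\to_{\infty}+\infty$, hence $F_\nu\geq 0$ outside a compact set, which gives $F_\lambda\geq aF_\mu$ there; as both $F_\lambda$ and $F_\mu$ are eventually positive (they tend to $+\infty$), this yields $\abs{F_\mu}\leq\tfrac1a\abs{F_\lambda}$ off a compact set, i.e.\ $F_\mu=O(F_\lambda)$. Exchanging the roles of $\lambda$ and $\mu$ gives $F_\lambda=O(F_\mu)$, whence $\Theta(F_\lambda)=\Theta(F_\mu)$.

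The argument is elementary once the reduction to $F_\lambda\to_{\infty}+\infty$ is in place; the one point requiring care — and the closest thing to an obstacle — is the relative-interior extension, namely verifying that $\nu=\lambda+\epsilon(\lambda-\mu)$ genuinely lands back in $\Lambda$. This rests on the direction $\lambda-\mu$ belonging to the affine hull of $\Lambda$ (automatic since $\lambda,\mu\in\Lambda$) together with $\lambda$ being a relative interior point, so that a small ball around $\lambda$ inside the affine hull remains in $\Lambda$.
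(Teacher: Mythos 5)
Your proof is correct and follows essentially the same route as the paper: convexity via nonnegative combinations of functions tending to $+\infty$, and the order statement via the relative-interior trick of extending the segment through $\mu$ and $\lambda$ slightly past $\lambda$ inside $\Lambda$. The paper packages the second step by comparing both $F_\lambda$ and $F_\mu$ to the common reference $F_\alpha+F_\beta$ for a segment $[\alpha,\beta]\subset\Lambda$ containing both points in its interior, whereas you compare them directly via $F_\lambda=aF_\mu+bF_\nu$ with $a,b>0$; these are the same argument up to bookkeeping.
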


\begin{proof}
   Let $(\lambda_i)_i, (\mu_i)_i\in\Lambda$, and $t\in (0;1)$. Observe that
   \begin{align*}
      \sum(t\lambda_i + (1-t)\mu_i)f_i(x) &= t\sum\lambda_i f_i(x) + (1-t)\sum\mu_i f_i(x) \\
      &\geq \min\{t,1-t\}(\sum\lambda_if_i(x) + \sum\mu_if_i(x))
   \end{align*}
   when $x\in X$ is large enough that both $\sum\lambda_if_i(x)$ and $\sum\mu_if_i(x)$ are positive. Therefore $\Lambda$ is convex.

   We also have with the same condition on $x$:
   \begin{equation*}
      t\sum\lambda_if_i(x) + (1-t)\sum\mu_if_i(x) \leq \sum\lambda_if_i(x) + \sum\mu_if_i(x)
   \end{equation*}
   so we see that the linear combination associated to $(t\lambda_i + (1-t)\mu_i)_i$ has the same order as $\sum\lambda_if_i + \sum\mu_if_i$, which does not depend on the choice of $t\in(0;1)$.

   Therefore, since any two points in the relative interior of $\Lambda$ lie in the interior of a segment contained in $\Lambda$, all the points in the interior of $\Lambda$ are associated to s-proper bounded below linear combinations having the same order.
\end{proof}

We now introduce the main notion of this section.

\begin{definition}\label{def:genPLC}
   Keeping the notations of Lemma \ref{lm:setOfsPLCIsConvex}, if $\Lambda$ has non-empty interior in $\R^s$, s-proper linear combinations of $f_1,\dots, f_s$ that are associated to points in the interior of $\Lambda$ or $-\Lambda$ are said to be \emph{generic}.
\end{definition}

A s-proper linear combinations of continuous functions on $X$ is generic if we can perturb each of the coefficients of the linear combination independently of the others, while the resulting linear combinations remain s-proper. 

\begin{proposition}\label{prop:genPLCOrder}
   Let $f_1,\dots,f_s\in\Cont(X)$ be continuous functions that admit a generic s-proper linear combination $f$. Then $f$ has the same order as $\{f_1,\dots,f_s\}$.

\end{proposition}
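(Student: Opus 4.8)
The plan is to establish the two order comparisons $f = O(\{f_1,\dots,f_s\})$ and $\{f_1,\dots,f_s\} = O(f)$ separately; together they give $\Theta(f) = \Theta(\{f_1,\dots,f_s\})$. Write $f = \sum_{i} \lambda_i f_i$ with coefficient vector $\lambda = (\lambda_1,\dots,\lambda_s)$. Up to replacing $f$ by $-f$, which leaves the order unchanged since $\abs{f} = \abs{-f}$, genericity lets me assume $\lambda$ lies in the interior of $\Lambda$ (rather than of $-\Lambda$). The first comparison is immediate: the triangle inequality gives $\abs{f} \leq \left(\sum_i\abs{\lambda_i}\right)\max_i\abs{f_i}$ everywhere on $X$, so $f = O(\max_i\abs{f_i}) = O(\{f_1,\dots,f_s\})$.

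For the reverse comparison I would fix an index $i$ and exploit that $\lambda$ is an interior point of $\Lambda$. Let $e_i \in \R^s$ be the $i$-th standard basis vector. For $\epsilon > 0$ small enough, both $\lambda + \epsilon e_i$ and $\lambda - \epsilon e_i$ lie in $\Lambda$, so by definition of $\Lambda$ the two linear combinations $f + \epsilon f_i$ and $f - \epsilon f_i$ are s-proper and bounded below. The key observation is that a function that is simultaneously s-proper and bounded below must tend to $+\infty$: by Definition \ref{def:sProper} it tends to $+\infty$ or to $-\infty$, and boundedness below rules out the latter. Hence both $f + \epsilon f_i \to +\infty$ and $f - \epsilon f_i \to +\infty$.

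It follows that outside a large enough compact set $K_i \subset X$ one has $f(x) + \epsilon f_i(x) > 0$ and $f(x) - \epsilon f_i(x) > 0$ simultaneously, whence $\epsilon\abs{f_i(x)} < f(x)$. Summing the two perturbed functions also shows $2f \to +\infty$, so $f$ agrees with $\abs{f}$ and is positive far enough out; combining, $\abs{f_i} \leq \tfrac1\epsilon\abs{f}$ outside $K_i$, i.e.\ $f_i = O(f)$. As this holds for every $i$, we obtain $\max_i\abs{f_i} = O(f)$, that is $\{f_1,\dots,f_s\} = O(f)$, and the two comparisons together give the claim.

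I expect the only genuinely delicate point to be the implication \textquotedblleft s-proper and bounded below $\Rightarrow$ tends to $+\infty$\textquotedblright, together with the careful sign bookkeeping when combining the two one-sided perturbations $f \pm \epsilon f_i$ into the two-sided bound on $\abs{f_i}$; everything else is formal manipulation of Landau symbols and the convexity and openness of $\Lambda$ already recorded in Lemma \ref{lm:setOfsPLCIsConvex}.
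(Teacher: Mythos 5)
Your proof is correct and follows essentially the same strategy as the paper: the triangle inequality gives the upper bound, and the lower bound comes from perturbing the coefficient vector within the interior of $\Lambda$ and using eventual positivity of the perturbed combinations to get $\max_i\abs{f_i}\leq\frac1\epsilon\abs{f}$ off a compact set. The only difference is in execution: the paper perturbs all $s$ coordinates simultaneously by $\pm\epsilon$ with $x$-dependent signs and invokes Lemma \ref{lm:setOfsPLCIsConvex} to control the $2^s$ perturbed combinations at once, whereas you perturb one coordinate at a time and deduce eventual positivity directly from the observation that an s-proper function bounded below must tend to $+\infty$ --- a marginally more elementary route to the same estimate.
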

\begin{proof}
   Write $f=\sum_i\lambda_if_i$ where $\lambda_1,\dots,\lambda_s\in\R$. We have for all $x\in X$:
   \begin{align*}
      \abs{f(x)}&\leq \sum\abs{\lambda_i}\abs{f_i(x)}\\
      &\leq s\max\abs{\lambda_i}\max\abs{f_i(x)}
   \end{align*}

   For the lower bound, we may suppose up to changing $f$ into $-f$ that $f$ is s-proper and bounded below. The genericity of $f$ implies the existence of $\epsilon>0$ such that any $(\mu_1,\dots,\mu_s)$ satisfying $\abs{\mu_i-\lambda_i}=\epsilon$ for all $i$ still lies in the interior of $\Lambda$ (as defined in Lemma \ref{lm:setOfsPLCIsConvex}). Since this is a finite subset of $\Lambda$, Lemma \ref{lm:setOfsPLCIsConvex} implies that there is a compact subset $K$ of $X$ and a uniform constant $A$ such that
   \begin{equation*}
      \frac1 A \abs{f(x)} \leq \sum_i\mu_if_i(x)\leq A \abs{f(x)}
   \end{equation*}
   for all $(\mu_1,\dots,\mu_s)$ as above, and $x\not\in K$. Up to enlarging $K$, we may suppose that $f$ is non-negative on the complement of $K$. Therefore, if we define $\epsilon_i(x)$ to be $\epsilon$ times the sign of $f_i(x)$ for any $x\in X$, we have
   \begin{equation*}
      f(x) - \epsilon\sum_i\abs{f_i(x)} = \sum_i(\lambda_i -\epsilon_i(x))f_i(x) \geq \frac1 A f(x) \geq 0
   \end{equation*}
   for all $x\not\in K$, from which follows
   \begin{equation*}
      f(x)\geq \epsilon\sum_i\abs{f_i(x)}\geq \epsilon\max_i\abs{f_i(x)}
   \end{equation*}
   which yields the result.

\end{proof}

Notice that the genericity hypothesis in Proposition \ref{prop:genPLCOrder} cannot be dropped, as demonstrated in the example below.

\begin{example}
   The polynomial $P(X,Y) = X^4 - 2X^2Y^2 + Y^4 + Y^2$ is s-proper and bounded below on $\R^2$. Indeed, whenever $\abs{X^2-Y^2}\geq \frac12 X^2$,
   \begin{equation*}
      P(X,Y) = (X^2-Y^2)^2 + Y^2\geq\frac14 X^4 + Y^2
   \end{equation*}
   and if $\abs{X^2-Y^2}\leq\frac12 X^2$, then $Y^2\geq\frac12 X^2$, so that
   \begin{equation*}
      P(X,Y)\geq Y^2 = \frac23 Y^2 + \frac13 Y^2\geq \frac13 (X^2 + Y^2)
   \end{equation*}

   It follows that $P$ is a s-proper linear combination of the polynomials $X^4$, $Y^4$, $X^2Y^2$ and $Y^2$ on $\R^2$. However, observe that $P$ does not have the same order as $\{X^4,X^2Y^2,Y^4,Y^2\}$ along the sequence $(X_n,Y_n) = (n,n)$ for $n\in\N$.

   It is easy to check directly that $P$ is not generic by observing that for any $\epsilon>0$, $P-\epsilon X^2 Y^2$ is not s-proper since along the same sequence as above, it goes to $-\infty$, while it goes to $+\infty$ along the sequence $(X_n,Y_n)=(0,n)$ for instance.
\end{example}

\subsection{Invariant domains and s-properness}

We start by describing the notations that we will use in the rest of this article. We will denote by $(e_1,\dots,e_n)$ a basis of $\R^n$ (the canonical basis if no precision is made), $(e_1^\ast,\dots,e_n^\ast)$ the dual basis of $(\R^n)^\ast$, and $(e_{ij})_{1\leq i,j\leq n}$ the associated basis of $\M(n,\R)$.

For a vector $x$ in $\R^n$ or $(\R^n)^\ast$, we denote by $x_i$ its entries (in the canonical basis unless stated otherwise). Similarly, if $\gamma\in\M(n,\R)$, we denote by $\gamma_{ij}$ its $(i,j)$-th entry with respect to the canonical basis, or another specified basis.

In order to denote entries or linear combinations of these entries for a whole group $\Gamma<\GL(n,\R)$, we will use $\mapsto$. For instance, the $(i,j)$-th entry of $\Gamma$ would be $\gamma\in\Gamma\mapsto \gamma_{ij}$, which we will often simplify to $\gamma\mapsto \gamma_{ij}$ to lighten the notation.

In the following, we will have to deal with s-proper linear combinations of the entries of subgroups $\Gamma$ of $\SLpm(n,\R)$, seen as a subset of $\M(n,\R)\simeq\R^{n^2}$. We stress that we do not make any assumption on the topology of $\Gamma$, so as to allow for discrete groups, as well as Lie groups. Certain linear combinations of the entries of $\Gamma$ arise naturally in our geometric setting.

\begin{definition}
   Let $\Gamma$ be a unbounded subgroup of $\SLpm(n,\R)$. A \emph{matrix coefficient} of $\Gamma$ is a linear combination of the entries of $\Gamma$ of the form
   \begin{equation*}
      \gamma\mapsto\alpha(\gamma\cdot x) = \sum_{ij}\alpha_ix_j\gamma_{ij}
   \end{equation*}
   for some $(\alpha,x)\in\R^n\times(\R^n)^\ast$.
\end{definition}

A s-proper matrix coefficient of $\Gamma<\SLpm(n,\R)$ might be generic in two different ways a priori: as a s-proper linear combination, or as a matrix coefficient (\ie we can choose $(x,\alpha)$ in an open subset of $\R^n\times(\R^n)^\ast$). Since the dimension of the set of matrix coefficients is only $2n$, and is smaller than the dimension $n^2$ of the set of linear combinations, it may seem at first that we can't infer one from the other. This is not the case as the following lemma explains.

\begin{lemma}\label{lm:genPGLCAreGenPLC}
   Let $\Gamma$ be an unbounded subgroup of $\SLpm(n,\R)$. If there are two open subsets $U$ and $V$ of $\R^n$ and $(\R^n)^\ast$ respectively such that for all $(x,\alpha)\in U\times V$, the matrix coefficients $\gamma\mapsto\alpha(\gamma\cdot x)$ are s-proper, then they are generic s-proper linear combinations of the entries of $\Gamma$.
\end{lemma}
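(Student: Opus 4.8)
The plan is to reduce everything to a statement about the convex set $\Lambda\subseteq\R^{n^2}$ of Lemma~\ref{lm:setOfsPLCIsConvex}, applied to the $n^2$ entry functions $\gamma\mapsto\gamma_{ij}$ of $\Gamma$. Write $[x,\alpha]:=\sum_{ij}\alpha_ix_je_{ij}\in\M(n,\R)\simeq\R^{n^2}$ for the coefficient vector of the matrix coefficient $\gamma\mapsto\alpha(\gamma\cdot x)$; the map $(x,\alpha)\mapsto[x,\alpha]$ is bilinear and satisfies $[e_j,e_i^\ast]=e_{ij}$. Since s-properness (Definition~\ref{def:sProper}) means a function tends to $+\infty$ or to $-\infty$ at infinity, no nonzero vector lies in both $\Lambda$ and $-\Lambda$; together with stability under positive rescaling, this makes $\Lambda$ a \emph{pointed} convex cone. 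Up to replacing $\Lambda$ by $-\Lambda$, it then suffices to fix $(x_0,\alpha_0)\in U\times V$ with $\alpha_0(\gamma\cdot x_0)\to_{\infty}+\infty$, set $C_0:=[x_0,\alpha_0]\in\Lambda$, and prove $C_0\in\Int\Lambda$; by Definition~\ref{def:genPLC} this is exactly the assertion that $c_{x_0,\alpha_0}$ is generic.

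First I would check that $\Lambda$ is full-dimensional, so that genericity is even well defined. Note that $0\notin U$ and $0\notin V$, since otherwise some $c_{x,\alpha}$ would vanish identically and fail to be s-proper; hence $U$ and $V$ contain bases of $\R^n$ and $(\R^n)^\ast$, and the vectors $[x,\alpha]$, $(x,\alpha)\in U\times V$, span $\R^{n^2}$. Each such vector is s-proper, hence lies in $\Lambda\cup(-\Lambda)$; correcting each by a sign produces a spanning family contained in $\Lambda$, so the convex cone it generates — which is contained in $\Lambda$ — has nonempty interior.

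The engine of the proof is a polarization identity: for every $i,j$ and $\delta>0$,
\[
\tfrac12[x_0+\delta e_j,\alpha_0+\delta e_i^\ast]+\tfrac12[x_0-\delta e_j,\alpha_0-\delta e_i^\ast]=C_0+\delta^2e_{ij},
\]
while the cross-sign average equals $C_0-\delta^2e_{ij}$. Thus, if for one fixed small $\delta$ the four perturbed rank-one vectors $[x_0\pm\delta e_j,\alpha_0\pm\delta e_i^\ast]$ all lie in $\Lambda$, convexity of $\Lambda$ puts $C_0\pm\delta^2e_{ij}$ in $\Lambda$ for every $i,j$; since $\{e_{ij}\}$ is a basis, the convex hull of these $2n^2$ points is a cross-polytope around $C_0$, forcing $C_0\in\Int\Lambda$. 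This is precisely the mechanism by which the $2n$-parameter family of matrix coefficients produces all $n^2$ directions, resolving the dimensional worry raised just before the statement.

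The main obstacle is therefore to show that a whole neighborhood of $(x_0,\alpha_0)$ in $U\times V$ maps to the \emph{same} side of $\Lambda$ as $C_0$ — i.e. that the sign of $c_{x,\alpha}$ is locally constant — so that the four perturbed vectors indeed lie in $\Lambda$. This is delicate because $\Lambda$ need be neither open nor closed. I would argue by contradiction using pointedness together with the companion sum identities $[x_0+\delta e_j,\alpha_0+\delta e_i^\ast]+[x_0-\delta e_j,\alpha_0-\delta e_i^\ast]=2C_0+2\delta^2e_{ij}$ and $[x_0+\delta e_j,\alpha_0-\delta e_i^\ast]+[x_0-\delta e_j,\alpha_0+\delta e_i^\ast]=2C_0-2\delta^2e_{ij}$: if corners landed in $-\Lambda$ for arbitrarily small $\delta$, then combining such an identity with $2C_0\in\Lambda$ and the cone structure would force $e_{ij}$ (or a limiting combination thereof) into $\Lambda\cap(-\Lambda)=\{0\}$, which is absurd. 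A short case analysis over the sign patterns of the four corners closes this, and pinning down that case analysis is the technical heart of the argument.
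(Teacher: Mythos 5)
Your polarization mechanism is correct and is in fact a sharper version of what the paper does: the paper also reduces to the convexity statement of Lemma \ref{lm:setOfsPLCIsConvex} and perturbs $(x,\alpha)$ by $\epsilon e_i$, $\epsilon e_j^\ast$, but it only perturbs in one direction and verifies (by a determinant computation) that the $n^2$ perturbed images together with $\mathcal{L}(x,\alpha)$ are affinely independent; this makes $\Conv\mathcal{L}(U,V)$ full-dimensional with $C_0$ a \emph{vertex} of the resulting simplex. Your symmetric perturbations $x_0\pm\delta e_j$, $\alpha_0\pm\delta e_i^\ast$ and the identity $\tfrac12[x_0+\delta e_j,\alpha_0+\delta e_i^\ast]+\tfrac12[x_0-\delta e_j,\alpha_0-\delta e_i^\ast]=C_0+\delta^2e_{ij}$ genuinely place $C_0$ in the interior, which is what genericity requires. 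Keep that part.

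The genuine gap is the step you yourself flag: that all four corners lie in $\Lambda$ rather than $-\Lambda$, and the case analysis you sketch cannot close it. At a fixed scale $\delta$ the sum identities are powerless: if three corners lie in $-\Lambda$ and one, $A$, lies in $\Lambda$, then $A=4C_0+B$ with $B\in\Lambda$, which is perfectly consistent with $\Lambda$ being a pointed convex cone. The limiting argument over $\delta\to 0$ founders on exactly the non-closedness you mention: from $-A^{(\delta_m)}\in\Lambda$ and $-A^{(\delta_m)}\to-C_0$ you only get $-C_0\in\overline{\Lambda}$, which does not contradict $C_0\in\Lambda$. Worse, the local sign constancy you are after is \emph{false} at the level of generality at which you (and, to be fair, the paper's own terse proof) are working, namely bilinearity plus s-properness of every coefficient on $U\times V$. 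Take $n=2$ and continuous functions on $\N$ given by $f_{11}(k)=k$, $f_{12}(k)=\sqrt{k}$, $f_{21}=0$, $f_{22}(k)=-k$, with $U\times V$ a small neighborhood of $x_0=\alpha_0=(1,1)$: then $c_{x,\alpha}(k)=(\alpha_1x_1-\alpha_2x_2)k+\alpha_1x_2\sqrt{k}$ is s-proper throughout, $c_{x_0,\alpha_0}=\sqrt{k}\to+\infty$, yet points with $\alpha_1x_1<\alpha_2x_2$ accumulate at $(x_0,\alpha_0)$ and tend to $-\infty$; here $C_0$ lies on $\partial\Lambda$ and the conclusion of the lemma itself fails for this family. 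So any correct completion must invoke an input your argument never uses: either the group structure of $\Gamma$ (closure under inverses is what kills odd-order leading behavior, compare the Jordan-block remark after Proposition \ref{prop:gpPreservesDomainNSC}), or the fact that in every application of this lemma the coefficients $\alpha(\gamma\cdot x)$ for $(x,\alpha)\in U\times V$ are sign-definite by construction, coming from $C\times C^\ast$ for an invariant cone $C$. The cheapest honest fix is to add the hypothesis that the $c_{x,\alpha}$ are all bounded below (or all bounded above) on $U\times V$; under that hypothesis all corners land in $\Lambda$ and your cross-polytope argument closes the proof completely.
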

\begin{proof}
   Suppose there are $U$ and $V$ as above, and consider
   \begin{equation*}
      \mathcal{L}:\left\{\begin{aligned}
         U\times V &\to \M(n,\R)\simeq\R^{n^2}\\
         (x,\alpha) &\mapsto (x_i\alpha_j)_{1\leq i,j\leq n}
      \end{aligned}\right.
   \end{equation*}
   By Lemma \ref{lm:setOfsPLCIsConvex}, it is enough to prove that $\Conv\mathcal{L}(U,V)$ has non-empty interior, since $\mathcal{L}(U,V)$ forms a subset of the set of s-proper linear combinations of the entries of $\Gamma$. This amounts to finding $n^2$ elements of $U\times V$ whose image under $\mathcal{L}$ span $\R^{n^2}$.

   Choose $(x,\alpha)\in U\times V$, and $\epsilon >0$ such that $B(x,2\epsilon)\times B(\alpha,2\epsilon)\subset U\times V$. For all $i,j\in\{1,\dots,n\}$, define $x^i = x + \epsilon e_i$ and $\alpha^j = \alpha + \epsilon e_j^\ast$. We compute
   \begin{equation*}
      \mathcal{L}(x^i,\alpha^j) = \mathcal{L}(x,\alpha) + \epsilon\begin{pmatrix}&0&\\&\vdots&\\\alpha_1 & \cdots & \alpha_n\\&\vdots&\\&0&\end{pmatrix} + \epsilon\begin{pmatrix}&&x_1&&\\0&\cdots&\vdots&\cdots&0\\&&x_n&&\end{pmatrix} + \epsilon^2 e_{ij}
   \end{equation*}
   where $\alpha$ is on row $i$ and $x$ on column $j$.

   Define $f(\epsilon) = \det(\mathcal{L}(x^i,\alpha^j)-\mathcal{L}(x,\alpha))_{ij}$ by rearranging the $n\times n$ matrices into column vectors of size $n^2$, and observe that this is a polynomial function of $\epsilon$. Using the multilinearity of the determinant and the fact that the derivatives of order larger than $2$ of each of the columns vanish at $0$, one gets that the derivative of order $2n^2$ at $0$ of $f$ is a non-zero multiple of $\det(2e_{ij})_{ij} = 2^{n^2}$. Hence $f$ is a non-zero polynomial, so up to choosing $\epsilon$ slightly smaller, we can suppose that $f(\epsilon)\neq 0$, which yields the result.
\end{proof}

Now, we give a necessary and sufficient condition for a subgroup of $\SLpm(n,\R)$ to preserve a sharp open convex cone. To avoid any ambiguity, we clarify first that we say a linear combination of continuous functions on a topological space $X$ is positive if it is positive as a continuous function on $X$.

\begin{proposition}\label{prop:gpPreservesDomainNSC}
   Let $\Gamma$ be a subgroup of $\SLpm(n,\R)$. Then $\Gamma$ preserves a sharp open convex cone if and only if it admits a positive generic s-proper matrix coefficient.

\end{proposition}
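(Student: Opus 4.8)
\emph{Overview and forward direction.} The plan is to read the invariant cone off the orbit map in both directions: for the forward implication I produce the matrix coefficient by pairing a point of the cone with a functional from its dual, and for the converse I recover the cone from a suitably normalized convex hull of the orbit. I take $\Gamma$ to be closed (discrete or a closed Lie subgroup), which is the relevant case; the bounded case is vacuous, so I assume $\Gamma$ unbounded. Suppose first that $\Gamma$ preserves a sharp open convex cone $C$. By Lemma \ref{lm:sharpConvConeNSC} the dual $C^\ast$ is nonempty, so $-C^\ast=\{\beta:\beta(y)>0\text{ for all }y\in\overline{C}\setminus\{0\}\}$ is a nonempty open subset of $(\R^n)^\ast$. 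For $(x,\beta)\in C\times(-C^\ast)$ the matrix coefficient $\gamma\mapsto\beta(\gamma x)$ is positive, since $\gamma x\in C$. To see it is s-proper I would note $\beta(\gamma x)\geq c\,\norm{\gamma x}$ with $c=\min\{\beta(y):y\in\overline{C},\ \norm{y}=1\}>0$, reducing the claim to $\norm{\gamma_k x}\to\infty$ whenever $\gamma_k\to\infty$ in $\Gamma$. If this failed, a subsequence of $\gamma_k x$ would converge to some $y\in\overline{C}$; but the characteristic function $f_C$ is $\Gamma$-invariant since $f_C(\gamma x)=\abs{\det\gamma}^{-1}f_{\gamma C}(x)=f_C(x)$ by \eqref{eq:characFuncEquivariance}, and as $f_C$ is a barrier tending to $+\infty$ at every point of $\partial C$ (including $0$), the constancy of $f_C$ along the orbit forces $y\in C$. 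Then $[\gamma_k x]$ would stay in a compact subset of $\P(C)$, contradicting properness of the $\Gamma$-action (Proposition \ref{prop:autGpActsProp}). Hence these coefficients are positive and s-proper for $(x,\beta)$ in the open set $C\times(-C^\ast)$, and Lemma \ref{lm:genPGLCAreGenPLC} upgrades them to \emph{generic} s-proper linear combinations; any one of them is the desired coefficient.

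\emph{Backward direction.} Conversely, let $m(\gamma)=\alpha(\gamma x)$ be a positive generic s-proper matrix coefficient. Positivity and s-properness give $m\to+\infty$, and the associated coefficient point lies in the interior of the set $\Lambda$ of Lemma \ref{lm:setOfsPLCIsConvex}. By Proposition \ref{prop:genPLCOrder} the genericity means $m$ has the same order as the full collection of entries, so $m(\gamma)\asymp\max_{ij}\abs{\gamma_{ij}}$ and therefore $\norm{\gamma y}=O(m(\gamma))$ for every fixed $y$. Let $N$ be the largest $\Gamma$-invariant subspace contained in $\ker\alpha$; for $y\in x+N$ one has $\alpha(\gamma y)=\alpha(\gamma x)=m(\gamma)$, so the normalized points $v_\gamma^y:=\gamma y/m(\gamma)$ all lie in the affine hyperplane $\{\alpha=1\}$ and, by the bound above, remain in a fixed compact set when $y$ ranges over a bounded subset $B\subset x+N$. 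I would then set
\begin{equation*}
   C:=\Int\Bigl(\R_{\geq 0}\cdot\overline{\Conv\{v_\gamma^y:\gamma\in\Gamma,\ y\in B\}}\Bigr).
\end{equation*}
This cone is sharp because every generator satisfies $\alpha=1$, whence $\alpha>0$ on $\overline{C}\setminus\{0\}$ and $\overline{C}$ contains no line; and it is $\Gamma$-invariant because $\gamma'\cdot v_\gamma^y=\tfrac{m(\gamma'\gamma)}{m(\gamma)}\,v_{\gamma'\gamma}^y$ is a positive multiple of a generator, so each $\gamma'$ carries the generating set into the closed cone, and using $\gamma'^{-1}$ as well yields $\gamma'C=C$.

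\emph{Main obstacle.} The delicate point is that $C$ must be genuinely open, that is, full-dimensional in $\R^n$. The span of the generators equals $\Span(\Gamma x)+N$, so everything reduces to proving $\Span(\Gamma x)+N=\R^n$. I expect this to be the crux, and it is exactly where the genericity hypothesis (rather than mere s-properness) is indispensable: diagonal examples show that $\Gamma x$ alone may span only a proper subspace $V$, with the $\alpha$-annihilated directions $N$ precisely filling in the gap, so that a full-dimensional sharp invariant cone still exists. Making the spanning precise should amount to showing that if $\Span(\Gamma x)+N$ were a proper subspace, then the coefficient point $(\alpha_i x_j)$ would lie on the boundary of $\Lambda$ rather than its interior, contradicting genericity. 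Relating the facial structure of $\Lambda$ to the invariant subspaces $\Span(\Gamma x)$ and $N$ is the step I anticipate requiring the most care.
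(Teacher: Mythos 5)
Your forward direction is essentially the paper's argument: pair a point $x\in C$ with a functional in $-C^\ast$, use invariance of the characteristic function (equivalently, of the Vinberg hypersurfaces of Theorem \ref{thm:VinHypersurfaces}) together with properness of the action (Proposition \ref{prop:autGpActsProp}) to get $\norm{\gamma x}\to\infty$, and then invoke Lemma \ref{lm:genPGLCAreGenPLC} to upgrade to genericity. That half is correct.

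The backward direction has a genuine gap, and the route you propose for closing it fails. You reduce openness of your cone to $\Span(\Gamma x)+N=\R^n$ and conjecture that if this failed the point $(\alpha_ix_j)$ would sit on the boundary of $\Lambda$. This is false: take $\Gamma=\{\diag(e^t,e^{-t},1,1):t\in\R\}<\SLpm(4,\R)$, $x=(1,1,1,0)$ and $\alpha=e_1^\ast+e_2^\ast+e_4^\ast$. Then $\alpha(\gamma x)=e^t+e^{-t}$ is positive and s-proper, and it is generic: the only nonzero entries of $\Gamma$ are the diagonal ones, so $\Lambda=\{\lambda_{11}>0,\ \lambda_{22}>0\}$ is open in $\R^{16}$ and contains $(\alpha_ix_j)$ (which has $\lambda_{11}=\lambda_{22}=1$) in its interior. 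Yet $\Span(\Gamma x)=\gen{e_1,e_2,e_3}$ and $N=\gen{e_3}$, so $\Span(\Gamma x)+N=\gen{e_1,e_2,e_3}\subsetneq\R^4$ and your cone has empty interior, even though $\Gamma$ visibly preserves the open positive orthant. The genericity hypothesis should be spent differently: since $(\alpha_ix_j)$ lies in the interior of $\Lambda$, every nearby pair $(x',\alpha')$ gives a point $(\alpha'_ix'_j)$ still in the interior of $\Lambda$, so there are open sets $U\ni x$ in $\R^n$ and $V\ni\alpha$ in $(\R^n)^\ast$ such that $\gamma\mapsto\alpha'(\gamma x')$ is s-proper, tends to $+\infty$ and is positive for all $(x',\alpha')\in U\times V$. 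The paper then takes $C$ to be the smallest open convex cone containing $\Gamma\cdot U$: openness is free because $C\supset U$, invariance is clear, and sharpness follows because $-V\subset C^\ast$, so $C^\ast\neq\varnothing$ and Lemma \ref{lm:sharpConvConeNSC} applies. No auxiliary subspace $N$ and no spanning argument are needed.
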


\begin{proof}
   Suppose first that $\Gamma$ preserves a sharp open convex cone $C\subset\R^n$. Then for any $x\in C$ and $\alpha\in C^*$, $\norm{\gamma\cdot x}\to +\infty$ when $\gamma\to\infty$ in $\Gamma$, since the action of $\Gamma$ on $\Omega$ is proper by Proposition \ref{prop:autGpActsProp}, and $\Gamma$ preserves each leaf of the foliation of $C$ by Vinberg hypersurfaces by Theorem \ref{thm:VinHypersurfaces}. Moreover, since $\alpha\in C^*$, there is $A>0$ such that $\alpha(y) < -A$ for any $y\in C$ of norm $1$. Hence, we have
   \begin{align*}
      \alpha(\gamma\cdot x) &= \norm{\gamma\cdot x}\alpha\left(\frac{\gamma\cdot x}{\norm{\gamma\cdot x}}\right) \\
      &< -A\norm{\gamma\cdot x} \\ &\tosub{\substack{\gamma\to\infty \\ \gamma \in \Gamma}} -\infty
   \end{align*}
   so $\gamma\mapsto -\alpha(\gamma\cdot x)$ is a positive s-proper matrix coefficient of $\Gamma$. Since this holds for all $(x,\alpha)\in C\times C^\ast$, the associated s-proper matrix coefficients of $\Gamma$ are generic by Lemma \ref{lm:genPGLCAreGenPLC}.

   Conversely, suppose there is a positive generic s-proper matrix coefficient of $\Gamma$. We can find open subsets $U$ and $V$ of $\R^n$ and $(\R^n)^\ast$ respectively such that
   \begin{alignat*}{3}
      \alpha(\gamma\cdot x)&\tosub{\substack{\gamma\to\infty \\ \gamma\in\Gamma}}+\infty &&\quad\text{and}\quad & \alpha(x) > 0
   \end{alignat*}
   for all $(x,\alpha)\in U\times V$.

   Hence, if $C$ is the smallest open convex cone containing the orbit of $U$, we have $-V\subset C^\ast$. Thus $C^\ast$ has non-empty interior, so $C$ is sharp by Lemma \ref{lm:sharpConvConeNSC}. Clearly $C$ is $\Gamma$-invariant.
   
\end{proof}

\begin{remark}
   We may see Proposition \ref{prop:gpPreservesDomainNSC} as a generalization of the necessary condition due to Y. Benoist for a unipotent element to preserve a properly convex domain (see \cite[\Lm 2.3]{benoist2006}).

   Indeed, a unipotent element may be conjugated to a block diagonal matrix with Jordan blocks as diagonal entries. The group generated by a single Jordan block of size $k\times k$ is
   \begin{equation*}
      \left\{\begin{pmatrix}
         1 & n & \frac{n^2}{2} & \cdots & \frac{n^{k-1}}{(k-1)!} \\
         0 & 1 & n             & \cdots & \frac{n^{k-2}}{(k-2)!} \\
         0 & 0 & 1             & \cdots & \frac{n^{k-3}}{(k-3)!} \\
         \vdots & \vdots & \vdots & \ddots & \vdots \\
         0 & 0 &           0   &  0    &   1
      \end{pmatrix}, n\in \Z\right\}
   \end{equation*}
   whose entries admit a generic s-proper matrix coefficient if and only if $n^{k-1}$ is s-proper, if and only if $k$ is odd. Therefore, a unipotent element preserves a properly convex domain if and only if the maximal size of its Jordan blocks is odd.
\end{remark}

We can also use this proposition to produce examples of weakly unipotent subgroups of $\SLpm(n,\R)$ preserving properly convex domains.

\begin{example}
   The following group is generated by a parabolic automorphism preserving a properly convex domain, but no strictly convex domains, nor any decomposable domain: 
   \begin{equation*}
      \left\{\begin{psmallmatrix}
         1 & n & \frac{n^2}2 & & & & & & \\
           & 1 & n     & & & & & & \\
           &   & 1     & & & & & & \\
           &   &       &  \cos n & \sin n &  n\cos n & n\sin n &  \frac{n^2}2\cos n & \frac{n^2}2\sin n \\
           &   &       & -\sin n & \cos n & -n\sin n & n\cos n & -\frac{n^2}2\sin n & \frac{n^2}2\cos n \\
           &   &       &         &        &   \cos n &  \sin n &    n \cos n  &   n \sin n \\
           &   &       &         &        &  -\sin n &  \cos n &   -n \sin n  &   n \cos n \\
           &   &       &         &        &          &         &      \cos n  &     \sin n \\
           &   &       &         &        &          &         &     -\sin n  &     \cos n
      \end{psmallmatrix},n\in\Z\right\}
   \end{equation*}

   Since $2 + \cos n \geq 1$ for all $n$, the matrix coefficient given by 
   \begin{alignat*}{3}
      \alpha &= \begin{psmallmatrix}1 & 0 & 0 & 1 & 1 & 0 & 0 & 0 & 0\end{psmallmatrix}&\quad\text{and}\quad &&x &= \begin{psmallmatrix}0 \\ 0 \\ 2 \\ 0 \\ 0 \\ 0 \\ 0 \\ 1/2 \\ 1/2\end{psmallmatrix}
   \end{alignat*}
   is s-proper bounded below as the coefficient in front of the $n^2/2$ term is $2 + \cos n\geq 1$, so that this term dominates all other terms for $n$ large. It is clear that perturbing slightly both $\alpha$ and $x$ would yield a coefficient that is always larger or equal to $\frac 1 2$, for instance, giving the genericity by Lemma \ref{lm:genPGLCAreGenPLC}.

   Moreover, the group does not preserve any strictly convex domain by Theorem \ref{thm:characHoloStrictlyConvCusps}. It does not preserve any decomposable domain either since it has only 2 indecomposable real blocks, so a decomposable domain would be the join of two domains preserved by each of the two blocks. However, the second block (with complex eigenvalues) does not preserve any properly convex domain as there is no positive linear combination of its entries.
\end{example}

\section{\texorpdfstring{$P_1$}{P\_1}-divergence and properly convex domains}

In this section, we recall well-known properties of divergent subgroups of $\SLpm(n,\allowbreak\R)$.
Recall the Cartan decomposition of $\SLpm(n,\allowbreak\R)$:
\begin{equation*}
   \SLpm(n,\R) = K A K = KA^+K
\end{equation*}
where $K = O(n)$ is a maximal compact subgroup of $\SLpm(n,\R)$, $A<\SLpm(n,\R)$ is the diagonal subgroup, and $A^+<A$ is the subgroup consisting of diagonal matrices $\diag(\sigma_1,\dots,\sigma_n)$ with $\sigma_1\geq \dots \geq\sigma_n>0$.

The Cartan projection of an element $g\in\SLpm(n,\R)$ is the ordered $n$-tuple of its \emph{singular values} $\sigma_1(g)\geq \dots\geq \sigma_n(g) >0$ such that $g = K\diag(\sigma_1(g),\dots,\sigma_n(g)) L$ for some $K,L\in\O(n)$. Observe that the singular values of $g$ are the eigenvalues of the symmetric positive definite matrix $\sqrt{g\transpose{g}}$, so they are well-defined.

The $i$-th singular value $\sigma_i(g)$ of $g$ corresponds to the length of the $i$-th largest axis of the image of the unit ball $\Ell$ under $\sqrt{g\transpose{g}}$.

\begin{definition}
   An unbounded sequence $(\gamma_m)_m$ of $\SLpm(n,\R)$ is $P_k$-divergent if $\sigma_k(\gamma_m)/\sigma_{k+1}(\gamma_m)\to +\infty$.

   A group $\Gamma<\SLpm(n,\R)$ is $P_k$-divergent if all unbounded sequence of $\Gamma$ are $P_k$-divergent.
\end{definition}

Since the ratios $\sigma_k(g)/\sigma_{k+1}(g)$ are well-defined for any $g\in \PGL(n,\R)$, the same definition makes sense for subgroups of $\PGL(n,\R)$. All the discussion below directly generalizes to $\PGL(n,\R)$ by this observation.

\begin{remark}\label{rmk:divergentSequenceDynamics}
   Let $(\gamma_m)_m$ be a $P_k$-divergent sequence of $\SLpm(n,\R)$. By the Cartan decomposition, we can write
   \begin{equation*}
      \gamma_m = K_m\diag(\sigma_1(\gamma_m),\dots,\sigma_n(\gamma_m)) L_m
   \end{equation*}
   where $K_m,L_m\in O(n)$. We may assume up to extracting subsequences that $K_m\to K$ and $L_m\to L$. Moreover, if $l\leq k$ is the first index such that the sequence $\sigma_l(\gamma_m)/\sigma_{l+1}(\gamma_m)$ is unbounded, we may suppose up to further extracting that $\gamma_m$ is $P_l$-divergent, so that $[\gamma_m]\to[K\Delta L]$ in $\P(\End(\R^n))$, where $\Delta = \diag(1,s_2,\dots,s_l,\allowbreak 0,\dots,0)$ for some $s_i$ such that $1\geq s_2\geq \dots\geq s_l >0$.

   It follows that for any $x\in \P(\R^n)-\P(L^{-1}\ker\Delta)$, $\gamma_m x\to [K\Delta L x]$. In particular, if $U$ is an open set of $\P(\R^n)$ whose closure does not meet $\P(L^{-1}\ker\Delta)$, then $\gamma_m U\to \P(K\Delta L U)$ for the Hausdorff metric, and $l-1$ is the dimension of the Hausdorff limit of $\gamma_m U$.
\end{remark}

It is not difficult to see that a $P_k$-divergent subgroup of $\SLpm(n,\R)$ is also $P_{n-k}$-divergent.

\begin{lemma}\label{lm:dualityOfDivergentGroups}
   Let $(\gamma_m)_m$ be a $P_k$-divergent sequence of $\SLpm(n,\R)$. Then $(\gamma_m^{-1})_m$ is $P_{n-k}$-divergent.

   In particular, any $P_k$-divergent subgroup of $\SLpm(n,\R)$ is also $P_{n-k}$-divergent.
\end{lemma}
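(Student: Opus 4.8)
The plan is to reduce everything to the elementary identity relating the singular values of a matrix to those of its inverse. First I would recall that for $g\in\SLpm(n,\R)$ with Cartan decomposition $g = K\diag(\sigma_1(g),\dots,\sigma_n(g))L$, where $K,L\in\O(n)$, one has $g^{-1} = L^{-1}\diag(\sigma_1(g)^{-1},\dots,\sigma_n(g)^{-1})K^{-1}$. Since $\sigma_1(g)\geq\dots\geq\sigma_n(g)>0$, the reciprocals appear in \emph{increasing} order, so reordering them produces the Cartan projection of $g^{-1}$, namely $\sigma_i(g^{-1}) = \sigma_{n+1-i}(g)^{-1}$ for all $i$. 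This is the only genuine ingredient; everything else is bookkeeping.

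With this identity in hand, the first assertion is a direct computation. Applying it with $i = n-k$ and $i = n-k+1$ gives $\sigma_{n-k}(\gamma_m^{-1}) = \sigma_{k+1}(\gamma_m)^{-1}$ and $\sigma_{n-k+1}(\gamma_m^{-1}) = \sigma_{k}(\gamma_m)^{-1}$, so that
\[
   \frac{\sigma_{n-k}(\gamma_m^{-1})}{\sigma_{n-k+1}(\gamma_m^{-1})} = \frac{\sigma_k(\gamma_m)}{\sigma_{k+1}(\gamma_m)} \tosub{m\to\infty} +\infty,
\]
which is exactly the statement that $(\gamma_m^{-1})_m$ is $P_{n-k}$-divergent.

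For the group statement, I would first note that $g\mapsto g^{-1}$ is a homeomorphism of $\SLpm(n,\R)$ carrying relatively compact sets to relatively compact sets, so a sequence $(\eta_m)_m$ of $\Gamma$ is unbounded if and only if $(\eta_m^{-1})_m$ is. Let $(\eta_m)_m$ be an arbitrary unbounded sequence in a $P_k$-divergent group $\Gamma$. Then $(\eta_m^{-1})_m$ is unbounded in $\Gamma$ as well, hence $P_k$-divergent by hypothesis, and applying the first part to $(\eta_m^{-1})_m$ shows that $(\eta_m)_m = ((\eta_m^{-1})^{-1})_m$ is $P_{n-k}$-divergent. Since $(\eta_m)_m$ was arbitrary, $\Gamma$ is $P_{n-k}$-divergent.

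I do not expect a serious obstacle here: the only point requiring a moment's care is the reordering of the reciprocal singular values that yields $\sigma_i(g^{-1})=\sigma_{n+1-i}(g)^{-1}$, together with the observation that passing to inverses preserves unboundedness. The case of subgroups of $\PGL(n,\R)$ follows identically, since all the ratios $\sigma_i/\sigma_{i+1}$ involved are insensitive to scaling.
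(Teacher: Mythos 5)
Your proof is correct and follows essentially the same route as the paper: both use the Cartan decomposition to derive the identity $\sigma_i(g^{-1})=\sigma_{n+1-i}(g)^{-1}$ and then observe that $\sigma_{n-k}(\gamma_m^{-1})/\sigma_{n-k+1}(\gamma_m^{-1})=\sigma_k(\gamma_m)/\sigma_{k+1}(\gamma_m)$. Your explicit treatment of the subgroup statement (passing to inverses preserves unboundedness) is a small but welcome addition that the paper leaves implicit.
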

\begin{proof}
   Observe that if $g\in\SLpm(n,\R)$, we can write $g = K\diag(\sigma_1(g),\dots,\sigma_n(g))L$ with $K,L\in\O(n)$ and $\sigma_1(g)\geq \dots \geq\sigma_n(g) >0$. It follows that
   \begin{align*}
      g^{-1} &= L^{-1}\diag(\sigma_1(g)^{-1},\dots,\sigma_n(g)^{-1}) K^{-1} \\
      &= K' \diag(\sigma_n(g)^{-1},\dots,\sigma_1(g)^{-1}) L'
   \end{align*}
   where $K',L'\in\O(n)$ and $\sigma_n(g)^{-1}\geq \dots\geq\sigma_1(g)^{-1}>0$. Hence $\sigma_i(g^{-1}) = \sigma_{n-i+1}(g)^{-1}$.

   It follows that if $(\gamma_m)_m$ is $P_k$-divergent, then we have
   \begin{equation*}
      \sigma_{n-k}(\gamma_m^{-1})/\sigma_{n-k+1}(\gamma_m^{-1}) = \sigma_k(\gamma_m)/\sigma_{k+1}(\gamma_m) \to +\infty
   \end{equation*}
   so $(\gamma_m^{-1})_m$ is $P_{n-k}$-divergent.
\end{proof}

\subsection{Limit sets of divergent groups}

Let $\Gamma$ be a $P_k$-divergent subgroup of $\SLpm(n,\R)$. By considering the embedding given by the composition $\SLpm(n,\R)\to\PGL(n,\R)\to\P(\End(\R^n))$, we can associate to unbounded sequences in $\Gamma$ an attractive (\resp repulsive) limit flag in $\grass_k(\R^n) = \flag_k(\R^n)$ (\resp in $\grass_{n-k}(\R^n) = \flag_{n-k}(\R^n)$).

Let $(\gamma_m)_m$ be a $P_k$-divergent sequence of $\SLpm(n,\R)$. We can write 
\begin{equation*}
   \gamma_m = K_m\diag(\sigma_1(\gamma_m),\dots,\sigma_n(\gamma_m)) L_m
\end{equation*}
where $K_m,L_m\in\O(n)$. Up to extracting a subsequence, we can suppose that $K_m\to K$ and $L_m\to L$ in $\O(n)$. The unstable $k$-dimensional (\resp stable $n-k$-dimensional) subspace of $\gamma_{\bigcdot}:=(\gamma_m)_m$ is $E_k^+(\gamma_{\bigcdot}) := K\gen{e_1,\dots,e_k}$ (\resp $E_{n-k}^-(\gamma_{\bigcdot}) :=L\gen{e_{k+1},\dots,e_n}$). Observe that if $\gamma_{\bigcdot}$ is also $P_{n-k}$-divergent, then $E_{n-k}^-(\gamma_{\bigcdot})=E_k^+(\gamma_{\bigcdot}^{-1}) $, which is well-defined since $\gamma^{-1}_{\bigcdot}$ is also $P_{n-k}$-divergent by Lemma \ref{lm:dualityOfDivergentGroups}.

For $k=1$, we see that the unstable subspaces behave like attracting points for the dynamics on the projective space.

\begin{lemma}\label{lm:P1UnstableSpaceIsAttractive}
   Let $(\gamma_m)_m$ be a $P_1$-divergent sequence of $\SLpm(n,\R)$ whose stable and unstable subspaces are defined. Then, for any point $x\in\P(\R^n)-\P(E_{n-1}^-(\gamma_{\bigcdot}))$, the sequence $\gamma_m x$ converges to the point $\P(E_1^+(\gamma_{\bigcdot}))$ of $\P(\R^n)$, uniformly on compact sets.
\end{lemma}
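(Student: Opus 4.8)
The plan is to renormalise the sequence so that it converges in $\End(\R^n)$ to a rank-one endomorphism, and then to read off the dynamics directly from this limit; this is the uniform refinement of the $P_1$-case of Remark \ref{rmk:divergentSequenceDynamics}. First I would write $\gamma_m = K_m\diag(\sigma_1(\gamma_m),\dots,\sigma_n(\gamma_m))L_m$ with $K_m\to K$ and $L_m\to L$ in $\O(n)$, as in the definition of the stable and unstable subspaces. Since $(\gamma_m)_m$ is $P_1$-divergent we have $\sigma_2(\gamma_m)/\sigma_1(\gamma_m)\to 0$, and as $\sigma_1\geq\sigma_2\geq\dots\geq\sigma_n$ this forces $\sigma_i(\gamma_m)/\sigma_1(\gamma_m)\to 0$ for every $i\geq 2$. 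Dividing by $\sigma_1(\gamma_m)$ then gives
\begin{equation*}
   \frac{1}{\sigma_1(\gamma_m)}\gamma_m = K_m\diag\left(1,\frac{\sigma_2(\gamma_m)}{\sigma_1(\gamma_m)},\dots,\frac{\sigma_n(\gamma_m)}{\sigma_1(\gamma_m)}\right)L_m \tosub{m\to\infty} K\diag(1,0,\dots,0)L =: P,
\end{equation*}
the convergence taking place in operator norm. The endomorphism $P$ has rank one, with image $K\gen{e_1}$ representing $\P(E_1^+(\gamma_{\bigcdot}))$ and with kernel the hyperplane $\P(\ker P) = \P(E_{n-1}^-(\gamma_{\bigcdot}))$ (the repelling hyperplane of the sequence, cf. Remark \ref{rmk:divergentSequenceDynamics}).

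Next I would deduce the pointwise convergence. Fix $x=[v]$ with $v\neq 0$ and $x\notin\P(E_{n-1}^-(\gamma_{\bigcdot}))$, so that $v\notin\ker P$ and $Pv\neq 0$. Since $P$ has rank one, $Pv$ is a non-zero multiple of $Ke_1$, so $[Pv]=\P(E_1^+(\gamma_{\bigcdot}))$ regardless of $x$; this is exactly why all such points are attracted to the single point $\P(E_1^+(\gamma_{\bigcdot}))$. Because $\sigma_1(\gamma_m)^{-1}\gamma_m v\to Pv\neq 0$ and the projection $\R^n-\{0\}\to\P(\R^n)$ is continuous at $Pv$, we obtain $\gamma_m x = [\sigma_1(\gamma_m)^{-1}\gamma_m v]\to\P(E_1^+(\gamma_{\bigcdot}))$.

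Finally I would upgrade this to uniform convergence on compact subsets. Let $Q\subset\P(\R^n)-\P(E_{n-1}^-(\gamma_{\bigcdot}))$ be compact and lift it to a compact set $\widetilde Q$ of unit vectors. The continuous map $v\mapsto\norm{Pv}$ is strictly positive on $\widetilde Q$, hence bounded below by some $\delta>0$ there, while $\epsilon_m:=\norm{\sigma_1(\gamma_m)^{-1}\gamma_m-P}\to 0$ in operator norm, so that $\norm{\sigma_1(\gamma_m)^{-1}\gamma_m v-Pv}\leq\epsilon_m$ for all $v\in\widetilde Q$. A standard estimate then bounds the distance in $\P(\R^n)$ between $\gamma_m x$ and $[Pv]=\P(E_1^+(\gamma_{\bigcdot}))$ by a quantity of order $\epsilon_m/\delta$, uniformly in $v\in\widetilde Q$; since $\epsilon_m/\delta\to 0$, the convergence is uniform on $Q$. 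The only point requiring any care is this last step, which converts the operator-norm convergence together with the lower bound $\norm{Pv}\geq\delta$ into uniform convergence in projective space; everything else follows mechanically from the rank-one degeneration of $\sigma_1(\gamma_m)^{-1}\gamma_m$.
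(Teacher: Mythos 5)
Your argument is correct and is essentially the paper's own proof: both renormalise the Cartan decomposition so that $\sigma_1(\gamma_m)^{-1}\gamma_m$ converges to the rank-one endomorphism $K\diag(1,0,\dots,0)L$ and read off the projective dynamics from that limit. The only difference is that you spell out the uniform-on-compacts step (the lower bound $\norm{Pv}\geq\delta$ on a compact lift), which the paper leaves implicit; that is a welcome addition, not a deviation.
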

\begin{proof}
   Write $\gamma_m = K_m\diag(\sigma_1(\gamma_m),\dots,\sigma_n(\gamma_m)) L_m$ with $K_m\to K$ and $L_m \to L$ are sequences in $\O(n)$, and $\sigma_1(\gamma_m)/\sigma_2(\gamma_m)\to +\infty$. Observe that $\gamma_m \to [K\Delta L]$ in $\P(\End(\R^n))$, where $\Delta = \diag(1,0,\dots,0)$. It follows that for any $x\in\P(\R^n)-\P(L^{-1}\ker\Delta) = \P(\R^n)-\P(E_{n-1}^-(\gamma_{\bigcdot}))$, $\gamma_m x \to \P(K\im\Delta) = \P(E_1^+(\gamma_{\bigcdot}))$.
\end{proof}

A reciprocal to that statement also holds.

\begin{lemma}
   Let $(\gamma_m)_m$ be an unbounded sequence of $\SLpm(n,\R)$. Suppose that there is a point $x\in\P(\R^n)$ and a hyperplane $H\in\P((\R^n)^\ast)$ such that $\gamma_m y\to x$ for all $y\not\in H$, uniformly on compact sets. Then $(\gamma_m)_m$ is $P_1$-divergent.
\end{lemma}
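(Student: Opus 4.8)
The plan is to argue by contradiction, using the Cartan decomposition together with the dynamical description of divergent sequences from Remark \ref{rmk:divergentSequenceDynamics}. Suppose $(\gamma_m)_m$ is \emph{not} $P_1$-divergent. I will show that the images $\gamma_m U$ of a suitably chosen open set $U$ must have a Hausdorff limit of positive dimension, contradicting the hypothesis that every point of $U$ is sent to the single point $x$.

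First I would write $\gamma_m = K_m\diag(\sigma_1(\gamma_m),\dots,\sigma_n(\gamma_m))L_m$ with $K_m,L_m\in\O(n)$. Since $(\gamma_m)_m$ is unbounded while $\prod_i\sigma_i(\gamma_m)=1$, at least one gap $\sigma_i/\sigma_{i+1}$ must be unbounded: if all gaps stayed bounded, say by $M$, then $\sigma_1\leq M^{n-1}\sigma_n$ together with $\sigma_n\leq 1\leq\sigma_1$ would force $\sigma_1$ bounded, a contradiction. Now, failure of $P_1$-divergence means $\sigma_1/\sigma_2\not\to+\infty$, so I may extract a subsequence along which $\sigma_1/\sigma_2$ stays bounded, $K_m\to K$, $L_m\to L$, and (exactly as in Remark \ref{rmk:divergentSequenceDynamics}) the first index $l$ with $\sigma_l/\sigma_{l+1}$ unbounded satisfies $l\geq 2$, the subsequence is $P_l$-divergent, and $[\gamma_m]\to[K\Delta L]$ in $\P(\End(\R^n))$ with $\Delta=\diag(1,s_2,\dots,s_l,0,\dots,0)$ of rank $l$.

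Next I would choose the test set. The hyperplane $H$ has projective dimension $n-2$, whereas the stable subspace $\P(L^{-1}\ker\Delta)$ has projective dimension $n-l-1\leq n-3$; their union is therefore a proper closed subset of $\P(\R^n)$, and one can pick a nonempty open $U$ whose closure $\overline{U}$ is disjoint from both. Since $\overline{U}$ is a compact subset of $\P(\R^n)-H$, the hypothesis of uniform convergence gives $\gamma_m\overline{U}\to\{x\}$ for the Hausdorff metric, a limit of dimension $0$; in particular every Hausdorff limit of $\gamma_m U$ equals $\{x\}$. On the other hand, because $\overline{U}$ avoids $\P(L^{-1}\ker\Delta)$, Remark \ref{rmk:divergentSequenceDynamics} gives $\gamma_m U\to\P(K\Delta L\,U)$ for the Hausdorff metric, a limit of dimension $l-1\geq 1$. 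These two descriptions are incompatible, which is the desired contradiction; hence $(\gamma_m)_m$ is $P_1$-divergent.

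The only genuinely delicate point is the bookkeeping of subsequences. Because the definition requires $\sigma_1/\sigma_2\to+\infty$ for the full sequence, the contradiction must begin from an arbitrary subsequence along which $P_1$-divergence fails, and the open set $U$ must be selected \emph{after} passing to the extraction that produces $K$, $L$ and $\Delta$, so that $U$ can be arranged to avoid the then-defined stable subspace $\P(L^{-1}\ker\Delta)$. Everything else reduces to the dimension count reconciling the two descriptions of the Hausdorff limit of $\gamma_m U$, which is immediate from Remark \ref{rmk:divergentSequenceDynamics}.
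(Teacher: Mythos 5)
Your proof is correct and follows essentially the same route as the paper's: Cartan decomposition, extraction of a subsequence with limits $K$, $L$ and a rank-$l$ limit $[K\Delta L]$, and a contradiction between the $0$-dimensional Hausdorff limit forced by the hypothesis and the $(l-1)$-dimensional limit given by Remark \ref{rmk:divergentSequenceDynamics}. The only differences are cosmetic (contradiction versus a direct argument plus the subsequence principle), and you are in fact slightly more careful than the paper in choosing $U$ to avoid $\P(L^{-1}\ker\Delta)$ as well as $H$.
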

\begin{proof}
   Write $\gamma_m = K_m\diag(\sigma_1(\gamma_m),\dots,\sigma_n(\gamma_m))L_m$ with $K_m,L_m\in \O(n)$. Up to extracting a subsequence, we may assume that $K_m\to K\in \O(n)$ and $L_m\to L\in\O(n)$. Up to further extracting, we may assume that there is $k$ minimal such that $\sigma_1(\gamma_m)/\sigma_{k+1}(\gamma_m)\to +\infty$. It follows that for any open subset $U$ of $\P(\R^n)-H$, $\gamma_m U$ converges for the Hausdorff metric to a $k-1$-dimensional subset of $\P(\R^n)$. By hypothesis however, we know that $\gamma_m U\to\{x\}$ for the Hausdorff metric, hence $k=1$ and $(\gamma_m)_m$ is $P_1$-divergent.

   Applying the previous argument to an arbitrary subsequence of $(\gamma_m)_m$ shows that all subsequences of $(\gamma_m)_m$ admit a $P_1$-divergent subsequence. Hence $(\gamma_m)_m$ itself is $P_1$-divergent.
\end{proof}

Similar results hold for $P_1$-divergent sequences acting on $\flag_{1,n-1}(\R^n)$. This justifies Definition \ref{def:divergentGroup} in the introduction. We take what follows Definition \ref{def:divergentGroup} as our definition of limit sets here.

\subsection{Divergent groups preserving properly convex domains}

\begin{lemma}\label{lm:(un)stableSpaceSupportsDomain}
   Let $\Omega$ be a properly convex domain, and $(\gamma_m)_m$ be a $P_k$-divergent sequence of $\Aut(\Omega)$ whose unstable and stable subspaces are defined. Then both $\P(E_k^+(\gamma_{\bigcdot}))$ and $\P(E_{n-k}^-(\gamma_{\bigcdot}))$ are supporting subspaces to $\Omega$.
\end{lemma}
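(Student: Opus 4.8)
The plan is to prove a slightly more flexible statement and then apply it twice: \emph{for any properly convex domain $\Omega'$ and any $P_j$-divergent sequence $g_{\bigcdot}$ of $\Aut(\Omega')$ whose unstable space is defined, the unstable subspace $\P(E_j^+(g_{\bigcdot}))$ is a supporting subspace of $\Omega'$}, i.e. it meets $\overline{\Omega'}$ but is disjoint from $\Omega'$. This reduction already handles the stable case: by Lemma \ref{lm:dualityOfDivergentGroups} the sequence $\gamma_{\bigcdot}^{-1}$ is $P_{n-k}$-divergent, lies in $\Aut(\Omega)$, and its unstable subspace (at level $n-k$) is precisely $E_{n-k}^-(\gamma_{\bigcdot})$, so the statement for $E_{n-k}^-$ follows from the unstable case applied to $\gamma^{-1}_{\bigcdot}$.

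The first half, that $\P(E_j^+(g_{\bigcdot}))$ meets $\overline{\Omega'}$, follows from the orbit dynamics. Up to extraction, Remark \ref{rmk:divergentSequenceDynamics} produces an index $l\le j$ (the first singular gap) and a limit $[K\Delta L]$ with $\Delta=\diag(1,s_2,\dots,s_l,0,\dots,0)$ such that $g_m x\to[K\Delta L x]\in\P(E_l^+(g_{\bigcdot}))\subseteq\P(E_j^+(g_{\bigcdot}))$ for every $x$ outside the proper projective subspace $\P(L^{-1}\ker\Delta)$. I would then pick a base point $o\in\Omega'$ off this subspace, so that $g_m o$ converges to some $q\in\P(E_j^+(g_{\bigcdot}))$. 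Since $o\in\Omega'$ and $g_m\in\Aut(\Omega')$ the orbit stays in $\Omega'$, and since $g_{\bigcdot}$ is unbounded while $\Aut(\Omega')$ acts properly on $\Omega'$ by Proposition \ref{prop:autGpActsProp}, the orbit must leave every compact subset of $\Omega'$; hence $q\in\partial\Omega'$, giving $\P(E_j^+(g_{\bigcdot}))\cap\overline{\Omega'}\neq\varnothing$.

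The crux is the second half, that $\P(E_j^+(g_{\bigcdot}))$ avoids the interior, and the key idea is to pass to the dual domain. Writing $g_m=K_mD_mL_m$ with $K_m\to K$, $L_m\to L$ in $\O(n)$ and $D_m$ diagonal descending, one gets $h_m:=\transpose{(g_m^{-1})}=K_mD_m^{-1}L_m$ because $K_m,L_m\in\O(n)$. The singular values of $h_m$ are the $\sigma_i(g_m)^{-1}$ in reverse order, so $h_{\bigcdot}$ is a $P_{n-j}$-divergent sequence of $\Aut((\Omega')^\ast)$, and its top $n-j$ left-singular directions converge to $K\gen{e_{j+1},\dots,e_n}$. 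Identifying $(\R^n)^\ast$ with $\R^n$, this is exactly the annihilator of $E_j^+(g_{\bigcdot})=K\gen{e_1,\dots,e_j}$, i.e. the family of hyperplanes of $\P(\R^n)$ that contain $\P(E_j^+(g_{\bigcdot}))$.

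Finally I would apply the first half, now to the properly convex domain $(\Omega')^\ast$ and the sequence $h_{\bigcdot}$: its unstable subspace meets $\overline{(\Omega')^\ast}$. This yields a point of $\overline{(\Omega')^\ast}$, that is, a hyperplane $H$ of $\P(\R^n)$ which contains $\P(E_j^+(g_{\bigcdot}))$; recalling that every point of $\overline{(\Omega')^\ast}$ is a hyperplane disjoint from the open domain $\Omega'$, we conclude $\P(E_j^+(g_{\bigcdot}))\cap\Omega'\subseteq H\cap\Omega'=\varnothing$. Combined with the first half this shows $\P(E_j^+(g_{\bigcdot}))$ is a supporting subspace, and the reduction above then gives the statement for both $\P(E_k^+(\gamma_{\bigcdot}))$ and $\P(E_{n-k}^-(\gamma_{\bigcdot}))$. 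The only genuinely delicate step is the dual computation identifying the unstable space of $h_{\bigcdot}$ with the hyperplanes through $E_j^+(g_{\bigcdot})$; once that is in place, both the closure and the interior statements are just applications of the orbit dynamics together with properness.
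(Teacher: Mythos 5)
Your proof is correct, and while the reduction of the stable case to the unstable one via $\gamma_{\bigcdot}^{-1}$ and Lemma \ref{lm:dualityOfDivergentGroups} is exactly what the paper does, the core of your argument is organized differently. The paper pushes a whole open set $U\subset\Omega-\P(E_{n-k}^-(\gamma_{\bigcdot}))$ forward and uses Remark \ref{rmk:divergentSequenceDynamics} to get Hausdorff convergence of $\gamma_m U$ to a relatively open subset $X$ of the attracting subspace; properness places $X$ in $\partial\Omega$, and the fact that a nonempty relatively open piece of the subspace sits in the boundary is what simultaneously gives both halves of ``supporting'' (if the subspace met $\Omega$, a line from an interior point through a point of $X$ would force $X$ to contain points outside $\overline{\Omega}$). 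You instead split the two halves: a single orbit point plus properness gives $\P(E_j^+)\cap\overline{\Omega'}\neq\varnothing$, and disjointness from the interior is obtained by running the same ``meets the closure'' argument for the dual sequence $h_m=\transpose{(\smash{g_m^{-1}})}$ acting on $(\Omega')^\ast$. Your singular-value computation for $h_m$ is right ($\sigma_i(h_m)=\sigma_{n-i+1}(g_m)^{-1}$, unstable space $K\langle e_{j+1},\dots,e_n\rangle$, which is the annihilator of $E_j^+$ since $K$ is orthogonal), and the conclusion that some hyperplane in $\overline{(\Omega')^\ast}$ contains $\P(E_j^+)$ does yield disjointness from $\Omega'$. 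What the duality route buys you is robustness in the edge case where the first singular gap occurs at an index $l$ strictly below $k$: the Hausdorff limit in the paper's argument is then only open inside $\P(E_l^+)$, whereas your dual hyperplane automatically contains all of $E_k^+$ (indeed all of $E_j^+$ for $j$ the last gap index), so no extra convexity observation is needed. The cost is the extra setup (the dual action, the SVD of the inverse transpose, the annihilator identification), where the paper's version is shorter and entirely elementary.
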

\begin{proof}
   Let $U$ be an open subset of $\Omega-\P(E_{n-k}^-(\gamma_{\bigcdot}))$, then $\gamma_m U$ converges to an open subset $X$ of $\P(E_k^+(\gamma_{\bigcdot}))$ for the Hausdorff metric by Remark \ref{rmk:divergentSequenceDynamics}. Moreover, since $\gamma_m\in\Aut(\Omega)$ for all $m$ and $\gamma_m$ is unbounded, Proposition \ref{prop:autGpActsProp} implies that $X$ lies in $\partial\Omega$. It follows that $\P(E_k^+(\gamma_{\bigcdot}))$ is a supporting subspace to $\Omega$.

   The result for $\P(E_{n-k}^-(\gamma_{\bigcdot}))$ follows from Lemma \ref{lm:dualityOfDivergentGroups} and the observation that $E_{n-k}^-(\gamma_{\bigcdot}) = E_{n-k}^+(\gamma^{-1}_{\bigcdot})$.
\end{proof}

It is a classical fact that discrete subgroups of automorphisms of a strictly convex domain are $P_1$-divergent.

\begin{proposition}\label{prop:autOfStrictlyConvDomainsAreP1Div}
   Let $\Omega$ be a strictly convex domain. Then $\Aut(\Omega)$ is $P_1$-divergent.
\end{proposition}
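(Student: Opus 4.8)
The plan is to show that every unbounded sequence in $\Aut(\Omega)$ is $P_1$-divergent, which by definition suffices to conclude that $\Aut(\Omega)$ itself is $P_1$-divergent. So I would start by fixing an arbitrary unbounded sequence $(\gamma_m)_m$ in $\Aut(\Omega)$, and argue that it admits a $P_1$-divergent subsequence; a standard subsequence-extraction argument (as used at the end of the reciprocal lemma above) then upgrades this to $P_1$-divergence of the full sequence.

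First I would invoke the Cartan decomposition to write $\gamma_m = K_m\diag(\sigma_1(\gamma_m),\dots,\sigma_n(\gamma_m))L_m$ with $K_m,L_m\in\O(n)$, and pass to a subsequence so that $K_m\to K$ and $L_m\to L$, and so that the first index $k$ at which $\sigma_1(\gamma_m)/\sigma_{k+1}(\gamma_m)\to+\infty$ is well-defined (so the sequence becomes $P_k$-divergent with this $k$). The goal is then to prove $k=1$. Suppose for contradiction that $k\geq 2$. Then the unstable subspace $E_k^+(\gamma_{\bigcdot})$ is at least $2$-dimensional, and by Lemma \ref{lm:(un)stableSpaceSupportsDomain} the projective subspace $\P(E_k^+(\gamma_{\bigcdot}))$ is a supporting subspace to $\Omega$, meaning it is disjoint from $\Omega$ but meets $\overline{\Omega}$.

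The heart of the argument is to extract a nontrivial segment in $\partial\Omega$, contradicting strict convexity. I would take an open set $U\subset\Omega$ whose closure avoids $\P(E_{n-k}^-(\gamma_{\bigcdot}))$; by Remark \ref{rmk:divergentSequenceDynamics}, the images $\gamma_m U$ converge in the Hausdorff metric to a subset $X$ of $\P(E_k^+(\gamma_{\bigcdot}))$ whose dimension is exactly $k-1\geq 1$. Since each $\gamma_m$ preserves $\Omega$ and $\gamma_m\to\infty$, Proposition \ref{prop:autGpActsProp} forces $X\subset\partial\Omega$ (the orbit of an interior point accumulates only on the boundary). As $X$ is a positive-dimensional subset of $\overline{\Omega}$ contained in a projective subspace, it contains a nontrivial segment lying in $\partial\Omega$, contradicting the strict convexity of $\Omega$. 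Hence $k=1$, so $(\gamma_m)_m$ is $P_1$-divergent up to extraction.

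The main obstacle I anticipate is pinning down precisely that the Hausdorff limit $X$ has positive dimension and genuinely lies in $\partial\Omega$ rather than degenerating, and that positive dimension of $X$ inside a properly convex $\overline{\Omega}$ actually produces a nondegenerate segment in the boundary; this is where the interplay between the convexity of $\Omega$ and the dimension count $k-1$ in Remark \ref{rmk:divergentSequenceDynamics} must be handled carefully. Once the contradiction is secured for an arbitrary subsequence, I would finish by noting that every subsequence of $(\gamma_m)_m$ thus contains a further $P_1$-divergent subsequence, so that $\sigma_1(\gamma_m)/\sigma_2(\gamma_m)\to+\infty$ along the whole sequence, giving $P_1$-divergence of $\Aut(\Omega)$.
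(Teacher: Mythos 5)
Your proposal is correct and follows essentially the same route as the paper: Cartan decomposition, extraction of a subsequence with convergent $\O(n)$-factors and a well-defined first index $k$ of singular-value gap, the observation that the Hausdorff limit of $\gamma_m U$ is a relatively open subset of a $(k-1)$-dimensional projective subspace forced into $\partial\Omega$ by properness of the action, and the conclusion $k=1$ from strict convexity, followed by the standard subsequence upgrade. The only cosmetic difference is that you invoke Lemma \ref{lm:(un)stableSpaceSupportsDomain} and Remark \ref{rmk:divergentSequenceDynamics} where the paper re-derives their content inline, and the step you flag as delicate (positive dimension of the limit yielding a nontrivial boundary segment) is exactly handled by the ``relatively open in $\P(\im T)$'' statement of that remark.
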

\begin{proof}
   Let $\gamma_m\in\Aut(\Omega)$ be an unbounded sequence. We can write $\gamma_m = K_m \Delta_m L_m$ where $K_m,L_m\in \O(n)$ and $\Delta_m = \diag(\sigma_1(\gamma_m),\dots,\sigma_n(\gamma_m))$. Since $\O(n)$ is compact, we can suppose up to extracting a subsequence $(\gamma_{\psi(m)})_m$ that $K_{\psi(m)}\to K$ and $L_{\psi(m)}\to L$. 
   
   Let $k$ be the smallest integer such that $\sigma_k(\gamma_{\psi(m)})/\sigma_{k+1}(\gamma_{\psi(m)}) \to\infty$. Up to passing to another subsequence $(\gamma_{\phi(m)})_m$, we can suppose that $[\Delta_{\phi(m)}]$ converges to $[D]:=[\diag(a_1,\dots,a_k,0\dots,0)]$ in $\P(\End(\R^n))$ (where $a_1\geq \dots\geq a_k >0$), so that $\gamma_{\phi(m)}\to [T]:= [kDl]$ in the same space. It follows that for any $x\in \P(\R^n)-\P(\ker T)$, $\gamma_{\phi(m)} x\to Tx$.

   In particular, if $U$ is an open subset of $\Omega-\P(\ker T)$, $\gamma_{\phi(m)} U\to TU$ which is relatively open in the $k-1$-dimensional subspace $\P(\im T)$ of $\R^n$. Since the action of $\Aut(\Omega)$ on $\Omega$ is proper by Proposition \ref{prop:autGpActsProp}, all limit points of $\gamma_{\phi(m)} U$ lie in $\partial\Omega$. Therefore, $[T]U$ is a subset of a face of $\partial\Omega$ of dimension at least $k-1$. Since $\Omega$ is strictly convex, we have $k = 1$, so $(\gamma_{\phi(m)})_m$ is $P_1$-divergent.

   Hence, we can extract from any subsequence of $(\gamma_m)_m$ a further subsequence along which $\sigma_1/\sigma_2$ diverges to $\+\infty$. It follows that $\sigma_1(\gamma_m)/\sigma_2(\gamma_m)\to +\infty$. That is, $\Aut(\Omega)$ is $P_1$-divergent.
\end{proof}

\begin{proposition}
   If $\Gamma<\Aut(\Omega)$ is $P_1$-divergent, then its $P_1$-limit set $\Lambda_1(\Gamma)$ equals its orbital limit set $\Lambda_\Omega(\Gamma)$.
\end{proposition}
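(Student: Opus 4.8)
The plan is to establish the two inclusions $\Lambda_1(\Gamma)\subseteq\Lambda_\Omega(\Gamma)$ and $\Lambda_\Omega(\Gamma)\subseteq\Lambda_1(\Gamma)$ separately, relying in both directions on the same three facts: the attracting dynamics of $P_1$-divergent sequences (Lemma \ref{lm:P1UnstableSpaceIsAttractive}), the fact that the stable hyperplane of such a sequence supports $\Omega$ (Lemma \ref{lm:(un)stableSpaceSupportsDomain}), and the properness of the $\Aut(\Omega)$-action (Proposition \ref{prop:autGpActsProp}). The common mechanism is that a basepoint in the open set $\Omega$ automatically lies off the repulsive hyperplane, which is what triggers convergence to the attractive point.

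For $\Lambda_1(\Gamma)\subseteq\Lambda_\Omega(\Gamma)$, I would take $p_+\in\Lambda_1(\Gamma)$, so that $p_+=\P(E_1^+(\gamma_{\bigcdot}))$ is the attractive point of some $P_1$-divergent sequence $(\gamma_m)_m$ in $\Gamma$; after passing to a subsequence I may assume its stable and unstable subspaces are defined. By Lemma \ref{lm:(un)stableSpaceSupportsDomain} the stable hyperplane $\P(E_{n-1}^-(\gamma_{\bigcdot}))$ supports $\Omega$, and in particular is disjoint from the open set $\Omega$. Hence any basepoint $o\in\Omega$ lies off this hyperplane, and Lemma \ref{lm:P1UnstableSpaceIsAttractive} gives $\gamma_m o\to p_+$. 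By properness (Proposition \ref{prop:autGpActsProp}) the unbounded orbit $(\gamma_m o)_m$ leaves every compact subset of $\Omega$, so $p_+\in\partial\Omega$ and in particular $p_+\notin\Gamma\cdot o$. Thus $p_+\in\overline{\Gamma\cdot o}-\Gamma\cdot o\subseteq\Lambda_\Omega(\Gamma)$.

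For $\Lambda_\Omega(\Gamma)\subseteq\Lambda_1(\Gamma)$, I would take $\xi\in\Lambda_\Omega(\Gamma)$, witnessed by $o\in\Omega$ and a sequence $\gamma_m\in\Gamma$ with $\gamma_m o\to\xi$; recall that $\xi\in\partial\Omega$, since orbital limit points of a discrete group lie in the boundary. As the orbit $(\gamma_m o)_m$ escapes $\Omega$ in the limit while the action is proper, the sequence $(\gamma_m)_m$ must be unbounded, hence $P_1$-divergent. After extraction its stable and unstable subspaces are defined, so $\P(E_1^+(\gamma_{\bigcdot}))\in\Lambda_1(\Gamma)$; and exactly as above $\P(E_{n-1}^-(\gamma_{\bigcdot}))$ supports $\Omega$ and therefore avoids $o$. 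Lemma \ref{lm:P1UnstableSpaceIsAttractive} then gives $\gamma_m o\to\P(E_1^+(\gamma_{\bigcdot}))$, and uniqueness of limits forces $\xi=\P(E_1^+(\gamma_{\bigcdot}))\in\Lambda_1(\Gamma)$.

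The only genuinely delicate point is guaranteeing that the chosen basepoint avoids the repulsive hyperplane $\P(E_{n-1}^-(\gamma_{\bigcdot}))$, which is precisely the hypothesis needed to invoke the convergence Lemma \ref{lm:P1UnstableSpaceIsAttractive}; this is resolved cleanly by Lemma \ref{lm:(un)stableSpaceSupportsDomain}, since a supporting hyperplane cannot meet the open domain $\Omega$. A secondary bookkeeping issue is that the stable and unstable subspaces are only defined after passing to a subsequence along which the orthogonal factors $K_m,L_m$ of the Cartan decomposition converge; this is harmless, because both the orbital limit point $\xi$ and the attractive point $p_+$ are limits of convergent (sub)sequences and are preserved under extraction.
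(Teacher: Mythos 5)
Your proposal is correct and follows essentially the same route as the paper: both inclusions are obtained by combining Lemma \ref{lm:(un)stableSpaceSupportsDomain} (the repulsive hyperplane supports $\Omega$, hence misses any basepoint) with the attracting dynamics of Lemma \ref{lm:P1UnstableSpaceIsAttractive}, plus properness of the action. Your write-up is in fact slightly more explicit than the paper's on the bookkeeping (checking $p_+\in\partial\Omega$ so that it lies in $\overline{\Gamma\cdot o}-\Gamma\cdot o$, and the subsequence extraction), but there is no substantive difference.
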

\begin{proof}
   By Lemma \ref{lm:(un)stableSpaceSupportsDomain}, given any $P_1$-divergent sequence of $\Gamma$ and any point $x\in\Omega$, $x$ is transverse to $\P(E_{n-1}^-(\gamma_{\bigcdot}))$, so $\gamma_m x\to\P(E_1^+(\gamma_{\bigcdot}))\in\partial\Omega$ by Lemma \ref{lm:P1UnstableSpaceIsAttractive}. In particular, $\Lambda_1(\Gamma)\subset\Lambda_\Omega(\Gamma)$.

   For the converse, consider a point $z\in\Lambda_\Omega(\Gamma)$, a sequence $(\gamma_m)_m$ in $\Gamma$ and a point $x\in\Omega$ such that $\gamma_m x\to z$. Since $(\gamma_m)_m$ is unbounded, it is $P_1$-divergent, so up to extraction, we can suppose that the stable and unstable subspaces of $(\gamma_m)_m$ are defined, so that $\gamma_m x'\to \P(E_1^+(\gamma_{\bigcdot})$ for any $x'\in\Omega$ by Lemma \ref{lm:(un)stableSpaceSupportsDomain}. In particular, $\gamma_m x\to\P(E_1^+(\gamma_{\bigcdot}))$, so we have $z = \P(E_1^+(\gamma_{\bigcdot}))\in\Lambda_1(\Gamma)$.
\end{proof}

\section{Characterization of strictly convex and round cusp holonomies}

\subsection{The conditions}

Our goal is to give a characterization of the holonomies of strictly convex and round cusps. For this purpose, we introduce the following conditions on an unbounded subgroup $\Gamma$ of $\SLpm(n,\R)$:
\begin{conditions}
   \conditem{(WU)}{cond:gpIsWeaklyUnip} $\Gamma$ is weakly unipotent;
   \conditem{(GP+)}{cond:existsPosGenPGLC} there is a positive generic s-proper matrix coefficient of $\Gamma$;
   \conditem{(GP)}{cond:existsGenPLC} there is a generic s-proper linear combination of the entries of $\Gamma$;
   \conditem{(Tr)}{cond:topRowDominates} there is a pair $(p,\phi)\in (\R^n-\{0\})\times((\R^n)^\ast-\{0\})$ of $\Gamma$-fixed vectors such that $\phi(p) = 0$ and in any basis of $\R^n$ whose first element is $p$, all dominating entries of $\Gamma$ are on the top row;
   \conditem{(TRe)}{cond:topRightDominates} there is a pair $(p,\phi)\in (\R^n-\{0\})\times((\R^n)^\ast-\{0\})$ of $\Gamma$-fixed vectors such that $\phi(p) = 0$ and in any basis of $\R^n$ whose first element is $p$ and the $n-1$ first elements span $\ker\phi$, the top right entry of $\Gamma$ is the only dominating entry and has constant sign outside of a bounded subset of $\Gamma$.
\end{conditions}

\begin{remark}\label{rmk:TrTReMatrixForm}
   The existence of a pair $(p,\phi)$ as mandated in conditions \ref{cond:topRowDominates} and \ref{cond:topRightDominates} is equivalent to saying that there is a basis of $\R^n$ in which all elements of $\Gamma$ are of the form
   \begin{equation*}
      \begin{pmatrix}
         1 & * & * \\
         0 & * & * \\
         0 & 0 & 1
      \end{pmatrix}
   \end{equation*}

   However, observe that condition \ref{cond:topRowDominates} also applies when $\Gamma$ is only in a basis such that its elements are of the form
   \begin{equation*}
      \begin{pmatrix}
         1 & * & * \\
         0 & * & * \\
         0 & * & *
      \end{pmatrix}
   \end{equation*}
\end{remark}
\begin{remark}\label{rmk:TReImpliesTReProper}
   Condition \ref{cond:topRightDominates} implies that the top right entry is also s-proper. Indeed, if $\gamma\to\infty$ in $\Gamma$, $\norm{\gamma}\to\infty$, but since $\gamma\mapsto\gamma_{1n}$ dominates all other entries of $\Gamma$, $\norm{\gamma}\sim \abs{\gamma_{1n}}$, so that $\gamma\mapsto \gamma_{1n}$ is s-proper since it does not change sign outside of a bounded subset of $\Gamma$.
\end{remark}

\begin{remark}\label{rmk:TrImpliesGenPLCAreGenPGLC}
   If $\Gamma<\SLpm(n,\R)$ satisfies \ref{cond:topRowDominates}, then there is a generic s-proper linear combination of the entries of $\Gamma$ if and only if there is a generic s-proper matrix coefficient of $\Gamma$. This is because a generic s-proper linear combination of the entries of $\Gamma$ will have the order of the entries of $\Gamma$ by Proposition \ref{prop:genPLCOrder}, so the restriction of this linear combination to the entries of the top row of $\Gamma$ is also s-proper, because the restriction to the other entries of $\Gamma$ is a little $o$ of the whole linear combination. Since the first linear combination is generic, so is its restriction. But this is clearly a matrix coefficient of $\Gamma$ by choosing $\alpha=\begin{psmallmatrix}1 & 0 &\cdots & 0\end{psmallmatrix}$, and the coordinates of $x$ to be the coefficients of the restriction to the entries on the top row.
\end{remark}

\begin{remark}\label{rmk:TReImpliesGP+}
   It is not hard to see that \ref{cond:topRightDominates} implies \ref{cond:existsPosGenPGLC}. Indeed, in a basis as specified by \ref{cond:topRightDominates}, the top left entry will be $1$, and the top right entry is s-proper by Remark \ref{rmk:TReImpliesTReProper}. 
   Then, if $A$ is large enough and $\epsilon\in\{\pm 1\}$ has the right sign, the matrix coefficient given by $\alpha=\begin{psmallmatrix}1&0&\cdots&0\end{psmallmatrix}$ and $x=(A,0,\dots,0,\epsilon)$ is a positive generic s-proper matrix coefficient of $\Gamma$.

      Similarly, if $\Gamma$ satisfies \ref{cond:existsGenPLC} and \ref{cond:topRowDominates}, it also satisfies \ref{cond:existsPosGenPGLC}, we can find a generic s-proper matrix coefficients of $\Gamma$ given by $\alpha=\begin{psmallmatrix}1&0&\cdots&0\end{psmallmatrix}$ and some $x\in\R^n$ by Remark \ref{rmk:TrImpliesGenPLCAreGenPGLC}, so up to changing the sign of $x$ and increasing its first coefficient, we get a positive generic s-proper matrix coefficient of $\Gamma$.
\end{remark}

\begin{theorem}\label{thm:characHoloStrictlyConvCusps}
   Let $\Gamma$ be an unbounded subgroup of $\PGL(n,\R)$. The following are equivalent:
   \begin{enumerate}[(i)]
      \item there is a $\Gamma$-invariant strictly convex domain $\Omega\subset\P(\R^n)$ on which the action of $\Gamma$ is doubly elementary of the parabolic type fixing a point $\xi\in\partial\Omega$ and a hyperplane $H$ supporting $\Omega$ at $\xi$, such that $\partial\Omega-\{\xi\}$ is the graph of a strictly convex analytic function in the affine chart $\P(\R^n)-H$;
      \item there is a $\Gamma$-invariant strictly convex domain $\Omega\subset\P(\R^n)$ on which the action of $\Gamma$ is elementary of the parabolic type;
         \label{item:gpGivesStrictlyConvCusp}
      \item $\Gamma$ is $P_1$-divergent with $\Lambda_1(\Gamma)$ a point, and it preserves a properly convex domain;
      \item a lift of $\Gamma$ to $\SLpm(n,\R)$ satisfies \ref{cond:gpIsWeaklyUnip}, \ref{cond:existsPosGenPGLC} and \ref{cond:topRowDominates};
         \label{item:strongCondStrictlyConvCusp}
      \item a lift of $\Gamma$ to $\SLpm(n,\R)$ satisfies \ref{cond:existsGenPLC} and \ref{cond:topRowDominates}.
         \label{item:weakCondStrictlyConvCusp}
   \end{enumerate}

   In particular, if $\Gamma$ is discrete, then $\Gamma$ is the holonomy of a strictly convex cusp if and only if it satisfies \ref{cond:existsGenPLC} and \ref{cond:topRowDominates}. In this case, $\Gamma$ is virtually nilpotent.
\end{theorem}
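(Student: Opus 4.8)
We prove the equivalence through the cycle (i)$\Rightarrow$(ii)$\Rightarrow$(iii)$\Rightarrow$(iv)$\Rightarrow$(v)$\Rightarrow$(i), and then read off the discrete corollary. The implications (i)$\Rightarrow$(ii) and (iv)$\Rightarrow$(v) are immediate: the domain produced in (i) is in particular a strictly convex domain on which $\Gamma$ acts elementarily of the parabolic type, and for (iv)$\Rightarrow$(v) a generic s-proper matrix coefficient is a generic s-proper linear combination of the entries by \Lm\ref{lm:genPGLCAreGenPLC}, so one simply forgets \ref{cond:gpIsWeaklyUnip}. For (ii)$\Rightarrow$(iii), \Prop\ref{prop:autOfStrictlyConvDomainsAreP1Div} gives that $\Aut(\Omega)$, hence $\Gamma$, is $P_1$-divergent; since $\Gamma$ is elementary of the parabolic type it fixes a unique point $\xi\in\partial\Omega$, and as $\Lambda_1(\Gamma)$ coincides with the orbital limit set $\Lambda_\Omega(\Gamma)\subset\partial\Omega$, which reduces to $\xi$, condition (iii) holds, the strictly convex $\Omega$ being in particular properly convex.

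The two substantial directions are (iii)$\Rightarrow$(iv) and (v)$\Rightarrow$(i). For (iii)$\Rightarrow$(iv), condition \ref{cond:existsPosGenPGLC} is immediate from \Prop\ref{prop:gpPreservesDomainNSC} applied to the invariant properly convex domain. Weak unipotence \ref{cond:gpIsWeaklyUnip} follows from $\Lambda_1(\Gamma)$ being a single point: if some $g\in\Gamma$ were hyperbolic, its attracting fixed point and its repelling fixed point, the latter being the attracting point of $g^{-1}$, would be two distinct points of $\Lambda_1(\Gamma)$, a contradiction; hence every element has all eigenvalues of modulus $1$. Writing $[p]$ for the fixed point $\Lambda_1(\Gamma)$, the top-row domination in \ref{cond:topRowDominates} comes from the Cartan asymptotics: for an unbounded sequence $\gamma_m=K_mD_mL_m$ with $\sigma_1/\sigma_2\to+\infty$ and attracting point $K_me_1\to p$, one has $\gamma_m/\sigma_1(\gamma_m)\to p\,\transpose{w}$ for some $w\in(\R^n)^\ast$, so every entry outside the row of $p$ is a little-$o$ of $\sigma_1(\gamma_m)=\Theta(\norm{\gamma_m})$, which says exactly that the dominating entries sit on the top row in any basis whose first vector is $p$. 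The delicate point is the production of the $\Gamma$-fixed covector $\phi$ with $\phi(p)=0$, \ie a $\Gamma$-invariant supporting hyperplane at $[p]$. Here I would pass to the dual: by \Lm\ref{lm:dualityOfDivergentGroups} the group is also $P_{n-1}$-divergent, so its transpose–inverse $\Gamma^\ast$ is $P_1$-divergent and preserves $\Omega^\ast$, and since $\Gamma$ fixes $[p]$ the group $\Gamma^\ast$ preserves the projective hyperplane $[p]^\perp$ and the nonempty face $F=[p]^\perp\cap\partial\Omega^\ast$ of supporting hyperplanes at $[p]$. It then remains to produce a fixed point of $\Gamma^\ast$ in $\overline F$; this amenability-type input on the compact convex set $\overline F$ is where weak unipotence is essential, and the resulting fixed point is the desired $\phi$.

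For (v)$\Rightarrow$(i), I would first upgrade (v) to \ref{cond:existsPosGenPGLC}: by \Rmk\ref{rmk:TrImpliesGenPLCAreGenPGLC} condition \ref{cond:existsGenPLC} together with \ref{cond:topRowDominates} yields a generic s-proper matrix coefficient, which one makes positive as in \Rmk\ref{rmk:TReImpliesGP+}. \Prop\ref{prop:gpPreservesDomainNSC} then furnishes a $\Gamma$-invariant sharp open cone, \ie a properly convex domain $\Omega$; the pair $(p,\phi)$ of \ref{cond:topRowDominates}, having $\phi(p)=0$ and trivial weight, exhibits $H=\ker\phi$ as a $\Gamma$-invariant supporting hyperplane with trivial weight, as in \Rmk\ref{rmk:TrTReMatrixForm}. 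The smoothing \Lm\ref{lm:smoothing} then applies: $\Omega$ is foliated by $\Gamma$-invariant hypersurfaces, and replacing $\Omega$ by the interior $\Omega'$ of the convex hull of one leaf yields $\partial\Omega'-\{[p]\}$ as the graph of a strictly convex analytic function over $\P(\R^n)-H$. The key remaining check is that $H\cap\overline\Omega$ is the single point $[p]$, for only then does \Lm\ref{lm:smoothing} guarantee that $\Omega'$ is strictly convex; this I would deduce from the top-row domination, which forces every orbit in $\overline\Omega$ to accumulate on $H$ only at $[p]$. Finally $\Gamma$ fixes $[p]\in\partial\Omega'$ and, having no hyperbolic element, acts elementarily of the parabolic type while fixing $(p,H)$, giving exactly (i).

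The discrete corollary is then immediate. For $\Gamma$ discrete and infinite, \Def\ref{def:cusps} and the following lemma identify \emph{holonomy of a strictly convex cusp} with condition (ii), since for discrete $\Gamma$ acting on a strictly convex domain, elementary of the parabolic type is the same as doubly elementary by \Prop\ref{prop:CLTParabElemSgpSummary}, and the cycle gives (ii)$\Leftrightarrow$(v). Virtual nilpotency is the last assertion of \Prop\ref{prop:CLTParabElemSgpSummary}. I expect the \emph{main obstacle} to be twofold and concentrated in the geometric reconstruction: extracting the invariant supporting hyperplane $\phi$ from the purely dynamical hypotheses of (iii), and establishing the single-point intersection $H\cap\overline\Omega=\{[p]\}$ required for strict convexity in (v)$\Rightarrow$(i); everything else reduces to the cited asymptotic analysis, duality, and smoothing results.
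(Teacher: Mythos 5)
Your overall architecture coincides with the paper's: the same cycle of implications, with (iii)$\Rightarrow$(iv) carried by an asymptotic analysis of orbits (Proposition \ref{prop:necessaryCond}), (v)$\Rightarrow$(i) by building an invariant domain from Proposition \ref{prop:gpPreservesDomainNSC} and smoothing it with Lemma \ref{lm:smoothing} (Proposition \ref{prop:suffCondStrictlyConvCase}), and (ii)$\Rightarrow$(iii) via Proposition \ref{prop:autOfStrictlyConvDomainsAreP1Div}; your Cartan-decomposition derivation of \ref{cond:topRowDominates} is a legitimate variant of the paper's matrix-coefficient argument. There are, however, two genuine gaps. First, in (iii)$\Rightarrow$(iv) your production of the $\Gamma$-fixed covector $\phi$ rests on an ``amenability-type input'' on the compact convex face $\overline{F}\subset\partial\Omega^\ast$ of supporting hyperplanes at $[p]$; but $\Gamma$ is only an unbounded, weakly unipotent subgroup at this stage, and nothing makes it amenable, so the fixed point in $\overline{F}$ is not obtained. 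The paper sidesteps this entirely by quoting \cite[Cor.~4.7]{cooperLongTillmann2015}: a group of automorphisms with no hyperbolic elements is doubly elementary, which directly supplies the invariant supporting hyperplane at $\xi$. Second, in (v)$\Rightarrow$(i) you assert that $\Gamma$ ``has no hyperbolic element''; condition (v) does not include \ref{cond:gpIsWeaklyUnip}, so this must be proved. The paper's argument is that the trivial weights on $p$ and $\phi$ guaranteed by \ref{cond:topRowDominates} force $\Gamma$ to preserve the $(\xi,H)$-algebraic horospheres of the smoothed domain (Proposition \ref{prop:autPreservesAlgHoroNSC}), whereas a hyperbolic automorphism of a strictly convex domain has an axis in $\Omega$ and cannot preserve any algebraic horosphere.

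Two further steps are asserted rather than proved. In (ii)$\Rightarrow$(iii), the claim that $\Lambda_\Omega(\Gamma)$ reduces to the fixed point $\xi$ requires an argument (the paper again uses the invariant algebraic horospheres, which meet $\partial\Omega$ only at $\xi$, to show every orbit converges to $\xi$). In (v)$\Rightarrow$(i), the identity $\overline{\Omega}\cap H=\{[p]\}$ does not follow merely from orbits accumulating on $H$ only at $[p]$: the invariant domain is the convex hull of the orbit of an open set, and its closure could a priori acquire extra boundary points in $H$ that are not orbit accumulation points. The paper controls this with two nested domains $\Omega_1=\Int\Conv(\Gamma\cdot\P(U_1))\subset\Omega_0=\Int\Conv(\Gamma\cdot\P(U))$ with $\overline{U_1}\subset U$ and $\overline{\P(U)}\cap H=\varnothing$, deducing $\overline{\Omega_1}-\{\xi\}\subset\Omega_0$ and hence $\overline{\Omega_1}\cap H=\{\xi\}$. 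All of these points are fixable along the paper's lines, but as written the proposal does not close them.
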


\begin{theorem}\label{thm:characHoloRoundCusps}
   Let $\Gamma$ be an unbounded subgroup of $\PGL(n,\R)$. The following are equivalent:
   \begin{enumerate}[(i)]
      \item there is a $\Gamma$-invariant round domain $\Omega\subset\P(\R^n)$ on which the action of $\Gamma$ is doubly elementary of the parabolic type fixing a point $\xi\in\partial\Omega$ and a hyperplane $H$ supporting $\Omega$ at $\xi$, such that $\partial\Omega-\{\xi\}$ is the graph of a strictly convex analytic function in the affine chart $\P(\R^n)-H$;
         \label{item:gpGivesRoundAnalyticCusp}
      \item there is a $\Gamma$-invariant round domain $\Omega\subset\P(\R^n)$ on which the action of $\Gamma$ is elementary of the parabolic type;
         \label{item:gpGivesRoundCusp}
      \item $\Gamma$ is $P_1$-divergent with $\Lambda_{1,n-1}(\Gamma)$ a point, and it preserves a properly convex domain;
         \label{item:P1DivGpGivesRoundCusp}
      \item a lift of $\Gamma$ to $\SLpm(n,\R)$ satisfies \ref{cond:gpIsWeaklyUnip}, \ref{cond:existsPosGenPGLC} and \ref{cond:topRightDominates};
         \label{item:strongCondRoundCusp}
      \item a lift of $\Gamma$ to $\SLpm(n,\R)$ satisfies \ref{cond:topRightDominates}.
         \label{item:weakCondRoundCusp}
   \end{enumerate}

   In particular, if $\Gamma$ is discrete, it is the holonomy of a round cusp if and only if it satisfies \ref{cond:topRightDominates}. In this case, $\Gamma$ is virtually nilpotent.
\end{theorem}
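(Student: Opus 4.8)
The plan is to bootstrap everything off Theorem~\ref{thm:characHoloStrictlyConvCusps}, guided by the principle that a round domain is exactly a strictly convex domain that is moreover $\COne$, and that $\COne$-ness is dual to strict convexity; the round case should therefore be \emph{self-dual}, amounting to imposing the strictly convex characterization simultaneously on $\Gamma$ and on its dual action on $(\R^n)^\ast$. I would first record the cheap reductions: a round domain is strictly convex, $\Lambda_{1,n-1}(\Gamma)$ being a point forces $\Lambda_1(\Gamma)=\pi_1(\Lambda_{1,n-1}(\Gamma))$ to be a point, and \ref{cond:topRightDominates} implies \ref{cond:topRowDominates} (in any basis with first vector $p$ the lower block stays dominated, so the dominating entries remain on the top row) as well as \ref{cond:existsGenPLC} and \ref{cond:existsPosGenPGLC} by Remarks~\ref{rmk:TReImpliesTReProper} and~\ref{rmk:TReImpliesGP+}. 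Hence any group satisfying one of the round conditions satisfies all the strictly convex conditions, so by Theorem~\ref{thm:characHoloStrictlyConvCusps} it carries the full strictly convex package: weak unipotence, the adapted basis of Remark~\ref{rmk:TrTReMatrixForm}, and a $\Gamma$-invariant strictly convex domain $\Omega$ with $\partial\Omega-\{\xi\}$ an analytic graph on which $\Gamma$ is doubly elementary parabolic fixing a flag $(\xi,H)$. This already gives \ref{item:gpGivesRoundAnalyticCusp}$\Rightarrow$\ref{item:gpGivesRoundCusp}, the equivalence \ref{item:strongCondRoundCusp}$\Leftrightarrow$\ref{item:weakCondRoundCusp} (the only missing ingredient, weak unipotence, being supplied by Theorem~\ref{thm:characHoloStrictlyConvCusps}), and the final virtual nilpotence claim.

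For \ref{item:gpGivesRoundCusp}$\Rightarrow$\ref{item:P1DivGpGivesRoundCusp} I only need the flag limit set to be a single point. The point $\xi$ is already the unique element of $\Lambda_1(\Gamma)$, so it suffices to show the attracting hyperplane is unique. By Lemma~\ref{lm:dualityOfDivergentGroups} every $P_1$-divergent sequence is $P_{n-1}$-divergent, so its attracting hyperplane $E^+_{n-1}$ is defined, and by Lemma~\ref{lm:(un)stableSpaceSupportsDomain} it is a supporting hyperplane of $\Omega$ containing the attracting point $\xi$. Since $\Omega$ is round, $\xi$ is $\COne$, i.e.\ it has a unique supporting hyperplane, namely $H$; hence every attracting hyperplane equals $H$ and $\Lambda_{1,n-1}(\Gamma)=\{(\xi,H)\}$.

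The crux, and the step I expect to be the main obstacle, is \ref{item:P1DivGpGivesRoundCusp}$\Rightarrow$\ref{item:weakCondRoundCusp}, i.e.\ extracting \ref{cond:topRightDominates}. Working in the adapted basis, where each element of $\Gamma$ fixes its first vector $p$ and the covector $\phi$ defining $H$ with eigenvalue $1$, condition \ref{cond:topRowDominates} localizes the dominating entries to the top row. The claim I would prove is the self-dual statement that \emph{$\Gamma$ satisfies \ref{cond:topRightDominates} if and only if both $\Gamma$ and its dual action satisfy \ref{cond:topRowDominates}}. Granting this, I apply Theorem~\ref{thm:characHoloStrictlyConvCusps} twice: once to $\Gamma$, which is $P_1$-divergent with $\Lambda_1(\Gamma)$ a point and preserves $\Omega$, yielding \ref{cond:topRowDominates} for $\Gamma$; and once to the dual (inverse-transpose) action, which is $P_1$-divergent by Lemma~\ref{lm:dualityOfDivergentGroups}, preserves $\Omega^\ast$, and whose $P_1$-limit set is a single point because $\Lambda_{1,n-1}(\Gamma)$ is a single flag, forcing the attracting hyperplane of $\Gamma$ to be unique; this yields \ref{cond:topRowDominates} for the dual. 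The dual interchanges rows and columns while sending $\phi$ to the first slot, so \ref{cond:topRowDominates} for the dual localizes the dominating entries of $\Gamma$ to the right column; intersecting with \ref{cond:topRowDominates} for $\Gamma$ leaves only $\gamma\mapsto\gamma_{1n}$, whose constant sign then follows from the positive matrix coefficient of \ref{cond:existsPosGenPGLC}, whose dominant term must be $\alpha_1 x_n\gamma_{1n}$. The difficulty is entirely in making the self-dual statement rigorous: passing to $\gamma^{-1}$ introduces cross terms such as $b^\top M^{-1}d$ in the top-right corner from inverting the middle block, and one must verify that \ref{cond:topRowDominates} on both sides prevents these from creating dominating entries off the corner, which requires a careful comparison of the growth orders of all entries and of the middle block and its inverse.

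Finally, for \ref{item:weakCondRoundCusp}$\Rightarrow$\ref{item:gpGivesRoundAnalyticCusp} I would construct the domain by smoothing. Since \ref{cond:topRightDominates} implies \ref{cond:existsPosGenPGLC}, Proposition~\ref{prop:gpPreservesDomainNSC} provides a $\Gamma$-invariant sharp cone, and since $\phi$ is fixed with trivial weight the hyperplane $H$ supports the associated domain, so Lemma~\ref{lm:smoothing} foliates it by analytic graphs whose convex hulls are strictly convex as soon as the trace on $H$ is the single point $\xi$, which holds by the \ref{cond:topRowDominates} part exactly as in the strictly convex case. To upgrade ``strictly convex'' to ``round'' in Lemma~\ref{lm:smoothing} I must know that $\xi$ is $\COne$; this is the geometric content of the single dominating entry in \ref{cond:topRightDominates}, which I would read off through the dual \ref{cond:topRowDominates} and the supporting-hyperplane dictionary of the previous paragraph (uniqueness of the attracting, hence of the supporting, hyperplane at $\xi$). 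Together with the trivial \ref{item:gpGivesRoundAnalyticCusp}$\Rightarrow$\ref{item:gpGivesRoundCusp} and \ref{item:strongCondRoundCusp}$\Leftrightarrow$\ref{item:weakCondRoundCusp}, this closes the cycle; the discrete-case statement and virtual nilpotence then follow from Definition~\ref{def:cusps} and Theorem~\ref{thm:characHoloStrictlyConvCusps}.
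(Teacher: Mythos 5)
Your overall architecture is the same as the paper's: reduce everything to Theorem \ref{thm:characHoloStrictlyConvCusps}, extract \ref{cond:topRightDominates} by running the top-row argument a second time on the dual domain, and build the round domain by smoothing. The cheap reductions, the implication \ref{item:gpGivesRoundCusp}$\Rightarrow$\ref{item:P1DivGpGivesRoundCusp} via the $\COne$ point having a unique supporting hyperplane, and \ref{item:strongCondRoundCusp}$\Leftrightarrow$\ref{item:weakCondRoundCusp} are all fine. But the two steps you yourself identify as hard are exactly where the paper does real work, and in both places your argument is incomplete.

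For \ref{item:P1DivGpGivesRoundCusp}$\Rightarrow$\ref{item:weakCondRoundCusp}, the difficulty you flag --- cross terms like $b^{\top}M^{-1}d$ appearing when one inverts the middle block of $\gamma$ --- is illusory, but as written you stop short of resolving it, so the step is not proved. The observation you are missing is that \ref{cond:topRowDominates} and \ref{cond:topRightDominates} are conditions on a \emph{set} of matrices, and since $\Gamma$ is a group, $\{\transpose{(\gamma^{-1})}:\gamma\in\Gamma\}=\{\transpose{\gamma}:\gamma\in\Gamma\}$ as sets. One therefore applies the top-row argument of Proposition \ref{prop:necessaryCond} to the group $\transpose{\Gamma}$ acting on $C^\ast$ (it is $P_1$-divergent, preserves $\Omega^\ast$, and its $P_1$-limit set is the single point $[\phi]$ precisely because $\Lambda_{1,n-1}(\Gamma)$ is a single flag). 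In the reordered dual basis $(e_n^\ast,\dots,e_1^\ast)$ the entries of $\transpose{\gamma}$ are literally the entries of $\gamma$ with rows and columns swapped and reversed, so ``dominating entries of $\transpose{\Gamma}$ on the top row'' reads verbatim as ``dominating entries of $\Gamma$ on the rightmost column''; intersecting with \ref{cond:topRowDominates} for $\Gamma$ isolates the top-right corner. No matrix is ever inverted and no comparison of $M$ with $M^{-1}$ is needed.

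The more serious gap is in \ref{item:weakCondRoundCusp}$\Rightarrow$\ref{item:gpGivesRoundAnalyticCusp}. For Lemma \ref{lm:smoothing} to output a \emph{round} rather than merely strictly convex domain, the pre-smoothing domain must be $\COne$ at $\xi$, and you assert this can be ``read off'' from uniqueness of the attracting hyperplane. It cannot: $\COne$-ness at $\xi$ is a property of the particular invariant domain, not of the group, and the domain $\Omega_1=\Int\Conv(\Gamma\cdot\P(U_1))$ produced by Proposition \ref{prop:suffCondStrictlyConvCase} generically fails to be $\COne$ at $\xi$. Already for a cyclic parabolic group in $\H^3$ the convex hull of a single orbit has a ridge at the fixed point: the orbit escapes to $\xi$ along only one direction of the invariant horosphere, so a whole segment of supporting hyperplanes survives at $\xi$ even though the attracting hyperplane is unique. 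This is precisely why Proposition \ref{prop:suffCondRoundCase} enlarges $\Omega_1$ by the orbits of the auxiliary points $x_m^\epsilon=e_n+m\sum_{i=2}^{n-1}\epsilon_ie_i+b_me_1$, which approach $\xi$ in all transverse directions and force every supporting hyperplane at $\xi$ to equal $H$; the constants $b_m$ are chosen using the full strength of \ref{cond:topRightDominates} (the single dominating, sign-constant, top-right entry) to keep these points inside an invariant convex cone. Some construction of this kind is unavoidable and is missing from your argument.
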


\begin{proof}[Proof of Theorems \ref{thm:characHoloStrictlyConvCusps} and \ref{thm:characHoloRoundCusps}]
   Proposition \ref{prop:necessaryCond} proves that \ref{item:P1DivGpGivesRoundCusp} implies \ref{item:strongCondRoundCusp} in both the round and strictly convex case, and it is clear that \ref{item:strongCondRoundCusp} implies \ref{item:weakCondRoundCusp}. Propositions \ref{prop:suffCondStrictlyConvCase} and \ref{prop:suffCondRoundCase} show that \ref{item:weakCondRoundCusp} implies \ref{item:gpGivesRoundAnalyticCusp} in both cases, and it is clear that \ref{item:gpGivesRoundAnalyticCusp} implies \ref{item:gpGivesRoundCusp}.

   For \ref{item:gpGivesRoundCusp} implies \ref{item:P1DivGpGivesRoundCusp}, Proposition \ref{prop:autOfStrictlyConvDomainsAreP1Div} shows that $\Gamma$ is $P_1$-divergent. Since $\Gamma$ is elementary of the parabolic type, it is also doubly elementary and weakly unipotent by Proposition \ref{prop:CLTParabElemSgpSummary}, so it preserves algebraic horospheres centered at a pair $(\xi,H)$, where $\xi\in\partial\Omega$ and $H$ is a supporting hyperplane to $\Omega$ at $\xi$. Since $\Omega$ is strictly convex, these algebraic horospheres meet $\partial\Omega$ exactly at $\xi$, so we must have $\gamma x\to \xi$ when $\gamma\to\infty$ in $\Gamma$, for any $x\in\Omega$. By Lemma \ref{lm:(un)stableSpaceSupportsDomain}, this implies that $\Lambda_1(\Gamma)=\{\xi\}$. In the round case, a duality argument shows that $\Lambda_{n-1}(\Gamma) = \{H\}$ is a point as well, so we must have that $\Lambda_{1,n-1}(\Gamma) = \{(p,H)\}$ is a point.

   In the case where $\Gamma$ is discrete, it is virtually nilpotent by \ref{cond:gpIsWeaklyUnip} and \cite[\Prop 4.11]{cooperLongTillmann2015}.
\end{proof}

\subsection{The conditions are necessary}

\begin{proposition}\label{prop:necessaryCond}
   Let $\Gamma$ be a $P_1$-divergent subgroup of $\PGL(n,\R)$ preserving a properly convex domain $\Omega$, and suppose that $\Lambda_1(\Gamma)$ has cardinal $1$. Then there is a lift of $\Gamma$ to $\SLpm(n,\R)$ that satisfies \ref{cond:gpIsWeaklyUnip}, \ref{cond:existsPosGenPGLC} and \ref{cond:topRowDominates}.

   If moreover $\Lambda_{1,n-1}(\Gamma)$ has cardinal $1$, then we can find a lift of $\Gamma$ to $\SLpm(n,\R)$ that also satisfies \ref{cond:topRightDominates}.
\end{proposition}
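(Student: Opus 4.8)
The plan is to verify the listed conditions one at a time, starting with the soft ones and then extracting the needed linear-algebraic data from the asymptotics of $P_1$-divergent sequences. Condition \ref{cond:existsPosGenPGLC} is essentially free: $\Omega$ lifts to a sharp open convex cone $C$ preserved by a suitable lift of $\Gamma$ to $\SLpm(n,\R)$, so Proposition \ref{prop:gpPreservesDomainNSC} immediately furnishes a positive generic s-proper matrix coefficient. For \ref{cond:gpIsWeaklyUnip} I would argue by contradiction: if some $g\in\Gamma$ were hyperbolic, then $\trans(g)>0$ forces $\abs{\lambda_1(g)}>\abs{\lambda_n(g)}$ by Lemma \ref{lm:transLengthIsRatioEV}, so the unbounded power sequences $(g^m)_m$ and $(g^{-m})_m$ are $P_1$-divergent and proximal, with attracting points the dominant eigenlines of $g$ and of $g^{-1}$ respectively. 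These two eigenlines are distinct and both lie in $\Lambda_1(\Gamma)$, contradicting $\#\Lambda_1(\Gamma)=1$. Hence $\Gamma$ is weakly unipotent.

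Next I would produce the fixed vector $p$ of \ref{cond:topRowDominates}. Lift the unique (hence $\Gamma$-fixed) point $\xi\in\Lambda_1(\Gamma)$ to $p\in\partial C$; each $\gamma$ scales $p$ by some $\lambda(\gamma)>0$, since $\gamma$ preserves $C$ and $p\in\partial C$, and $\abs{\lambda(\gamma)}=1$ by weak unipotence, so $\gamma p=p$ for all $\gamma$. The top-row domination is then read off from the dynamics: working in a basis whose first vector is $p$, Remark \ref{rmk:divergentSequenceDynamics} shows that along any $P_1$-divergent sequence with defined limits, $\gamma_m/\norm{\gamma_m}$ converges to a rank-one matrix $p\otimes\psi$ whose image is $\langle p\rangle=E_1^+(\gamma_{\bigcdot})$ (because $\Lambda_1(\Gamma)=\{\xi\}$). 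Consequently every entry off the top row is $o(\norm{\gamma_m})$. A routine extraction argument — apply this to a hypothetical sequence witnessing an off-top-row entry that is \emph{not} $o$ of the full set of entries — upgrades the sequential statement to domination over all of $\Gamma$, giving that the off-top-row entries are dominated by the set of all entries.

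The delicate point is the fixed covector $\phi$ with $\phi(p)=0$. Here I would use that, since $\Lambda_1(\Gamma)=\{\xi\}$, the inverse sequence $(\gamma_m^{-1})_m$ also attracts to $\xi$; comparing the Cartan data of $\gamma_m$ and $\gamma_m^{-1}$ (using $K,L\in\O(n)$ and $\sigma_i(g^{-1})=\sigma_{n-i+1}(g)^{-1}$ as in Lemma \ref{lm:dualityOfDivergentGroups}) forces $p$ to lie on every repulsive and every attracting hyperplane, so the whole dual limit set sits inside $\P(\mathrm{Ann}(p))$ and $\Gamma$ preserves the $\Gamma$-invariant sharp convex cone $K=\overline{C^\ast}\cap\mathrm{Ann}(p)$ of functionals supporting $\Omega$ at $\xi$. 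It then remains to find a $\Gamma$-fixed ray in $K$, for which I would induct on $n$: pass to the quotient action on $\R^n/\langle p\rangle$, which preserves the projection of $\Omega$ from $\xi$ (a properly convex domain) and is again $P_1$-divergent and weakly unipotent, apply the inductive hypothesis to recover an invariant hyperplane there, and pull it back; the bounded (elliptic) case is handled directly by the center of mass of Proposition \ref{lm:centerOfMass}. I expect the \textbf{main obstacle} to be exactly the verification that the ``single limit point'' hypothesis descends to this quotient action (equivalently, that $\Gamma$ fixes a point of $K$); this is the one step that uses more than the rank-one asymptotics above, and it is where the argument is genuinely rigid rather than formal.

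Finally, in the round case where $\#\Lambda_{1,n-1}(\Gamma)=1$, the covector is free: $\Lambda_{n-1}(\Gamma)=\{H\}$ is a single $\Gamma$-fixed hyperplane, which contains $\xi$ by the incidence established above, so $\phi$ is immediate and no induction is needed. Choosing a basis with $p=e_1$ and $\ker\phi=\langle e_1,\dots,e_{n-1}\rangle$, the rank-one limit $p\otimes\psi$ now satisfies $\ker\psi=H$, so the unique dominating entry is the top-right one $\gamma\mapsto\gamma_{1n}$; its constant sign outside a bounded subset of $\Gamma$ follows by identifying it, up to the chosen pair $(\alpha,x)$, with the positive s-proper matrix coefficient supplied by \ref{cond:existsPosGenPGLC}, exactly as in Remark \ref{rmk:TReImpliesTReProper}. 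This establishes \ref{cond:topRightDominates} and completes the plan.
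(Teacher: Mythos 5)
Your treatment of \ref{cond:existsPosGenPGLC} and \ref{cond:gpIsWeaklyUnip} matches the paper's, and your derivation of the top-row (and, dually, top-right) domination from the rank-one Cartan limits is a valid variant of the paper's argument, which instead runs through Proposition \ref{prop:genPLCOrder} applied to the orbit map $\gamma\mapsto\gamma\cdot\tilde x$ and to $\gamma\mapsto\gamma\cdot(\tilde x+\epsilon e_j)$. The genuine gap is exactly where you flagged it: the existence of the $\Gamma$-fixed covector $\phi$ with $\phi(p)=0$ in the strictly convex case. The paper does not prove this from scratch; it invokes \cite[Cor.\ 4.7]{cooperLongTillmann2015}, which states that an unbounded automorphism group of a properly convex domain with no hyperbolic elements is \emph{doubly} elementary, i.e.\ fixes a pair $(\xi,H)$ with $H$ a supporting hyperplane at $\xi$. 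One then lifts $(\xi,H)$ to $(p,\phi)\in\overline{C}\times\overline{C^\ast}$ and observes that weak unipotence together with preservation of $C$ forces the eigenvalues on $p$ and $\phi$ to equal $1$. Your proposal replaces this citation with an induction that, as written, would fail: you project $\Omega$ from the boundary point $\xi$ into $\P(\R^n/\langle p\rangle)$ and claim the image is properly convex, but the projection of a properly convex domain from a point of its own closure is in general not properly convex (project a disc from a boundary point: the image is the complement of a single point of $\P(\R^2)$). This is precisely why the paper's Proposition \ref{prop:solvGpsAreElem} only ever projects from invariant subspaces \emph{disjoint} from $\overline{\Omega}$. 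On top of that, the descent of $P_1$-divergence and of the single-limit-point hypothesis to the quotient action is unestablished, so the inductive hypothesis cannot be applied. Without repairing this step (or simply citing the Cooper--Long--Tillmann result), condition \ref{cond:topRowDominates} is not obtained, since it requires the fixed pair $(p,\phi)$ and not merely the fixed vector $p$.

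A smaller remark on the round case: your identification $\ker\psi=H$ for the rank-one limits needs the observation that the \emph{repulsive} hyperplane of a sequence is the \emph{attractive} hyperplane of the inverse sequence, hence also lies in $\Lambda_{n-1}(\Gamma)=\{H\}$; as written, ``by the incidence established above'' only gives $\xi\in H$, not that the kernel of the limit is $H$. The paper sidesteps this by running the top-row argument on $\Omega^\ast$ for the transposed action and intersecting: dominating entries must lie simultaneously on the top row and the rightmost column. Both routes work, but the dual-domain argument is the cleaner one and avoids any bookkeeping with Cartan data of inverses.
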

\begin{proof}
   Let $\Omega$ be a $\Gamma$-invariant properly convex domain, and $C$ be a lift of $\Omega$. The lift of $\Gamma$ to $\SLpm(n,\R)$ preserving $C$ satisfies \ref{cond:existsPosGenPGLC} by Proposition \ref{prop:gpPreservesDomainNSC}. Observe that $\Gamma$ contains no loxodromic elements, otherwise $\Lambda_1(\Gamma)$ would contain at least its attractive and repulsive points in $\P(\R^n)$. It follows that $\Gamma$ satisfies \ref{cond:gpIsWeaklyUnip} by Lemma \ref{lm:transLengthIsRatioEV}.

   To prove the last statement about the dominating entries of $\Gamma$, let $\xi$ denote the unique $P_1$-limit point of $\Gamma$ in $\P(\R^n)$. By Lemma \ref{lm:(un)stableSpaceSupportsDomain}, we have $\xi\in\partial\Omega$, and since $\Gamma$ has no loxodromic element and is unbounded, \cite[\Cor 4.7]{cooperLongTillmann2015} shows that there is a $\Gamma$-invariant supporting hyperplane $H$ to $\Omega$ at $\xi$. Let $(p,\phi)\in \overline{C}\times \overline{C^\ast}$ be lifts of $\xi$ and $H$: they are fixed by $\Gamma$ since it is weakly unipotent and preserves $C$. We will show that the pair $(p,\phi)$ is as required in \ref{cond:topRowDominates}. We place ourselves in a basis of $\R^n$ with $e_1 = p$ and $e_n^\ast = \phi$.

   Observe that by Lemma \ref{lm:(un)stableSpaceSupportsDomain}, we have $\gamma_n x\to \P(E_1^+(\gamma_{\bigcdot}))$ for any unbounded sequence admitting stable and unstable subspaces in $\Gamma$ and $x\in\Omega$. Since moreover $\Lambda_1(\Gamma)=\{\xi\}$, one can extract out of any unbounded sequence of $\Gamma$ a subsequence converging to $\xi$ in $\Omega$. It follows that $\gamma x\to\xi$ when $\gamma\to\infty$ in $\Gamma$ for any $x\in \Omega$.

   Pick $\tilde{x}\in C$ a lift of $x$ to $\R^n$. Since $\gamma x\to\xi$ when $\gamma\to\infty$ in $\Gamma$, we have that $\gamma\mapsto (\gamma\cdot\tilde{x})_1$ dominates $\gamma\mapsto (\gamma\cdot\tilde{x})_i$ for $i\geq 2$.

   Recall from the proof of Proposition \ref{prop:gpPreservesDomainNSC} and Lemma \ref{lm:genPGLCAreGenPLC} that for any $\alpha\in C^\ast$, $\gamma\mapsto\alpha(\gamma\cdot\tilde{x})$ is a generic s-proper linear combination of the entries of $\Gamma$. Observe that it is also a generic s-proper linear combination of the entries of $\gamma\mapsto\gamma\cdot\tilde{x}$. Therefore, Proposition \ref{prop:genPLCOrder} implies that the entries of $\Gamma$ have the same order as the entries of $\gamma\mapsto\gamma\cdot \tilde{x}$, which have the same order as $\gamma\mapsto(\gamma\cdot\tilde{x})_1$ by the last paragraph.

   Define $\tilde{y} = \tilde{x} + \epsilon e_j$ where $\epsilon$ is small enough that $\tilde{y}$ projects into $\Omega$. The $(i,j)$-th entry of $\Gamma$ is exactly $\gamma\mapsto (\gamma\cdot\frac1\epsilon (y-x))_i$, which is a linear combination of entries dominated by the entries of $\Gamma$ whenever $i\geq 2$. Hence it cannot be dominating, and the dominating entries of $\Gamma$ are all on the top row, \ie $\Gamma$ satisfies \ref{cond:topRowDominates}.

   \medskip

   In order to get item \ref{cond:topRightDominates} in the round case, observe that we can apply what is above to both the action of $\Gamma$ on $\Omega$ and $\Omega^\ast$. Since the dual action of $\Gamma$ is by its transpose, we need to reorder the dual basis as $(e_n^\ast,\dots,e_1^\ast)$ to get a matrix form as desired. This transformation takes the top row of $\Gamma$ to the rightmost column, leaving the top right entry in place. Hence, the dominating entries of $\Gamma$ must be at the same time on its top row and its rightmost column, so the only dominating entry is in the top right corner.
\end{proof}

\subsection{The conditions are sufficient}

We now prove that the conditions given in Proposition \ref{prop:necessaryCond} are sufficient. The strategy is to show that groups satisfying these conditions preserve domains that are regular enough to apply the smoothing Lemma \ref{lm:smoothing}.

We start with the strictly convex case which is easier.

\begin{proposition}\label{prop:suffCondStrictlyConvCase}
   Let $\Gamma$ be an unbounded subgroup of $\SLpm(n,\R)$ satisfying \ref{cond:existsGenPLC} and \ref{cond:topRowDominates}. There exists a $\Gamma$-invariant strictly convex domain $\Omega$ on which the action of $\Gamma$ is doubly elementary of the parabolic type fixing $\xi\in\partial\Omega$ and a hyperplane $H$ supporting $\Omega$ at $\xi$, such that $\partial\Omega-\{\xi\}$ is the graph of a strictly convex analytic function in the affine chart $\P(\R^n)-H$.

   In particular, if $\Gamma$ is discrete, then $\Omega/\Gamma$ is a strictly convex cusp.
\end{proposition}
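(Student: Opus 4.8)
The plan is to produce a $\Gamma$-invariant properly convex domain that is regular enough for the smoothing Lemma \ref{lm:smoothing} to upgrade it to a strictly convex one. First I would record the algebraic data. By Remark \ref{rmk:TReImpliesGP+}, conditions \ref{cond:existsGenPLC} and \ref{cond:topRowDominates} together imply \ref{cond:existsPosGenPGLC}, so Proposition \ref{prop:gpPreservesDomainNSC} gives a $\Gamma$-invariant sharp open convex cone, hence a $\Gamma$-invariant properly convex domain $\Omega_0$. The pair $(p,\phi)$ provided by \ref{cond:topRowDominates} consists of $\Gamma$-fixed vectors, so in a basis with $e_1=p$, $e_n^\ast=\phi$ and $\ker\phi=\langle e_1,\dots,e_{n-1}\rangle$ every element of $\Gamma$ has the form of Remark \ref{rmk:TrTReMatrixForm}, fixing $\xi:=[p]$ and the hyperplane $H:=\P(\ker\phi)$ with trivial weight (eigenvalue $1$ on both $p$ and $\phi$).

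Next I would establish orbit convergence. By Remark \ref{rmk:TrImpliesGenPLCAreGenPGLC}, condition \ref{cond:existsGenPLC} furnishes a vector for which $\gamma\mapsto(\gamma x)_1$ is a generic s-proper linear combination of the entries of $\Gamma$, valid on a genericity open set $U_0$; by Proposition \ref{prop:genPLCOrder} it has the order of the whole set of entries, i.e.\ of $\norm{\gamma}$. For $i\geq 2$ the coordinate $\gamma\mapsto(\gamma x)_i$ is a linear combination of non-top-row entries, all of which are dominated by \ref{cond:topRowDominates}, so $(\gamma x)_i=o(\norm{\gamma})$. Hence $[\gamma x]\to[e_1]=\xi$ as $\gamma\to\infty$ for every $x\in U_0$; by properness of the action (Proposition \ref{prop:autGpActsProp}) this gives $\xi\in\partial\Omega_0$ and $\Lambda_1(\Gamma)=\{\xi\}$. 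In particular $\Gamma$ has no hyperbolic element: such an element $g$ would have distinct attracting and repelling fixed points $g^\pm\in\partial\Omega_0$ attracting generic orbits under $g^{\pm k}$, yet both limits must equal $\xi$, contradicting $g^+\neq g^-$. Thus $\Gamma$ satisfies \ref{cond:gpIsWeaklyUnip}.

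The crux of the argument, and the step I expect to be the main obstacle, is to build a $\Gamma$-invariant properly convex domain $\Omega_1$ for which $H$ is a supporting hyperplane meeting $\overline{\Omega_1}$ only at $\xi$, so that the single-point clause of Lemma \ref{lm:smoothing} applies. I would rerun the construction of Proposition \ref{prop:gpPreservesDomainNSC} while forcing the generating set onto one side of $H$. Since $\{\phi=0\}$ is a hyperplane it cannot contain the open set $U_0$, so $U_0$ meets $\{\phi>0\}$ or $\{\phi<0\}$; after possibly replacing $\phi$ by $-\phi$ (same hyperplane $H$) I may shrink $U_0$ to $U$ with $\phi>0$ on $U$. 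As $\phi$ is $\Gamma$-fixed, $\phi(\gamma u)=\phi(u)>0$ for all $u\in U$, so the smallest $\Gamma$-invariant open convex cone $C_1$ containing $\Gamma\cdot U$ lies in $\{\phi\geq 0\}$, and it is sharp by the same dual-cone argument (a neighborhood of functionals lands in $C_1^\ast$) as in Proposition \ref{prop:gpPreservesDomainNSC}. Then $H$ supports $\Omega_1:=\P(C_1)$, and since $\phi$ is strictly positive on every orbit while orbits accumulate only at $\xi$, the contact set $\overline{\Omega_1}\cap H$ reduces to $\{\xi\}$. I expect the delicate bookkeeping to be exactly this simultaneous control of sharpness, of the sign of $\phi$, and of the absence of any extra boundary face inside $H$.

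Finally I would invoke the smoothing Lemma \ref{lm:smoothing} for $\Omega_1$ and the trivial-weight supporting hyperplane $H$: because $\overline{\Omega_1}\cap H=\{\xi\}$ is a single point, the resulting convex-hull domain $\Omega$ is strictly convex and $\Gamma$-invariant, and $\partial\Omega-\{\xi\}=\partial\Omega-H$ is the graph of a strictly convex analytic function in the affine chart $\P(\R^n)-H$. Since $\Gamma$ fixes $(\xi,H)$ and has no hyperbolic element, Proposition \ref{prop:CLTParabElemSgpSummary} shows its action on $\Omega$ is doubly elementary of the parabolic type, with $\xi$ its unique fixed point in $\partial\Omega$. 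If $\Gamma$ is discrete it is infinite (being unbounded), so $\Omega/\Gamma$ is a strictly convex cusp by Definition \ref{def:cusps}.
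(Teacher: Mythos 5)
Your proposal is correct and follows the paper's strategy step for step: invoke Proposition \ref{prop:gpPreservesDomainNSC} to get an invariant domain, use \ref{cond:topRowDominates} together with Proposition \ref{prop:genPLCOrder} to show that generic orbits converge to $\xi$, arrange that $H$ meets the closure of the domain only at $\xi$, and finish with Lemma \ref{lm:smoothing} and Proposition \ref{prop:CLTParabElemSgpSummary}. The only place where you genuinely diverge is the contact-set claim $\overline{\Omega_1}\cap H=\{\xi\}$, which you correctly identify as the delicate point. The paper handles it by nesting two domains, $\Omega_1=\Int\Conv(\Gamma\cdot\P(U_1))\subset\Omega_0=\Int\Conv(\Gamma\cdot\P(U))$ with $\overline{\P(U_1)}$ compact in $\Omega_0$ and $\Omega_0\cap H=\varnothing$: properness of the action on $\Omega_0$ (Proposition \ref{prop:autGpActsProp}) then traps $\overline{\Omega_1}-\{\xi\}$ inside $\Omega_0$, and no convexity bookkeeping is needed because a convex combination of points of $\Omega_0\cup\{\xi\}$ with some positive weight in $\Omega_0$ stays in $\Omega_0$. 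Your single-domain version via the sign of $\phi$ also works, but as written it needs two supplements: the convergence $\gamma u\to\xi$ must be uniform over $u$ in the compact set $\overline{\P(U)}$ (so shrink $U$ until its closure lies in the genericity region), and one must note that a point of $\overline{\Omega_1}\cap H$ is a limit of Carath\'eodory convex combinations of points of $\overline{\Gamma\cdot\P(U)}\subset\{\phi\geq 0\}$, so every constituent with positive weight must itself lie on $H$, hence equal $\xi$. Two smaller remarks: writing $\Lambda_1(\Gamma)=\{\xi\}$ presupposes $P_1$-divergence, which you have not established at that stage (all you need, and have, is convergence of generic orbits to $\xi$); and a hyperbolic automorphism of a merely properly convex $\Omega_0$ need not have a single attracting fixed point, though your contradiction survives since forward orbits of generic points accumulate on the top-modulus eigenspace, which misses $\xi$ because the weight at $p$ is $1<\lambda_1(g)$ --- the paper instead rules out hyperbolic elements only at the very end, via the unique-axis argument in the strictly convex $\Omega$.
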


\begin{proof}
   We keep the notations given in condition \ref{cond:topRowDominates}.
   By Remark \ref{rmk:TReImpliesGP+}, $\Gamma$ also satisfies \ref{cond:existsPosGenPGLC}, so Proposition \ref{prop:gpPreservesDomainNSC} gives the existence of $\Gamma$-invariant domains of the form
   \begin{align*}
      \Omega_1 &= \Int \Conv (\Gamma\cdot\P(U_1)) \\
      \Omega_0 &= \Int \Conv(\Gamma\cdot\P(U))
   \end{align*}
   where $U\subset \R^n$ and $V\subset (\R^n)^\ast$ are open subsets such that for any $(x,\alpha)\in U\times V$, $\gamma\mapsto\alpha(\gamma\cdot x)$ is a negative generic s-proper matrix coefficient of $\Gamma$, and $U_1$ is an open subset of $U$ such that $\overline{U_1}\subset U$. We further assume up to choosing $U$ smaller that $\overline{\P(U)}$ does not meet $H:=\P(\ker\phi)$, so that $\Omega_0\cap H=\varnothing$.

   We now choose a basis of $\R^n$ as prescribed in condition \ref{cond:topRowDominates}. Pick any pair $(x,\alpha)\in U\times V$. Since $\gamma\mapsto\alpha(\gamma\cdot x)$ is generic by Lemma \ref{lm:genPGLCAreGenPLC}, it is also a generic s-proper linear combination of the entries of $\gamma\mapsto\gamma\cdot x$. Hence the entries of $\gamma\mapsto\gamma\cdot x$ have the same order as the entries of $\Gamma$ by Proposition \ref{prop:genPLCOrder}. But for all $i\geq 2$, $\gamma\mapsto(\gamma\cdot x)_i$ is a linear combination of dominated entries of $\Gamma$ by condition \ref{cond:topRowDominates}, so the only dominating entry of $\gamma\mapsto\gamma\cdot x$ is its first entry.

   It follows that $\gamma\mapsto(\gamma\cdot x)_1$ dominates $\gamma\mapsto(\gamma\cdot x)_i$ for all $i\geq 2$. Hence $(\gamma\cdot x)_1\sim\alpha(\gamma\cdot x)/\alpha_1$ when $\gamma\to\infty$ in $\Gamma$, and hence $\gamma\mapsto (\gamma\cdot x)_1$ is s-proper. Therefore, for all $x\in U$, $\gamma [x]\to \xi:=[p]$ when $\gamma\to\infty$ in $\Gamma$. In particular, for $x\in U_1$, this implies $\xi\in\overline{\Omega_1}$. Since $\Omega_0\cap H=\varnothing$, $\xi\in\partial\Omega_0\cap\partial\Omega_1$.

   Since $\overline{\P(U_1)}$ is a compact subset of $\Omega_0$, the closure of $\Gamma\cdot\overline{\P(U_1)}$ meets $\partial\Omega_0$ only at $\xi$ by Proposition \ref{prop:autGpActsProp} and the last paragraph. Therefore, $\overline{\Omega_1}-\{\xi\}\subset \Omega_0$, so that $\overline{\Omega_1}\cap H = \{\xi\}$.

   We now apply Lemma \ref{lm:smoothing} to $\Omega_1$ and $H$, yielding a strictly convex domain $\Omega\subset\Omega_1$ such that $\overline{\Omega}\cap H = \overline{\Omega_1}\cap H = \{\xi\}$, and $\partial\Omega - \{\xi\}$ is the graph of a strictly convex analytic function in the affine chart $\P(\R^n)-H$.

   Since the weights of $\Gamma$ associated to $p$ and $\phi$ are trivial by \ref{cond:topRowDominates}, $\Gamma$ preserves the algebraic horospheres of $\Omega$ centered at $(\xi,H)$. Since $\Omega$ is strictly convex, any hyperbolic element in $\Gamma$ would have a unique axis in $\Omega$, so it cannot preserve any algebraic horosphere. Hence, $\Gamma$ is doubly elementary of the parabolic type.
\end{proof}

\begin{remark}\label{rmk:TrImpliesStrictlyConvDomInInvariantDomain}
   Keeping the same notations as in the proof above, observe that if $\Gamma$ is already elementary of the parabolic type in a properly convex domain $\Omega'$, then there is a $\Gamma$-invariant strictly convex domain $\Omega\subset\Omega'$ such that $\overline{\Omega}\cap\partial\Omega'=\{\xi'\}$, where $\xi'$ is the fixed point of $\Gamma$ in $\partial\Omega'$.

   Indeed, we can first observe that $\xi'=\xi$ by conditions \ref{cond:topRowDominates} since the orbit of any generic point in $\Omega'$ would converge to $\xi$. Then, we may assume that $U$ is an open subset of $\Omega'$, so that $\Omega\subset\Omega_1\subset\Omega_0\subset\Omega'$, and since $\overline{\Omega}\cap\partial\Omega_1=\{\xi\}$, we must have $\overline{\Omega}\cap\partial\Omega'=\{\xi\}$.
\end{remark}

In the round case, we need to produce a domain that is $\COne$ at the fixed point of $\Gamma$ before applying the smoothing lemma.

\begin{proposition}\label{prop:suffCondRoundCase}
   Let $\Gamma$ be an unbounded subgroup of $\SLpm(n,\R)$ satisfying \ref{cond:topRightDominates}. There exists a $\Gamma$-invariant round domain $\Omega$ on which the action of $\Gamma$ is doubly elementary of the parabolic type fixing $\xi\in\partial\Omega$ and a hyperplane $H$ supporting $\Omega$ at $\xi$, such that $\partial\Omega-\{\xi\}$ is the graph of an analytic function in the affine chart $\P(\R^n)-H$.

   In particular, if $\Gamma$ is discrete, then $\Omega/\Gamma$ is a round cusp.
\end{proposition}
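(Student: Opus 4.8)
The plan is to follow the strategy of Proposition \ref{prop:suffCondStrictlyConvCase}, producing a $\Gamma$-invariant domain to which the smoothing Lemma \ref{lm:smoothing} applies; the new difficulty is that Lemma \ref{lm:smoothing} only outputs a \emph{round} domain when its input is already $\COne$ at its single contact point. Since \ref{cond:topRightDominates} implies \ref{cond:topRowDominates} and \ref{cond:existsPosGenPGLC} by Remark \ref{rmk:TReImpliesGP+}, Proposition \ref{prop:suffCondStrictlyConvCase} already provides, in a basis as in \ref{cond:topRightDominates}, a $\Gamma$-invariant strictly convex domain $\Omega_1$ with $\overline{\Omega_1}\cap H=\{\xi\}$, where $\xi=[p]$ and $H=\P(\ker\phi)$; after smoothing this gives a strictly convex domain that is analytic and $\COne$ on $\partial\Omega-\{\xi\}$. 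So the whole problem reduces to arranging that the contact point $\xi$ be a $\COne$ point of the domain fed into Lemma \ref{lm:smoothing}.

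To control the supporting hyperplanes at $\xi$ I would exploit the self-duality of condition \ref{cond:topRightDominates}. As in the duality step of the proof of Proposition \ref{prop:necessaryCond}, passing to the transpose group $\transpose{\Gamma}$ acting on $(\R^n)^\ast$ and reversing the dual basis fixes the top right entry $\gamma\mapsto\gamma_{1n}$; hence $\transpose{\Gamma}$ again satisfies \ref{cond:topRightDominates}, now with the roles of $p$ and $\phi$ exchanged. Applying the strictly convex construction of Proposition \ref{prop:suffCondStrictlyConvCase} to $\transpose{\Gamma}$ produces a $\transpose{\Gamma}$-invariant strictly convex domain $\Lambda\subset\P((\R^n)^\ast)$ whose unique boundary point on the hyperplane dual to $p$ is $[\phi]$. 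Dualizing, $\Omega_2:=\Lambda^\ast$ is a $\Gamma$-invariant domain that, by the duality exchanging strict convexity and $\COne$ regularity, is $\COne$ at $\xi$ with supporting hyperplane $H$.

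It then remains to merge the point-contact information carried by $\Omega_1$ (strict convexity at $\xi$) with the smoothness information carried by $\Omega_2$ ($\COne$ at $\xi$) into a single $\Gamma$-invariant domain that is \emph{round at $\xi$}, meaning $\COne$ at $\xi$ and with closure meeting $H$ only at $\xi$. This is the main obstacle, and it is genuinely two-sided: point contact is monotone under passing to subdomains, whereas the $\COne$ property is monotone under passing to superdomains, so neither a na\"ive intersection nor a na\"ive convex hull of $\Omega_1$ and $\Omega_2$ preserves both properties at once. This is precisely where \ref{cond:topRightDominates} is stronger than \ref{cond:topRowDominates}: the fact that the \emph{same} entry $\gamma_{1n}$ dominates both the top row and the rightmost column links the two constructions, and I would use it to show that $\Omega_1$ is itself already $\COne$ at $\xi$. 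Concretely, a functional $\psi$ supporting $\Omega_1$ at $\xi$ satisfies $\psi(p)=0$, which annihilates the dominating entry, so its orbit $\transpose{\gamma}\psi$ stays in the fixed polar cone of the defining open set while being driven only by the remaining, dominated, entries; the single-dominating-entry hypothesis should prevent any $\psi$ not proportional to $\phi$ from having its whole orbit trapped there, forcing the corresponding face of $\Omega_1^\ast$ to reduce to the point $[\phi]$.

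Once a round-at-$\xi$ domain is in hand, Lemma \ref{lm:smoothing} applied to it and $H$ yields a $\Gamma$-invariant round domain $\Omega$ with $\overline{\Omega}\cap H=\{\xi\}$ and $\partial\Omega-\{\xi\}$ an analytic graph in $\P(\R^n)-H$. Finally I would check, exactly as at the end of Proposition \ref{prop:suffCondStrictlyConvCase}, that the action is doubly elementary of the parabolic type: the weights of $\Gamma$ on $p$ and $\phi$ are trivial by \ref{cond:topRightDominates}, so $\Gamma$ preserves the algebraic horospheres centered at $(\xi,H)$ and commutes with the $(\xi,H)$-flow by Lemma \ref{lm:autThatPreservesAlgHoroCommutesWithFlow}; since $\Omega$ is strictly convex, no hyperbolic element can preserve a horosphere, so $\Gamma$ has no hyperbolic element and is doubly elementary of the parabolic type by Proposition \ref{prop:CLTParabElemSgpSummary}. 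In particular, if $\Gamma$ is discrete then $\Omega/\Gamma$ is a round cusp.
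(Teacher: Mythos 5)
You correctly reduce the problem to producing a $\Gamma$-invariant domain that is $\COne$ at $\xi$ with $\overline{\Omega}\cap H=\{\xi\}$ before invoking Lemma \ref{lm:smoothing}, and your final verification of the doubly elementary parabolic type matches the paper. But the step you ultimately rely on --- that the domain $\Omega_1=\Int\Conv(\Gamma\cdot\P(U_1))$ coming out of Proposition \ref{prop:suffCondStrictlyConvCase} is \emph{already} $\COne$ at $\xi$ --- is false, and the heuristic you give for it does not survive scrutiny. Take $n=4$ and
\begin{equation*}
   \Gamma=\left\{\begin{pmatrix}1&0&m&m^2/2\\0&1&0&0\\0&0&1&m\\0&0&0&1\end{pmatrix},\ m\in\Z\right\},
\end{equation*}
with $p=e_1$, $\phi=e_4^\ast$. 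This satisfies \ref{cond:topRightDominates}: the $(1,4)$ entry $m^2/2$ is the unique dominating entry and is positive. However $\Gamma$ fixes the functional $e_2^\ast$, so in the affine chart $\{x_4=1\}$ every $\Gamma$-orbit has constant second coordinate, and $\Conv(\Gamma\cdot\P(U_1))$ is confined to a slab $c_-\leq x_2\leq c_+$. The projective hyperplane $\ker(e_2^\ast-c_+e_4^\ast)$ then supports $\Omega_1$, contains $\xi=[e_1]$, and is distinct from $H$; so $\Omega_1$ is not $\COne$ at $\xi$, for any choice of $U_1$. The functional $\psi=e_2^\ast-c_+e_4^\ast$ is exactly the kind your argument claims cannot exist: it vanishes on $p$, is not proportional to $\phi$, and its $\transpose{\Gamma}$-orbit is trivially ``trapped'' because it is fixed. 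Being $\COne$ at $\xi$ is a statement about the domain being unbounded in every direction of $\ker\phi/\gen{p}$ in the chart $\P(\R^n)-H$, and the orbit of a bounded set need not achieve this.

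The paper closes this gap not by proving $\Omega_1$ is $\COne$ but by \emph{enlarging} it: it sets $\Omega_2=\Conv\bigl(\Omega_1\cup\bigcup_{m,\epsilon}\Gamma\cdot[x_m^\epsilon]\bigr)$ where $x_m^\epsilon=e_n+m\sum_{i=2}^{n-1}\epsilon_ie_i+b_me_1$, $\epsilon\in\{\pm1\}^{n-2}$. The point of \ref{cond:topRightDominates} here is quantitative: since $\gamma_{1n}$ dominates $m\sum_{(i,j)\neq(1,n)}\abs{\gamma_{ij}}$ after inserting a function of intermediate order, one can choose $b_m$ so large that the orbits of all the $x_m^\epsilon$ still accumulate only at $\xi$ and stay on the correct side of a fixed compact family of dual functionals, so $\Omega_2$ remains properly convex with $\overline{\Omega_2}\cap H=\{\xi\}$. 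The added points diverge in every direction $\pm e_i$, which forces any supporting functional $\alpha$ at $\xi$ (normalized by $\alpha_1=0$, $\alpha_n=-1$) to satisfy $-1+m\sum_i\epsilon_i\alpha_i\leq 0$ for all $m$ and $\epsilon$, hence $\alpha=\phi$. Your duality observation (that $\transpose{\Gamma}$ again satisfies \ref{cond:topRightDominates} and yields a $\COne$-at-$\xi$ dual domain) is correct but, as you yourself note, does not combine with $\Omega_1$ by intersection or convex hull; some enlargement of this kind is genuinely needed.
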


\begin{proof}
   We keep the notations given in condition \ref{cond:topRightDominates}.
   By Remark \ref{rmk:TReImpliesGP+}, \ref{cond:topRightDominates} implies \ref{cond:existsPosGenPGLC}, so $\Gamma$ satisfies \ref{cond:existsGenPLC} and \ref{cond:topRowDominates}. Therefore, we can apply Proposition \ref{prop:suffCondStrictlyConvCase} to get a $\Gamma$-invariant strictly convex domain $\Omega_1\subset\P(\R^n)$ on which the action of $\Gamma$ is doubly elementary of the parabolic type. Let $C_1$ be the $\Gamma$-invariant sharp open convex cone lifting $\Omega_1$ whose closure contains $p$.

   We choose a basis as required by condition \ref{cond:topRightDominates}, and we further assume that $e_n\in C_1$. Up to multiplying $\phi$ by a constant, we impose $\phi(e_n) = -1$, so that $\phi\in\partial C_1^\ast$. We choose a compact subset $K$ of $C_1^\ast$ such that $\P(K)$ has non-empty interior, and $\beta(e_1)=-1$ for all $\beta\in K$ (this is possible because $e_1=p$ does not lie in $C_1$).

   By condition \ref{cond:topRightDominates}, the sum $\gamma\mapsto\sum_{(i,j)\neq (1,n)}\abs{\gamma_{ij}}$ is dominated by $\gamma\mapsto \gamma_{1n}$, so we may choose a positive function $f\in\Cont(\Gamma)$ of intermediate order, meaning that $f$ is dominated by the top right entry of $\Gamma$, and dominates the sum of the absolute values of the other entries of $\Gamma$.

   For any fixed $m\in\N$, the function $\gamma\mapsto\gamma_{1n} - m^2f(\gamma)$ is s-proper and bounded below, as it is equivalent to the top right entry of $\Gamma$, which is s-proper by Remark \ref{rmk:TReImpliesTReProper}, and bounded below since $e_n\in C_1$, so that $\gamma e_n/\norm{\gamma e_n}\to p=e_1$, and $\gamma_{1n}=e_1^\ast(\gamma e_n)\sim \norm{\gamma e_n} e_1^\ast(e_1)\to +\infty$ for $\gamma\to\infty$ in $\Gamma$. Therefore we can choose $b_m\geq 0$ such that $\gamma_{1n}+b_m-m^2f(\gamma)\geq 0$ for all $\gamma\in\Gamma$. It follows that $(\gamma,m)\mapsto \gamma_{1n} + b_m$ dominates $(\gamma,m)\mapsto m\sum_{(i,j)\neq (1,n)}\abs{\gamma_{ij}}$.

   Consider the points $x_m^\epsilon = e_n + m\sum_{i=2}^{n-1}\epsilon_i e_i + b_m e_1$ for all $\epsilon\in\{\pm 1\}^{n-2}$ and $m\in\N$. For all $\gamma$, $m$ and $\epsilon$, we have:
   \begin{equation}\label{eq:accPointsOfOrbitsOfAddedPoints}
      \gamma [x_m^\epsilon] = \begin{bmatrix}
         \gamma_{1n} + m\sum_{i=2}^{n-1}\epsilon_i\gamma_{1i} + b_m \\
         \gamma_{2n} + m\sum_{i=2}^{n-1}\epsilon_i\gamma_{2i} \\
         \vdots \\
         \gamma_{n-1\; n} + m\sum_{i=2}^{n-1}\epsilon_i\gamma_{n-1\; i} \\
         1
      \end{bmatrix} 
      \tosub{\substack{(m,\gamma)\to \infty \\ (m,\gamma)\in\N\times\Gamma}} \xi
   \end{equation}

   Hence the function $(m,\gamma)\mapsto-\beta(\gamma x_m^\epsilon)$ is s-proper and bounded below for every $\beta\in K$. Moreover, $K$ is compact, so up to increasing $b_m$, we may suppose that $-\beta(\gamma\cdot x_m^\epsilon)\geq 0$ for all $m$, $\gamma$, $\epsilon$ and $\beta$. Therefore, by Lemma \ref{lm:sharpConvConeNSC},
   \begin{equation*}
      \Omega_2 = \Conv\left\{\Omega_1\cup\bigcup_{\substack{m\in\N \\\epsilon\in\{\pm 1\}^{n-2}}}\Gamma\cdot [x_m^\epsilon]\right\}
   \end{equation*}
   is a properly convex $\Gamma$-invariant domain, where we take the convex hull in the affine chart defined by $H$. Since $\Omega_1$ is strictly convex, $\overline{\Omega_1}\cap H=\{\xi\}$. Also, the closure of the union of the orbits of the points $[x_m^\epsilon]$ only meets $H$ at $\xi$ by Equation \eqref{eq:accPointsOfOrbitsOfAddedPoints}, so $\overline{\Omega_2}\cap H = \{\xi\}$. Let $C_2\supset C_1$ be an open convex cone lifting $\Omega_2$.

   We now check that $\Omega_2$ is $\COne$ at $\xi$. Let $\alpha\in(\R^n)^\ast-\{0\}$ such that $\P(\ker\alpha)$ supports $\Omega_2$ at $\xi$. We must have $\alpha(p) = 0$, \ie $\alpha_1 = 0$, and since $e_n\in C_1\subset C_2$, we may assume up to multiplying $\alpha$ by a scalar that $\alpha_n = -1$. Therefore, $\alpha$ takes negative values on $C_2$.

   In particular, $\alpha(x_m^\epsilon)\leq 0$ for all $m$ and $\epsilon$, that is
   \begin{equation*}
      -1 + m\sum_{i=2}^{n-1}\epsilon_i \alpha_i \leq 0
   \end{equation*}

   However, choosing $\epsilon_i$ to be the sign of $\alpha_i$ when it is non-zero, we see that unless $\alpha_i=0$ for all $i\in\{2,\dots,n-1\}$, the left hand side diverges to $+\infty$ when $m\to+\infty$, which is a contradiction. Therefore $\alpha=\phi$, so $\Omega_2$ is $\COne$ at $\xi$.

   We can now apply Lemma \ref{lm:smoothing} to get a $\Gamma$-invariant round domain $\Omega\subset\Omega_2$ such that $\partial\Omega-\{\xi\}$ is the graph of a strictly convex analytic function in the affine chart $\P(\R^n)-H$. We conclude that $\Gamma$ is a doubly elementary group of the parabolic type as in the last paragraph of the proof of Proposition \ref{prop:suffCondStrictlyConvCase}.
\end{proof}

\begin{remark}\label{rmk:TReImpliesRoundDomainInCOneDomain}
   Keeping the same notations as in the proof of the last proposition, it appears that if $\Gamma$ is already elementary of the parabolic type in a domain $\Omega'$ that is $\COne$ at the unique fixed point $\xi'$ of $\Gamma$, then we can produce a $\Gamma$-invariant round domain $\Omega\subset\Omega'$ such that $\overline{\Omega}\cap\partial\Omega'=\{\xi'\}$ in which $\Gamma$ is elementary of the parabolic type. 
   Indeed, observe first that $\xi'=\xi$ since the orbit of a generic point in $\Omega'$ would converge to $\xi$ by condition \ref{cond:topRightDominates}, and the same argument made in $\Omega'^\ast$ shows that the only supporting hyperplane of $\Omega'$ at $\xi$ is $H$. Observe also that by Remark \ref{rmk:TrImpliesStrictlyConvDomInInvariantDomain}, we may suppose that $\Omega_1\subset\Omega'$.

   Now, we only need to make sure that all the points $[x_m^\epsilon]$ lie in $\Omega'$, as it would imply that their $\Gamma$-orbit also does. Note that since $\xi$ is a $\COne$ point of $\Omega'$, and for all $m$ and $\epsilon$, $x_m^\epsilon\not\in H$, the projective line through $\xi$ and $[x_m^\epsilon]$ intersects $\Omega'$. Observe also that increasing $b_m$ amounts to translating each of the $[x_m^\epsilon]$ on the projective line through $\xi$ and $[x_m^\epsilon]$ in the direction of $\xi$, from the side of the interior of $\Omega'$ (that is because both $e_1$ and $e_n$ lie in the closure of a sharp open convex cone lifting $\Omega_1\subset\Omega'$). Therefore, we can always suppose up to increasing $b_m$ that $[x_m^\epsilon]\in\Omega'$.
\end{remark}

\section{Generalized cusps of non-maximal rank}

In this section, we build generalized cusps of non-maximal rank, as defined first by Cooper--Long--Tillmann.

\begin{definition}[{{\cite[\Def 6.1]{cooperLongTillmann2018}}}] 
   A \emph{generalized cusp} is a convex projective orbifold with boundary $C = \Omega/\Gamma$ with virtually nilpotent holonomy that is homeomorphic to $\partial C\times \R_{\geq 0}$, and such that $\partial C$ is strictly convex.

   If moreover $\partial C$ is compact, we say that $C$ has \emph{maximal rank}.
\end{definition}

Ballas, Cooper and Leitner classified the generalized cusps of maximal rank, and studied their deformation theory \cite{ballasCooperLeitner2020,ballasCooperLeitner2022}. In particular, they proved that generalized cusps of maximal rank always have abelian holonomy. This is similar to how in strictly convex manifolds, maximal rank cusps are equivalent to hyperbolic cusps (see \cite[\Thm 0.5]{cooperLongTillmann2015}).

It turns out however that solvability is a more natural assumption on the holonomy of projective cusps. Indeed, by the Tits alternative, linear groups are either virtually solvable or contain a free group, so the natural negation of having free subgroups is virtual solvability. Free groups should be avoided if generalized cusps are thought of as connected components of a sort of “generalized thick-thin decomposition”. Indeed, one typically expects the holonomy of the connected components of the thin part to be elementary, which is implied up to finite index by the assumption that the holonomy be virtually solvable by Proposition \ref{prop:solvGpsAreElem}, whereas free groups are typically non-elementary. 
We therefore weaken the requirement of virtual nilpotency in the above definition, to allow for virtual solvability.

\begin{definition}\label{def:genCusp}
   A generalized cusp is a convex projective orbifold with  boundary $C=\Omega/\Gamma$ with virtually solvable holonomy that is homeomorphic to $\partial C\times\R_{\geq 0}$, and such that $\partial C$ is strictly convex.
\end{definition}

We believe what was mentioned above justify that we modify the original definition of Cooper, Long and Tillmann to this one. From now on, every mention of generalized cusps refers to this definition. This allows for new examples, as we will see in paragraph \ref{par:genCuspWithSolvHolo}.

\subsection{A geometric description of generalized cusps} \label{par:genCuspGeom}

We start by giving a geometric framework inspired by Ballas--Cooper--Leitner's description of the generalized cusps of maximal rank for building generalized cusps.

\begin{definition}\label{def:genCuspDomain}
   A properly convex domain $\Omega\in\P(\R^n)$ is an \emph{analytic generalized cusp domain with parabolic face $\omega$} if
   \begin{enumerate}
      \item $\omega$ is an open face of $\partial\Omega$;
      \item there is no non-trivial segment in $\partial\Omega-\overline{\omega}$;
      \item[(2')] $\partial\Omega-\overline{\omega}$ is the graph of a strictly convex analytic function in some affine chart $\P(\R^n)-H$ for $H$ a supporting hyperplane of $\Omega$ containing $\omega$.
   \end{enumerate}
   If $\Omega$ satisfies the first two conditions only, we say that $\Omega$ is a \emph{generalized cusp domain}.

   If $\Omega$ is a generalized cusp domain and there is a unique hyperplane supporting $\Omega$ that contains $\omega$, we say that its parabolic face $\omega$ is $\COne$.
\end{definition}

For our purpose, these generalized cusp domains are only interesting insofar as they admit an action by a group such that the quotient is a generalized cusp.

\begin{definition}\label{def:genCuspGp}
   If $\Omega$ is a generalized cusp domain with parabolic face $\omega$, we say that a subgroup $\Gamma$ of the automorphisms of $\Omega$ acts as a \emph{generalized cusp group} if
   \begin{enumerate}
      \item $\Gamma$ preserves $\omega$ and its restriction to $\Span\omega$ is virtually solvable;
      \item there is a point $\xi$ of $\overline{\omega}$ and a hyperplane $H\supset\omega$ supporting $\Omega$ such that $G$ preserves the algebraic horospheres of $\Omega$ centered at $(\xi,H)$.
   \end{enumerate}
\end{definition}

The geometric characterization of generalized cusps we are aiming at can be summarized as in the following definition.

\begin{definition}
   Let $\Omega\subset\P(\R^n)$ be a generalized cusp domain with parabolic face $\omega$ on which a discrete group $\Gamma$ acts as a generalized cusp group. We call the quotient $\Omega/\Gamma$ a \emph{geometric generalized cusp}.

   If moreover $\Omega$ is an analytic generalized cusp domain, we say that $\Omega/\Gamma$ is an \emph{analytic geometric generalized cusp}.
\end{definition}

We now prove that these definitions really allow to build generalized cusps. The following lemma will be used in the proof. We are grateful to Konstantinos Tsouvalas who gave us a proof of a similar statement, which we adapted to this case.

\begin{lemma}\label{lm:extensionsOfVirtSolvAreVirtSolv}
   Let $1\to A\to B\to C\to 1$ be a short exact sequence where $A$ and $C$ are virtually solvable. Then $B$ is virtually solvable.
\end{lemma}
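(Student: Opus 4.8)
The plan is to reduce to the case where $C$ is solvable, and then to exploit a \emph{canonical} finite-index solvable subgroup of $A$ that is characteristic, hence automatically normal in $B$. First I would pass to a finite-index subgroup to make $C$ solvable: choosing a finite-index solvable subgroup $C_0<C$ and letting $B_0$ be its preimage in $B$ gives a short exact sequence $1\to A\to B_0\to C_0\to 1$ with $[B:B_0]=[C:C_0]<\infty$. Since $[B:B_0]<\infty$, a finite-index solvable subgroup of $B_0$ remains finite-index and solvable in $B$, so it suffices to treat the case where $C$ is solvable.

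The key point is that $A$, being virtually solvable, possesses a largest normal solvable subgroup $R$ (its solvable radical), and that $R$ has finite index. Indeed, starting from any finite-index solvable subgroup of $A$, its normal core $S$ is normal in $A$, solvable, and still of finite index (the normal core of a finite-index subgroup is finite-index, since $A$ acts on the finite coset space with kernel the core). The normal solvable subgroups of $A$ containing $S$ correspond to the normal solvable subgroups of the finite group $A/S$; as a product of two normal solvable subgroups is normal and solvable, there is a largest such subgroup $R\supseteq S$, and any normal solvable subgroup $T$ lies in $R$ because $ST$ is normal, solvable and contains $S$. Thus $R$ is solvable, of finite index, and characteristic in $A$. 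Since $A\triangleleft B$ and $R$ is characteristic in $A$, we conclude $R\triangleleft B$.

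Then I would analyze $B/R$. The sequence becomes $1\to A/R\to B/R\to C\to 1$ with $A/R$ finite (as $[A:R]<\infty$) and $C$ solvable, so $B/R$ is finite-by-solvable. A finite-by-solvable group is virtually solvable: if $F\triangleleft G$ is finite with $G/F$ solvable, then $C_G(F)$ has finite index (because $G/C_G(F)$ embeds in the finite group $\mathrm{Aut}(F)$), and $C_G(F)$ is solvable since $C_G(F)\cap F=Z(F)$ is central in $C_G(F)$ while $C_G(F)/(C_G(F)\cap F)$ embeds in the solvable group $G/F$. Applying this to $B/R$ produces a finite-index solvable subgroup $H/R$. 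Finally $H$ is an extension of the solvable group $H/R$ by the solvable group $R$, hence solvable, and $[B:H]=[B/R:H/R]<\infty$, so $B$ is virtually solvable.

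The main obstacle is exactly the step that forces the use of the solvable radical rather than an arbitrary finite-index solvable subgroup of $A$: such a subgroup need not be normal in $B$, and since conjugation by $B$ may produce infinitely many subgroups of a given finite index in $A$, its $B$-normal core can fail to have finite index. Passing to the characteristic subgroup $R$ circumvents this difficulty entirely and reduces the whole statement to the elementary fact that finite-by-solvable groups are virtually solvable.
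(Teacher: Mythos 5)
Your proof is correct and follows essentially the same route as the paper's: reduce to the case where $C$ is solvable, replace the finite-index solvable subgroup of $A$ by one that is characteristic in $A$ (hence normal in $B$), and then handle the resulting finite-by-solvable quotient via the centralizer of the finite normal subgroup. If anything, your version is slightly more careful at two points: you construct the characteristic subgroup explicitly as the solvable radical rather than asserting it can be built ``in a similar way'' to a normal core, and in the centralizer step you correctly account for $C_G(F)\cap F=Z(F)$ possibly being non-trivial, whereas the paper's claim that the relevant subgroup is a direct product $N\times(A/A')$ implicitly assumes $A/A'$ is centerless.
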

\begin{proof}
   Let $C'$ be a finite index normal solvable subgroup of $C$, that we can produce by intersecting the finitely many conjugates of a finite index solvable subgroup of $C$. Since $C\simeq B/A$, there is a normal subgroup $B'$ of $B$ such that $A$ is normal in $B'$ and $C'\simeq B'/A$. Moreover, $B/B'\simeq (B/A)/(B'/A)\simeq C/C'$ is finite, so $B'$ has finite index in $B$.

   Consider $A'$ a finite index characteristic solvable subgroup of $A$ (which we can construct in a similar way as $C'$ above). Since $A'$ is characteristic in $A$ which is normal in $B'$, $A'$ is a normal subgroup of $B'$. Moreover, since $C'\simeq B'/A\simeq (B'/A')/(A/A')$, we see that we have a short exact sequence $1\to A/A'\to B'/A'\to C'\to 1$, where $A/A'$ is finite and $C'$ is solvable.

   Let $N$ be the centralizer of $A/A'$ in $B'/A'$. It has finite index in $B'/A'$ as it is the kernel of the morphism $x\in B'/A'\mapsto (y\in A/A'\mapsto xyx^{-1})\in\Aut(A/A')$ whose target is a finite group. Moreover, the group $G$ generated by $N$ and $A/A'$ in $B'/A'$ is the direct product $N\times (A/A')$ since $N$ acts trivially on $A/A'$ by conjugation. Therefore, the projection of $G$ into $C$ is isomorphic to $(N\times (A/A'))/(A/A') \simeq N$. Since $C$ is solvable, it follows that $N$ is solvable.

   If now $B''$ is the subgroup of $B'$ such that $B''/A'\simeq N$, we see that $B''$ is solvable since both $A'$ and $B''/A'$ are solvable. Note moreover that $B''$ is normal in $B'$ since $N$ is normal in $B'/A'$, and that $B'/B''\simeq (B'/A')/(B''/A')\simeq (B'/A')/N$ is finite. Hence $B''$ is a finite index solvable subgroup of $B'$, which itself has finite index in $B$. So $B$ is virtually solvable.
\end{proof}

We can now prove that geometric generalized cusps are generalized cusps.

\begin{proposition}\label{prop:geomGenCuspsAreGenCusps}
   Let $C$ be a geometric generalized cusp. Then its closure $\overline{C}$ is a generalized cusp.
\end{proposition}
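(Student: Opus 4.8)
The plan is to verify that $\overline{C}$ satisfies the three requirements of Definition \ref{def:genCusp}: that it is a convex projective orbifold with boundary homeomorphic to $\partial\overline{C}\times\R_{\geq 0}$, that this boundary is strictly convex, and that the holonomy $\Gamma$ is virtually solvable. Write $\Omega$ for the open generalized cusp domain, $\omega$ for its parabolic face, and $(\xi,H)$ for the center of the algebraic horospheres preserved by $\Gamma$, with $\xi\in\overline{\omega}$ and $H\supset\omega$; note that $\Gamma$ preserves $\omega$, hence $\Span\omega$, so the restriction $\Gamma|_{\Span\omega}$ is defined. Since $\partial\Omega-\overline{\omega}$ contains no non-trivial segment, the set $S^\Omega_\xi$ of segments of $\partial\Omega$ through $\xi$ is exactly $\overline{\omega}$, so the algebraic horospheres $\horo_t$ are strictly convex hypersurfaces foliating $\Omega$. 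I would realize $\overline{C}$ as the quotient by $\Gamma$ of the closed convex region cut out by one chosen horosphere $\horo_0$ on the $\xi$-side.

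For the geometric conditions, I would first note that by Proposition \ref{prop:autPreservesAlgHoroNSC} every element of $\Gamma$ fixes $(\xi,H)$, and since $\Gamma$ preserves each horosphere it has trivial weight, so by Lemma \ref{lm:autThatPreservesAlgHoroCommutesWithFlow} it commutes with the $(\xi,H)$-flow. Hence $(\zeta,t)\mapsto\phi^{(\xi,H)}_t(\zeta)$ is a $\Gamma$-equivariant homeomorphism $\horo_0\times\R\to\Omega$ with $\Gamma$ acting trivially on the $\R$-factor; restricting to $t\geq 0$ and passing to the (properly discontinuous, by Proposition \ref{prop:autGpActsProp}) quotient gives $\overline{C}\cong(\horo_0/\Gamma)\times\R_{\geq 0}$, a convex projective orbifold with boundary $\partial\overline{C}=\horo_0/\Gamma$. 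This boundary is strictly convex because $\horo_0$ is the image under a projective transformation of the strictly convex set $\partial\Omega-\overline{\omega}$.

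The remaining, and hardest, point is virtual solvability of $\Gamma$, for which I would use the short exact sequence $1\to N\to\Gamma\to\Gamma|_{\Span\omega}\to 1$, with $N$ the subgroup acting trivially on $\Span\omega$. The quotient is virtually solvable by the generalized cusp group hypothesis (Definition \ref{def:genCuspGp}), so by Lemma \ref{lm:extensionsOfVirtSolvAreVirtSolv} it suffices to prove $N$ virtually solvable. The plan for $N$ is to project along $\Span\omega$: since $\overline{\Omega}\cap\Span\omega=\overline{\omega}$, the image $\bar\Omega\subset\P(\R^n/\Span\omega)$ is properly convex, one checks it is in fact strictly convex (a segment of $\partial\bar\Omega$ would lift to one in $\partial\Omega-\overline{\omega}$), and $N$ descends to a group fixing the image of $(\xi,H)$ and preserving the induced horospheres, hence acting as an elementary group of parabolic type. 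The kernel of this projection is a group of square-zero unipotents, hence abelian, so one more application of Lemma \ref{lm:extensionsOfVirtSolvAreVirtSolv} reduces everything to virtual solvability of this parabolic elementary image, which for a \emph{discrete} group follows from Proposition \ref{prop:CLTParabElemSgpSummary}.

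I expect the genuine obstacle to be exactly this last reduction: the projected group need not be discrete, so before invoking the classification of elementary parabolic groups I would have either to verify its discreteness or, passing to its closure, to rule out a non-solvable compact factor using the proper convexity of $\bar\Omega$ together with the absence of an interior fixed point (a compact factor would fix a point of $\bar\Omega$ by the center-of-mass argument). Establishing strict convexity of $\bar\Omega$ cleanly and controlling this transverse parabolic action are the steps demanding the most care; the topological product structure, by contrast, is a routine consequence of the flow-invariance of the horosphere foliation.
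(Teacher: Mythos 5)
Your treatment of the product structure and of the strict convexity of $\partial\overline{C}$ matches the paper's: the foliation $\overline{\Omega}-\omega=\bigsqcup_{t\geq 0}\horo_t$ together with Lemma \ref{lm:autThatPreservesAlgHoroCommutesWithFlow} gives the homeomorphism with $\partial C\times\R_{\geq 0}$, and the absence of segments in $\partial\Omega-\overline{\omega}$ gives strict convexity. You also correctly identify the exact sequence $1\to N\to\Gamma\to\Gamma|_{\Span\omega}\to 1$ and Lemma \ref{lm:extensionsOfVirtSolvAreVirtSolv} as the mechanism for assembling virtual solvability.

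The gap is exactly where you suspect it: your route to the virtual solvability of $N$ (projecting along $\Span\omega$, arguing the image domain is properly and strictly convex, and classifying the transverse action as elementary parabolic) leaves several claims unestablished. The image of $\Omega$ under $\P(\R^n)-\P(\Span\omega)\to\P(\R^n/\Span\omega)$ need not obviously be properly convex here, since $\P(\Span\omega)$ meets $\overline{\Omega}$ in $\overline{\omega}$ (contrast with Proposition \ref{prop:solvGpsAreElem}, where the projected-along subspace is disjoint from $\overline{\Omega}$); the projected group need not be discrete, so Proposition \ref{prop:CLTParabElemSgpSummary} does not apply to it as stated; and your fallback of passing to the closure and excluding a compact factor is not carried out. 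None of this machinery is needed. The paper's argument is: a hyperbolic $\gamma\in\Gamma$ has attracting and repelling fixed points $x_\gamma^\pm\in\partial\Omega$, and if either lay outside $\overline{\omega}$ the open segment $(x_\gamma^-,x_\gamma^+)$ would lie in $\Omega$ (no non-trivial segments in $\partial\Omega-\overline{\omega}$), giving $\gamma$ an axis in $\Omega$ and contradicting its preservation of the $(\xi,H)$-horospheres. Hence every hyperbolic element has both fixed points in $\overline{\omega}$ and therefore acts non-trivially on $\omega$, so $N$ contains no hyperbolic elements; being discrete and weakly unipotent, $N$ is virtually nilpotent by \cite[Prop.~4.11]{cooperLongTillmann2015}. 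This replaces your entire transverse-projection analysis and closes the proof via Lemma \ref{lm:extensionsOfVirtSolvAreVirtSolv} as you intended.
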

\begin{proof}
   Let $\Omega\subset\P(\R^n)$ be a generalized cusp domain with parabolic face $\omega$ and $\Gamma<\Aut(\Omega)$ be a group acting as a generalized cusp group on $\Omega$, so that $C=\Omega/\Gamma$. Then $\overline{C}=(\overline{\Omega}-\omega)/\Gamma$. Since $\Omega$ is a generalized cusp domain, $\partial\Omega-\omega$ does not contain non-trivial segments, so that $\partial C = (\partial\Omega-\omega)/\Gamma$ is strictly convex.

   Let $(\xi,H)$ be the center of the algebraic horospheres preserved by $G$. 
   
   Let $\gamma\in\Gamma$ be a hyperbolic element. It fixes points $x_\gamma^+,x_\gamma^-\in\partial\Omega$ that are respectively eigenvectors for its largest and smallest eigenvalues. Since $\partial\Omega-\omega$ contains no non-trivial segments, if at least one of these two points lies outside of $\overline{\omega}$, then $(x_\gamma^-,x_\gamma^+)\subset\Omega$, and $\gamma$ cannot preserve the $(\xi,H)$-algebraic horospheres. Hence they both lie in $\overline{\omega}$, and $\gamma$ must act non-trivially on $\omega$.
   It follows that the subgroup $N$ of the elements of $\Gamma$ which act trivially on $\omega$ does not contain any hyperbolic element. So by \cite[\Prop 4.11]{cooperLongTillmann2015}, $N$ is virtually nilpotent.

   Moreover, $\Gamma/N$ identifies with the restriction of $\Gamma$ to $\Span\omega$, so it is virtually solvable by assumption. Hence $\Gamma$ is virtually solvable by Lemma \ref{lm:extensionsOfVirtSolvAreVirtSolv}.

   Remains to prove that $\overline{C}$ is homeomorphic to $\partial C\times \R_{\geq 0}$. Since $\partial\Omega-\omega$ does not contain any non-trivial segment, the foliation of $\Omega$ by the horospheres centered at $(\xi,H)$ extends to $\overline{\Omega}-\omega$, and we have
   \begin{equation*}
      \overline{\Omega}-\omega = \bigsqcup_{t\geq 0}\horo_t
   \end{equation*}
   where $\horo_t = \phi_t^{(\xi,H)}(\partial\Omega-\omega)$. Since $\Gamma$ preserves these horospheres, we have by Lemma \ref{lm:autThatPreservesAlgHoroCommutesWithFlow}
   \begin{align*}
      \overline{C} &= (\overline{\Omega}-\omega)/\Gamma \\
      &=\bigsqcup_{t\geq 0}\phi_t^{(\xi,H)}\left((\partial\Omega-\omega)/\Gamma\right) \\
      &=\bigsqcup_{t\geq 0}\phi_t^{(\xi,H)}\partial C
   \end{align*}
   so that
   \begin{equation*}
      h:\left\{\begin{aligned}
         \partial C\times\R_{\geq 0} &\to \overline{C} \\
         ([x],t) &\mapsto \phi_t^{(\xi,H)}(x)
      \end{aligned}\right.
   \end{equation*}
   is a well-defined homeomorphism between $\partial C\times\R_{\geq 0}$ and $\overline{C}$ (it is clearly bijective by the last equation, and continuous and open by definition of $\phi_t^{(\xi,H)}$).
\end{proof}

\subsection{Holonomies of generalized cusps}

In \cite{ballasCooperLeitner2020}, Ballas, Cooper and Leitner show that a generalized cusp of dimension $n$ is a quotient $\Omega/\Gamma$, where $\Gamma<\PGL(n+1,\R)$ is a lattice in a certain Lie group $G$ isomorphic to a subgroup of Euclidean isometries, and $\Omega\subset\P(\R^{n+1})$ is a properly convex domain determined by $G$ (as the convex hull of a generic orbit of $G$). See Theorems 0.2 and 1.45 and Equation (1.33) in \cite{ballasCooperLeitner2020} for more details about this description.

$G$ is uniquely determined up to conjugation by the data of an integer $s\in\{0,\dots,n-1\}$ and of a point $[\psi]\in\P(\R^s)^\ast$ such that the associated affine chart of $\P(\R^s)$ contains the simplex defined by the standard basis of $\R^s$. More precisely, up to a compact factor, $G$ is the subgroup of $\PGL(n+1,\R)$ given by:
\begin{equation*}
   \left\{\begin{pmatrix}
      e^{\diag{X}} & 0 \\
      0 & \rho(Y)\circ\Phi_\psi(X)
   \end{pmatrix}, (X,Y)\in\R^s\times \R^{n-1-s}\right\}
\end{equation*}
where
\begin{equation*}
   \rho(Y) = \begin{pmatrix}
      1 & \transpose{Y} & \frac12 \norm{Y}^2 \\
      0 & I_{n-1-s} & Y \\
      0 & 0 & 1
   \end{pmatrix}
\end{equation*}
is the classical isomorphism between $\R^{n-1-s}$ and the full group of translations on a horosphere of $\H^{n-s}$, and
\begin{equation*}
   \Phi_\psi(X) = \begin{pmatrix}
      0 & \cdots & 0 & -\psi(X) \\
      0 & \cdots & 0 & 0 \\
      \vdots & \ddots & \vdots & \vdots \\
      0 & \cdots & 0 & 0
   \end{pmatrix}
\end{equation*}
is a representation that commutes with $\rho$, and intermingles the action of $X$ and $Y$.

The idea is to use the same description of groups as presented above, and replace the representation $\rho$ which whose restriction to a lattice of $\R^{n+s-1}$ is the holonomy of a hyperbolic cusp with any representation satisfying \ref{cond:existsGenPLC} and \ref{cond:topRowDominates} (in particular, it may be interesting if possible to apply this construction to well-behaved syndetic hulls of the holonomies of strictly convex cusps).

Let $\rho:N\to\SLpm(n,\R)$ be a representation satisfying \ref{cond:existsGenPLC} and \ref{cond:topRowDominates}, $s\in\N$ and $[\psi]\in\P(\R^s)$. We may suppose up to conjugating $\rho$ that it is of the form
\begin{equation*}
   \begin{pmatrix}
      1 & * & *\\
      0 & * & * \\
      0 & 0 & 1
   \end{pmatrix}
\end{equation*}
Therefore, $\rho$ and $\Phi_\psi$ commute, and
\begin{equation*}
   \genRep{\rho}{\psi}: (X,Y)\in\R^s\times N \mapsto \begin{pmatrix}
      e^{\diag{X}} & 0 \\
      0 & \rho(Y)\circ\Phi_\psi(X)
   \end{pmatrix}
\end{equation*}
is a well-defined representation $\R^s\times N\to\GL(n+s,\R)$. 

We now investigate using Proposition \ref{prop:gpPreservesDomainNSC} under what conditions $\genRep{\rho}{\psi}$ preserves a properly convex domain.

\begin{proposition}\label{prop:genRepPreservesDomainNSC}
   $\genRep{\rho}{\psi}$ preserves a properly convex domain if and only if all coordinates of $\psi$ have the same sign $\sigma\in\{\pm 1\}$ and there is a positive s-proper generic matrix coefficient of $\rho$ whose top-right coefficient has the sign $\sigma$.

   In particular, $\genRep{\rho}{0}$ always preserves a properly convex domain.
\end{proposition}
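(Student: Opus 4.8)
The plan is to use the criterion of Proposition~\ref{prop:gpPreservesDomainNSC}: a group preserves a properly convex domain if and only if a lift to $\SLpm(n+s,\R)$ admits a positive generic s-proper matrix coefficient. First I would write $\genRep{\rho}{\psi}(X,Y)$ as the block matrix $\diag\bigl(e^{\diag X},\,\rho(Y)-\psi(X)E_{1n}\bigr)$ (using that $\Phi_\psi$ commutes with $\rho$, so the bottom block is $\rho(Y)$ with its top-right entry shifted by $-\psi(X)$), rescale by $\abs{\det}^{-1/(n+s)}=e^{-\bar X}$, where $\bar X=\tfrac1{n+s}\sum_iX_i$, to land in $\SLpm$, and record that the matrix coefficient attached to $\alpha=(a,\beta)$ and $v=(u,w)$ is
\[
   e^{-\bar X}\Bigl(\sum_{i=1}^s a_iu_i e^{X_i}+\beta(\rho(Y)w)-(\beta_1w_n)\psi(X)\Bigr)=:e^{-\bar X}F(X,Y).
\]
The three summands are a diagonal $\R^s$-part, a matrix coefficient $Q(Y)=\beta(\rho(Y)w)$ of $\rho$ whose top-right coefficient is $\beta_1w_n$, and a linear coupling through $\psi$.

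For sufficiency (and in particular for $\psi=0$), I would start from a positive generic s-proper matrix coefficient $Q$ of $\rho$, which exists by \ref{cond:existsPosGenPGLC} and Remark~\ref{rmk:TReImpliesGP+}, chosen with top-right coefficient $\beta_1w_n$ of sign $\sigma$, and take the diagonal coefficients $c_i=a_iu_i>0$ large. Since $\psi$ is single-signed of sign $\sigma$, the coupling coefficients $d_k=(\beta_1w_n)\psi_k$ are $\geq 0$, so $F=\sum_i\bigl(c_ie^{X_i}-d_iX_i\bigr)+Q(Y)$ is a sum of functions that are each bounded below and, for $c_i$ large, positive. The heart of the argument is that $F$ has the order of $\sigma_{\max}(M)$: Proposition~\ref{prop:genPLCOrder} gives $Q\asymp\norm{\rho(Y)}$, elementary convexity bounds give $\sum_i(c_ie^{X_i}-d_iX_i)\gtrsim\max(\max_ie^{X_i},\abs{\psi(X)})$, and since $\sigma_{\max}(R)\lesssim\norm{\rho(Y)}+\abs{\psi(X)}\lesssim F$ one concludes $F\gtrsim\max(\max_ie^{X_i},\sigma_{\max}(R))=\sigma_{\max}(M)$. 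After normalization this reads $e^{-\bar X}F\gtrsim\norm{\widehat M}$, so the coefficient is s-proper; positivity comes from the large $c_i$, and genericity from perturbing $(c_i,w,\beta)$ and invoking Lemma~\ref{lm:genPGLCAreGenPLC}.

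For necessity, starting from a positive generic s-proper coefficient $e^{-\bar X}F$, I would first restrict to $X=0$ (there $\bar X=0$ and the coupling vanishes), which exhibits $Q$ as a positive generic s-proper matrix coefficient of $\rho$. To constrain $\psi$ and the sign, I would test along the rays $Y=e$, $X_i\to-\infty$, where $e^{-\bar X}\to+\infty$ and $F$ is dominated by $-(\beta_1w_n)\psi(X)$; using genericity to select a representative with $\beta_1w_n\neq0$. If $\psi$ had coordinates of both signs, one could steer $\psi(X)\to+\infty$ along one such ray and $\psi(X)\to-\infty$ along another, both genuinely divergent in $\SLpm$, driving the coefficient to $+\infty$ on one and $-\infty$ on the other and contradicting s-properness; hence $\psi$ is single-signed of some sign $\sigma$. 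Reading off which of the two signed infinities keeps the coefficient bounded below then forces the top-right coefficient of $Q$ to have sign $\sigma$.

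The main obstacle I anticipate is the interplay between the $\SLpm$-normalization factor $e^{-\bar X}$ and divergence in the group: "going to infinity" is governed by $\sigma_{\max}(M)/\sigma_{\min}(M)$ rather than by a naive matrix norm, so s-properness must be verified along \emph{every} divergent direction. The delicate directions are precisely those where $\max_iX_i$ stays bounded while $\abs{\psi(X)}\to\infty$; this is exactly where the single-sign hypothesis on $\psi$ and the matching sign of the top-right coefficient are indispensable, and where the order estimate $F\gtrsim\abs{\psi(X)}$ must be set up with care (in particular one must exploit $c_i>d_i$ to dominate the linear term by the exponential when $X_i\geq 0$). A secondary technical point, handled by Lemma~\ref{lm:genPGLCAreGenPLC} in both directions, is transferring genericity between $\rho$ and $\genRep{\rho}{\psi}$.
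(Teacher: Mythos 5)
Your proposal is correct and follows essentially the same route as the paper: reduce to Proposition \ref{prop:gpPreservesDomainNSC}, decouple the matrix coefficient into the diagonal exponential part, a matrix coefficient $Q$ of $\rho$, and the $\psi$-coupling through the top-right entry; specialize to $X=0$ and to rays $X_i\to-\infty$ for necessity; and for sufficiency take large diagonal weights, use $e^x\geq 1+x$ to absorb the linear coupling term, and handle the $e^{-\frac{1}{n+s}\sum_i X_i}$ normalization by the case analysis you flag as the main obstacle (the paper does this via the change of variables $Y_i=X_i-\frac{1}{n+s}\sum_j X_j$). The only cosmetic difference is that you phrase the final s-properness estimate in terms of $\sigma_{\max}$ rather than the paper's direct two-case argument.
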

\begin{proof}
   We start by projecting the image of $\genRep{\rho}{\psi}$ to $\SLpm(n+s,\R)$ so that we can use Proposition \ref{prop:gpPreservesDomainNSC}. Since $\rho$ is a representation into $\SLpm(n,\R)$, $\abs*{\det \genRep{\rho}{\psi}(X,Y)}= \exp{\sum_i X_i}$, so that
   \begin{equation*}
      \mathcal{D}\genRep{\rho}{\psi}(X,Y) = \exp\left(-\frac1{n+s}\sum_iX_i\right)\genRep{\rho}{\psi}(X,Y)
   \end{equation*}
   is the projection into $\SLpm(n+s,\R)$.

   We start by supposing there is a positive generic s-proper matrix coefficient of $\mathcal{D}\genRep{\rho}{\psi}$. We may write it as
   \begin{equation*}
      P:(X,g)\in\R^s\times N \mapsto e^{-\frac1{n+s}\sum_iX_i}\left(\sum_i\mu_ie^{X_i} - \mu_{1n}\psi(X) + \sum_{ij}\mu_{ij}(\rho(g))_{ij}\right)
   \end{equation*}
   where $\mu_i$ is indexed from $1$ to $s$ and $\mu_{ij}$ is indexed on $\{1,\dots,n\}^2$ and is a matrix coefficient of $\rho$. We can decouple this expression by denoting
   \begin{equation*}
      Q:g\mapsto \sum_{ij}\mu_{ij}(\rho(g))_{ij}
   \end{equation*}
   and
   \begin{equation*}
      T:X\mapsto e^{-\frac1{n+s}\sum_iX_i}\left(\sum_i\mu_ie^{X_i}-\mu_{1n}\psi(X)\right)
   \end{equation*}
   so that
   \begin{equation}\label{eq:decoupledPLC}
      P(X,g) = T(X) + e^{-\frac1{n+s}\sum_iX_i}Q(g)
   \end{equation}

   Since $P$ is a positive generic s-proper linear combination of the entries of $\mathcal{D}\genRep{\rho}{\psi}$, it is clear that $Q$ must also be s-proper and bounded below by setting $X=0$ in \eqref{eq:decoupledPLC}. Hence $Q$ is a generic s-proper bounded below matrix coefficient of $\rho$.
   
   In the case where $\mu_{1n}\psi_i\neq 0$, we have
   \begin{align*}
      T(0,\dots,X_i,\dots,0) &= e^{-\frac1{n+s}X_i}(\mu_ie^{X_i}-\mu_{1n}\psi_iX_i + O(1)) \\
      &\simsub{X_i\to -\infty} \mu_{1n}\psi_i\abs{X_i}e^{\frac1{n+s}\abs{X_i}} \\
      &\tosub{X_i\to -\infty} \mu_{1n}\psi_i\infty
   \end{align*}
   so $\mu_{1n}\psi_i>0$. Therefore, we always have $\mu_{1n}\psi_i\geq 0$, so all the coordinates of $\psi$ have the same sign $\epsilon\in\{\pm 1\}$, and there exists a generic s-proper bounded below linear combination of the entries of $\rho$ whose top-right coefficient has sign $\epsilon$.

   Observe moreover that for all $i\in\{1,\dots, s\}$, then in all cases, we have (with the convention that $0\cdot\infty = 0$)
   \begin{align*}
      T(0,\dots,X_i,\dots,0) &= e^{-\frac1{n+s}X_i}(\mu_ie^{X_i}-\mu_{1n}\psi_iX_i + O(1)) \\
      &\tosub{X_i\to +\infty} \mu_i\infty
   \end{align*}
   Since $P$ is s-proper and bounded below, it follows that $\mu_i>0$. 

   \medskip

   Now suppose that there is a generic s-proper bounded below matrix coefficient of $\rho$ $(\mu_{ij})_{ij}$ with $\mu_{1n}$ of sign $\sigma\in\{\pm 1\}$, and choose any $\psi$ whose coordinates all have the sign of $\sigma$. Since $(\mu_{ij})_{ij}$ is generic, we may suppose that $\mu_{1n}\neq 0$, and up to multiplying $(\mu_{ij})_{ij}$ by a positive scalar, we may assume that $\mu_{1n} >1$. Moreover, we may assume by increasing $\mu_{11}$ as in Remark \ref{rmk:TReImpliesGP+} that $Q(g) = \sum_{ij}\mu_{ij}(\rho(g))_{ij}>1$ for all $g\in N$, and that this remains true for small perturbations of $(\mu_{ij})_{ij}$. For all $i$, choose $\mu_i > \abs{\psi_i}+\frac12$.

   Observe that since $e^x\geq 1+ x$ for all $x\in\R$, we have for any $X\in\R^s$
   \begin{align*}
      \sum_i\mu_ie^{X_i} - \mu_{1n}\psi(X) &\geq \sum_i(\mu_i e^{X_i} - \abs{\mu_{1n}}\abs{\psi_i}e^{X_i}) \\
      &\geq \sum_i(\mu_i-\abs{\psi_i})e^{X_i} \\
      &\geq \frac12\sum_i e^{X_i}
   \end{align*}
   so that for all $(X,g)\in\R^s\times N$, and for any small perturbation as a matrix coefficient of $\mathcal{D}\genRep{\rho}{\psi}$
   \begin{equation}\label{eq:ineqGenRepSProper}
      \begin{aligned}
         P(X,g) &= e^{-\frac1{n+s}\sum_iX_i}\sum_i(\mu_ie^{X_i}-\mu_{1n}\psi_iX_i) + e^{-\frac1{n+s}\sum_iX_i}Q(g) \\
         &\geq \frac12 \sum_i e^{X_i-\frac1{n+s}\sum_iX_i} + e^{-\frac1{n+s}\sum_iX_i}Q(g)
      \end{aligned}
   \end{equation}
   is a positive matrix coefficient of $\mathcal{D}\genRep{\rho}{\psi}$. We claim that it is also s-proper and generic.
    Indeed, if $(X,g)\to\infty$ in $\R^s\times N$, either $X$ remains bounded, or $X\to\infty$ in $\R^s$. In the first case, we must have $g\to\infty$ in $N$, and we conclude that $P(X,g)\to +\infty$ from the fact that $g\mapsto Q(g)$ is s-proper and bounded below. In the second case, observe that since $Q(g)\geq 1$ for any $g\in N$, we just need to show that
    \begin{equation*}
       X\mapsto\frac12 \sum_i e^{X_i-\frac1{n+s}\sum_iX_i} + e^{-\frac1{n+s}\sum_iX_i} 
    \end{equation*}
    is s-proper and bounded below on $\R^s$. For this, note that letting $Y_i=X_i-\frac1{n+s}\sum_i X_i$ is a linear change of variable, so that $X\to\infty$ in the variables $X_1,\dots,X_s$ if and only if it does so in the variables $Y_1,\dots,Y_s$. Moreover, if any one of the $Y_i$ goes to $+\infty$, then the term $\exp{Y_i}$ goes to $+\infty$, and so does $P(X,g)$. Otherwise, all the $Y_i$ are bounded above, so $\sum_i Y_i = \frac n{n+s}\sum_i X_i$ is also bounded above. But since $X\to\infty$ in $\R^s$, we must have $\min Y_i\to -\infty$, so that $\sum_i X_i=\frac{n+s}n\sum_i Y_i\to -\infty$ and $\exp(-\frac1{n+s}\sum_i X_i)\to +\infty$. Hence the function above is positive and s-proper on $\R^s$.

    We conclude that $(X,g)\mapsto P(X,g)$ is a positive s-proper matrix coefficient of $\mathcal{D}\genRep{\rho}{\psi}$ that remains so under any small perturbation of its coefficients, and is therefore generic, since \eqref{eq:ineqGenRepSProper} holds for small perturbations as well. By Proposition \ref{prop:gpPreservesDomainNSC}, $\mathcal{D}\genRep{\rho}{\psi}$ preserves a properly convex domain.

\end{proof}

\begin{remark}
   Since $\rho$ satisfies \ref{cond:existsPosGenPGLC} and \ref{cond:topRowDominates}, we can always suppose by Remark \ref{rmk:TReImpliesGP+} that there is a positive generic s-proper matrix coefficient of $\rho$ with \emph{positive coefficients}, up to changing the sign of the elements $e_{2},\dots,e_{n}$ of the basis of $\R^n$.

   From now on, we will assume that all the representations we consider satisfy this. In particular, there are always positive generic s-proper matrix coefficients of $\rho$ with positive top-right coefficient, so the above proposition shows that $\genRep{\rho}{\psi}$ always preserves a properly convex domain when $\psi$ have non-negative coordinates.

\end{remark}

\subsection{Generalized cusps with virtually nilpotent holonomy}

We now proceed with building domains that are stabilized by $\genRep{\rho}{\psi}$. We follow in that regard the construction given in \cite{ballasCooperLeitner2020}. In particular, we will require, even though this is not necessary according to Proposition \ref{prop:genRepPreservesDomainNSC}, that all coordinates of $\psi$ are strictly positive.

Let $\rho:\Gamma\to\SLpm(n,\R)$ be a representation satisfying \ref{cond:existsGenPLC} and \ref{cond:topRowDominates}. By Theorem \ref{thm:characHoloStrictlyConvCusps}, there is a $\Gamma$-invariant strictly convex domain $\Omega\subset\P(\R^n)$ with a single $\Gamma$-fixed point $\xi\in\partial\Omega$, as well as a $\Gamma$-fixed hyperplane supporting $\Omega$ at $\xi$, such that $\partial\Omega-\{\xi\}$ is the graph of a strictly convex analytic function on $\P(\R^n)-H$. If moreover $\Gamma$ satisfies \ref{cond:topRightDominates}, then $\Omega$ can be chosen to be round by Theorem \ref{thm:characHoloRoundCusps}, that is, we can find $\Omega$ as above such that $H$ is the only hyperplane supporting $\Omega$ at $\xi$.

Recall that $\genRep{\rho}{\psi}$ acts on $\P(\R^s\oplus\R^n)$, where we have chosen a basis $(e_{s+1},\dots,e_{s+n})$ of $\R^n$ such that there is a positive generic s-proper matrix coefficient of $\rho$ with positive coefficients. We can moreover assume by Remark \ref{rmk:TrTReMatrixForm} that $\xi = [e_{s+1}]$ and that $H = \P(\gen{e_{s+1},\dots,e_{s+n-1}})$.

Consider $H' = \P(\gen{e_1,\dots,e_{s+n-1}})$ the hyperplane spanned by $\P(\R^s)$ and $H\subset\P(\R^n)$, and $\A = \P(\R^s\oplus\R^n)-H'$ the associated affine chart. We identify $\A$ with $\R^s\oplus\R^{n-2}\oplus\R$ via the following isomorphism
\begin{equation*}
   \begin{cases}
      \P(\R^s\oplus\R^n)- H' &\to \R^s\oplus\R^{n-2}\oplus\R\\
      [U:h:V:1]&\mapsto (U,V,h)
   \end{cases}
\end{equation*}
where $\R^n=\gen{e_{s+1},\dots,e_{s+n}}$ is decomposed as $\R\oplus\R^{n-2}\oplus\R = \gen{e_{s+1}}\oplus\gen{e_{s+2},\allowbreak\dots,\allowbreak e_{s+n-1}}\oplus\gen{e_{s+n}}$. Observe that in these coordinates, $0$ is identified with the point $[e_{s+n}]\in\P(\R^s\oplus\R^n)-H'$. In restriction to $\P(\R^n)$, we let $\phi:\D_\Omega\to\R$ be the strictly convex analytic function whose graph $\{(0,V,\phi(V)),V\in\D_\Omega\}$ is $\partial\Omega-\{\xi\}$.

In these coordinates, $\genRep{\rho}{\psi}$ acts following the formula, given any $X\in\R^s$ and $\gamma\in\Gamma$:
\begin{equation}\label{eq:genRepFormulaParabCoord}
   \begin{aligned}
      \genRep{\rho}{\psi}(X,\gamma)\cdot (U,V,h) &\simeq \genRep{\rho}{\psi}(X,\gamma)\begin{bmatrix}
         U\\ h \\ V \\ 1
      \end{bmatrix}
      =\begin{bmatrix}
         e^{\diag X}U \\ h_\gamma - \psi(X) \\ V_\gamma \\1
      \end{bmatrix}\\
      &\simeq (e^{\diag X}U,V_\gamma,h_\gamma-\psi(X))
   \end{aligned}
\end{equation}
where $(V_\gamma,h_\gamma)$ is determined by
\begin{equation*}
   \rho(\gamma)\begin{pmatrix}h\\ V\\ 1\end{pmatrix} = \begin{pmatrix}h_\gamma \\ V_\gamma \\ 1\end{pmatrix}
\end{equation*}

Following \cite[\Def 1.3]{ballasCooperLeitner2020}, the above data allows to define a horofunction, as well as the associated horospheres and properly convex domains.
\begin{definition}\label{def:genRepDomain}
   Let $V_{\psi,\Omega} := \R_{>0}^s\times\D_\Omega\times\R$. We define the \emph{$(\psi,\Omega)$-horofunction}
\begin{equation*}
   h_{\psi,\Omega}:\begin{cases}
      V_{\psi,\Omega}&\to \R\\
      (U,V,t) &\mapsto \phi(V) - \sum_{i=1}^s\psi_i\log U_i - t
   \end{cases}
\end{equation*}
   For any $t\in\R$, the level set $\horo_t:=h_{\psi,\Omega}^{-1}(t)$ of $h_{\psi,\Omega}$ is a \emph{$(\psi,\Omega)$-horosphere}. We denote by $\Omega_{\psi,\Omega}:=h_{\psi,\Omega}^{-1}(\R_{<0})$ the \emph{$(\psi,\Omega)$-domains}.
\end{definition}

\begin{remark}
   Observe that if, as in \cite{ballasCooperLeitner2020}, we had $\rho:N\to\PO(n-1,1)$ and $\Omega = \H^{n-1}$, Definition \ref{def:genRepDomain} would have defined exactly the same objects as \cite[\Def 1.3]{ballasCooperLeitner2020}, so this construction is a proper generalization of that of \cite{ballasCooperLeitner2020}.
\end{remark}

\begin{proposition}\label{prop:genRepDomainProperties}
   We have
   \begin{enumerate}
      \item $h_{\psi,\Omega}$ is an analytic $\genRep\rho\psi$-invariant function;
      \item the $(\psi,\Omega)$-horospheres define a foliation of $V_{\psi,\Omega}$ by $\genRep\rho\psi$-invariant strictly convex analytic hypersurfaces;
      \item the $(\psi,\Omega)$-horospheres limit to the $s$-simplex $\Delta:=\P(\R_{>0}e_1+\dots+\R_{>0}e_{s+1})$;
      \item $\Omega_{\psi,\Omega}\subset\P(\R^s\oplus\R^n)$ is a properly convex, $\genRep\rho\psi$-invariant domain, foliated by the $(\psi,\Omega)$-horospheres $(\horo_s)_{s>0}$;
      \item its boundary $\partial\Omega_{\psi,\Omega}$ is the union of $\horo_0$ and $\Delta$;
      \item $\Delta$ is a $\COne$ face of $\partial\Omega_{\psi,\Omega}$ if and only if $\Omega$ is round.
   \end{enumerate}
\end{proposition}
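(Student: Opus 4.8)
The plan is to reduce all six assertions to the geometry, in the affine chart $\A$ with coordinates $(U,V,h)$, of the graph of the strictly convex analytic function $g(U,V):=\phi(V)-\sum_i\psi_i\log U_i$ over the convex set $\R_{>0}^s\times\D_\Omega$, noting that $h_{\psi,\Omega}(U,V,h)=g(U,V)-h$ and $\horo_t=\{h=g(U,V)-t\}$.

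For parts (1) and (2): analyticity of $h_{\psi,\Omega}$ is immediate, since $\phi$ is analytic, $\log$ is analytic on $\R_{>0}$, and $\psi$ is linear. For invariance I would substitute the action formula \eqref{eq:genRepFormulaParabCoord} into $h_{\psi,\Omega}$; the terms $-\sum_i\psi_i X_i$ arising from $\log(e^{X_i}U_i)$ cancel exactly against $+\psi(X)$, so invariance reduces to the single identity $\phi(V_\gamma)-h_\gamma=\phi(V)-h$ for all $\gamma$ and all $(V,h)\in\D_\Omega\times\R$. This identity is precisely the statement that $\rho(\gamma)$ preserves each algebraic horosphere of $\Omega$ centered at $(\xi,H)$: condition \ref{cond:topRowDominates} forces the weights on $\xi$ and $H$ to be trivial, so Proposition \ref{prop:autPreservesAlgHoroNSC} gives $\tau(\gamma)=1$ and $\rho(\gamma)$ fixes each level set $\{h-\phi(V)=t\}$ (the translates of $\partial\Omega-\{\xi\}$ in the $\xi$-direction filling the cone of Remark \ref{rmk:ConeFoliatedByAlgHoro}). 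This settles (1). For (2), $\partial h_{\psi,\Omega}/\partial h\equiv-1\neq0$, so the level sets foliate $V_{\psi,\Omega}$ by regular analytic hypersurfaces, each $\genRep{\rho}{\psi}$-invariant by (1); and the Hessian of $g$ is block diagonal, with the positive definite Hessian of $\phi$ and the diagonal matrix $(\psi_i/U_i^2)_i$, positive because all $\psi_i>0$, so $g$ is strictly convex and each $\horo_t$ is strictly convex.

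For parts (4), (3) and (5): $\Omega_{\psi,\Omega}$ is the epigraph $\{h>g(U,V)\}$ of a convex function over a convex set, hence convex, and its $\genRep{\rho}{\psi}$-invariance and decomposition into the $\horo_t$ follow from (1) and the definition. Proper convexity I would obtain by computing the recession cone: since $\phi$ is the boundary function of the properly convex $\Omega$, strict convexity forbids recession in $V$; the $-\log$ term allows recession only in the positive $U$-directions; hence the recession cone is the sharp simplicial cone $\R_{\geq0}e_1+\dots+\R_{\geq0}e_{s+1}$ (equivalently one produces a negative functional on a lift via Lemma \ref{lm:sharpConvConeNSC}). These recession rays are exactly the ideal points of $\Omega_{\psi,\Omega}$, giving (3) and (5): a sequence in $\horo_t$ escapes to $H'$ only when some $U_i\to\infty$ (the limit lies in the $\langle e_1,\dots,e_s\rangle$-face, as $\abs h=O(\log U)$ is negligible), when $U\to0$ with $\phi$ bounded (then $h\to+\infty$ and the limit is $\xi=[e_{s+1}]$), or in a mixture forcing $V\to\partial\D_\Omega$ and $\phi(V)\to+\infty$; chasing these cases shows $\overline{\horo_t}\setminus\horo_t=\overline\Delta$, so $\partial\Omega_{\psi,\Omega}=\horo_0\cup\Delta$.

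For part (6), I would identify the supporting hyperplanes of $\Omega_{\psi,\Omega}$ containing $\Delta$ with the supporting hyperplanes of $\Omega$ at $\xi$. A hyperplane $\P(\ker\alpha)$ contains $\Delta$ iff $\alpha_1=\dots=\alpha_{s+1}=0$, so on $(U,h,V,1)$ the value $\alpha(U,h,V,1)=\sum_j\alpha_{s+1+j}V_j+\alpha_{s+n}$ depends only on $V$, and the hyperplane supports $\Omega_{\psi,\Omega}$ iff this affine form is $\leq0$ on $\D_\Omega$. A supporting hyperplane of $\Omega$ at $\xi=[e_{s+1}]$ is a $\beta$ with $\beta(e_{s+1})=0$ whose associated affine form on $\D_\Omega$ is $\leq0$ — the identical condition — so the two families coincide, and $\Delta$ is $\COne$ iff $\xi$ admits a unique supporting hyperplane in $\Omega$. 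Since $\partial\Omega-\{\xi\}$ is an analytic graph (Theorem \ref{thm:characHoloStrictlyConvCusps}), every boundary point other than $\xi$ is automatically $\COne$, so $\Omega$ is round iff $\xi$ is $\COne$, finishing (6). The main obstacle will be the closure computation in (3) and (5): unlike the hyperbolic model of \cite{ballasCooperLeitner2020}, there is no explicit formula for $\phi$, so controlling all routes to $H'$ requires extracting from proper and strict convexity of $\Omega$ both that $\phi\to+\infty$ at $\partial\D_\Omega$ (to realize the interior of $\Delta$) and that $\phi$ is bounded below while the $U$-term grows only logarithmically (to keep every limit inside $\overline\Delta$).
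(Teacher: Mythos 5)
Your proposal is correct and follows essentially the same route as the paper: invariance via the cancellation of $\psi(X)$ against the $\log(e^{X_i}U_i)$ terms plus the $\rho$-invariance of the graph of $\phi$, strict convexity of the horospheres from the separate strict convexity of $\phi$ and $-\log$, the limit-set and boundary computations from the superlinearity $\norm{V}=o(\phi(V))$ (coming from $\overline{\Omega}\cap H=\{\xi\}$) together with the logarithmic growth of the $U$-term, and the identification of supporting hyperplanes containing $\Delta$ with affine forms in $V$ that are nonpositive on $\D_\Omega$, i.e.\ with supporting hyperplanes of $\Omega$ at $\xi$. The only organizational difference is that you package proper convexity and the accumulation set through the recession cone of the epigraph, where the paper runs an explicit order comparison against $N(U,V)=\norm{U}+\abs{\phi(V)}+\sum_i\max\{0,-\log U_i\}$; both rest on exactly the two asymptotic facts you single out at the end, so this is a repackaging rather than a different argument.
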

\begin{proof}
   \begin{enumerate}
      \item Since $\phi$ is analytic on its domain, $h_{\psi,\Omega}$ clearly is as well. Moreover, we compute with the help of Equation \eqref{eq:genRepFormulaParabCoord}, for any $(X,\gamma)\in\R^s\times N$:
         \begin{align*}
            h_{\psi,\Omega}(\genRep\rho\psi(X,\gamma)\cdot (U,V,t))&=h_{\psi,\Omega}(e^{\diag X}U,V_\gamma,t_\gamma-\psi(X))\\
            &= \phi(V_\gamma)-\sum_i\psi_i\log(e^{X_i}U_i) - (t_\gamma-\psi(X)) \\
            &= \phi(V)-\sum_i\psi_i\log U_i -t = h_{\psi,\Omega}(U,V,t)
         \end{align*}
         where the identity $\phi(V_\gamma)-t_\gamma = \phi(V)-t$ comes from the $\rho$-invariance of $\partial\Omega-\xi$ seen as the graph of $\phi$ in the $\rho$-invariant affine chart $\P(\R^n)-H$.
      \item By the last item, the $(\psi,\Omega)$-horospheres define a foliation by $\genRep\rho\psi$-invariant analytic hypersurfaces. Observe that since $\horo_s = h_{\psi,\Omega}^{-1}(s)$ is the graph of the function $(U,V)\in\R_{>0}^s\times\D_\Omega \mapsto f(U,V) - s$, where $f(U,V) := \phi(V) - \sum_i\psi_i\log U_i$, it is enough to compute the order two Taylor expansion of $f$. We have, for any $(U,V)\in\R_{>0}^s\times\D_\Omega$ and $(u,v)\in\R^s\times\R^{n-2}$ small enough that $(U+u,V+v)\in\R_{>0}^s\times\D_\Omega$:
         \begin{multline*}
            f(U+u,V+v) = f(U,V) + D_V\phi(v) - \sum_i\psi_i\frac{u_i}{U_i}
            + \frac12 D_V^2\phi(v) +\sum_i\psi_i\left(\frac{u_i}{U_i}\right)^2 \\+ o(\norm{(u,v)}^2)
         \end{multline*}
         Since the quadratic term $(u,v)\mapsto \frac12 D_V^2\phi(v) + \sum_i\psi_i(\tfrac{u_i}{U_i})^2$ is clearly definite positive for any choice of $(U,V)$, as $\phi$ is a strictly convex analytic function and $\psi_i>0$ for all $i$, the graph of $f$ (and thus of $f-s$, for any constant $s\in\R$) is strictly convex.

         \item Since $\Omega$ is strictly convex, $\Omega\cap H' = \Omega\cap H = \{\xi\}$, so $\partial\Omega-\{\xi\}$, which is the graph of $\phi$, limits to the point $\xi$. In other words,
         \begin{equation*}
            [0:\phi(V):V:1]\tosub{\substack{V\to\infty \\ V\in \D_\Omega}} \xi=[e_{s+1}]
         \end{equation*}
         so that $\norm{V}=o(\phi(V))$ as $V$ diverges in $\D_\Omega$. In particular,

         \begin{equation*}
            N(U,V):= \norm{U} + \abs{\phi(V)} + \sum_i\max\{0,-\log U_i\} \to +\infty
         \end{equation*}
         if and only if $(U,V)\in\R_{>0}^s\times\D_\Omega$ diverges. Moreover, when $\norm{U}\leq \phi(V)$:
         \begin{multline*}
            \norm{U}+\abs{\phi(V)-\sum_i\psi_i\log U_i} \geq \norm{U} + \phi(V) - \sum_i\psi_i\log U_i \\
            \begin{aligned}
               &\geq \norm{U}+\phi(V) + \sum_i\psi_i (\max\{0,-\log U_i\} -\abs{\log \phi(V)}) \\
               &\geq \norm{U} + \phi(V) + \min_i\{\psi_i\}\sum\max\{0,-\log U_i\} - n\max_i\{\psi_i\}\abs{\log \psi(V)} \\
               &\geq \min_i\left\{\frac12,\psi_i\right\} N(U,V) - A
            \end{aligned}
         \end{multline*}
         for some constant $A>0$. Similarly, when $\norm{U}\geq \phi(V)$:
         \begin{multline*}
            \norm{U}+\abs{\phi(V)-\sum_i\psi_i\log U_i} \geq \norm{U} + \phi(V) - \sum_i\psi_i\log U_i \\
            \begin{aligned}
               &\begin{multlined}\geq \norm{U} + \phi(V) + \min_i\{\psi_i\}\sum_i\max\{0,-\log U_i\} \phantom{- n \max_i\{\psi_i\}\log \norm{U}} \\- n \max_i\{\psi_i\}\sum_i\max\{0,\log U_i\}\end{multlined} \\
               &\geq \norm{U} + \psi(V) + \min_i\{\psi_i\}\sum_i\max\{0,-\log U_i\} - n \max_i\{\psi_i\}\log \norm{U} \\
               &\geq \min_i\left\{\frac12,\psi_i\right\} N(U,V) - A
            \end{aligned}
         \end{multline*}
         up to increasing $A$. Since we also have
         \begin{multline*}
            \norm{U} + \abs{\phi(V)-\sum_i\psi_i\log U_i} \leq \norm{U} + \abs{\phi(V)} + \sum_i\psi_i\abs{\log U_i} \\
            \begin{aligned}
               &\leq \norm{U} + \abs{\phi(V)} +\sum_i\psi_i(\max\{0,-\log U_i\} + \max\{0,\log U_i\}) \\
               &\leq \norm{U} + \abs{\phi(V)} + \max_i\{\psi_i\}\sum_i\max\{0,-\log U_i\} + \max_i\{\psi_i\}\norm{U} \\
               &\leq \max_i\{1,\psi_i\} N(U,V)
            \end{aligned}
         \end{multline*}
         it follows that $\norm{U} + \abs{\phi(V) - \sum_i\psi_i\log U_i}$ and $N(U,V)$ have the same order when $(U,V)$ diverges in $\R_{>0}^s\times\D_\Omega$.

         Since $\norm{V}=o(\phi(V))$ as $V\to\infty$ in $\D_\Omega$, it follows that $\norm{V} = o(N(U,V))$ as $(U,V)\to\infty$ in $\R_{>0}^s\times\D_\Omega$. From these inequalities, we conclude that for all $s\in\R$, $[U:\phi(V)-\sum_i\psi_i\log U_i -s: V:1]$ converges to a point of the simplex $\Delta$ whenever $(U,V)$ diverges in $\R_{>0}\times\D_\Omega$. Hence the $(\psi,\Omega)$-horospheres limit to a point of $\Delta$.
         \item This follows from the strict convexity and the $\genRep\rho\psi$-invariance of $\horo_0$, and the fact that $\Omega_{\psi,\Omega}$ is contained in $\P(\R^s\oplus\R^n)-H'$. 
         \item This follows from the strict convexity of $\horo_0$ and the fact that it limits on $\Delta$.
         \item $\Delta$ is $\COne$ if and only if the only supporting hyperplane of $\Omega_{\psi,\Omega}$ containing $\Delta$ is $H'$. Since $\R^s$ is the span of $\Delta$, this holds if and only if the domain of $h_{\psi,\Omega}$ is $\R_{>0}^s\times\R^{n-2}$, if and only if $\D_\Omega = \R^{n-2}$, if and only if $\xi$ is a $\COne$ point of $\Omega$, if and only $\Omega$ is round (by assumption on $\Omega$).\qedhere
   \end{enumerate}
\end{proof}

\begin{corollary}\label{cor:genCuspDomainsForVirtNilpGps}
   Let $\rho:N\to\PGL(n,\R)$ be a representation satisfying \ref{cond:existsGenPLC} and \ref{cond:topRowDominates}, $\Omega\subset\P(\R^n)$ be a strictly convex domain with analytical boundary (apart from one point), $s$ be a non-negative integer, and $\psi\in(\R^s)^\ast$ be a linear form with positive coordinates. Then, $\Omega_{\psi,\Omega}$ is a $\genRep\rho\psi$-invariant analytic generalized cusp domain with parabolic face a $s$-simplex $\Delta$, and for any discrete subgroup $G$ of $N\times \R^s$, $\genRep\rho\psi(G)$ acts on $\Omega_{\psi,\Omega}$ as a generalized cusp group.

   Moreover, $\Delta$ is $\COne$ if and only if $\Omega$ is round.
\end{corollary}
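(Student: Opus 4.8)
The plan is to read off every clause of the statement directly from Proposition~\ref{prop:genRepDomainProperties}, so that the proof reduces to matching the six properties established there against Definitions~\ref{def:genCuspDomain} and~\ref{def:genCuspGp}, together with one eigenvalue computation. First I would verify that $\Omega_{\psi,\Omega}$ is an analytic generalized cusp domain with parabolic face $\Delta$. Condition~(1) of Definition~\ref{def:genCuspDomain} is the assertion, contained in item~(6) of Proposition~\ref{prop:genRepDomainProperties}, that $\Delta$ is a face of $\partial\Omega_{\psi,\Omega}$. For the remaining conditions, item~(5) gives $\partial\Omega_{\psi,\Omega}-\overline{\Delta}\subset\horo_0$, and item~(2) says $\horo_0$ is a strictly convex analytic hypersurface, so condition~(2) holds; and since $\horo_0$ is by construction the graph of $(U,V)\mapsto\phi(V)-\sum_i\psi_i\log U_i$ in the affine chart $\A=\P(\R^s\oplus\R^n)-H'$, while $H'=\P(\gen{e_1,\dots,e_{s+n-1}})$ both contains $\Delta$ and supports $\Omega_{\psi,\Omega}$ (it is disjoint from $\Omega_{\psi,\Omega}\subset\A$ yet meets its closure along $\Delta$), condition~(2') holds with this choice of $H'$.

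Next I would check that $\genRep\rho\psi(G)$ acts as a generalized cusp group for any discrete $G<N\times\R^s$. By item~(4), $\genRep\rho\psi(\R^s\times N)\subset\Aut(\Omega_{\psi,\Omega})$; moreover each $\genRep\rho\psi(X,\gamma)$ preserves $\Span\Delta=\gen{e_1,\dots,e_{s+1}}$, so it preserves both $\Omega_{\psi,\Omega}$ and this subspace, hence the face $\Delta$. Reading off the matrix form, the restriction of $\genRep\rho\psi(X,\gamma)$ to $\Span\Delta$ is $\diag(e^{X_1},\dots,e^{X_s},1)$, the coordinate $e_{s+1}$ being the $\rho$-fixed point $\xi$ of weight $1$ and untouched by $\Phi_\psi(X)$; this restriction is abelian, a fortiori virtually solvable, giving condition~(1) of Definition~\ref{def:genCuspGp}. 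For condition~(2) I would take $\xi=[e_{s+1}]\in\overline{\Delta}$ and $H=H'$, and invoke Proposition~\ref{prop:autPreservesAlgHoroNSC}, which reduces the preservation of the algebraic horospheres centered at $(\xi,H')$ to the single assertion that every $\genRep\rho\psi(X,\gamma)$ fixes the pair $(\xi,H')$ with trivial weight. The crux is the computation that, in the chosen basis, $\genRep\rho\psi(X,\gamma)$ fixes $e_{s+1}$ and preserves $\gen{e_1,\dots,e_{s+n-1}}$, with eigenvalue $1$ both on $\xi$ and on the covector $e_{s+n}^\ast$ defining $H'$: this holds because $\rho(\gamma)$ satisfies~\ref{cond:topRowDominates} with $1$'s in its top-left and bottom-right corners, the factor $I+\Phi_\psi(X)$ only feeds $e_{s+n}$ into the $e_{s+1}$-direction (the $-\psi(X)$ term), and $e^{\diag X}$ acts on the complementary block. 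Trivial weight then yields, through Proposition~\ref{prop:autPreservesAlgHoroNSC}, that each such horosphere is preserved. Finally, the \emph{moreover} clause is exactly item~(6) of Proposition~\ref{prop:genRepDomainProperties}.

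I expect the only genuine subtlety to be this weight computation: one must keep careful track of which diagonal entries of $\genRep\rho\psi(X,\gamma)$ correspond to the basepoint $\xi=[e_{s+1}]$ and to the supporting hyperplane $H'$, and confirm that the off-diagonal contribution $-\psi(X)$ of $\Phi_\psi(X)$ lands solely in the $e_{s+1}$-direction and hence perturbs neither eigenvalue. Everything else is bookkeeping translating Proposition~\ref{prop:genRepDomainProperties} into the language of Definitions~\ref{def:genCuspDomain} and~\ref{def:genCuspGp}. It is worth noting that this route never needs the $(\psi,\Omega)$-horospheres to coincide with the algebraic horospheres of $\Omega_{\psi,\Omega}$: preservation follows purely from the fixed-flag-with-trivial-weight criterion. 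Discreteness of $G$ plays no role in the cusp-group conditions themselves; it is retained only so that the quotient $\Omega_{\psi,\Omega}/\genRep\rho\psi(G)$ is a bona fide geometric generalized cusp in the sense preceding Proposition~\ref{prop:geomGenCuspsAreGenCusps}.
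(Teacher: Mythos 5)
Your proposal is correct and follows essentially the same route as the paper: the paper likewise reads everything off Proposition~\ref{prop:genRepDomainProperties} and observes that the only remaining point is preservation of the algebraic horospheres centered at $([e_{s+1}],H')$, which follows from the trivial-weight criterion of Proposition~\ref{prop:autPreservesAlgHoroNSC}. Your extra remarks (the explicit matching against Definitions~\ref{def:genCuspDomain} and~\ref{def:genCuspGp}, and the observation that one need not identify the $(\psi,\Omega)$-horospheres with the algebraic ones, which the paper mentions only as an aside) are accurate elaborations of the same argument.
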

\begin{proof}
   Proposition \ref{prop:genRepDomainProperties} proves almost everything needed. The only thing left to check is that the $\genRep\rho\psi$ preserves algebraic horospheres in $\Omega_{\psi,\Omega}$. Since $\genRep\rho\psi$ fixes both $\xi:=[e_{s+1}]\in\Delta$ and $H'=\P(\gen{e_1,\dots,e_{s+n-1}})\supset\Delta$ with the same eigenvalue, this is clear. In fact, it is easy to check that the $(\xi,H')$-algebraic horospheres of $\Omega_{\psi,\Omega}$ are exactly the $(\psi,\Omega)$-horospheres $(\horo_t)_{t\in\R}$, which are invariant by Proposition \ref{prop:genRepDomainProperties}.
\end{proof}

\subsection{A generalized cusp with solvable holonomy}\label{par:genCuspWithSolvHolo}

In this section, we build an example of a generalized cusp whose holonomy is isomorphic to a solvable non virtually nilpotent group. The holonomy is isomorphic to
\begin{equation*}
   R = \left\{\begin{pmatrix}
      \lambda^n & a+b\sqrt{2} \\
      0 & 1
   \end{pmatrix},a,b,n\in\Z\right\}
\end{equation*}
where $\lambda=(1+\sqrt{2})^2$. This group is isomorphic to the full upper triangular subgroup of $\SL(2,\Z[\sqrt{2}])$, and is a solvable non virtually nilpotent group.

We start by explaining how to build a discrete and faithful representation of $R$ that may act as a generalized cusp group on some generalized cusp domain. The first observation is that it should preserve a properly convex domain, have discrete image, and preserve a subspace on which the action is virtually solvable. Notice that there is no necessity a priori that it preserves a simplex on which it acts diagonally, but we will require this for simplicity.

We start by computing the action of $R$ on $\H^2$ seen as a subspace of the projectivization of the second symmetric power of $\R^2$:
\begin{equation*}
   \begin{pmatrix}
      \lambda^{2n} & \lambda^n(a+b\sqrt{2}) & (a+b\sqrt{2})^2 \\
      0 & \lambda^n & 2(a+b\sqrt{2}) \\
      0 & 0 & 1
   \end{pmatrix}
\end{equation*}
This group preserves a copy of $\H^2$, so it preserves a properly convex domain. However, it is still far from being discrete. In order to make it discrete, we just have to combine it with its Galois conjugate, by having them act on the two summands of the decomposition $\R^6=\R^3\oplus\R^3$. Observe that the Galois conjugate of $\lambda$ is $\lambda^{-1}$. Since there are going to be two $1$ on the diagonal, on the last position, we identify them by restricting to the subspace $\gen{e_1,e_2,e_4,e_5,e_3+e_6}$, giving
\begin{equation*}
   \begin{pmatrix}
      \lambda^{2n} & \lambda^n(a+b\sqrt{2}) & 0 & 0 & (a+b\sqrt{2})^2 \\
      0 & \lambda^n & 0 & 0 & 2(a+b\sqrt{2}) \\
      0 & 0 & \lambda^{-2n} & \lambda^{-n}(a-b\sqrt{2}) & (a-b\sqrt{2})^2 \\
      0 & 0 & 0 & \lambda^{-n} & 2(a-b\sqrt{2}) \\
      0 & 0 & 0 & 0 & 1
   \end{pmatrix}
\end{equation*}
Now, observe that there are two eigenvectors $\lambda^{2n}$ and $\lambda^{-2n}$, and that the two corresponding lines contain dominating entries. We rearrange the lines and the columns so that we have a diagonal block in the top left corner.
\begin{equation*}
   \begin{pmatrix}
      \lambda^{2n} & 0 & \lambda^n(a+b\sqrt{2}) & 0 & (a+b\sqrt{2})^2 \\
      0 & \lambda^{-2n} & 0 & \lambda^{-n}(a-b\sqrt{2}) & (a-b\sqrt{2})^2 \\
      0 & 0 & \lambda^n & 0 & 2(a+b\sqrt{2}) \\
      0 & 0 & 0 & \lambda^{-n} & 2(a-b\sqrt{2}) \\
      0 & 0 & 0 & 0 & 1
   \end{pmatrix}
\end{equation*}
We are not far from what we need now. Observe that this group does not preserve algebraic horospheres since it does not have any pair of fixed hyperplane and point with the same eigenvalue. For this reason, we add a column with a $1$ on the diagonal in the diagonal part.
\begin{equation*}
   \begin{pmatrix}
      \lambda^{2n} & 0 & 0 & \lambda^n(a+b\sqrt{2}) & 0 & (a+b\sqrt{2})^2 \\
      0 & \lambda^{-n} & 0 & 0 & \lambda^{-n}(a-b\sqrt{2}) & (a-b\sqrt{2})^2 \\
      0 & 0 & 1 & 0 & 0 & 0 \\
      0 & 0 & 0 & \lambda^n & 0 & 2(a+b\sqrt{2}) \\
      0 & 0 & 0 & 0 & \lambda^{-n} & 2(a-b\sqrt{2}) \\
      0 & 0 & 0 & 0 & 0 & 1
   \end{pmatrix}
\end{equation*}
It seems possible that this representation acts as a generalized cusp group on a generalized cusp domain. However, in order to ease the proof, we instead glue a new parabolic block conjugated into $\PO(2,1)$, giving the following representation $\rho$ of $R\times\R$.
\begin{equation*}
   \begin{pmatrix}
      \lambda^{2n} & 0 & 0 & 0 & \lambda^n(a+b\sqrt{2}) & 0 & (a+b\sqrt{2})^2 \\
      0 & \lambda^{-2n}& 0 & 0 & 0 &\lambda^{-n}(a-b\sqrt{2})&(a-b\sqrt{2})^2 \\
      0 & 0 & 1 & m & 0 & 0 & m^2\\
      0 & 0 & 0 & 1 & 0 & 0 & 2m \\
      0 & 0 & 0 & 0 & \lambda^n & 0 & 2(a+b\sqrt{2}) \\
      0 & 0 & 0 & 0 & 0&\lambda^{-n}& 2(a-b\sqrt{2}) \\
      0 & 0 & 0 & 0 & 0 & 0 & 1
   \end{pmatrix}
\end{equation*}

Observe that the matrix coefficient given by $\alpha=\begin{psmallmatrix}1 & 1 & 1 & 0 & 0 & 0 & 0\end{psmallmatrix}$ and $x=(1,1,1,\allowbreak 0,\allowbreak 0,\allowbreak 0,1)$ is $P:a,b,n,m\mapsto\lambda^{2n}+\lambda^{-2n} + 1 + (a+b\sqrt{2})^2 + (a-b\sqrt{2})^2 + m^2$ which is positive and s-proper. Since this dominates all entries on the last four lines, and $\abs{\lambda^n(a+b\sqrt{2})}$ and $\abs{\lambda^{-n}(a-b\sqrt{2})}$ are both smaller than $P(a,b,n,m)/2$, $P$ is also generic, and Proposition \ref{prop:gpPreservesDomainNSC} shows that $\rho$ preserves a properly convex domain. The proof of the following proposition is adapted from that of Proposition \ref{prop:suffCondStrictlyConvCase}.

\begin{proposition}\label{prop:exOfGenCuspWithSolvableHolo}
   There is an analytic generalized cusp domain $\Omega\subset\P(\R^7)$ whose parabolic face is a $2$-dimensional simplex on which $\rho(R\times\R)$ acts as a generalized cusp group.
\end{proposition}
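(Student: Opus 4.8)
The plan is to exhibit an explicit $\rho$-invariant hyperplane and simplex, build a first invariant properly convex domain as the convex hull of a generic orbit, and then apply the smoothing Lemma \ref{lm:smoothing} to obtain the analytic structure, exactly mirroring the strategy of Proposition \ref{prop:suffCondStrictlyConvCase}. First I would record the linear-algebraic features of $\rho$. The seventh row of every $\rho(a,b,n,m)$ is $(0,\dots,0,1)$, so $e_7^\ast$ is fixed with eigenvalue $1$ and the hyperplane $H' := \P(\ker e_7^\ast) = \P(\gen{e_1,\dots,e_6})$ is $\rho$-invariant with trivial weight. Moreover $e_1,e_2,e_3$ are eigenvectors with positive eigenvalues $\lambda^{2n},\lambda^{-2n},1$, so $\gen{e_1,e_2,e_3}$ is $\rho$-invariant, the restriction of $\rho$ to it is diagonal (hence abelian), and the closed $2$-simplex $\Delta := \P(\R_{\ge 0}e_1 + \R_{\ge 0}e_2 + \R_{\ge 0}e_3)\subset H'$ is preserved; its vertex $\xi := [e_3]$ is fixed with eigenvalue $1$, matching the weight of $H'$. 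Working in the affine chart $\A := \P(\R^7) - H'$, in which the coordinate $x_7$ is preserved along $\rho$-orbits, I would pick a small open set $U$ of points with $x_7 > 0$ on which the positive generic s-proper matrix coefficient $P$ from the discussion above is defined, and set $\Omega_2 := \Int\Conv(\rho(R\times\R)\cdot\P(U))$, the convex hull taken in $\A$. By Proposition \ref{prop:gpPreservesDomainNSC} this is a properly convex $\rho$-invariant domain contained in $\A$, so $H'$ supports it.

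The core computation is to show that $\partial\Omega_2\cap H' = \Delta$, with $\Delta$ a genuine open $2$-face. For the inclusion $\subseteq$, I would use that by the AM--GM inequality the middle entries $\gamma\mapsto\lambda^n(a+b\sqrt2)$ and $\gamma\mapsto\lambda^{-n}(a-b\sqrt2)$ are dominated in their rows by the diagonal and column-$7$ entries, and that (as noted above the statement) all entries on the last four rows are dominated by $P$; hence for any $x\in U$ the coordinates $1,2,3$ of $\rho(g)x$ dominate the others and are non-negative, so every accumulation point of the orbit lying in $H'$ belongs to $\Delta$, and by Proposition \ref{prop:autGpActsProp} the closure of $\rho(R\times\R)\cdot\overline{\P(U)}$ meets $H'$ only along $\Delta$. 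For the reverse inclusion I would realise every point of the relative interior of $\Delta$ as an orbit limit: taking $n=0$ and letting $a,b,m$ grow suitably, the orbit of a generic point of $U$ tends to $[(a+b\sqrt2)^2 : (a-b\sqrt2)^2 : m^2 : 0 : \cdots : 0]\in\Delta$, and these points are dense in $\Delta$, since the ratio $(a+b\sqrt2)^2 : (a-b\sqrt2)^2$ ranges over a dense subset of $(0,\infty)$ as $a/b$ ranges over $\Q$ and $m$ is free. Taking closures gives $\Delta\subseteq\overline{\Omega_2}$, hence $\partial\Omega_2\cap H' = \Delta$, and since $\Delta$ is a convex subset of the supporting hyperplane $H'$, its relative interior $\omega$ is a single open face of $\Omega_2$. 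I expect this balancing argument — pinning down exactly the full $2$-dimensional simplex and nothing more — to be the main obstacle; the delicate point is that coordinates $1$ and $2$ are Galois conjugate and not independent, yet are simultaneously controllable because $|(a+b\sqrt2)(a-b\sqrt2)| = |a^2-2b^2|\ge 1$.

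With $\Omega_2$ and the trivial-weight supporting hyperplane $H'$ in hand, I would apply the smoothing Lemma \ref{lm:smoothing}: it produces a $\rho$-invariant properly convex domain $\Omega\subset\Omega_2$ with $\partial\Omega\cap H' = \partial\Omega_2\cap H' = \Delta$ and with $\partial\Omega - \Delta$ equal to the graph of a strictly convex analytic function in $\A$ containing no non-trivial segment. This is exactly conditions (1), (2) and (2') of Definition \ref{def:genCuspDomain}, so $\Omega$ is an analytic generalized cusp domain whose parabolic face $\omega$ is the relative interior of the $2$-simplex $\Delta$.

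Finally I would verify the two requirements of Definition \ref{def:genCuspGp}. The group $\rho(R\times\R)$ preserves $\omega$, and its restriction to $\Span\omega = \gen{e_1,e_2,e_3}$ is diagonal, hence (virtually) solvable. Moreover, since $\rho$ fixes the flag $(\xi,H') = ([e_3],H')$ with equal, trivial weights on $\xi$ and on $H'$, Proposition \ref{prop:autPreservesAlgHoroNSC} and Lemma \ref{lm:autThatPreservesAlgHoroCommutesWithFlow} show that $\rho(R\times\R)$ preserves the algebraic horospheres of $\Omega$ centered at $(\xi,H')$, with $\xi\in\overline\omega$ and $H'\supset\omega$. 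Thus $\rho(R\times\R)$ acts on $\Omega$ as a generalized cusp group, which is what we wanted.
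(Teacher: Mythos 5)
Your proposal is correct and follows essentially the same route as the paper's proof: build a $\rho$-invariant properly convex domain as the interior of the convex hull of a generic orbit, use the dominating-entries analysis (via Proposition \ref{prop:genPLCOrder}) to show its closure meets the invariant hyperplane exactly in the closed $2$-simplex spanned by $[e_1],[e_2],[e_3]$, apply the smoothing Lemma \ref{lm:smoothing}, and conclude with the trivial weights at $([e_3],H')$ and Proposition \ref{prop:autPreservesAlgHoroNSC}. The only notable (and harmless) differences are that you realize every relative-interior point of $\Delta$ as an orbit limit via a density argument in $a,b,m$, where the paper only exhibits the three vertices as limits (along $(0,0,\pm n,0)$ and $(0,0,0,m)$) and then invokes convexity of the closure, and that the paper uses a nested pair of domains $\Omega_1\subset\Omega_0$ to control $\overline{\Omega_1}\cap H$ where you work with a single domain.
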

\begin{proof}

   We denote by $\Gamma$ the image of $\rho(R\times\R)$. Let $p=e_3$ and $\phi=e_7^\ast$. Observe that $\phi(p)=0$ and that both are eigenvectors for the action of $\Gamma$ on $\R^7$ and $(\R^7)^\ast$ respectively, and that the associated eigenvalue is $1$.

   We have already showed that $\Gamma$ preserves a properly convex domain. By Proposition \ref{prop:gpPreservesDomainNSC}, we can thus find $U\subset\R^7$ and $V\subset(\R^7)^\ast$ such that for any $(x,\alpha)\in U\times V$, $\gamma\in\Gamma\mapsto\alpha(\gamma\cdot x)$ is a negative generic s-proper linear combinations of the entries of $\Gamma$. We also choose an open subset $U_1$ of $U$ such that $\overline{U_1}\subset U$. Up to shrinking $U$, we may also suppose that $\overline{\P(U)}$ does not meet $H:=\P(\ker\phi)$, nor any of the coordinate hyperplanes. We can now define
   \begin{align*}
      \Omega_1 &= \Int\Conv(\Gamma\cdot\P(U_1)) \\
      \Omega_0 &= \Int\Conv(\Gamma\cdot\P(U))
   \end{align*}
   Notice that $\Omega_0\cap H=\varnothing$.

   Observe that the dominating entries of $\rho$ are exactly the entries on its first two lines, as well as the rightmost entry of its third line.
   
   Let $(x,\alpha)\in U\times V$. Note that $\gamma\in\Gamma\mapsto\alpha(\gamma\cdot x)$ is a generic s-proper linear combination of the entries of $\Gamma$, but also of the entries of $\gamma\mapsto \gamma\cdot x$. Therefore, the entries of $\gamma\mapsto \gamma\cdot x$ have the same order as the entries of $\Gamma$ by Proposition \ref{prop:genPLCOrder}. But for $i\geq 4$, $\gamma\mapsto(\gamma\cdot x)_i$ is a linear combination of entries of $\Gamma$ that are dominated, so it is dominated. Hence all dominating entries of $\gamma\mapsto\gamma\cdot x$ are among its first $3$ entries. Moreover, each of these $3$ entries is dominating along a subsequence (the first and second entries are dominating along the sequences given by $(a,b,n,m)=(0,0,n,0)$ with $n$ going to either $+\infty$ or $-\infty$, and the third entry is dominating along $(a,b,n,m)=(0,0,0,m)$ where $m\to+\infty$), so these are exactly the dominating entries of $\gamma\mapsto\gamma\cdot x$. Therefore, all accumulation points of $\gamma\cdot x$ as $\gamma\to\infty$ in $\Gamma$ lie in $\Delta :=\P(\R_{>0}e_1 + \R_{>0}e_2 +\R_{>0}e_3) \subset H$. We also showed that for any vertex $v$ of $\Delta$, there is a sequence $\gamma_k\to\infty$ in $\Gamma$ such that $\gamma_k x\to v$. As this holds for any $x\in U$, we may pick $x\in U_1$, so that $\Delta\subset\overline{\Omega_1}$. Since $\Omega_0\cap H=\varnothing$, we see that $\Delta\subset\partial\Omega_0\cap\partial\Omega_1$. It is clear that $\Delta$ is preserved by the action of $\Gamma$.

   Since $\overline{\P(U_1)}$ is a compact subset of $\Omega_0$, its orbit under $\Gamma$ meets $\partial\Omega_0$ only at $\Delta$ by Proposition \ref{prop:autGpActsProp} and the last paragraph. Therefore $\overline{\Omega_1}-\Delta\subset\Omega_0$, so $\overline{\Omega_1}\cap H = \Delta$.

   Since $\rho$ takes values in $\SL(7,\R)$, the weight of $\Gamma$ associated to $H$ is trivial, so we can apply Lemma \ref{lm:smoothing} to $\Omega_1$ and $H$, yielding an analytic generalized cusp domain $\Omega\subset\Omega_1$ with parabolic face $\Delta$ which is invariant under the action of $\Gamma$.

   Observe that if $\xi := [e_3]=[p]$, then $\xi\in H$ is a vertex of $\Delta$, and both the weights of $\Gamma$ associated to $\xi$ and $H$ are trivial. Therefore, $\Gamma$ also preserves the algebraic horospheres centered at $(\xi,H)$ by Proposition \ref{prop:autPreservesAlgHoroNSC}. Hence $\Gamma$ acts on $\Omega$ as a generalized cusp group.
\end{proof}

\printbibliography
\end{document}